\documentclass{article}
\usepackage[english]{babel}
\usepackage{amsmath,amssymb,graphicx,latexsym,pslatex,theorem}
\usepackage{hyperref} 

\newcommand{\Alpha}{\mathrm{A}}
\newcommand{\Mu}{\mathrm{M}}
\newcommand{\Tau}{\mathrm{T}}
\newcommand{\assign}{:=}
\newcommand{\comma}{{,}}
\newcommand{\longdownminus}{{\mbox{\rotatebox[origin=c]{-90}{$\longminus$}}}}
\newcommand{\longminus}{{-\!\!-}}
\newcommand{\mathd}{\mathrm{d}}
\newcommand{\mathi}{\mathrm{i}}
\newcommand{\mathlambda}{\lambda}
\newcommand{\mathpi}{\pi}
\newcommand{\of}{:}
\newcommand{\textdots}{...}
\newcommand{\tmaffiliation}[1]{\\ #1}
\newcommand{\tmmathbf}[1]{\ensuremath{\boldsymbol{#1}}}
\newcommand{\tmop}[1]{\ensuremath{\operatorname{#1}}}
\newenvironment{proof}{\noindent\textbf{Proof\ }}{\hspace*{\fill}$\Box$\medskip}
\newtheorem{corollary}{Corollary}
\newtheorem{definition}{Definition}
{\theorembodyfont{\rmfamily}\newtheorem{example}{Example}}
\newtheorem{lemma}{Lemma}
\newtheorem{notation}{Notation}
\newtheorem{proposition}{Proposition}
{\theorembodyfont{\rmfamily}\newtheorem{remark}{Remark}}
\newtheorem{theorem}{Theorem}

\textheight=22cm
\voffset=-0.5cm
\textwidth=16cm
\hoffset=-1.4cm



\setcounter{MaxMatrixCols}{10}


\begin{document}

\title{Asymptotics of Harish-Chandra transform and infinitesimal freeness}

\author{
  Alexey Bufetov, Panagiotis Zografos
  \tmaffiliation{Leipzig University, Institute of Mathematics}
}

\date{}
\maketitle

\begin{abstract}

In the last ten years a technique of Schur generating functions and Harish-Chandra transforms was developed for the study of the asymptotic behavior of discrete particle systems and random matrices. In the current paper we extend this toolbox in several directions. We establish general results which allow to access not only the Law of Large Numbers, but also next terms of the asymptotic expansion of averaged empirical measures. In particular, this allows to obtain an analog of a discrete Baik-Ben Arous-Peche phase transition. A connection with infinitesimal free probability is shown and a quantized version of infinitesimal free probability is introduced. Also, we establish the Law of Large Numbers for several new regimes of growth of a Harish-Chandra transform.
  
\end{abstract}

\tableofcontents

\section{Introduction}
\label{sec:intro}

\subsection*{Overview}

Let $A$ be a random Hermitian $N \times N$ matrix with (possibly random) eigenvalues $\{\lambda_1 (A) \leq \cdots \leq \lambda_N (A)\}$. Its \textit{Harish-Chandra transform} (also known as a multivariate Bessel generating function) is defined by 
\begin{equation}
  \mathbb E [H \text{} C (x_1, \ldots, x_N ; \lambda_1 (A), \ldots, \lambda_N (A))] \assign \mathbb E \int_{U (N)} \exp (\tmop{Tr} (A \text{} U \text{} B
  \text{} U^{\ast})) \tmmathbf{m}_N (d \text{} U), \qquad x_i \in \mathbb C, \label{intro-Haarmeasure}
\end{equation}
where $B$ is a deterministic diagonal matrix with eigenvalues $x_1, x_2, \dots, x_N$, and $U$ is integrated with respect to the Haar measure on unitary $N \times N$ matrices. It is well-known by now that the asymptotic behavior of function \
$\mathbb E [H \text{} C (x_1, \ldots, x_N ; \lambda_1 (A), \ldots, \lambda_N (A))] $ can be used for the analysis of the asymptotic behavior of eigenvalues of $A$. In particular, it was established in \cite{BG} (see also \cite{GM}, \cite{GP}, \cite{MN}; we omit minor technical assumptions) that the following limit of the Harish-Chandra transform implies the weak Law of Large Numbers convergence of the empirical measure: 
\begin{equation}
\label{eq:Intro-free-conv}
\lim_{N \rightarrow \infty} \frac{1}{N} \log \text{} \mathbb{E} [H \text{}
    C (x_1, \ldots, x_r, 0^{N - r} ; \lambda_1 (A), \ldots, \lambda_N (A))] =
    \sum_{i = 1}^r \Psi (x_i) \ \ \Rightarrow \ \lim_{N \rightarrow \infty} \frac{1}{N} \sum_{i=1}^N \delta \left( \frac{\lambda_i (A)}{N} \right) = \mu,
\end{equation} 
where the former convergence is uniform in a complex neighborhood of $0^r:= (\underbrace{0,0,\dots,0}_r)$ and should hold for arbitrary fixed $r$, and the function  $\Psi' (x)$ is the R-transform of a probability measure $\mu$. In \cite{BG} this claim was also extended to the case  of Schur generating functions and applied to problems coming from random tilings and asymptotic representation theory.

For deterministic $\{\lambda_i (A)\}$, in a somewhat different direction, \cite[Theorem 4.1]{OV} can be written in the following form: 
\begin{equation}
\label{eq:Intro-erg-conv}
\lim_{N \rightarrow \infty} \log \text{} \mathbb{E} [H \text{}
    C (x_1, \ldots, x_r, 0^{N - r} ; \lambda_1 (A), \ldots, \lambda_N (A))] =
    \sum_{i = 1}^r \Phi (x_i) \ \ \Rightarrow \ \lim_{N \rightarrow \infty} \sum_{i=1}^N \delta \left( \frac{\lambda_i}{N} \right) = \nu,
\end{equation} 
where the former convergence is again uniform in a complex neighborhood of $0^r$ and should hold for arbitrary fixed $r$, while the latter convergence is the convergence in the sense of moments applied to measures of growing weight (this implies that for such a convergence to take place most of the weight should be around 0). This theorem plays a crucial role in the classification of infinite ergodic unitarily invariant Hermitian matrices, see \cite{OV}. 

The striking similarity between \eqref{eq:Intro-free-conv} and \eqref{eq:Intro-erg-conv} is the starting point of the current paper. These two results play very important role in two quite different settings with different sets of applications. The goal of this paper is to establish other asymptotic results of this form and to start to explore their applications. In particular, we address in detail perturbations (or corrections) to \eqref{eq:Intro-free-conv}.

The main results of this paper are
\begin{itemize}

\item in Theorem \ref{HALFGODHALFSOUVLAKI} from Section \ref{sec:interm} we establish the \textit{intermediate regime} which interpolates between \eqref{eq:Intro-free-conv} and \eqref{eq:Intro-erg-conv}. In Theorem \ref{BEBT!} we prove the implication \eqref{eq:Intro-erg-conv} for \textit{random} eigenvalues $\{\lambda_i (A)\}$. Also, in Theorem \ref{Propolemiko} we study the behavior of eigenvalues in the case if the Harish-Chandra transform grows faster than in \eqref{eq:Intro-free-conv}. 

\item in Theorem \ref{Einaiiagapi} from Section \ref{sec:inf-free} we generalize \eqref{eq:Intro-free-conv} by computing the first two leading terms of the asymptotic expansion for the expectation of the moments of the empirical measure. We also connect it to the notion of the infinitesimal freeness from free probability. 

\item in Theorem \ref{Maria!!} we establish an analog of Theorem \ref{Einaiiagapi} for a related setup of Schur generating functions (see details below) and connect it to the quantized free convolution; by doing this, we introduce the \textit{quantized} infinitesimal cumulants. As applications, we calculate the outliers in a perturbation of the model of uniformly random domino tilings of the Aztec diamond, see Figure \ref{fig:til} and Example \ref{ex:Aztec}, and demonstrate a BBP-type phase transition in asymptotic representation theory in Example \ref{ex:disc-BBP}.

\item in Theorems \ref{th:sec-order-main} and \ref{fredi} we extend Theorem \ref{Einaiiagapi} to the next terms of asymptotic expansion of the moments of the empirical measure, and connect them with the second and higher order infinitesimal freeness, respectively. We also provide several examples, in particular, in Example \ref{ex:BBP-highOrder} we demonstrate a version of a higher order BBP phase transition. 

\end{itemize}

The main focus of this paper is on establishing general theorems in various growth regimes of the Harish-Chandra transform and on establishing precise connections with (quantized) infinitesimal freeness. However, we also provide a number of examples. 
Examples from Section \ref{sec:inf-free} can be calculated (or were already calculated) with the use of the existing techniques of infinitesimal freeness, we included them in order to better illustrate our general results. Examples from Sections \ref{sec:Schur-free}, \ref{sec:second-order} and \ref{sec:higher-order} seem to be new. 

Section \ref{sec:prelim} contains required preliminaries. In Section \ref{sec:previous-result} we provide a technically improved and very detailed proof of (a degeneration of) \cite[Theorem 5.1]{BG}, which serves to us as a reference point for further progress. In the remainder of the introduction we discuss our results in more detail. 

\subsection*{Intermediate regime}

The main result of Section \ref{sec:interm} is the following.

\begin{theorem} (Theorem \ref{HALFGODHALFSOUVLAKI})
  \label{th:intro-HALFGODHALFSOUVLAKI}Let $A$ be a random Hermitian matrix of size $N$
  and $0 < \theta < 1$. Assume that for every finite $r$ we have
  \begin{equation}
    \lim_{N \rightarrow \infty} \frac{1}{N^{\theta}} \log \mathbb{E} [H
    \text{} C (x_1, \ldots, x_r, 0^{N - r} ; \lambda_1 (A), \ldots, \lambda_N
    (A))] = \sum_{i = 1}^r \Psi (x_i), \label{intro-manytoomany}
  \end{equation}
  where $\Psi$ is a smooth function in a complex neighborhood of $0$ and the above
  convergence is uniform in a complex neighborhood of $0^r$. Then, for every $k \in
  \mathbb{N}$, the $k$-th moment of the random measure $N^{- \theta} \sum_{i =
  1}^N \delta \left( N^{- 1} \lambda_i (A) \right)$ converges in
  probability to $\dfrac{\Psi^{(k)} (0)}{(k - 1)!}$, as $N \rightarrow \infty$.
\end{theorem}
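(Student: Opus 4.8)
The plan is to recognize that the $k$-th moment of the random measure $N^{-\theta}\sum_i\delta(N^{-1}\lambda_i(A))$ is exactly $N^{-\theta-k}\,\mathrm{Tr}(A^k)=N^{-\theta-k}\sum_i\lambda_i(A)^k$, to extract $\mathrm{Tr}(A^k)$ from the derivatives at the origin of $\log\mathbb{E}[HC]$, and to match these against $\Psi^{(k)}(0)$ via \eqref{intro-manytoomany}. The starting point is that specializing $r$ variables and setting the rest to zero gives $HC(x_1,\ldots,x_r,0^{N-r};\lambda)=\mathbb{E}_U[\exp(\sum_{j=1}^r x_j\eta_j)]$, where $\eta_j:=(U^\ast A U)_{jj}=\sum_i\lambda_i(A)|U_{ij}|^2$ and $U$ is Haar on $U(N)$. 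Thus $\log\mathbb{E}[HC(x_1,\ldots,x_r,0^{N-r};\lambda)]$ is the joint cumulant generating function of $(\eta_1,\ldots,\eta_r)$ under the joint law of $A$ and $U$, and its mixed derivative $\partial_{x_1}^{a_1}\cdots\partial_{x_r}^{a_r}$ at $0$ is the joint cumulant $\kappa_{a_1,\ldots,a_r}(\eta_1,\ldots,\eta_r)$. Since $N^{-\theta}\log\mathbb{E}[HC]$ converges to $\sum_j\Psi(x_j)$ uniformly on a complex polydisc around $0^r$, Cauchy's estimates upgrade this to convergence of all derivatives; the limit being separated in the variables forces $N^{-\theta}\kappa_{a,b}(\eta_1,\eta_2)\to0$ for $a,b\ge1$, while $N^{-\theta}\kappa_k(\eta_1)\to\Psi^{(k)}(0)$.

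The heart of the matter is a conditional (deterministic-$\lambda$) computation of these cumulants. Conditionally on $A$, the vector $(|U_{i1}|^2)_i$ is Dirichlet$(1,\ldots,1)$-distributed, so $\eta_1=\sum_i\lambda_i\beta_i$ is a Dirichlet-weighted sum whose cumulants are explicit symmetric functions of $\lambda$ with known $N$-dependent coefficients. From the Dirichlet moment formula $\mathbb{E}[\beta_1^{a_1}\cdots\beta_N^{a_N}]=\tfrac{(N-1)!\,\prod_i a_i!}{(N-1+\sum_i a_i)!}$ I would show that $\kappa_k(\eta_1\mid A)=\frac{(k-1)!}{N^k}p_k(\lambda)+(\text{products of lower power sums})$, with $p_k(\lambda)=\mathrm{Tr}(A^k)$. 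Under the intermediate scaling $p_j\sim N^{\theta+j}$ the leading term is of order $N^\theta$, while a product $\prod_s p_{b_s}$ arising from a $t$-block partition of $\{1,\ldots,k\}$ is, after the cumulant cancellations, of order $N^\theta\cdot N^{(t-1)(\theta-1)}$, which is $o(N^\theta)$ precisely because $\theta<1$ and $t\ge2$. This is the only place where $0<\theta<1$ is essential: it collapses the full free-cumulant combinatorics of the Law of Large Numbers regime \eqref{eq:Intro-free-conv} to the single linear term $p_k$. A parallel Weingarten estimate shows that the deterministic cross cumulants $\kappa_{a,b}(\eta_1,\eta_2\mid A)$ with $a,b\ge1$ are $o(N^\theta)$ as well, since correlations between distinct columns of a Haar unitary carry extra inverse powers of $N$.

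With these conditional estimates the law of total cumulance converts statements about full cumulants into statements about moments of $p_k(\lambda)$ over the randomness of $A$. For the first moment, $\kappa_k(\eta_1)=\mathbb{E}_A[\kappa_k(\eta_1\mid A)]+(\text{cumulants over }A\text{ of the }\kappa_j(\eta_1\mid A))$, and the correction terms are built from $\mathrm{Cov}_A(p_i,p_j)/N^{i+j}$; once the variance bound below is available these are $o(N^\theta)$, so $N^{-\theta}\kappa_k(\eta_1)=\frac{(k-1)!}{N^{\theta+k}}\mathbb{E}_A[p_k]+o(1)$, and comparing with $N^{-\theta}\kappa_k(\eta_1)\to\Psi^{(k)}(0)$ gives $\mathbb{E}[N^{-\theta-k}\mathrm{Tr}(A^k)]\to\Psi^{(k)}(0)/(k-1)!$. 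For the variance I apply total cumulance to the $r=2$ mixed cumulant: its leading contribution is $\mathrm{Cov}_A(\kappa_k(\eta_1\mid A),\kappa_k(\eta_2\mid A))=\frac{((k-1)!)^2}{N^{2k}}\mathrm{Var}_A(p_k)$ plus lower-order terms, while the deterministic piece $\mathbb{E}_A[\kappa_{k,k}(\eta_1,\eta_2\mid A)]$ is $o(N^\theta)$ by the Weingarten estimate. Since the hypothesis forces $N^{-\theta}\kappa_{k,k}(\eta_1,\eta_2)\to0$, this yields $\mathrm{Var}_A(p_k)=o(N^{2k+\theta})$, hence $\mathrm{Var}(N^{-\theta-k}\mathrm{Tr}(A^k))=N^{-2\theta-2k}\mathrm{Var}_A(p_k)=o(N^{-\theta})\to0$; convergence in probability then follows from Chebyshev's inequality together with the first-moment limit.

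In practice I would not recompute these expansions from scratch but reuse the exact combinatorial formulas for $\mathbb{E}[p_k]$ and $\mathbb{E}[p_{k_1}p_{k_2}]$ in terms of the derivatives of $\log\mathbb{E}[HC]$ established in the detailed treatment of the Law of Large Numbers in Section \ref{sec:previous-result}, re-examining the order of magnitude of each term under the $N^\theta$ normalization. The main obstacle is exactly this bookkeeping: one must verify that every monomial other than the linear $p_k$ term is genuinely $o(N^\theta)$, uniformly, and in particular control the deterministic cross cumulants $\kappa_{a,b}(\eta_1,\eta_2\mid A)$ for general $a,b$, which is the technical engine behind the vanishing of the variance. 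All of these smallness statements hinge on the single inequality $\theta<1$, and the argument degenerates precisely at $\theta=1$, where the surviving products of power sums reproduce the R-transform picture of \eqref{eq:Intro-free-conv}.
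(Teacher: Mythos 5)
Your route is genuinely different from the paper's. The paper never conditions on $A$ or touches the Haar measure directly: it applies the differential operator $\mathcal{D}_k$ of \eqref{gamisi} to $f_N=\mathbb{E}[H\,C]$, uses Proposition \ref{prop:1} to identify $\mathcal{D}_k f_N|_{x=0}$ with $\mathbb{E}[\tmop{Tr}(A^k)]$, writes $f_N=\exp\left(N^{\theta}\cdot\frac{1}{N^{\theta}}\log f_N\right)$, and tracks which term of the Leibniz/chain-rule expansion carries the largest power of $N$; the exponent $(k-m)\theta+m+1$ is maximized (at $N^{\theta+k}$) exactly by the single summand $m=k-1$, $l_1=1$, which produces $\Psi^{(k)}(0)/(k-1)!$, and concentration comes from the same analysis applied to $\mathcal{D}_k^2 f_N$. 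You instead read $H\,C(x_1,\dots,x_r,0^{N-r};\lambda)$ as the joint moment generating function of the diagonal entries $\eta_j=(U^{\ast}AU)_{jj}$, so that the hypothesis \eqref{intro-manytoomany} becomes a statement about joint cumulants of $(\eta_1,\dots,\eta_r)$, and you propagate this to $\tmop{Tr}(A^k)$ via Dirichlet/Weingarten formulas for conditional cumulants plus the law of total cumulance. What your approach buys is a transparent probabilistic explanation of why $\theta<1$ kills everything except the linear term $p_k$ (each extra block of power sums costs a factor $N^{(\theta-1)}$), and an elegant derivation of the variance bound from the vanishing of the mixed cumulant $\kappa_{k,k}(\eta_1,\eta_2)$; what the paper's approach buys is that every quantity in its expansion is, from the start, a derivative of $\frac{1}{N^{\theta}}\log f_N$ at $0$ and hence directly controlled by the hypothesis.

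The one place where your argument, as written, is not yet a proof is the order-of-magnitude bookkeeping you yourself flag, and it is worth naming why it is delicate: the estimate ``under the intermediate scaling $p_j\sim N^{\theta+j}$, a $t$-block product is $O(N^{\theta+(t-1)(\theta-1)})$'' presupposes growth bounds on the joint moments $\mathbb{E}_A[\,p_{b_1}\cdots p_{b_t}]$ that are themselves outputs of the theorem, so the moment--cumulant inversion is circular unless organized as a strong induction (first moments of $p_1,\dots,p_{k-1}$ and their pairwise covariances before $\mathbb{E}[p_k]$, and all of these before $\mathrm{Var}(p_k)$; note also that the law of total cumulance for $\kappa_{k,k}$ contains, besides the two terms you isolate, higher $A$-cumulants of conditional cumulants and covariances involving the conditional cross-cumulants $\kappa_{a,b}(\eta_1,\eta_2\mid A)$, all of which need a priori bounds). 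The hypothesis does supply enough information to close this induction, since it controls all joint cumulants of $(\eta_1,\dots,\eta_r)$ for every fixed $r$ and the inversion is triangular in the total degree, and the required Weingarten estimates are standard; but these steps would have to be carried out explicitly, whereas the paper's operator expansion avoids the issue entirely by never inverting anything. With that induction supplied (or by importing the exact expansions of Section \ref{sec:previous-result} as you suggest), your argument reaches the same conclusion as Theorem \ref{HALFGODHALFSOUVLAKI}.
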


This theorem shows that the random empirical measure $N^{- \theta} \sum_{i =
  1}^N \delta \left( N^{- 1} \lambda_i (A) \right)$ converges in the sense of moments also in this new limit regime, which interpolates between $\theta=1$ case \eqref{eq:Intro-free-conv} and $\theta=0$ case \eqref{eq:Intro-erg-conv}. The asymptotic behavior of a Harish-Chandra transform encoded by function $\Psi (x)$ can be translated into the information about the limit of the empirical measure: As visible from the statement of the theorem, the moment generating function of the limiting measure is $\Psi' (x)$.
  
It is interesting to note that this intermediate regime shares properties with both of the $\theta=0$ and $\theta=1$ cases. Similarly to the $\theta=1$ case, the limit (in terms of moments) of $N^{- \theta} \sum_{i=1}^N \delta \left( N^{- 1} \lambda_i (A) \right)$ can be essentially an arbitrary probability measure. On the other hand, this measure is related to the limit of the Harish-Chandra transform in the same way as in the $\theta=0$ case. 

We prove two more results in Section \ref{sec:interm}. In Theorem \ref{BEBT!} we prove the implication \eqref{eq:Intro-erg-conv} for \textit{random} eigenvalues $\{\lambda_i (A)\}$ (in \cite{OV} only deterministic ones were considered). This provides a new proof of \cite[Theorem 4.1]{OV}, and can potentially lead to new applications in such a limit regime, which is the most natural one from the point of view of asymptotic representation theory.

Also, for the sake of completeness, in Theorem \ref{Propolemiko} we study the behavior of eigenvalues in the case if the value of $\theta$ in \eqref{intro-manytoomany} is greater than 1. In such a situation, we show that the limit becomes degenerate. 

\subsection*{Random matrices: Infinitesimal freeness}

Equation \eqref{eq:Intro-free-conv} implies the Law of Large Numbers for the empirical (random) measure:
$$
\lim_{N \to \infty} \frac{1}{N} \sum_{i=1}^N \delta \left( \frac{\lambda_i}{N} \right) = \mu,
$$
where $\mu$ is a deterministic probability measure. There are (at least) two natural sources of a more detailed stochastic information about the empirical measure in this setup. One of them is the Central Limit Theorem (=CLT); it was studied (with the use of more detailed assumptions on asymtotics of the Harish-Chandra transform) in \cite{BG2}, \cite{BK}, \cite{BG3}, \cite{GS}, \cite{H}, \cite{BL}, among others. The Central Limit Theorem in all of these papers is studied on the scale $\frac{1}{N}$ (if we assume that the Law of Large Numbers is on a constant scale), since most of the main applications in random matrix theory, random tilings, and asymptotic representation theory have fluctuations of such size. 

However, there is another contribution to this scale: The second leading term in the asymptotics of the expected value of observables of the measure. We will often refer to it as a \textit{correction} (to the Law of Large Numbers). Since it lives on the same scale, one can argue that it is of comparable to CLT importance for the stochastic models under study. For example, it is known that this is the source of \textit{outliers} in perturbed random matrix ensembles. From the free probability side, this scaling is studied in the framework of the \textit{infinitesimal freeness}, see, e.g., \cite{Au}, \cite{BS}, \cite{BGN}, \cite{FN}, \cite{Min}, \cite{Sh}. 

In the current paper, we establish a general result which allows to obtain the information about such a scaling from the asymptotics of a Harish-Chandra transform. 

\begin{theorem} (Theorem \ref{Einaiiagapi})
  \label{th:intro-Einaiiagapi}Let $A$ be a random Hermitian matrix of size $N$. Assume that for every finite $r$ one has
  \begin{equation}
    \lim_{N \rightarrow \infty} N \left( \frac{1}{N} \log \mathbb{E}[H \text{}
    C (x_1, \ldots, x_r, 0^{N - r} ; \lambda_1 (A), \ldots, \lambda_N (A))] -
    \sum_{i = 1}^r \Psi (x_i) \right) = \sum_{i = 1}^r \Phi (x_i), \label{intro-ola}
  \end{equation}
  where $\Psi, \Phi$ are smooth functions in a (complex) neighborhood of $0$ and the
  above convergence is uniform in a (complex) neighborhood of $0^r$. Then one has the following     limit:
  \begin{equation}
    \lim_{N \rightarrow \infty} N \left( \frac{1}{N^{k + 1}} \mathbb{E} \left[
    \sum_{i = 1}^N \lambda_i^k (A) \right] -\tmmathbf{\mu}_k \right)
    =\tmmathbf{\mu}'_k, \label{intro-rin}
  \end{equation}
  where 
  \begin{equation}
    \tmmathbf{\mu}_n = \sum_{m = 0}^{n - 1} \frac{n!}{m! (m + 1) ! (n - m) !}
    \frac{\mathd^m}{\mathd x^m} ((\Psi' (x))^{n - m}) \longdownminus_{x =
    0}, \label{intro-topg}
  \end{equation}
  \begin{equation}
    \tmmathbf{\mu}'_k \assign \sum_{m = 0}^{k - 1} \frac{k!}{m! (m + 1) !
    (k - m - 1) !}  \left. \frac{\mathd^m}{\mathd x^m} ((\Psi' (x))^{k - m
    - 1} \Phi' (x)) \right|_{x = 0} . \label{intro-G}
  \end{equation}
\end{theorem}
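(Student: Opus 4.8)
The plan is to push the differential-operator moment-extraction scheme behind the Law of Large Numbers of Section \ref{sec:previous-result} one order further in $1/N$. Writing $F_N(x_1,\ldots,x_k)\assign\mathbb{E}[HC(x_1,\ldots,x_k,0^{N-k};\lambda_1(A),\ldots,\lambda_N(A))]$ and $G_N\assign\log F_N$ (so that $F_N(0)=1$), the object I want to isolate from the proof in Section \ref{sec:previous-result} is an exact identity of the shape
\[
\frac{1}{N^{k+1}}\mathbb{E}\Big[\sum_i\lambda_i^k(A)\Big] = P_k\big(g_1^{(N)},\ldots,g_k^{(N)}\big) + \mathcal R_N^{(k)}, \qquad g_j^{(N)}\assign\frac1N\frac{\partial^j}{\partial x_1^j}G_N(0),
\]
where $P_k$ is a fixed, $N$-independent polynomial in the normalized \emph{pure} derivatives $g_j^{(N)}$, while the remainder $\mathcal R_N^{(k)}$ collects all terms carrying at least one genuinely mixed derivative of $G_N$ at $0$ (supported on two or more variables). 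Here $P_k$ is forced to be the ``moment functional'': evaluated on the derivatives of any one-variable function $f$ it must reproduce the right-hand side of \eqref{intro-topg} with $\Psi$ replaced by $f$, so that $P_k(\Psi'(0),\ldots,\Psi^{(k)}(0))=\tmmathbf{\mu}_k$. That only pure-derivative data can survive at the orders $O(1)$ and $O(1/N)$ is dictated in advance by the fact that $\tmmathbf{\mu}_k$ depends on $\Psi$ alone and $\tmmathbf{\mu}'_k$ on $\Psi,\Phi$ alone, both one-variable functions.

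Next I would convert the hypothesis \eqref{intro-ola} (taken with $r=k$) into asymptotics for the individual Taylor coefficients of $G_N$ at the origin. Since the convergence there is uniform on a polydisc about $0^k$ and all functions are analytic, Cauchy's integral formula for derivatives upgrades it to termwise convergence of every coefficient: the pure derivatives satisfy $g_j^{(N)}=\Psi^{(j)}(0)+\frac1N\Phi^{(j)}(0)+o(1/N)$, while every mixed derivative of $G_N$ at $0$ is $o(1)$. The latter is the structural heart of the matter: the main part $N\sum_i\Psi(x_i)+\sum_i\Phi(x_i)$ in \eqref{intro-ola} is separable, so all its mixed derivatives vanish identically, and only the mixed derivative of the uniformly small remainder survives — which is $o(1)$ by the same Cauchy estimate.

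With these inputs I substitute into the identity above. Because $P_k$ is a fixed polynomial, a first-order Taylor expansion in $1/N$ gives
\[
P_k(g^{(N)}) = \tmmathbf{\mu}_k + \frac1N\sum_{j=1}^k\frac{\partial P_k}{\partial g_j}\Big|_{g=\Psi\text{-data}}\Phi^{(j)}(0)+o(1/N).
\]
The displayed $1/N$-coefficient is exactly the first variation $\frac{\mathd}{\mathd\epsilon}\big|_{\epsilon=0}\tmmathbf{\mu}_k[\Psi+\epsilon\Phi]$, and differentiating \eqref{intro-topg} directly — using $\frac{\mathd}{\mathd\epsilon}\big|_0(\Psi'+\epsilon\Phi')^{k-m}=(k-m)(\Psi')^{k-m-1}\Phi'$ and $\frac{k-m}{(k-m)!}=\frac{1}{(k-m-1)!}$ — identifies it with $\tmmathbf{\mu}'_k$ of \eqref{intro-G}. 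Thus the conceptual punchline is that $\tmmathbf{\mu}'_k$ is simply the derivative of the Law-of-Large-Numbers moment $\tmmathbf{\mu}_k$ in the direction $\Phi$, the correction entering only through the shift $\Psi\mapsto\Psi+\frac1N\Phi$ of the normalized derivative data. Multiplying by $N$ and letting $N\to\infty$ then produces \eqref{intro-rin}, provided $N\mathcal R_N^{(k)}\to0$.

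The main obstacle is precisely the control of $\mathcal R_N^{(k)}$ at this refined order. For the Law of Large Numbers it suffices that $\mathcal R_N^{(k)}=o(1)$, whereas here I must show $\mathcal R_N^{(k)}=o(1/N)$, so that mixed derivatives do not contaminate the correction. This demands sharpening the error bookkeeping of Section \ref{sec:previous-result} and verifying, term by term, that each summand of $\mathcal R_N^{(k)}$ — carrying at least one mixed derivative, which is only $o(1)$ against the $O(N)$ size of a pure derivative, set against the explicit $N$-dependent rational coefficients of the extraction operator — is genuinely $o(1/N)$. The case $k=2$ is a reliable guide: there the extraction reads
\[
\frac{1}{N^{3}}\mathbb{E}\Big[\sum_i\lambda_i^2(A)\Big] = g_2^{(N)} + \big(g_1^{(N)}\big)^2 - \frac{N-1}{N^2}\,\frac{\partial^2 G_N}{\partial x_1\partial x_2}(0),
\]
where the $N$-dependent coefficients of the two pure terms have already recombined into the fixed polynomial $P_2(g_1,g_2)=g_1^2+g_2$, and the lone mixed term carries a coefficient of size $1/N$ against a factor $o(1)$, hence is $o(1/N)$. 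Establishing that this recombination into an $N$-independent $P_k$ and the $o(1/N)$ suppression of every mixed term persist for all $k$ is the principal technical content of the proof.
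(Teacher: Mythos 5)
Your proposal is correct and follows essentially the same route as the paper's proof of Theorem \ref{Einaiiagapi}: apply $\mathcal{D}_k$, separate pure from mixed derivatives of $\log$ of the Harish--Chandra transform, upgrade the uniform hypothesis \eqref{intro-ola} to termwise derivative asymptotics, and read off $\tmmathbf{\mu}'_k$ as the first variation of $\tmmathbf{\mu}_k$ in the direction $\Phi$. The two points you flag as the principal remaining work are precisely what the paper's argument supplies — the $o(1/N)$ suppression of mixed terms follows from the separability of $N\sum_i\Psi(x_i)+\sum_i\Phi(x_i)$, and the recombination into an $N$-independent $P_k$ is the explicit cancellation (there phrased as $\lim_{N\to\infty}M_{1,N}(\Psi)=0$) between the $(\Psi')^{k-m-2}\Psi''$ family and the subleading terms of the binomial coefficients $\binom{N}{m+1}$, generalizing your $k=2$ check.
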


After proving this result in Section \ref{sec:inf-free}, we explain how the formula \eqref{intro-G} is connected with the known notions of infinitesimal freeness and cumulants, and provide several examples. 

\subsection*{Schur generating functions: Infinitesimal freeness}

In Section \ref{sec:Schur-free} we study a related setup of Schur generating functions instead of Harish-Chandra transforms. Let us briefly recall necessary definitions. 

A signature is an $N$-tuple of integers
$\mathlambda_1 \geq \mathlambda_2 \geq \cdots \geq \mathlambda_N$. We denote
by $\hat{U} (N)$ the set of all signatures.
Information about a signature $\lambda$ can be encoded by a discrete
probability measure on $\mathbb{R}$
\begin{equation}
  \tmmathbf{m}_N [\mathlambda] \assign \frac{1}{N} \sum_{i = 1}^N
  \delta \left( \frac{\mathlambda_i + N - i}{N} \right) . \label{intro-nono}
\end{equation}
For $\mathlambda \in \hat{U} (N)$ chosen at random with respect to a
probability measure $\varrho$ on $\hat{U} (N)$, we are interested in the
asymptotic behavior of the random measure (\ref{intro-nono}), which is a discrete analog of an empirical measure.

A \textit{Schur function} is defined by
\begin{equation}
  \tmmathbf{\chi}^{\mathlambda} (u_1, \ldots, u_N) := \frac{\det
  (u_i^{\mathlambda_j + N - j})}{\det \left( u_i^{N - j} \right) } = \frac{\det
  \left( u_i^{\mathlambda_j + N - j} \right) }{\prod_{i < j} (u_i - u_j)}, \qquad \mathlambda \in \hat{U} (N), \ u_i \in \mathbb C. \label{intro-Maroko} 
\end{equation}  
A \textit{Schur generating function} $S_{\varrho}^{U (N)}$ is a symmetric Laurent
  power series in $(u_1, \ldots, u_N)$ given by
  \[ S_{\varrho}^{U (N)} (u_1, \ldots, u_N) := \sum_{\mathlambda \in \hat{U}
     (N)} \varrho (\mathlambda) \frac{\tmmathbf{\chi}^{\mathlambda} (u_1,
     \ldots, u_N)}{\tmmathbf{\chi}^{\mathlambda} (1^N)},  \]
where $1^N$ stands for $\underbrace{(1,1,\dots,1)}_N$. It can be viewed as a discrete generalization of a Harish-Chandra transform, see, e.g., \cite[Proposition 1.5]{BG}.

In the current paper we establish a general result which allows to obtain the information about two leading terms of the expectation of the empirical measure \eqref{intro-nono} from the asymptotics of its Schur generating function. 

\begin{theorem} (Theorem \ref{Maria!!})
  \label{th:intro-Maria!!}Let $\varrho (N)$ be a sequence of probability measures on
  $\hat{U} (N)$ such that for every finite $r$ one has
  \begin{equation}
    \lim_{N \rightarrow \infty} N \left( \frac{1}{N} \log S_{\varrho (N)}^{U
    (N)} (u_1, \ldots, u_r, 1^{N - r}) - \sum_{i = 1}^r \Psi (u_i) \right) =
    \sum_{i = 1}^r \Phi (u_i), \label{intro-thatonegamiswtonKinezo}
  \end{equation}
  where $\Psi, \Phi$ are analytic functions in a complex neighborhood of $1$ and the
  above convergence is uniform in a complex neighborhood of $1^r$. Then for every $k
  \in \mathbb{N}$ we have
  \begin{equation}
    \lim_{N \rightarrow \infty} N \left( \frac{1}{N} \sum_{\mathlambda \in
    \hat{U} (N)} \varrho (N) [\mathlambda] \sum_{i = 1}^N \left(
    \frac{\mathlambda_i + N - i}{N} \right)^k -\mathbf{m}_k \right)
    =\mathbf{m}'_k, \label{intro-magapaskiegwxezw}
  \end{equation}
  where 
  \begin{equation}
    \mathbf{m}_k = \sum_{m = 0}^k
    \frac{k!}{m! (m + 1) ! (k - m) !}  \left. \frac{\mathd^m}{\mathd x^m} (x^k
    (\Psi' (x))^{k - m}) \right|_{x = 1}, \label{intro-mporw?}
  \end{equation}
  \begin{equation}
    \mathbf{m}'_k = \sum_{m = 0}^{k - 1} \frac{k!}{m! (m + 1) ! (k - m - 1) !}
    \left. \frac{\mathd^m}{\mathd x^m} \left( x^k \left( \Phi' (x) -
    \frac{1}{2 x} \right) (\Psi' (x))^{k - m - 1} \right) \right|_{x = 1} .
    \label{intro-poueinai}
  \end{equation}
\end{theorem}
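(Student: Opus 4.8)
The plan is to recover the power sums appearing in \eqref{intro-magapaskiegwxezw} by applying an explicit family of differential operators to the Schur generating function and evaluating at $u=1^N$, in parallel with the operator method used for the Harish-Chandra transform in Theorem \ref{Einaiiagapi}. Write $V(u)=\prod_{i<j}(u_i-u_j)$ for the Vandermonde and set $l_i:=\lambda_i+N-i$. Since $\det(u_i^{l_j})=V(u)\,\tmmathbf{\chi}^{\lambda}(u)$ is an alternating sum of monomials $\prod_i u_i^{l_{\sigma(i)}}$ over permutations $\sigma$, and $\sum_i(u_i\partial_{u_i})^k$ multiplies each such monomial by $\sum_i l_{\sigma(i)}^k=\sum_j l_j^k$, the operator
\[\mathcal{D}_k:=\frac{1}{V(u)}\left(\sum_{i=1}^N\left(u_i\frac{\partial}{\partial u_i}\right)^k\right)V(u)\]
acts diagonally on Schur functions: $\mathcal{D}_k\,\tmmathbf{\chi}^{\lambda}=\left(\sum_{i=1}^N l_i^k\right)\tmmathbf{\chi}^{\lambda}$. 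After the Vandermonde cancels, $\mathcal{D}_k S_{\varrho(N)}^{U(N)}$ is a regular symmetric series and $\left.\mathcal{D}_k S_{\varrho(N)}^{U(N)}\right|_{u=1^N}=\sum_{\lambda}\varrho(N)[\lambda]\sum_{i=1}^N l_i^k$. The task is then to expand this quantity to two orders in $N$: the term of order $N^{k+1}$ must reproduce $\mathbf{m}_k$ and the term of order $N^{k}$ must reproduce $\mathbf{m}'_k$, which is exactly the content of \eqref{intro-magapaskiegwxezw}.

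The difficulty is that $\mathcal{D}_k$ couples all $N$ variables through $V(u)$, while the hypothesis controls $\log S_{\varrho(N)}^{U(N)}$ only after all but $r$ variables are set to $1$; moreover the intermediate operator form carries $V(u)$ in the denominator, which vanishes at $1^N$. I would therefore represent the value at $1^N$ by a contour integral in which the coincident variables are first separated and then let tend to $1$, so that the computation is driven by the behaviour of $\log S_{\varrho(N)}^{U(N)}$ in a bounded number of variables. This is precisely the regime supplied by \eqref{intro-thatonegamiswtonKinezo}, which gives, uniformly near $1^r$, the expansion $\tfrac1N\log S_{\varrho(N)}^{U(N)}(u_1,\dots,u_r,1^{N-r})=\sum_i\Psi(u_i)+\tfrac1N\sum_i\Phi(u_i)+o(1/N)$. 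Substituting it and performing the residue (Lagrange-inversion-type) analysis, the leading order of $N^{-(k+1)}\left.\mathcal{D}_k S_{\varrho(N)}^{U(N)}\right|_{1^N}$ reproduces $\mathbf{m}_k$ as in \eqref{intro-mporw?}: the coefficients $\tfrac{k!}{m!(m+1)!(k-m)!}$ arise from the Lagrange expansion of the integrand, while the factors $x^k$ and the evaluation at $x=1$ reflect the multiplicative variable $u$ and the shift $l_i=\lambda_i+N-i$. This leading-order step is a degeneration of \cite[Theorem 5.1]{BG} and would follow the detailed argument of Section \ref{sec:previous-result}.

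The genuinely new work is the $1/N$ correction $\mathbf{m}'_k$ in \eqref{intro-poueinai}, which receives two contributions. The first is the explicit subleading term $\sum_i\Phi(u_i)$ of the hypothesis: propagating it through the same residue computation produces the $\Phi'(x)$ part of \eqref{intro-poueinai}, in direct analogy with the term $\tmmathbf{\mu}'_k$ of \eqref{intro-G} in the Harish-Chandra case. The second contribution has no continuous counterpart and is the main obstacle: it comes from the second-order expansion of the operator action itself. When the coincident variables are expanded around $1^N$, the operator $u_i\partial_{u_i}$ together with the logarithmic derivative of the Vandermonde $\sum_{j\neq i}u_i/(u_i-u_j)$ produces an extra contribution at order $1/N$, which I expect to be the origin of the summand $-\tfrac{1}{2x}$ standing next to $\Phi'(x)$ in \eqref{intro-poueinai}. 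Morally this is the first quantum correction separating the quantized (Schur) setting from the continuous (Harish-Chandra) one, and pinning down its coefficient exactly---rather than merely its order---requires uniform control of the error terms in the saddle-point expansion and careful bookkeeping of the Vandermonde; this is where I expect the bulk of the technical effort to lie.

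Finally I would assemble the two orders: the term of order $N^{k+1}$ gives \eqref{intro-mporw?} and the two $1/N$-contributions combine into \eqref{intro-poueinai}, after which the stated convergence of the moments of the measure \eqref{intro-nono} is immediate. The analyticity of $\Psi$ and $\Phi$ near $1$ and the uniformity of the convergence in \eqref{intro-thatonegamiswtonKinezo} are what justify interchanging the limit $N\to\infty$ with the differentiation and contour integration, and securing these interchanges uniformly is the technical backbone that the machinery of Section \ref{sec:previous-result} is designed to provide.
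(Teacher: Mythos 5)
Your plan coincides with the paper's actual proof of Theorem \ref{Maria!!}: the same operator $\mathcal{D}_k^{U(N)}$ with the eigenrelation of Proposition \ref{Gkofretakia}, the same two-order expansion $N^{k+1}m_{k,N}(\Psi)+N^k m_{k,N}(\Psi,\Phi)+o(N^k)$, and the same splitting of the correction into a $\Phi'$ piece (obtained exactly as in Theorem \ref{Einaiiagapi}, giving \eqref{iposxeseis}) and a purely combinatorial piece responsible for the $-\tfrac{1}{2x}$. The only presentational difference is that the paper separates the coincident variables by setting $u_i=1+i\varepsilon$ and invoking the symmetrization Lemma \ref{SnikfeatFy}, rather than a contour-integral/Lagrange-inversion device; both yield the same limits $\tfrac{1}{m!}\tfrac{\mathd^m}{\mathd x^m}(\cdot)\vert_{x=1}$.

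The one step you explicitly defer --- pinning down the coefficient of the quantum correction --- is where the real content of the proof sits, and your heuristic (``$u_i\partial_i$ interacting with the logarithmic derivative of the Vandermonde'') is correct in spirit but does not by itself produce the answer. Concretely, expanding $(u_i\partial_i)^k$ into the operators \eqref{Seskeftomaiotanxezw!}, only $n=k$ (coefficient $1$) and $n=k-1$ (coefficient $\tfrac{k(k-1)}{2}$) matter at order $N^k$, and one must track \emph{three} contributions: the $n=k-1$ term \eqref{Iagapisouvganeiapomesamousandiaria.}, and, from $n=k$, the two terms \eqref{WNIX} and \eqref{Kifmarokino} coming from $l_1+\cdots+l_{k-m}\in\{k-m,\,k-m-1\}$. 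The latter two are the exact analogues of the pair of sums that cancel identically in the Harish-Chandra case (where $\lim_{N\to\infty}M_{1,N}(\Psi)=0$ in the proof of Theorem \ref{Einaiiagapi}); here the factors $u_{b_0}^k$ and the extra $n=k-1$ contribution break that cancellation, and the three pieces sum to \eqref{boadcast}, i.e.\ to $-\tfrac12\sum_{m}\tfrac{k!}{m!(m+1)!(k-m-1)!}\tfrac{\mathd^m}{\mathd x^m}\bigl(x^{k-1}(\Psi'(x))^{k-m-1}\bigr)\big\vert_{x=1}$, which is precisely the $-\tfrac{1}{2x}$ inside \eqref{intro-poueinai}. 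So your outline is the right one, but to close it you must verify this three-term identity (a Leibniz-rule manipulation of the $\tfrac{\mathd^{m+1}}{\mathd x^{m+1}}$ term against the other two sums) rather than a single local computation; without it the coefficient $-\tfrac12$ is only conjectured.
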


This discrete setup of \textit{quantized free probability} is closely but non-trivially related to free probability, see \cite{BG}. We extend this connection to the level of infinitesimal freeness by introducing the infinitesimal quantized free cumulants in Theorem \ref{th:cumulants}, see also Remark \ref{rem:quant-class-free}.

It is known that several classes of discrete stochastic particle systems can be described with the help of Schur generating functions, see, e.g., \cite{BG}, \cite{BG2}, \cite{BK}, \cite{BL}. We provide two applications of Theorem \ref{th:intro-Maria!!} which can be of independent interest.  


\begin{figure}
\center
\includegraphics[height=9cm]{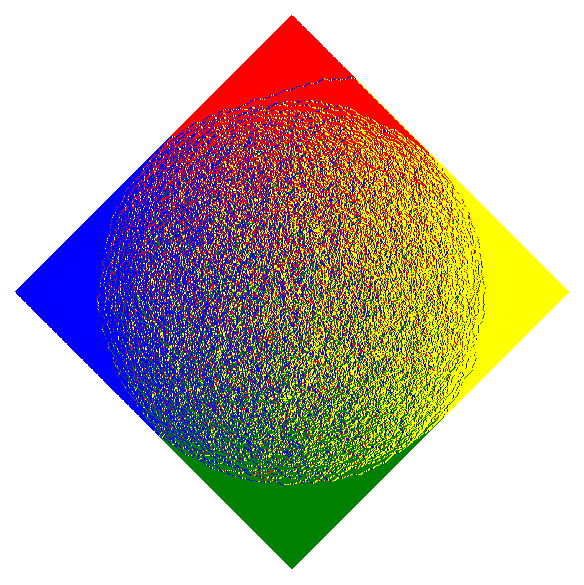}
\caption{Uniformly random domino tilings of Aztec diamond with a perturbation of weight 10 along the up-right edge. See Example \ref{ex:Aztec} for details.}
\label{fig:til}
\end{figure}


\subsection*{Examples in a discrete setup}

In Examples \ref{ex:disc-BBP} and \ref{ex:Aztec} we provide new results about two specific discrete systems. We formulate the results briefly here in the introduction; see the main part of the paper for more details. 

Let $\gamma>1$ and $\alpha>0$ be fixed real numbers. Let $\varrho_{dBBP} (N)$ be a probability measure on $\hat U(N)$ corresponding to the Schur generating function 
$$
S_{\varrho_{dBBP} (N)}^{U (N)} (u_1, \ldots, u_N) := \prod_{i=1}^N \exp \left( \gamma N ( u_i-1) \right) \frac{1}{1-\alpha (u_i-1)}.
$$
It is known that such a probability measure exists, since the expression above comes from an extreme character of the infinite-dimensional unitary group (see Section \ref{sec:Schur-free}). For any $k \in \mathbb{N}$, in Example \ref{ex:disc-BBP} we establish the following asymptotic expansion
\begin{equation}
\label{intro-eq:asym-dBBP}
\frac{1}{N} \sum_{\mathlambda \in
    \hat{U} (N)} \varrho_{dBBP} (N) [\mathlambda] \sum_{i = 1}^N \left(
    \frac{\mathlambda_i + N - i}{N} \right)^k 
    = \mathbf{m}_{k; dBBP} + \frac{1}{N} \mathbf{m}'_{k; dBBP} + o \left( \frac{1}{N} \right), \qquad N \to \infty,
\end{equation}
where 
$$
\mathbf{m}_{k; dBBP} = \int_{\mathbb{R}} t^k d \mu_{dBBP}, \qquad \mathbf{m}'_{k; dBBP} = \int_{\mathbb{R}} t^k d \mu'_{dBBP},
$$
while a probability measure $\mu_{dBBP}$ and a signed measure $\mu'_{dBBP}$ are given by

\[ d \mu_{dBBP} = \frac{1}{\mathpi} \mathbf{1}_{t \in [\gamma + 1 - 2 \sqrt{\gamma}, \gamma + 1
     + 2 \sqrt{\gamma}]} \arccos \left( \frac{t - 1 + \gamma}{2
     \sqrt{\gamma t}} \right) dt, 
 \]

\begin{multline}
\label{intro-eq:BBP-answer}
d \mu'_{dBBP} = \delta \left( \alpha + 1 + \gamma + \frac{\gamma}{\alpha} \right) \mathbf{1}_{\alpha > \sqrt{\gamma}} + \frac12 \delta \left( \alpha + 1 + \gamma + \frac{\gamma}{\alpha} \right) \mathbf{1}_{\alpha = \sqrt{\gamma}} \\ + \left( \frac{ \alpha ( t - 2 \alpha - \gamma - 1)}{\gamma + \alpha^2 +
     \alpha \gamma + \alpha - \alpha t} - \frac{t + 1 - \gamma}{2t} \right) \frac{1}{2 \pi \sqrt{(\gamma + 1 + 2
     \sqrt{\gamma} - t) (t - \gamma - 1 + 2 \sqrt{\gamma})}} \mathbf{1}_{ t \in [(\sqrt{\gamma}-1)^2;(\sqrt{\gamma}+1)^2]} dt. 
\end{multline}

The new result is the second term in the right-hand side of \eqref{intro-eq:asym-dBBP}, given by an explicit formula \eqref{intro-eq:BBP-answer}; the first term is essentially known from \cite{Bia2}, see also \cite{BBO}. An important feature of the result is the presence of delta measures in \eqref{intro-eq:BBP-answer} -- in fact, this can be viewed as a discrete analog of a well-known Baik-Ben Arous-Peche phase transition for random matrices. It appears in the following setup. Consider the sum of a GUE matrix and a rank-one matrix. Then, depending on the value of a parameter in the rank-one matrix, the outlier in the empirical measure might or might not appear (we recall the exact details in Example \ref{ex:one-pertub}).

The probability measure $\varrho_{dBBP}$ can be thought of in a similar way -- the $\gamma N$ parameter comes from the one-sided Plancherel character, which is a discrete version of a GUE matrix, while $\alpha$ parameter plays the role of a rank-one matrix. The product of these two characters corresponds to the tensor product of representations, which is a discrete analog of the summation of matrices. We see that depending on whether $\alpha > \sqrt{\gamma}$, the outlier does or does not appear in the model. 

Another example that we consider is a deformation of a model of uniformly random domino tilings of the Aztec diamond. It is well-known (we refer to \cite[Section 2]{BK} for a detailed discussion) that the domino tilings are in bijection with sequences of signatures $(\lambda^1, \mu^1, \dots, \lambda^M)$ satisfying the following \textit{interlacing conditions}:
$$
\varnothing \prec \lambda^1 \succ_v \mu^1 \prec \lambda^2 \succ_v \mu^2 \prec \dots \prec \lambda^M \succ_v \varnothing.
$$ 
Therefore, the uniform measure on the domino tilings of the Aztec diamond is equivalent to choosing uniformly at random such a sequence; also, geometric properties of the tilings are encoded by the signatures. Let $A>1$ be a fixed real number. We consider a deformation of the model by attaching the probability $Z A^{|\lambda^M|}$ to each sequence, where $Z$ is the normalizing factor, and $|\lambda^M|$ is the sum of all coordinates of $\lambda^M$. Geometrically, this means that we consider dominoes adjacent to the up-right edge of the Aztec diamond (drawn as in Figure \ref{fig:til}), count how many of them are horizontal (they are colored in yellow in Figure \ref{fig:til}), and the probability of a domino tiling is proportional to $A$ to the power of this amount. 

Let $\varrho_{A} (N)$ be a distribution of $\lambda^N$ under this measure, where $1 \le N \le M$. We assume that $N = \alpha M$, where $\frac12 < \alpha <1$ is a fixed real number. For any $k \in \mathbb{N}$, we obtain in Example \ref{ex:Aztec} the asymptotic expansion
\begin{equation}
\label{intro-eq:asym-Aztec}
\frac{1}{N} \sum_{\mathlambda \in
    \hat{U} (N)} \varrho_{A} (N) [\mathlambda] \sum_{i = 1}^N \left(
    \frac{\mathlambda_i + N - i}{N} \right)^k 
    = \mathbf{m}_{k; A} + \frac{1}{N} \mathbf{m}'_{k; A} + o \left( \frac{1}{N} \right), \qquad N \to \infty,
\end{equation}
where 
$$
\mathbf{m}_{k; A} = \int_{\mathbb{R}} t^k d \mu_{A}, \qquad \mathbf{m}'_{k; A} = \int_{\mathbb{R}} t^k d \mu'_{A},
$$
and
\begin{equation}
\label{intro-eq:Aztec-LLN}
d \mu_{A} = \mathbf{1}_{(1-2\alpha t)^2 \le 1 - (1-2 \alpha)^2 } \frac{1}{\pi} \arccos \left( \frac{1 - 2 \alpha}{\sqrt{1 - (2 \alpha t -1)^2}} \right) dt + \mathbf{1}_{1 \ge (1-2\alpha t)^2 > 1 - (1-2 \alpha)^2 } dt,
\end{equation}

\begin{multline}
\label{intro-eq:Aztec-correction}
d \mu'_{A} = - \mathbf{1}_{\alpha > \frac{(A+1)^2}{2(A^2+1)}} \delta \left( \frac{-1 + 2\alpha A -A}{\alpha ( A^2 -1 )} \right) - \frac12 \mathbf{1}_{\alpha = \frac{(A+1)^2}{2(A^2+1)}} \delta \left( \frac{-1 + 2\alpha A -A}{\alpha ( A^2 -1 )} \right) \\ + \left( \alpha+\frac{-2 \alpha+1}{4t}-\frac{\alpha (2 \alpha-1)}{4( \alpha t-1)}+\frac{\alpha (2 A^2 \alpha-A^2-2A+2 \alpha-1)}{2(-\alpha t +A^2 \alpha t-2 \alpha A+1+A)} \right) \frac{\mathbf{1}_{1-(2 \alpha-1)^2-(2 \alpha t-1)^2 >0}}{\pi \sqrt{1-(2 \alpha-1)^2-(2 \alpha t-1)^2}} dt.
\end{multline}

The new result is the second term in the right-hand side of \eqref{intro-eq:asym-Aztec}, given by an explicit formula \eqref{intro-eq:Aztec-correction}. The first term is given by the arctic circle theorem, see \cite{JPS}, \cite{CKP}, \cite{KO}, \cite{J}. 

Let us explain how this formula corresponds to geometric properties of a tiling. Since we scale in Theorem \ref{th:intro-Maria!!} by $N$, while geometrically we scale by $M$ in both directions (so that the Aztec diamond remains a square after the rescaling), we need to introduce variable $\tilde y = \alpha y$ for a horizontal direction. In coordinates $( \tilde y, \alpha)$ the last summand of the correction measure \eqref{intro-eq:Aztec-correction} provides the density of the correction inside the arctic circle $1-(2 \alpha-1)^2-(2 \tilde y-1)^2=0$. Next, we see that the outlier is present if and only if $\alpha > \frac{(A+1)^2}{2(A^2+1)}$ at point $\tilde y = \frac{-1 + 2\alpha A -A}{( A^2 -1 )}$. Therefore, the outliers form a segment on a line $\frac{1-A^2}{2A} \alpha + \tilde y = \frac{A+1}{2A}$, which is tangent to the arctic circle and intersects with it at the point $ \left( \frac{1}{A^2+1}, \frac{(A+1)^2}{2(A^2+1)} \right)$. Thus, the result matches the effect visible in Figure \ref{fig:til}.

It is interesting to note that the perturbation described above is similar to the one considered in the Tangent Method, see \cite{CS}, \cite{A}, \cite{DG1}, \cite{DG2}, \cite{KDR}, \cite{R}. In this method, one fixes positions of dominoes along the up-right edge of the Aztec diamond and considers uniformly random domino tilings of the remaining domain. As shown by the computation above, our introduction of a parameter $A$ leads to the same asymptotic behavior of outliers and the dominoes along the up-right edge as prescribed by the Tangent Method.
However, in principle these are two distinct perturbations; for example, it is not obvious whether the correction measures inside the arctic circle should also coincide for these two perturbations. 


\subsection*{Higher order freeness}

Given the connection of the Harish-Chandra transform and infinitesimal freeness from Theorem \ref{th:intro-Einaiiagapi}, it is natural to attempt to access further information about the averaged empirical measure. 
Our further results allow to extract next terms of its asymptotic expansion from the information about the Harish-Chandra transform. We establish two results in this direction for two slightly different asymptotic regimes.

The main result of Section \ref{sec:second-order} is the following.

\begin{theorem} (Theorem \ref{th:sec-order-main})
\label{intro-th:sec-order-main}
  Let $A$ be a random Hermitian matrix of size $N$ and assume that for every
  finite $r$ one has
  \begin{equation} 
  \lim_{N \rightarrow \infty} N^2 \left( \frac{1}{N} \log \mathbb{E}[H
     \text{} C (x_1, \ldots, x_r, 0^{N - r} ; \lambda_1 (A), \ldots, \lambda_N
     (A))] - \sum_{i = 1}^r \Psi (x_i) - \frac{1}{N} \sum_{i = 1}^r \Phi (x_i)
     \right) = \sum_{i = 1}^r \Tau (x_i), \label{intro-audikos}
     \end{equation} 
  where $\Psi, \Phi, \Tau$ are smooth functions in a complex neighborhood of $0$ and
  the above convergence is uniform in a complex neighborhood of $0^r$. Then the $1 /
  N^2$ correction of the average empirical distribution of $N^{- 1} A$
  is given by
  \[ \lim_{N \rightarrow \infty} N^2 \left( \frac{1}{N^{k + 1}}
     \mathbb{E}[\tmop{Tr} (A^k)] -\tmmathbf{\mu}_k - \frac{1}{N}
     \tmmathbf{\mu}'_k \right) =\tmmathbf{\mu}''_k
     +\tmmathbf{\nu}''_k, \]
  where $(\tmmathbf{\mu}_k)_{k \in \mathbb{N}}, (\tmmathbf{\mu}'_k)_{k \in
  \mathbb{N}}$ are given by (\ref{intro-topg}),(\ref{intro-G}) respectively. The sequences
  $(\tmmathbf{\mu}''_k)_{k \in \mathbb{N}}, (\tmmathbf{\nu}''_k)_{k \in
  \mathbb{N}}$ are given by
  \[ \tmmathbf{\mu}''_k = \sum_{m = 0}^{k - 1} \frac{k!}{m! (m + 1) ! (k -
     m - 1) !}  \left. \frac{\mathd^m}{\mathd x^m} ((\Psi' (x))^{k - m - 1}
     \Tau' (x)) \right|_{x = 0} \]
  \begin{equation}
    \text{{\hspace{10em}}} + \frac{1}{2} \sum_{m = 0}^{k - 2} \frac{k!}{m! (m
    + 1) ! (k - m - 2) !}  \left. \frac{\mathd^m}{\mathd x^m} ((\Psi'
    (x))^{k - m - 2} (\Phi' (x))^2) \right|_{x = 0} \label{intro-Ss}
  \end{equation}
  and
  \[ \tmmathbf{\nu}''_k = \frac{1}{24} \sum_{m = 0}^{k - 3} \frac{k!}{m! (m
     + 1) ! (k - m - 3) !}  \left. \frac{\mathd^m}{\mathd x^m} ((\Psi'
     (x))^{k - m - 3} \Psi''' (x)) \right|_{x = 0} \]
  \[ \text{{\hspace{4em}}} + \frac{1}{12} \sum_{m = 0}^{k - 3} \frac{k!}{m! (m
     + 2) ! (k - m - 3) !}  \left. \frac{\mathd^m}{\mathd x^m} ((\Psi'
     (x))^{k - m - 3} \Psi''' (x)) \right|_{x = 0} \]
  \begin{equation}
    \text{{\hspace{10em}}} + \frac{1}{12} \sum_{m = 0}^{k - 4} \frac{k!}{m! (m
    + 2) ! (k - m - 4) !}  \left. \frac{\mathd^m}{\mathd x^m} ((\Psi'
    (x))^{k - m - 4} (\Psi'' (x))^2) \right|_{x = 0} . \label{intro-sagan}
  \end{equation}
\end{theorem}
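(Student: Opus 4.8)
The plan is to extend the proof of Theorem \ref{Einaiiagapi} by one further order in $1/N$, reusing the same moment-extraction machinery but carrying every expansion to the $1/N^2$ level. First I would recall the operator identity underlying the whole approach. Writing $G_N(x_1, \ldots, x_r) := \mathbb{E}[H\text{}C(x_1, \ldots, x_r, 0^{N-r}; \lambda_1(A), \ldots, \lambda_N(A))]$ and using the multivariate Bessel eigenfunction property, which follows from the Harish--Chandra--Itzykson--Zuber determinantal formula, one has $\left( \sum_i \partial_{x_i}^k \right) (\Delta(x)\, H\text{}C) = \left( \sum_j \lambda_j^k \right) \Delta(x)\, H\text{}C$, where $\Delta$ is the Vandermonde. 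This yields a differential operator $\mathcal{D}_k$, built from the Vandermonde-conjugated power sums, whose action on $G_N$ at (a regularization of) $x = 0$ returns $\frac{1}{N^{k+1}} \mathbb{E}[\tmop{Tr}(A^k)]$; since $H\text{}C(0; \lambda) = 1$, the normalization is clean, and the whole problem reduces to an asymptotic expansion of $\mathcal{D}_k G_N$ to order $1/N^2$.

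Next I would insert the hypothesis \eqref{intro-audikos}, written as $\frac{1}{N} \log G_N = \sum_i \Psi(x_i) + \frac{1}{N} \sum_i \Phi(x_i) + \frac{1}{N^2} \sum_i \Tau(x_i) + o(1/N^2)$ uniformly near $0^r$, so that $G_N = \exp\!\left( N \sum_i \Psi + \sum_i \Phi + \frac{1}{N} \sum_i \Tau + o(1/N) \right)$. Applying the operators and expanding by Fa\`a di Bruno produces, for each $k$, a sum over set partitions in which every block contributes a derivative of $\Psi$ at leading order, with the subleading corrections arising from two sources that must be tracked separately. The first source is analytic: replacing one factor of $N\Psi$ by $\Phi$ (yielding the terms linear in $\Tau'$, once the same substitution is made with $\Tau$ at the deeper order) or taking a cross-term between two $\Phi$-insertions (yielding the $(\Phi'(x))^2$ contribution). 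These assemble precisely into $\tmmathbf{\mu}''_k$ of \eqref{intro-Ss}, echoing the structure already seen one order lower in \eqref{intro-G}, with $\Tau'$ now playing the role that $\Phi'$ played there and a genuinely new quadratic term in $\Phi'$ appearing.

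The second source is geometric, and this is where I expect the real work to lie. The operator $\mathcal{D}_k$ is not the naive power sum: the Vandermonde conjugation contributes terms such as $(\partial_{x_1} \log \Delta) \partial_{x_1}^{k-1}$ with $\partial_{x_1} \log \Delta = \sum_{j=2}^r (x_1 - x_j)^{-1} + (N-r)/x_1$, whose explicit factor of $N$ in the degenerate geometry $x_{r+1} = \cdots = x_N = 0$ means that $\mathcal{D}_k$ carries its own $1/N$-expansion. To leading order this reproduces $\tmmathbf{\mu}_k$ of \eqref{intro-topg}, at order $1/N$ it feeds (together with the first source) into $\tmmathbf{\mu}'_k$, and at order $1/N^2$ it produces purely-$\Psi$ terms. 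These are exactly the $\tmmathbf{\nu}''_k$ of \eqref{intro-sagan}; the appearance of $\Psi'''$ and $(\Psi''(x))^2$ with the rational coefficients $\frac{1}{24}$ and $\frac{1}{12}$ is the signature of an Euler--Maclaurin / genus-one correction in the operator's symbol, reflecting the curvature of the action $\Psi$ rather than any external perturbation.

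The main obstacle is therefore the precise second-order expansion of $\mathcal{D}_k$ near the degenerate point, together with the verification that its apparent singularities $(x_1 - x_j)^{-1}$ and $N/x_1$ cancel against the vanishing of $\det(e^{x_i \lambda_j})$ so that $\mathcal{D}_k G_N$ is regular at $x = 0$, and that the surviving $\Psi$-only contributions organize into the two sums of \eqref{intro-sagan} with exactly the stated coefficients. I would handle this by carrying the computation from the proof of Theorem \ref{Einaiiagapi} one order further, keeping all $O(1/N^2)$ remainders uniform in the complex neighborhood of $0^r$ so that the interchange of the limit with the finite operator action is justified for each fixed $k$, and then reading off $\tmmathbf{\mu}''_k + \tmmathbf{\nu}''_k$ as the coefficient of $1/N^2$.
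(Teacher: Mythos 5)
Your overall strategy---push the moment-extraction machinery of Theorem \ref{Einaiiagapi} one order further and split the answer into a part driven by $\Phi,\Tau$ and a purely-$\Psi$ part---is the same as the paper's, and your account of $\tmmathbf{\mu}''_k$ (the $\Tau'$ substitution plus the $(\Phi')^2$ cross-term, obtained by Leibniz rule and induction on the products of derivatives of $\frac{1}{N}\log f_N$) matches the paper's computation of $\lim_{N} N^2\bigl(M_{0,N}-\tmmathbf{\mu}_k-\frac{1}{N}\tmmathbf{\mu}'_k\bigr)$.

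The gap is in your explanation of $\tmmathbf{\nu}''_k$. You attribute it entirely to a $1/N$-expansion of the operator $\mathcal{D}_k$ itself (``the Vandermonde conjugation'', ``the operator's symbol'', an ``Euler--Maclaurin correction''). In the paper's proof $\tmmathbf{\nu}''_k=\lim_{N} M_{2,N}(\Psi)$, and $M_{2,N}$ receives contributions from \emph{two} interlocking sources: (i) the Fa\`a di Bruno expansion of $\partial_i^{k-m}\exp\bigl(N\cdot\frac{1}{N}\log f_N\bigr)$, in which each replacement of a factor $N\,\partial_i\bigl(\frac{1}{N}\log f_N\bigr)$ by a second or third derivative of $\frac{1}{N}\log f_N$ costs one or two powers of $N$ and is precisely what injects $\Psi''$ and $\Psi'''$ into the answer; and (ii) the subleading coefficients of the polynomial $m!\binom{N}{m+1}$ in $N$ that counts the admissible index tuples $b_0,\dots,b_m$ in the symmetrized denominators. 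Neither source alone gives the right answer: at the previous order these two effects cancel exactly (this is why $\lim_{N} N\,M_{1,N}(\Psi)=0$ and why no analog of $\tmmathbf{\nu}''_k$ appears in Theorem \ref{Einaiiagapi}), and at the present order they combine into the three sums of \eqref{intro-sagan} with the coefficients $\frac{1}{24}$, $\frac{1}{12}$, $\frac{1}{12}$ only after adding the three cases $l_1+\cdots+l_{k-m}\in\{k-m,\,k-m-1,\,k-m-2\}$ together with the appropriate subleading binomial coefficients. A computation that expands only the operator's symbol would never produce the $\Psi'''$ or $(\Psi'')^2$ factors. Relatedly, your plan should make explicit that hypothesis \eqref{intro-audikos} is what forces $N\,M_{1,N}(\Psi)=o(1)$: the first-order coefficient in the expansion of $\frac{1}{N^{k+1}}\mathbb{E}[\tmop{Tr}(A^k)]$ must vanish at rate $o(1/N)$, not merely tend to $0$, for the stated $1/N^2$ correction to be $\tmmathbf{\mu}''_k+\tmmathbf{\nu}''_k$ and nothing more.
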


As visible from the result, new effects appear in the analysis of the third asymptotic term. Despite the fact that condition \eqref{intro-audikos} is similar to condition \eqref{intro-ola}, the structure of the answer is more complicated than in Theorem \ref{th:intro-Einaiiagapi}. As explained in Lemma \ref{lem:mu-doublePrime}, the term \eqref{intro-Ss} comes from the second order infinitesimal freeness. However, the term \eqref{intro-sagan} is more specific to the scaling under consideration, and does not have an analog for the first order correction. It would be interesting to establish a connection of explicit formulas \eqref{intro-Ss}, \eqref{intro-sagan} with the description for higher order free cumulants given in \cite{BCGLS}; we do not address this question.

The main result of Section \ref{sec:higher-order} is the following.

\begin{theorem} (Theorem \ref{fredi})
  \label{intro-fredi}
  Let $n$ be a fixed integer. Let $A$ be a random Hermitian matrix of size $N$ and assume
  that for every finite $r$ one has
  \begin{equation}
    \lim_{N \rightarrow \infty} N^{n \varepsilon} \left( \frac{1}{N} \log
    \mathbb{E}[H \text{} C (x_1, \ldots, x_r, 0^{N - r} ; \lambda_1 (A),
    \ldots, \lambda_n (A))] - \sum_{j = 0}^{n - 1} \sum_{i = 0}^r
    \frac{1}{N^{j \varepsilon}} \Psi_j (x_i) \right) = \sum_{i = 1}^r \Psi_n
    (x_i), \label{intro-100}
  \end{equation}
  where $\Psi_0, \ldots, \Psi_n$ are smooth functions in a complex neighborhood of
  $0$, the above convergence is uniform in a complex neighborhood of $0^r$, and $0 <
  \varepsilon < n^{- 1}$. Then the $1 / N^{n \varepsilon}$ correction of the
  average empirical distribution of $N^{- 1} A$ is given by
  \begin{equation}
    \lim_{N \rightarrow \infty} N^{n \varepsilon} \left( \frac{1}{N^{k + 1}}
    \mathbb{E}[\tmop{Tr} (A^k)] - \sum_{j = 0}^{n - 1} \frac{1}{N^{j
    \varepsilon}} \tmmathbf{\mu}^{(j)}_k \right) =\tmmathbf{\mu}_k^{(n)},
    \label{intro-PORDOKOFTIS}
  \end{equation}
  where for every $j \in \{0, \ldots, n\}$
  \[ \tmmathbf{\mu}^{(j)}_k \assign \sum_{b_1 + 2 b_2 + \cdots + j \text{} b_j
     = j} \sum_{m = 0}^{k - b_1 - \cdots - b_j} \frac{k!}{m! (m + 1) ! (k - m
     - b_1 - \cdots - b_j) !b_1 ! \ldots b_j !} \text{{\hspace{8em}}} \]
  \begin{equation}
    \text{{\hspace{14em}}} \times \left. \frac{\mathd^m}{\mathd x^m} ((\Psi'_0
    (x))^{k - m - b_1 - \cdots - b_j} (\Psi'_1 (x))^{b_1} \ldots (\Psi'_j
    (x))^{b_j}) \right|_{x = 0} . \label{intro-tavradiamouleipis}
  \end{equation}
\end{theorem}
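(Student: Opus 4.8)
The plan is to reduce the statement to the exact moment-extraction formula that underlies Theorems \ref{Einaiiagapi} and \ref{th:sec-order-main} and is established in Section \ref{sec:previous-result}. Write $\mathcal{F}_N(x) := \frac{1}{N}\log\mathbb{E}[H\text{}C(x,0^{N-1};\lambda_1(A),\ldots,\lambda_N(A))]$ for the one-variable restriction of the normalized log-Harish-Chandra transform; since $\frac{1}{N^{k+1}}\mathbb{E}[\tmop{Tr}(A^k)]$ is a single expected power sum of the empirical measure, only the single-variable ($r=1$) data of the hypothesis \eqref{intro-100} will enter. First I would record the decomposition
$$\frac{1}{N^{k+1}}\mathbb{E}[\tmop{Tr}(A^k)] = \mathcal{M}_k[\mathcal{F}_N] + R_{k,N}, \qquad \mathcal{M}_k[\mathcal{F}] := \sum_{m=0}^{k}\frac{k!}{m!(m+1)!(k-m)!}\left.\frac{\mathd^m}{\mathd x^m}\left[(\mathcal{F}'(x))^{k-m}\right]\right|_{x=0},$$
where $\mathcal{M}_k$ is the leading ``multinomial operator'' and $R_{k,N}$ collects the quantization (scaling-specific) corrections --- the same ones that generate the term $\tmmathbf{\nu}_k''$ of Theorem \ref{th:sec-order-main} --- each of which carries at least one extra factor of $1/N$. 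Using that the hypothesis bounds $\mathcal{F}_N$ and, via Cauchy's estimates on the complex neighborhood of $0$, all of its derivatives at $0$ uniformly in $N$, one obtains $R_{k,N}=O(1/N)$.

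The decisive observation is the role of the hypothesis $\varepsilon < n^{-1}$: it forces $n\varepsilon < 1$, hence $N^{-1} = o(N^{-n\varepsilon})$, so that $N^{n\varepsilon} R_{k,N} \to 0$. In other words, every quantization correction lives at scale $N^{-1}$ or finer and is therefore invisible at the resolution $N^{-n\varepsilon}$ we are resolving; the target exponents $j\varepsilon$ ($0\le j\le n$) are all strictly below $1$ and never collide with the integer exponents of $R_{k,N}$. This is exactly the feature that breaks down for Theorem \ref{th:sec-order-main}, where $\varepsilon=1$, $n=2$ violate $\varepsilon<n^{-1}$ and the $N^{-2}$ correction genuinely contributes $\tmmathbf{\nu}_k''$. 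Consequently, to the order under consideration it suffices to analyze $\mathcal{M}_k[\mathcal{F}_N]$.

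Next I would feed in the hypothesis. Transferring the uniform convergence in \eqref{intro-100} from $\mathcal{F}_N$ to its derivatives at $0$ --- again by Cauchy's integral formula, legitimate because the $\Psi_j$ are analytic and the convergence is uniform on a complex neighborhood --- yields $\mathcal{F}_N'(x) = \sum_{j=0}^{n} N^{-j\varepsilon}\Psi_j'(x) + o(N^{-n\varepsilon})$ uniformly near $0$. Substituting this into $\mathcal{M}_k$ and expanding $(\mathcal{F}_N'(x))^{k-m}$ by the multinomial theorem, a monomial $\prod_{l}(\Psi_l'(x))^{a_l}$ acquires the weight $N^{-(\sum_l l\,a_l)\varepsilon}$. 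Collecting all monomials of a fixed total weight $j$, i.e.\ with $\sum_l l\,a_l=j$ (which for $j\le n$ forces $a_l=0$ whenever $l>j$), setting $b_l:=a_l$ for $l\ge 1$ and $a_0 = k-m-b_1-\cdots-b_j$, and combining the multinomial coefficient $\frac{(k-m)!}{(k-m-b_1-\cdots-b_j)!\,b_1!\cdots b_j!}$ with the prefactor of $\mathcal{M}_k$, reproduces exactly the coefficient $\tmmathbf{\mu}_k^{(j)}$ of \eqref{intro-tavradiamouleipis}. All monomials of weight $j>n$, together with every cross term involving the $o(N^{-n\varepsilon})$ remainder, are $o(N^{-n\varepsilon})$; since $k$ is fixed the sums are finite and these bounds are uniform. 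This gives $\frac{1}{N^{k+1}}\mathbb{E}[\tmop{Tr}(A^k)] = \sum_{j=0}^{n} N^{-j\varepsilon}\tmmathbf{\mu}_k^{(j)} + o(N^{-n\varepsilon})$, which simultaneously matches the lower-order coefficients and yields the limit \eqref{intro-PORDOKOFTIS}.

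The routine parts are the multinomial bookkeeping and the check of the combinatorial prefactor, both already implicit in the first-order matching of \eqref{intro-topg}--\eqref{intro-G}. I expect the main obstacle to be the first step: pinning down the precise moment-extraction formula together with a clean, uniform $O(1/N)$ bound on its quantization corrections $R_{k,N}$, so that the heuristic ``fractional scales see no integer-power corrections'' becomes rigorous. Once that error control is secured, the hypothesis $\varepsilon<n^{-1}$ does all the conceptual work and the rest is a direct expansion.
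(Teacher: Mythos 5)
Your proposal is correct and rests on the same engine as the paper's proof --- the operator $\mathcal{D}_k$, the symmetrization Lemma \ref{SnikfeatFy}, and the decisive power-counting observation that $\varepsilon<n^{-1}$ makes every integer-power correction invisible at resolution $N^{-n\varepsilon}$ --- but it is organized differently. The paper first treats the exact case $f_N=f_0f_1\cdots f_n$ with $f_j=\exp(N^{1-j\varepsilon}\sum_i\Psi_j(x_i))$, running the Leibniz/multinomial expansion of $\mathcal{D}_k$ directly on this product (equation \eqref{anemoi}) and isolating the dominant summands \eqref{VERAMAN}, and then handles the general hypothesis by factoring $f_N=F_N\cdot G_N$ and showing every term containing a derivative of the residual $\widetilde{F_N}$ dies. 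You instead reuse the generic moment-extraction expansion of Section \ref{sec:previous-result} for an arbitrary $f_N$, peel off the single-variable multinomial operator $\mathcal{M}_k$, and do the multinomial algebra at the level of the limiting functions $\Psi_j'$; this is more modular, avoids the two-case split, and makes the coefficient matching with \eqref{intro-tavradiamouleipis} transparent. What the paper's route buys is that the error control is localized in one explicit factor $F_N$; what yours buys is that Theorems \ref{Einaiiagapi}, \ref{th:sec-order-main} and \ref{fredi} all become instances of one expansion of $\mathcal{M}_k[\mathcal{F}_N]$.

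Two small imprecisions to repair, neither fatal. First, your remainder $R_{k,N}$ is not uniformly $O(1/N)$: it contains, besides the genuinely $O(1/N)$ terms $\sum_{i\ge1}M_{i,N}/N^i$, the mixed-partial contributions to the leading coefficient $M_{0,N}$, i.e.\ terms $\partial_{i_m}\cdots\partial_{i_1}\bigl(\partial_{i_0}\bigl(\tfrac1N\log f_N\bigr)\bigr)^{k-m}(0^N)$ with $|\{i_0,\dots,i_m\}|>1$. These are only $o(N^{-n\varepsilon})$ (because the hypothesis makes $\tfrac1N\log f_N$ additively separable up to a uniform $o(N^{-n\varepsilon})$ error, so its mixed partials are $o(N^{-n\varepsilon})$ by Cauchy estimates), which still suffices for \eqref{intro-PORDOKOFTIS} but is not $O(1/N)$. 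Second, and relatedly, controlling those mixed partials genuinely requires the hypothesis \eqref{intro-100} for $r\ge2$, so the statement that ``only the single-variable data will enter'' is true of the final formula but not of the proof; as written, your justification of the bound on $R_{k,N}$ invokes only the one-variable restriction $\mathcal{F}_N$.
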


The introduction of parameter $\epsilon$ and the limit regime \eqref{intro-100} simplify the structure of the answer. 
We connect formula \eqref{intro-tavradiamouleipis} with higher order infinitesimal freeness introduced in \cite{F}; nevertheless, the explicit formula \eqref{intro-tavradiamouleipis} seems to be new. We would like to emphasize that the main feature of Theorem \ref{intro-fredi}, as well as other key results of the current paper, is that condition \eqref{intro-100} is of general nature --- we do not assume the explicit form of the Harish-Chandra transform, only the asymptotic information about it. 

In Sections \ref{sec:second-order} and \ref{sec:higher-order} we also provide several examples with detailed calculations. In particular, in Example \ref{ex:BBP-highOrder} we study a version of a higher order BBP phase transition.


\subsection*{Acknowledgments}
We are grateful to Alexei Borodin,Vadim Gorin and Leonid Petrov for valuable comments. We used the code by Sunil Chhita in order to produce Figure \ref{fig:til}. Both authors were partially supported by the European Research Council (ERC), Grant Agreement No. 101041499.

\section{Preliminaries}
\label{sec:prelim}

Voiculescu's free probability theory (\cite{V1}, \cite{V2}, \cite{NS}) provides a natural framework to study
families of random matrices as their size goes to infinity. The reason is that
many classes of large random matrices behave like freely independent random
variables. A typical situation where this phenomenon occurs is the
summation of random matrices. For example, consider the case of two
independent $N \times N$ Hermitian random matrices $A_1, A_2$ with
deterministic eigenvalues $\{\lambda_1 (A_1) \leq \cdots \leq \lambda_N
(A_1)\}$, $\{\lambda_1 (A_2) \leq \cdots \leq \lambda_N (A_2)\}$
and eigenvectors chosen uniformly (=Haar distributed) at random. Under convergence assumptions for the empirical distributions $N^{- 1} \sum_{i = 1}^N \delta \left( \lambda_i (A_j) \right)$,
$j = 1, 2$, one can deduce the convergence for the (random) empirical distribution
of $A_1 + A_2$. The limit of $A_1 + A_2$ (meaning the limit of its empirical
distribution) can be uniquely determined in terms of the limits of $A_1, A_2$
by quantities called free cumulants. The summation $A_1+A_2$ translates into the 
summation of the free cumulants that correspond to the limits of $A_1$ and $A_2$.

Infinitesimal free probability theory goes a step further and studies not only
the limit of $A_1 + A_2$ but also its $1 / N$ \textbf{correction}, that is, the second leading term in the asymptotic expansion of the expectation of observables of $A_1 + A_2$. This question attracted additional attention due to its connection with finite-rank perturbations of
random matrices \cite{BBP}, \cite{BS}, \cite{Sh}. In this context,
let $A_3$ be a $N \times N$ finite-rank (i.e., its rank does not grow with $N$), Hermitian and deterministic matrix.
Its empirical distribution converges to $\delta \left( 0 \right)$ and its $1 / N$ correction
is given by
\[ \sum_{\lambda \in \text{ non-zero eigenvalues of } A_3} (\delta \left( \lambda \right) -
   \delta \left( 0 \right) ) . \]
Assuming the convergence for the empirical measure of $A_1$, the leading order limits of $A_1$ and $A_1 + A_3$ are
equal to each other. However, the existence of $A_3$ in $A_1 + A_3$ affects the $1 / N$
correction. This can be obtained from the fact that free independence of
$A_1, A_3$ does not only give a rule for the limit of $A_1 + A_3$, but also
for its $1 / N$ correction. The description of this rule can be presented with the use of
quantities that generalize the notion of free cumulants; they are known as
infinitesimal free cumulants. They were introduced and studied in \cite{BS}, \cite{FN}.

In the current paper we also study arbitrary order corrections to the limit of
the empirical distribution of certain random matrix models. We make explicit
calculations for higher order corrections on different scales. Specifically, we
compute the $n$-th order correction on the scale $1 / N^{\varepsilon}$, with $0 <
\varepsilon \leq 1$, for a $N \times N$ Hermitian random matrix $A$, by
obtaining an asymptotic expansion
\begin{equation}
  \frac{1}{N} \mathbb{E} [\tmop{Tr} (A^k)] = \sum_{i = 0}^n M_{i, k, N}
  \frac{1}{N^{i \varepsilon}} + \omicron (N^{- n \varepsilon}) \text{, \quad
  for every } k \in \mathbb{N}, \label{1}
\end{equation}
where $(M_{i, k, N})_{N \in \mathbb{N}}$ are convergent sequences of real
numbers. We prove the existence of an expansion of the form (\ref{1}) under general assumptions on the asymptotic behavior of Harish-Chandra transforms, see Section \ref{sec:intro}.


Let us now turn to formal definitions. We begin by recalling the basic notions of non-commutative/free probability and some of its basic tools.

\begin{definition}[Non-commutative probability]
\label{def:ncP}
  A non-commutative probability space consists of a unital algebra
  $\mathcal{A}$ over $\mathbb{C}$ and a linear functional $\varphi :
  \mathcal{A} \rightarrow \mathbb{C}$, such that $\varphi (1_{\mathcal{A}}) =
  1$. The elements of $\mathcal{A}$ are called non-commutative random
  variables. The non-commutative distribution of $a \in \mathcal{A}$ is the
  linear functional $\mu_a : \mathbb{C} \langle \mathbf{x} \rangle \rightarrow
  \mathbb{C}$, defined by
  \[ \mu_a (P) \assign \varphi (P (a)) \text{, \quad for every ploynomial } P \in
     \mathbb{C} \langle \mathbf{x} \rangle . \]
  For $(a_1, \ldots, a_n) \in \mathcal{A}^n$ the values $\varphi (a_1 \ldots
  a_n)$ are called moments.
\end{definition}

The framework described in Definition \ref{def:ncP} can be seen as a
generalization of the classical probabilistic setting. For this, one
considers the algebra of (integrable) random variables as the fundamental object, instead of
a probability space $(\Omega, \mathcal{F}, \mathbb{P})$. In that case, the
expectation $\mathbb{E}$ will play the role of the linear functional. In the
current paper, we are interested in the following non-commutative probability
space.
\begin{example}
  Let $\Mu_N (L^{\infty -} (\Omega, \mathcal{F}, \mathbb{P}))$ be the algebra
  of $N \times N$ matrices, whose entries are complex-valued random variables
  such that all their moments exist. The tuple $(\Mu_N (L^{\infty -} (\Omega,
  \mathcal{F}, \mathbb{P})), N^{- 1}  \mathbb{E}[\tmop{Tr} (\cdot)])$ is a
  non-commutative probability space.
\end{example}

\textbf{Free probability} deals with the notion of \textbf{free independence} in non-commutative spaces. A combinatorial description of free independence was developed in
\cite{Sp}; it is based on cumulant functionals, known as free cumulants. A partition $\pi$ of $\{1, \ldots, k\}$ is called non-crossing if for all
blocks $V = \{v_1 < \cdots < v_n \}$ and $W = \{w_1 < \cdots < w_m \}$ we have
$v_r < w_1 < v_{r + 1}$ for some $r = 1, \ldots, n - 1$ if and only if
$v_r < w_m < v_{r + 1}$. We denote the set of non-crossing partitions of $\{1,
\ldots, k\}$ by $\tmop{NC} (k)$. Non-crossing partitions can be visualized in
the following way: If we connect with ''bridges'' the points of $\{1, \ldots, k\}$
that belong to the same block of $\pi$, then $\pi$ is non-crossing if and only
if these bridges do not cross.


\begin{definition}[Free cumulants]
  Let $(\mathcal{A}, \varphi)$ be a non-commutative probability space. The
  multilinear functionals $(\kappa_n : \mathcal{A}^n \rightarrow
  \mathbb{C})_{n \in \mathbb{N}}$ uniquely determined by the relation
  \begin{equation}
    \varphi (a_1 \ldots a_k) = \sum_{\pi \in \tmop{NC} (k)} \prod_{\{i_1 <
    \cdots < i_s \} \in \pi} \kappa_s (a_{i_1}, \ldots, a_{i_s}) \text{, \quad
    for every $k \in \mathbb{N}$ and } (a_1, \ldots, a_k) \in \mathcal{A}^k,
    \label{tavradiamouleipeisKoulii}
  \end{equation}
  are called free cumulants of $(\mathcal{A}, \varphi)$.
\end{definition}

The relation between free independence and free cumulants comes from the fact
that two non-commutative random variables are freely independent if and only
if certain free cumulants vanish (see, e.g., \cite{NS}). Therefore, for freely independent $a, b \in (\mathcal{A}, \varphi)$, one has
\[ \kappa_n (a + b, \ldots, a + b) = \kappa_n (a, \ldots, a) + \kappa_n (b,
   \ldots, b) \text{, \quad for every } n \in \mathbb{N}. \]
The main connection between random matrices and free probability comes from
the fact that free probability provides a natural framework for many classes of random
matrices as their size goes to infinity, see, e.g., \cite{NS}, \cite{AGZ}. In particular, it describes large Hermitian random matrices with
fixed spectrum and eigenvectors chosen uniformly at random. Two independent
such matrices $A_1, A_2$, under asymptotic conditions for their spectra, become
asymptotically freely independent. In more detail, one has the following result.


\begin{theorem}[Voiculescu]
  \label{Voiculescu}Let $A_1 = U_1 \text{} \tmop{diag} (a^{(1)}_1, \ldots,
  a^{(1)}_N) U_1^{\ast}$ and $A_2 = U_2 \tmop{diag} (a^{(2)}_1, \ldots,
  a^{(2)}_N) U_2^{\ast}$ be Hermitian random matrices of size $N$, where $U_1,
  U_2$ are chosen independently at random with respect to the Haar measure
  on the unitary group $U (N)$ and $\{ a_i^{(j)} \}_{i, j}$ are fixed and such
  that for $j = 1, 2$ the empirical distribution of $A_j$ converges weakly to
  a probability measure $\tmmathbf{\mu}_j$. Then the random empirical
  distribution of $A_1 + A_2$ converges weakly, in probability to a
  deterministic probability measure $\tmmathbf{\mu}_1 \boxplus
  \tmmathbf{\mu}_2$.
\end{theorem}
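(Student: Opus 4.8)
The plan is to prove the statement by the method of moments, combined with a concentration estimate, and to pin down the limiting distribution using the Harish-Chandra transform, which is the main tool of this paper. First I would reduce weak convergence in probability of the empirical measure of $A_1+A_2$ to two facts: that for every $k$, $N^{-1}\mathbb{E}[\tmop{Tr}((A_1+A_2)^k)]$ converges to $\int t^k\, d(\tmmathbf{\mu}_1\boxplus\tmmathbf{\mu}_2)$, and that $\operatorname{Var}(N^{-1}\tmop{Tr}((A_1+A_2)^k))\to 0$. Since weak convergence of the empirical measures of $A_1,A_2$ does not by itself control high moments, I would first treat the case of uniformly bounded spectra — so that the limiting measures are compactly supported and determined by their moments — and remove this hypothesis afterwards by a standard truncation and approximation argument. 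By invariance of the Haar measure, conjugating by $U_1^{\ast}$ shows that $A_1+A_2$ has the same empirical distribution as $D_1 + V D_2 V^{\ast}$, where $D_j=\tmop{diag}(a^{(j)}_1,\dots,a^{(j)}_N)$ and $V:=U_1^{\ast}U_2$ is again Haar distributed; so I may assume $A_1=D_1$ is deterministic and $A_2=V D_2 V^{\ast}$.

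The identification of the limiting moments is where I would use the Harish-Chandra transform. The key observation is an exact factorization: for deterministic $D_1,D_2$ and $V$ Haar,
\[
\mathbb{E}\big[HC(x;D_1+V D_2 V^{\ast})\big] = HC(x;D_1)\cdot HC(x;D_2).
\]
This follows by writing the left-hand side as $\mathbb{E}_V\int_{U(N)}\exp(\tmop{Tr}((D_1+VD_2V^{\ast})UBU^{\ast}))\,\tmmathbf{m}_N(dU)$ with $B=\tmop{diag}(x)$, integrating over $V$ first for fixed $U$ — which produces the factor $HC(x;D_2)$, since $UBU^{\ast}$ has eigenvalues $x$ and the transform depends only on the eigenvalues of its arguments — and then integrating the remaining $V$-independent factor over $U$ to obtain $HC(x;D_1)$. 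Taking logarithms and dividing by $N$, the asymptotics of the two factors add. Using the (known) converse to the hypothesis of \eqref{eq:Intro-free-conv}, namely that weak convergence of the empirical measure of $D_j$ forces $\frac1N\log HC(x_1,\dots,x_r,0^{N-r};D_j)\to\sum_i\Psi_j(x_i)$ with $\Psi_j'$ the $R$-transform of $\tmmathbf{\mu}_j$, I obtain that $A_1+A_2$ satisfies the hypothesis of \eqref{eq:Intro-free-conv} with limiting function $\Psi_1+\Psi_2$. Since $R$-transforms linearize free convolution, the implication \eqref{eq:Intro-free-conv} identifies the limiting empirical measure as the one with $R$-transform $R_{\tmmathbf{\mu}_1}+R_{\tmmathbf{\mu}_2}$, i.e. $\tmmathbf{\mu}_1\boxplus\tmmathbf{\mu}_2$.

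A self-contained alternative, which avoids quoting the converse of \eqref{eq:Intro-free-conv}, is the classical combinatorial route: show directly that $(A_1,A_2)$ converges in non-commutative distribution to a freely independent pair with distributions $\tmmathbf{\mu}_1,\tmmathbf{\mu}_2$. For this I would expand a mixed trace $N^{-1}\mathbb{E}[\tmop{Tr}(D_1^{p_1}(VD_2V^{\ast})^{q_1}D_1^{p_2}\cdots)]$, integrate over $V$ using the Weingarten calculus for $U(N)$ (a sum over pairs of permutations weighted by the Weingarten function), and carry out a genus-type analysis showing that the only index configurations surviving the $N\to\infty$ normalization are those indexed by non-crossing partitions; these reassemble exactly into the moment--cumulant formula \eqref{tavradiamouleipeisKoulii} for freely independent variables. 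Asymptotic freeness then yields additivity of free cumulants, $\kappa_n(A_1+A_2)\to\kappa_n(\tmmathbf{\mu}_1)+\kappa_n(\tmmathbf{\mu}_2)$, which by definition of $\boxplus$ gives the limit $\tmmathbf{\mu}_1\boxplus\tmmathbf{\mu}_2$.

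In either route the remaining point is the concentration estimate $\operatorname{Var}(N^{-1}\tmop{Tr}((A_1+A_2)^k))\to 0$, which upgrades convergence of expectations to convergence in probability; this follows from a second moment computation in which the connected contributions are of lower order in $N$. I expect the main obstacle to be the core asymptotic input: in the Harish-Chandra route it is establishing (or citing precisely) the converse direction of \eqref{eq:Intro-free-conv} with the correct identification of $\Psi_j'$ as the $R$-transform, and in the combinatorial route it is the genus expansion isolating the non-crossing terms. Everything else — the Haar reduction, the factorization, and the variance bound — is comparatively routine.
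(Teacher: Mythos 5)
The paper does not prove this theorem: it is quoted in the preliminaries as Voiculescu's classical result (with \cite{V1}, \cite{V2}, \cite{NS}, \cite{AGZ} as background), so there is no internal proof to compare against. Judged on its own terms, your outline is sound and, pleasingly, it is the proof most consonant with the paper's machinery. The Haar reduction to $D_1+VD_2V^{\ast}$ is correct, and the factorization $\mathbb{E}_V[HC(x;D_1+VD_2V^{\ast})]=HC(x;D_1)\,HC(x;D_2)$ is exactly right: for fixed $U$ the substitution $W=U^{\ast}V$ turns the inner average into $\int\exp(\tmop{Tr}(D_2W^{\ast}BW))\,\tmmathbf{m}_N(dW)=HC(x;D_2)$, which is $U$-independent. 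Once you have the additive asymptotics of $\frac1N\log$ of the product, Theorem \ref{SouvlakiA} of the paper already delivers both the identification of the limiting moments and the in-probability (variance) statement, so you do not need a separate Weingarten computation for concentration.

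The weight of the argument sits where you say it does, but be precise about two points. First, the ``converse'' of \eqref{eq:Intro-free-conv} that you need is the Guionnet--Ma\"{\i}da asymptotics \cite{GM}; to feed it into Theorem \ref{SouvlakiA} you need it (i) for every finite $r$, i.e.\ the asymptotic factorization of the spherical integral with finitely many nontrivial arguments, and (ii) uniformly in a \emph{complex} neighborhood of $0^r$, whereas the spherical-integral asymptotics are most naturally stated for real arguments small relative to the spectral radius; the upgrade to complex uniformity (via analyticity plus a normal-families argument) and the restriction to bounded spectra are genuine hypotheses, not formalities. Second, your truncation step silently uses that $\boxplus$ is weakly continuous (Bercovici--Voiculescu) so that the truncated limits converge to $\tmmathbf{\mu}_1\boxplus\tmmathbf{\mu}_2$; this should be cited. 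With those two inputs made explicit the Harish-Chandra route closes; the Weingarten/genus-expansion alternative you sketch is the standard textbook proof in \cite{NS}, \cite{AGZ} and is a legitimate fallback that avoids both inputs at the cost of redoing the non-crossing combinatorics.
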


The probability measure $\tmmathbf{\mu}_1 \boxplus \tmmathbf{\mu}_2$ is known
as the free convolution of $\tmmathbf{\mu}_1$ and $\tmmathbf{\mu}_2$. It can be described via
corresponding free cumulant sequences $(\kappa_n (\tmmathbf{\mu}_1))_{n \in
\mathbb{N}}, (\kappa_n (\tmmathbf{\mu}_2))_{n \in \mathbb{N}}$, which are uniquely
determined by the relations
\[ \int_{\mathbb{R}} t^k \tmmathbf{\mu}_j (d \text{} t) = \sum_{\pi \in
   \tmop{NC} (k)} \prod_{V \in \pi} \kappa_{|V|} (\tmmathbf{\mu}_j) \text{,
   \quad for every } k \in \mathbb{N} \text{ and } j = 1, 2. \]
Namely, one has:
\[ \kappa_n (\tmmathbf{\mu}_1 \boxplus \tmmathbf{\mu}_2) = \kappa_n
   (\tmmathbf{\mu}_1) + \kappa_n (\tmmathbf{\mu}_2) \text{, \quad for every }
   n \in \mathbb{N}. \]
In the next sections we are interested in the \textbf{corrections} to this limit. We
want to understand these corrections through quantities that are
generalizations of free cumulants and encode the notion of higher order
infinitesimal freeness. The abstract setting is the following.

\begin{definition}
  An infinitesimal non-commutative probability space of order $\nu$ is a $(\nu
  + 1)$-tuple $(\mathcal{A}, \varphi^{(0)}, \ldots, \varphi^{(\nu)})$, where
  $(\mathcal{A}, \varphi^{(0)})$ is a non-commutative probability space and
  $\varphi^{(i)} : \mathcal{A} \rightarrow \mathbb{C}$ are linear functionals
  such that $\varphi^{(i)} (1_{\mathcal{A}}) = 0$, for every $i = 1, \ldots,
  \nu$. The infinitesimal non-commutative distribution of order $n$ of $a \in
  \mathcal{A}$ is the $(n + 1)$-tuple $(\mu_a^{(i)} : \mathbb{C} \langle
  \mathbf{x} \rangle \rightarrow \mathbb{C})_{i = 0}^n$ of linear functionals,
  defined by
  \[ \mu_a^{(i)} (P) = \varphi^{(i)} (P (a)) \text{, \quad for every } P \in
     \mathbb{C} \langle \mathbf{x} \rangle \text{ and } i = 0, \ldots, \nu .
  \]
\end{definition}

We are interested in infinitesimal non-commutative probability spaces
$(\mathcal{A}, \varphi^{(0)}, \ldots, \varphi^{(\nu)})$ that emerge as the
infinitesimal limit of a family $(\mathcal{A}, \varphi_t)_{t \in \mathbb{R}}$ of
non-commutative probability spaces. This means that for $a \in \mathcal{A}$, $\varphi^{(0)}
(a), \ldots, \varphi^{(\nu)} (a)$ are uniquely determined by
\begin{equation}
  \frac{\varphi^{(\nu)} (a)}{\nu !} = \lim_{t \rightarrow 0} \frac{1}{t^{\nu}}
  \left( \varphi_t (a) - \sum_{i = 0}^{\nu - 1} \varphi^{(i)} (a)
  \frac{t^i}{i!} \right) . \label{getingolder}
\end{equation}
Thus, we have $\lim_{t \rightarrow 0} \varphi_t (a) = \varphi^{(0)} (a)$,
for every $a \in \mathcal{A}$ and for $i = 1, \ldots, \nu$ the ``derivative''
$\varphi^{(i)}$ plays the role of the $i$-th order correction to the above
limit. For $\nu = 1$ this is the framework of the infinitesimal
free probability, introduced in \cite{BS}. They
introduced a notion of infinitesimal freeness which provides a rule for
computing moments of random variables in $(\mathcal{A}, \varphi^{(0)},
\varphi^{(1)})$, assuming that they are freely independent in $(\mathcal{A},
\varphi_t)$ for every $t \in \mathbb{R}$. In the same direction, the notion of infinitesimal free cumulants was introduced in \cite{FN}. The moments of random variables in $(\mathcal{A}, \varphi^{(0)}, \varphi^{(1)})$ can be written in terms of these cumulants in a similar to (\ref{tavradiamouleipeisKoulii}) way. In more detail,
the infinitesimal free cumulants $(\kappa^{(0)}_n, \kappa^{(1)}_n :
\mathcal{A}^n \rightarrow \mathbb{C})_{n \in \mathbb{N}}$ are defined in the
following way: $(\kappa_n^{(0)})_{n \in \mathbb{N}}$ are the free cumulants of
$(\mathcal{A}, \varphi^{(0)})$ and $(\kappa_n^{(1)})_{n \in \mathbb{N}}$ are
uniquely determined by
\begin{equation}
  \varphi^{(1)} (a_1 \ldots a_k) = \sum_{\pi \in \tmop{NC} (k)}  \sum_{V =
  \{i_1 < \cdots < i_s \} \in \pi} \kappa_s^{(1)} (a_{i_1}, \ldots, a_{i_s})
  \prod_{\underset{W \neq V}{W = \{j_1 < \cdots < j_l \} \in \pi}}
  \kappa_l^{(0)} (a_{j_1}, \ldots, a_{j_l}), \label{thelongestway}
\end{equation}
for every $k \in \mathbb{N}$ and $(a_1, \ldots, a_k) \in \mathcal{A}^k$. Formula \eqref{thelongestway} can be thought of as that there is an (informal) differentiation procedure to pass from $\varphi^{(0)}$ to $\varphi^{(1)}$, starting from the
moment-cumulant relations (\ref{tavradiamouleipeisKoulii}). Namely, if
$\kappa_n^{(1)} (a_1, \ldots, a_n)$ is considered as the derivative of
$\kappa_n^{(0)} (a_1, \ldots, a_n)$ and $\varphi^{(1)} (a_1 \ldots a_n)$ as
the derivative of $\varphi^{(0)} (a_1 \ldots a_n)$, applying derivatives in
(\ref{tavradiamouleipeisKoulii}) and using Leibniz rule, we get
(\ref{thelongestway}). In a similar vein, the infinitesimal free cumulants
$(\kappa_n^{(0)})_{n \in \mathbb{N}}$ and $(\kappa_n^{(1)})_{n \in
\mathbb{N}}$ of $(\mathcal{A}, \varphi^{(0)}, \varphi^{(1)})$ satisfy the
relations
\[ \kappa_n^{(0)} (a_1, \ldots, a_n) = \lim_{t \rightarrow 0} \kappa_n^{(t)}
   (a_1, \ldots, a_n) \text{\quad and\quad} \kappa_n^{(1)} (a_1, \ldots, a_n)
   = \left. \frac{\mathd}{\mathd t} \kappa_n^{(t)} (a_1, \ldots, a_n)
   \right|_{t = 0}, \]
where $(\kappa_n^{(t)})_{n \in \mathbb{N}}$ are the free cumulants of
$(\mathcal{A} \comma \varphi_t)$.

\begin{remark}
  Similarly with the free independence, the infinitesimal freeness can be
  characterized by infinitesimal free cumulants. Two random variables in $(\mathcal{A}, \varphi^{(0)},
  \varphi^{(1)})$ are infinitesimally free if and only if certain
  infinitesimal free cumulants vanish (see \cite{FN}). As a corollary, infinitesimally free variables $a, b \in (\mathcal{A},
  \varphi^{(0)}, \varphi^{(1)})$ satisfy
  \[ \kappa_n^{(i)} (a + b, \ldots, a + b) = \kappa_n^{(i)} (a, \ldots, a) +
     \kappa_n^{(i)} (b, \ldots, b), \text{\quad for every } n \in \mathbb{N}
     \text{ and } i = 0, 1. \]
\end{remark}

Motivated by the construction of infinitesimal free cumulants for
$(\mathcal{A}, \varphi^{(0)}, \varphi^{(1)})$, one can extend the notion of
infinitesimal free cumulants for $(\mathcal{A}, \varphi^{(0)}, \varphi^{(1)},
\ldots, \varphi^{(\nu)})$. Since $\varphi^{(i)}$ plays the role of the $i$-th
derivative of $\varphi^{(0)}$, the infinitesimal free cumulants of higher
order $\kappa_n^{(i)}$ can be seen as the $i$-th derivative of
$\kappa_n^{(0)}$. In that way, starting from the relation
(\ref{tavradiamouleipeisKoulii}) between $\varphi^{(0)}$, $(\kappa_n^{(0)})_{n
\in \mathbb{N}}$ and differentiating step by step, we can determine
$(\kappa_n^{(i)})_{n \in \mathbb{N}}$ with the use of $\varphi^{(i)},
(\kappa_n^{(0)})_{n \in \mathbb{N}}, \ldots, (\kappa_n^{(i - 1)})_{n \in
\mathbb{N}}$. In order to write the exact formula that describes this
relation, we introduce some notation.

\begin{notation}
  Let $(\psi_n : \mathcal{A}^n \rightarrow \mathbb{C})_{n \in \mathbb{N}}$ be
  a family of multilinear functionals and $\pi$ be a partition of $\{1,
  \ldots, k\}$. Given a block $V = \{i_1 < \cdots < i_s \}$ of $\pi$ and
  $(a_1, \ldots, a_k) \in \mathcal{A}^k$, we define
  \[ \psi_{|V|} (a_1, \ldots, a_k |V) \assign \psi_s (a_{i_1}, \ldots,
     a_{i_s}) . \]
\end{notation}

\begin{definition}
\label{def:inf-free-mom-cum}
  \label{Xisi}Let $(\mathcal{A}, \varphi^{(0)}, \varphi^{(1)}, \ldots,
  \varphi^{(\nu)})$ be an infinitesimal non-commutative probability space of
  order $\nu$. The infinitesimal free cumulants of order $\nu$ are
  multilinear maps $(\kappa_n^{(0)}, \ldots, \kappa_n^{(\nu)} : \mathcal{A}^n
  \rightarrow \mathbb{C})$ uniquely determined by the following relation: For
  every $k \in \mathbb{N}$, $i = 0, \ldots, \nu$ and $(a_1, \ldots, a_k) \in
  \mathcal{A}^k$,
  \begin{equation}
    \varphi^{(i)} (a_1 \ldots a_k) = \sum_{\pi = \{V_1, \ldots, V_p \} \in
    \tmop{NC} (k)} \sum_{\lambda_1 + \cdots + \lambda_p = i}
    \frac{i!}{\lambda_1 ! \ldots \lambda_p !} \prod_{i = 1}^p \kappa_{|V_i
    |}^{(\lambda_i)} (a_1, \ldots, a_k |V_i) . \label{Goldtouch}
  \end{equation}
\end{definition}

Note that Definition \ref{Xisi} is consistent with our description of the
infinitesimal free cumulants of higher order since the latter sum in the right hand side of
(\ref{Goldtouch}) plays the role of the $i$-th derivative of $\prod_{V \in \pi}
\kappa^{(0)}_{|V|} (a_1, \ldots, a_k |V)$. Formula \eqref{Goldtouch} for the
infinitesimal free cumulants of higher order was introduced in \cite{F}.
Analogously to the connection of the (infinitesimal) freeness (of order $1$) and
(infinitesimal) free cumulants (of order $1$), in \cite{F} the notion of higher
order freeness is introduced as a rule for vanishing certain higher order
infinitesimal free cumulants. This is consistent with the cases $\nu = 0, 1$, and
for two infinitesimally free of order $\nu$ random variables $a, b \in
(\mathcal{A}, \varphi^{(0)}, \ldots, \varphi^{(\nu)})$ we have
\[ \kappa_n^{(i)} (a + b, \ldots, a + b) = \kappa_n^{(i)} (a, \ldots, a) +
   \kappa_n^{(i)} (b, \ldots, b) \text{, \quad for every } n \in \mathbb{N}
   \text{ and } i = 0, \ldots, \nu . \]
In the present paper, our interest is in infinitesimal non-commutative probability spaces of higher order that emerge as the infinitesimal limit of the non-commutative
distribution of random matrix models. In more detail, given a Hermitian random
matrix $A$ of size $N$, the linear functional that will play the role of
$\varphi_t$ is
\[ P \in \mathbb{C} \langle \mathbf{x} \rangle \mapsto \frac{1}{N} \mathbb{E}
   [\tmop{Tr} \text{} P (A)], \]
where ``$t = \frac{1}{N}$''. In this setting, in order to guarantee the
existence of the infinitesimal limit of the form (\ref{getingolder}), we will
focus on particular random matrix models. The simplest example is a GUE
matrix, i.e. a Hermitian random matrix $A = (a_{i, j})_{i, j = 1}^N$ such that
$(a_{i, j})_{i \leq j}$ are independent centered complex Gaussian variables,
with independent real and imaginary parts and covariances $\mathbb{E} [a_{i,
j} a_{k, l}] = N^{- 1} \delta_{i, l} \delta_{j, k}$, for every $1 \leq i, j, k,
l \leq N$. 
It is well known (see, e.g., \cite{AGZ}) that for such a matrix, for
every $k \in \mathbb{N}$ one has an asymptotic expansion
\begin{equation}
\frac{1}{N} \mathbb{E} [\tmop{Tr} (A^k)] = \sum_{n \geq 0} \frac{1}{N^{2 n}}
M_{n, k}, \label{topology}
\end{equation}
where $M_{n, k}$ are real numbers that do not depend on $N$. Relation
(\ref{topology}) is called the topological expansion because the numbers $M_{n,
k}$ are related to the enumeration of maps.

In the following sections we study random matrix models that have
expansions similar to (\ref{topology}). Let $A$ be a Hermitian random matrix of
size $N$ with real eigenvalues $\lambda_1 (A), \ldots, \lambda_N (A)$. Our
approach for computing the moments of the probability measure $N^{- 1}
\mathbb{E} \left[ \sum_{i = 1}^N \delta \left( \lambda_i (A) \right) \right]$ is based on a
differentiation procedure of the characteristic/moment generating function of
$A$. This is analogous to the fact that via differentiating the
characteristic/moment generating function of a random variable one can get its
moments. Therefore, we focus on classes of random matrices whose
characteristic function can be controlled to some extent.

One class of such matrices is formed by ergodic unitarily invariant matrices. 
In the current paper they will be the main source of our examples.
Ergodic random matrices were classified and studied in \cite{OV} (see also \cite{P}). We recall that given a
probability measure $\tmmathbf{M}$ on the space of all infinite Hermitian
matrices $H$, its characteristic function is defined via
\[ f_{\tmmathbf{M}} (A) := \int_H \exp (\mathi \tmop{Tr} (A \text{} B))
   \tmmathbf{M} (d \text{} B) \text{, \quad for every } A \in H (\infty), \]
where $H (\infty) \subset H$ is the space of infinite Hermitian matrices with
finitely many non-zero entries. Let $U (\infty) \subseteq H$ be the group of
inifinite unitary matrices $U = (u_{i, j})$ such that $u_{i, j} = \delta_{i,
j}$ when $i + j$ is large enough and $D (\infty) \subseteq H (\infty)$ be the
subspace of diagonal matrices in $H (\infty)$. For a $U (\infty)$-invariant
Borel probability measure $\tmmathbf{M}$, the value $f_{\tmmathbf{M}} (A)$
depends only from the spectrum of $A \in H (\infty)$, since any matrix in $H
(\infty)$ can be diagonalized under the action of $U (\infty)$. Then the
Multiplicativity Theorem (see, e.g., \cite[Theorem 2.1]{OV}) states that $\tmmathbf{M}$ is
ergodic if and only if for every $k \in \mathbb{N}$ the symmetric function
$(a_1, \ldots, a_k) \mapsto f_{\tmmathbf{M}} (\tmop{diag} (a_1, \ldots, a_k,
0, \ldots))$ is multiplicative, in the sense that there exists a one-variable
function $F_{\tmmathbf{M}}$, such that
\[ f_{\tmmathbf{M}} (\tmop{diag} (a_1, \ldots, a_k, 0, \ldots)) = \prod_{i =
   1}^k F_{\tmmathbf{M}} (a_i) \text{, \quad for every } k \in \mathbb{N}
   \text{ and } a_1, \ldots, a_k \in \mathbb{R}. \]
The function $F_{\tmmathbf{M}}$ is determined by $F_{\tmmathbf{M}} (a) =
f_{\tmmathbf{M}} (\tmop{diag} (a, 0, \ldots))$, for every $a \in \mathbb{R}$.
Let $\mathcal{F}$ denote the class of all these functions $F_{\tmmathbf{M}}$.
The description of $\mathcal{F}$ leads to the classification of the ergodic
measures $\tmmathbf{M}$.

\begin{theorem} (\cite{OV}, \cite{P})
  The class $\mathcal{F}$ consists of all the functions $F_{\gamma_1, \gamma_2,
  x}$ of the form
  \begin{equation}
  \label{eq:OV-classification}
  F_{\gamma_1, \gamma_2, x} (a) = \exp \left( \mathi \gamma_1 a -
     \frac{\gamma_2}{2} a^2 \right) \prod_{n = 1}^{\infty} \frac{\exp (-
     \mathi x_n a)}{1 - \mathi x_n a},
  \end{equation}
  where $(x_n) \in \mathbb{N}$ is a sequence of real numbers such that
  $\sum_{n \geq 1} x_n^2 < \infty$ and $\gamma_1 \in \mathbb{R}$, $\gamma_2
  \geq 0$.
\end{theorem}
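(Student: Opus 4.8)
The plan is to prove both inclusions: that every $F_{\gamma_1,\gamma_2,x}$ of the stated form is the function $F_{\tmmathbf{M}}$ of some ergodic measure (\emph{sufficiency}), and conversely that every $F_{\tmmathbf{M}}\in\mathcal F$ has this form (\emph{necessity}). By the Multiplicativity Theorem quoted above, the entire characteristic function of an ergodic $\tmmathbf{M}$ is recovered from the single-variable function $F_{\tmmathbf{M}}(a)=f_{\tmmathbf{M}}(\tmop{diag}(a,0,\ldots))$; since $F_{\tmmathbf{M}}(a)=\int_H \exp(\mathi a B_{11})\,\tmmathbf{M}(d B)$ is the characteristic function of the real random variable $B_{11}$, it is continuous, positive definite on $\mathbb R$, and satisfies $F_{\tmmathbf{M}}(0)=1$. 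So everything reduces to classifying these one-variable functions.

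For the sufficiency direction I would exhibit, for each admissible triple $(\gamma_1,\gamma_2,(x_n))$, an explicit ergodic measure realizing it. Reading off the factorization in \eqref{eq:OV-classification}, the factor $\exp(\mathi\gamma_1 a-\tfrac{\gamma_2}{2}a^2)$ is the characteristic function of a Gaussian (plus a deterministic drift), while each factor $\exp(-\mathi x_n a)/(1-\mathi x_n a)$ is the characteristic function of a centered exponential random variable of variance $x_n^2$. One therefore builds the random infinite Hermitian matrix $B$ as an independent superposition: a $U(\infty)$-invariant Gaussian (GUE-type) part together with the central drift $\gamma_1\,\mathrm{Id}$, carrying $\gamma_2$ and $\gamma_1$, and, for each $n$, a rank-one invariant term $x_n\,\xi^{(n)}(\xi^{(n)})^{\ast}$ with $\xi^{(n)}$ a standard complex Gaussian vector, whose $(1,1)$-entry is exponential of mean $x_n$. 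The condition $\sum_n x_n^2<\infty$ is exactly what makes the infinite convolution converge to a genuine probability measure, and $\gamma_2\ge 0$ is forced by positive definiteness of the Gaussian factor. One then checks $U(\infty)$-invariance by construction and ergodicity by verifying that the measure is an extreme point, e.g. via a tail $0$-$1$ argument; the resulting $f_{\tmmathbf{M}}$ is multiplicative with the prescribed $F$.

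The necessity direction is the substantive part, and here I would use the ergodic method. Ergodic measures are precisely the extreme points of the convex set of $U(\infty)$-invariant probability measures, and the approximation theorem identifies these extreme points as limits of the orbital measures attached to finite $N\times N$ Hermitian corners with prescribed spectra $\lambda_1^{(N)}\ge\cdots\ge\lambda_N^{(N)}$. Restricting the characteristic function of such an orbital measure to a fixed finite diagonal corner expresses it as the normalized Harish-Chandra/Bessel integral over $U(N)$ evaluated at that spectrum, i.e. exactly the spherical function of the pair. The claim then follows from the asymptotic analysis of these spherical functions as $N\to\infty$: the eigenvalues that survive in the appropriate normalization produce the discrete parameters $x_n$, the quadratic fluctuation of the bulk produces the Gaussian exponent $\gamma_2$, and the mean produces $\gamma_1$, assembling into the product in \eqref{eq:OV-classification}.

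The main obstacle, and the technical heart of the argument, is twofold. First, one must prove the approximation/boundary statement that every extreme invariant measure is obtainable as such a limit of finite orbital measures; this is a genuine infinite-dimensional fact and is where the precise structure of $U(\infty)$ enters. Second, one must carry out the asymptotics of the spherical functions and show that the limiting multiplicative positive-definite function is forced to have Lévy--Khintchine data of exactly the gamma/exponential type appearing in \eqref{eq:OV-classification}, with no other jump measures allowed; equivalently, that the admissible $\log F$ are exactly those whose Lévy measure is a superposition of the $e^{-s/x_n}/|s|$ kernels. This rigidity is the continuous analogue of Edrei's classification of totally positive sequences (and of Schoenberg's theorem on positive-definite multiplicative functions), and it is the step where positive definiteness on the full matrix space, not merely on the diagonal, is indispensable.
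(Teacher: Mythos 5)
First, a point of comparison: the paper itself does not prove this statement --- it is imported from \cite{OV} and \cite{P} as background, so there is no internal proof to measure your argument against. Your outline follows the standard ergodic-method strategy of \cite{OV}, and the individual identifications in the sufficiency half are sound: $F_{\tmmathbf{M}}(a)=\mathbb{E}[e^{\mathi a B_{11}}]$ is indeed the characteristic function of the $(1,1)$-entry; $e^{-\mathi x_n a}/(1-\mathi x_n a)$ is the characteristic function of a centered exponential variable of variance $x_n^2$, realized as the corner entry of the centered invariant rank-one matrix $x_n\bigl(\xi^{(n)}(\xi^{(n)})^{\ast}-\mathrm{Id}\bigr)$; and the three-series theorem shows $\sum_n x_n^2<\infty$ is exactly the condition for the independent superposition to converge. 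One simplification: given the Multiplicativity Theorem already quoted in the paper, ergodicity of your explicit construction follows from the multiplicativity of $f_{\tmmathbf{M}}$ (immediate from independence and invariance), so the separate extreme-point or tail-field argument you sketch is not needed.

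The genuine gap is in the necessity direction, which you correctly flag as ``the technical heart'' but then do not carry out. Two statements are asserted without proof and neither is routine: (i) the approximation theorem, that every ergodic $U(\infty)$-invariant measure arises as a limit of $U(N)$-orbital measures with prescribed spectra $\lambda^{(N)}$, which requires the Choquet/martingale machinery of the Vershik ergodic method specific to the inductive limit group $U(\infty)$; and (ii) the asymptotic classification of the normalized spherical functions $H\,C(a,0^{N-1};\lambda_1^{(N)},\ldots,\lambda_N^{(N)})$ as $N\to\infty$, i.e.\ the determination of exactly which scaling regimes of the spectra yield convergent multiplicative limits and the proof that the resulting one-variable functions are precisely those in \eqref{eq:OV-classification} and no others. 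Point (ii) is where the rigidity lives: positive definiteness of $\prod_i F(\lambda_i(A))$ on all of $H(\infty)$, not merely on a one-dimensional diagonal corner, is what excludes other L\'evy measures, and your appeal to the analogy with Edrei and Schoenberg names the right phenomenon without supplying the estimate. As written, the proposal is a correct roadmap of the known proof rather than a proof; making it self-contained requires establishing (i) and (ii), which is effectively the entire content of \cite{OV} and \cite{P}.
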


Let us recall that for two $N \times N$ Hermitian matrices
$A, B$ with eigenvalues $\lambda_1 (A) \leq \cdots \leq \lambda_N (A)$ and
$\lambda_1 (B) \leq \cdots \leq \lambda_N (B)$, the Harish-Chandra integral
(also known as Itzykson-Zuber integral) is defined by
\begin{equation}
  H \text{} C (\lambda_1 (A), \ldots, \lambda_N (A) ; \lambda_1 (B), \ldots,
  \lambda_N (B)) \assign \int_{U (N)} \exp (\tmop{Tr} (A \text{} U \text{} B
  \text{} U^{\ast})) \tmmathbf{m}_N (d \text{} U), \label{Haarmeasure}
\end{equation}
where $\tmmathbf{m}_N$ denotes the Haar measure on the unitary group $U (N)$ (see \cite{HC}).
Note that the right hand side of (\ref{Haarmeasure}) depends only on
${\{\lambda_i (A), \lambda_i (B)\}_{i = 1}^N} $. The above integral can be
computed explicitly and it is equal to
\begin{equation}
  c_N \frac{\det (\exp (\lambda_i (A) \lambda_j (B)))_{i, j = 1}^N}{\prod_{1
  \leq i < j \leq N} (\lambda_i (A) - \lambda_j (B)) \prod_{1 \leq i < j \leq
  N} (\lambda_i (B) - \lambda_j (B))}, \label{Jmn}
\end{equation}
where $c_N = \prod_{i = 1}^{N - 1} i!$.

\section{Free probability scaling}
\label{sec:previous-result}

In this section we essentially recall the proof of \cite[Theorem 5.1]{BG}, see Theorem \ref{SouvlakiA} below. We do certain technical improvements along the way, which will allow us to use this section as a reference point for the proofs in the rest of the paper. We also present it here in the language of random matrices and Harish-Chandra transform, rather than a somewhat more general setup of discrete particle systems and Schur functions of \cite[Theorem 5.1]{BG}, which we address in Section \ref{sec:Schur-free} below. 


For every $k \in \mathbb{N}$ let us introduce a
differential operator acting on smooth functions $f$ of $N$ variables $x_1, x_2, \dots, x_n$:
\begin{equation}
  \mathcal{D}_k (f) \assign \left( \prod_{i < j} (x_i - x_j) \right)^{- 1}
  \sum_{i = 1}^N \partial_i^k \left( \prod_{i < j} (x_i - x_j) \cdot f \right)
  . \label{gamisi}
\end{equation}
\begin{proposition}
\label{prop:1}
  For the function $f_{\underset{\lambda}{\rightarrow}} (x_1, \ldots, x_N)
  \assign H \text{} C (x_1, \ldots, x_N ; \lambda_1, \ldots, \lambda_N)$ we
  have
  \begin{equation}
    \mathcal{D}_k \left( f_{\underset{\lambda}{\rightarrow}} \right) (x_1,
    \ldots, x_N) = \sum_{i = 1}^N \lambda_i^k
    f_{\underset{\lambda}{\rightarrow}} (x_1, \ldots, x_N) .
    \label{pordokoftis}
  \end{equation}
\end{proposition}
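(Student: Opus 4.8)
The plan is to work directly from the explicit evaluation~(\ref{Jmn}) of the Harish-Chandra integral, which expresses $f_{\underset{\lambda}{\rightarrow}}$ as a ratio of a single determinant to a product of two Vandermonde-type factors. Writing $\Delta (x) \assign \prod_{i < j} (x_i - x_j)$ for the Vandermonde determinant in the $x$-variables, formula~(\ref{Jmn}) gives
\[
\Delta (x) \cdot f_{\underset{\lambda}{\rightarrow}} (x_1, \ldots, x_N) = C_{\lambda} \det \left( \exp (x_i \lambda_j) \right)_{i, j = 1}^N,
\]
where the constant $C_{\lambda} \assign c_N \big/ \prod_{i < j} (\lambda_i - \lambda_j)$ does not depend on the variables $x_1, \ldots, x_N$. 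Since the operator $\mathcal{D}_k$ of~(\ref{gamisi}) is precisely the conjugation of $\sum_{i} \partial_i^k$ by multiplication by $\Delta (x)$, it suffices to show that $\sum_{i = 1}^N \partial_i^k$ acts on the right-hand side above by multiplication by the scalar $\sum_{i = 1}^N \lambda_i^k$.

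To see this, I would expand the determinant by the Leibniz formula,
\[
\det \left( \exp (x_i \lambda_j) \right) = \sum_{\sigma \in S_N} \tmop{sgn} (\sigma) \prod_{i = 1}^N \exp (x_i \lambda_{\sigma (i)}) .
\]
Because each factor $\exp (x_i \lambda_{\sigma (i)})$ depends on the variable $x_i$ only through the coefficient $\lambda_{\sigma (i)}$, applying $\partial_i^k$ multiplies that single factor by $\lambda_{\sigma (i)}^k$ while leaving the others untouched. Summing these derivatives over $i = 1, \ldots, N$ therefore produces, for each permutation $\sigma$, the factor $\sum_{i = 1}^N \lambda_{\sigma (i)}^k$ times the original product.

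The key observation is that $\sum_{i = 1}^N \lambda_{\sigma (i)}^k = \sum_{i = 1}^N \lambda_i^k$ is invariant under $\sigma$, being merely a reordering of the same $N$ summands. Consequently this power sum factors out of the summation over $S_N$, and the remaining alternating sum is exactly the original determinant; thus $\sum_{i = 1}^N \partial_i^k \det(\exp(x_i\lambda_j)) = \left( \sum_{i = 1}^N \lambda_i^k \right) \det(\exp(x_i\lambda_j))$. Multiplying back by $C_{\lambda}$ and dividing by $\Delta (x)$ recovers $\left( \sum_{i = 1}^N \lambda_i^k \right) f_{\underset{\lambda}{\rightarrow}} (x)$, which is~(\ref{pordokoftis}).

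I do not expect a serious obstacle here: the computation is elementary once the explicit formula~(\ref{Jmn}) is in hand, and the whole argument rests on the permutation-invariance of the power sum $\sum_i \lambda_i^k$, which is what allows it to be pulled outside the sum over $S_N$. The only point deserving mild care is that the division by $\Delta (x)$ is legitimate wherever the $x_i$ are pairwise distinct; since both sides of~(\ref{pordokoftis}) are entire in $(x_1, \ldots, x_N)$, the identity then extends to all of $\mathbb{C}^N$ by continuity.
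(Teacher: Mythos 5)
Your proof is correct and is essentially identical to the paper's: both expand the determinant in \eqref{Jmn} over permutations, observe that $\sum_{i=1}^N \partial_i^k$ produces the factor $\sum_{i=1}^N \lambda_{\sigma(i)}^k$ for each permutation $\sigma$, and use the permutation-invariance of the power sum to pull it out and reassemble the determinant. The remark about extending across the diagonal $x_i = x_j$ by continuity is a harmless extra precaution not spelled out in the paper.
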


\begin{proof}
  By the relation \eqref{Jmn} and the Leibniz rule we have
  \begin{eqnarray*}
    \mathcal{D}_k \left( f_{\underset{\lambda}{\rightarrow}} \right) & = &
    \left( \prod_{i < j} (x_i - x_j) (\lambda_i - \lambda_j) \right)^{- 1}
    \sum_{\pi \in S_N} \tmop{sgn} (\pi) \left( \sum_{i = 1}^N \lambda_{\pi
    (i)}^k \right) \prod_{i = 1}^N \exp (x_i \lambda_{\pi (i)})\\
    & = & \left( \sum_{i = 1}^N \lambda_i^k \right) \left( \prod_{i < j} (x_i
    - x_j) (\lambda_i - \lambda_j) \right)^{- 1} \det (\exp (x_i
    \lambda_j))_{i, j = 1}^N
    = \sum_{i = 1}^N \lambda_i^k f_{\underset{\lambda}{\rightarrow}} .
  \end{eqnarray*}
  
\end{proof}

Before stating the main theorem of this section, we give
several lemmas which help us to understand how the differential operator
$\mathcal{D}_k$ acts on smooth functions.

\begin{lemma}
\label{lem:2}
  For a smooth function $f$ of $N$ variables we have
  \begin{equation}
    \mathcal{D}_k (f) = \sum_{m = 0}^k \sum_{\underset{l_i \neq l_j \text{ for
    } i \neq j}{l_0, \ldots, l_m = 1}}^N \binom{k}{m} \frac{\partial_{l_0}^{k
    - m} f}{(x_{l_0} - x_{l_1}) \ldots (x_{l_0} - x_{l_m})} .
    \label{alaniariko}
  \end{equation}
\end{lemma}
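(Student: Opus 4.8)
The plan is to apply the Leibniz rule to expand $\partial_i^k$ and then identify the coefficients $\partial_i^m V / V$, where $V := \prod_{p<q}(x_p - x_q)$ denotes the Vandermonde product appearing in the definition of $\mathcal{D}_k$. First I would write
\[
\mathcal{D}_k(f) = V^{-1}\sum_{i=1}^N \partial_i^k(V f) = \sum_{i=1}^N \sum_{m=0}^k \binom{k}{m} \frac{\partial_i^m V}{V}\, \partial_i^{k-m} f,
\]
using the one-variable Leibniz rule in the variable $x_i$. Everything then reduces to a single self-contained identity:
\[
\frac{\partial_i^m V}{V} = \sum_{\substack{l_1,\ldots,l_m \neq i\\ l_a \text{ pairwise distinct}}} \frac{1}{(x_i - x_{l_1})\cdots(x_i - x_{l_m})}. \qquad (\star)
\]
Granting $(\star)$, I would relabel the outer summation index $i$ as $l_0$; the constraints ``$l_1,\ldots,l_m$ distinct and $\neq l_0$'' then merge into ``$l_0,l_1,\ldots,l_m$ pairwise distinct'', which is exactly the stated formula. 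Reassuringly, this also forces the inner sum to be empty once $m \geq N$, matching the fact that $\partial_i^m V = 0$ for $m \geq N$ since $V$ has degree $N-1$ in $x_i$.

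The heart of the argument is $(\star)$, which I would prove by isolating the dependence of $V$ on $x_i$. Writing $V = (-1)^{i-1} W_i \prod_{j \neq i}(x_i - x_j)$, where $W_i := \prod_{\substack{p<q\\ p,q\neq i}}(x_p - x_q)$ is free of $x_i$, the constant factor $(-1)^{i-1}W_i$ cancels in $\partial_i^m V / V$, so it suffices to treat the monic polynomial $P(x_i) := \prod_{j\neq i}(x_i - x_j)$. Since each factor $(x_i - x_j)$ is linear in $x_i$, the multinomial Leibniz rule applied to the product leaves only the terms in which $m$ distinct factors are each differentiated exactly once (a factor differentiated twice vanishes), each such term carrying multinomial weight $m!$; hence
\[
P^{(m)}(x_i) = m! \sum_{\substack{S \subseteq \{j : j \neq i\}\\ |S| = m}} \prod_{j \notin S,\, j\neq i}(x_i - x_j).
\]
Dividing by $P(x_i)$ gives $P^{(m)}/P = m!\sum_{|S|=m}\prod_{j\in S}(x_i - x_j)^{-1}$, and rewriting the sum over $m$-element subsets as $\tfrac{1}{m!}$ times the sum over ordered tuples of pairwise distinct indices yields exactly the right-hand side of $(\star)$.

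The only genuinely delicate point is the bookkeeping in $(\star)$: one must confirm that the surviving multinomial coefficients all collapse to $m!$ and that this factor is precisely absorbed when passing from unordered subsets to ordered distinct tuples, so that no spurious constant survives. A low-cost sanity check I would include is the case $m=2$, where a direct computation gives $\partial_i^2 V/V = \big(\sum_{j\neq i}(x_i-x_j)^{-1}\big)^2 - \sum_{j\neq i}(x_i-x_j)^{-2}$, and the two ``diagonal'' contributions cancel, leaving exactly $\sum_{l_1\neq l_2,\,\neq i}(x_i-x_{l_1})^{-1}(x_i-x_{l_2})^{-1}$. An alternative to the multinomial computation is a short induction on $m$ based on $\partial_i^{m+1}V/V = (\partial_i V/V)(\partial_i^m V/V) + \partial_i(\partial_i^m V/V)$, but this shifts the combinatorial burden onto showing that the derivative terms cancel precisely the index collisions produced by the product, which I find less transparent than the direct argument above.
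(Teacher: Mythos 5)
Your proposal is correct and follows essentially the same route as the paper: both apply the Leibniz rule to the product of the Vandermonde's linear factors with $f$, observe that only terms differentiating $m$ distinct linear factors once each survive, and use the $m!$ count of orderings to produce the binomial coefficient $\binom{k}{m}$. Your two-step organization (binary Leibniz first, then the identity for $\partial_i^m V/V$) is a slightly cleaner packaging of the same computation, and your verification of $(\star)$ and the $m=2$ sanity check are sound.
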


\begin{proof}
  For $l \in \{1, \ldots, N\}$ and $k \in \mathbb{N}$, by Leibniz rule, we
  have
  \begin{equation}
    \partial_l^k \left( \prod_{i < j} (x_i - x_j) \cdot f \right) = (- 1)^{l -
    1} \sum_{k_1 + \cdots + k_N = k} \frac{k!}{k_1 ! \ldots k_N !} 
    \prod_{\underset{i \neq l}{i = 1}}^N \partial_l^{k_i} (x_l - x_i)
    \partial_l^{k_l} f, \label{mplimmplom}
  \end{equation}
  where in the product in the left hand side we omit terms of $\prod_{i <
  j} (x_i - x_j)$ that do not depend on $x_l$. Consider the case, where $k_l
  = k - m$ and $k_{l_1} = \cdots = k_{l_m} = 1$, for some $l_1, \ldots, l_m \in
  \{1, \ldots, N\} \backslash \{l\}$, with $l_i \neq l_j$, for all $i \neq j$.
  Then, if we divide the corresponding summand of the left hand side of
  (\ref{mplimmplom}) to $\prod_{i < j} (x_i - x_j)$, we get
  \[ \frac{k!}{(k - m) !}  \frac{\partial_l^{k - m} f}{(x_l - x_{l_1}) \ldots
     (x_l - x_{l_m})}. \]
  Considering all the different choices of variables to differentiate, we
  obtain that the left hand side of (\ref{mplimmplom}) divided by $\prod_{i < j}
  (x_i - x_j)$ is equal to
  \[ \sum_{\underset{l_i \neq l_j \text{ for } i \neq j}{l_1, \ldots, l_m \in
     \{1, \ldots, N\} \backslash \{l\}}} \binom{k}{m} \frac{\partial_l^{k - m}
     f}{(x_l - x_{l_1}) \ldots (x_l - x_{l_m})}, \]
  where in the above sum the binomial coefficient appears because for a fixed
  $m$-tuple $(l_1, \ldots, l_m)$ the summand $\frac{k!}{(k - m) !} 
  \frac{\partial_l^{k - m} f}{(x_l - x_{l_1}) \ldots (x_l - x_{l_m})}$ appears
  $m!$ times. If we take the sum with respect to $l = 1, \ldots, N$ and $m =
  1, \ldots, k$, we arrive at the claim.
\end{proof}

The next lemma is a standard result which will help us to evaluate the differential operator at zero, see e.g., \cite[Lemma 5.5]{BG}.

\begin{lemma}
  Let $n \geq 2$ and a function $f$ smooth in a neighborhood of $0$. Then
  \begin{multline}
    \lim_{x_1, \ldots, x_n \rightarrow 0} \left( \frac{f (x_1)}{(x_1 - x_2)
    \ldots (x_1 - x_n)} + \frac{f (x_2)}{(x_2 - x_1) (x_2 - x_3) \ldots (x_2 -
    x_n)} + \cdots + \frac{f (x_n)}{(x_n - x_1) \ldots (x_n - x_{n - 1})}
    \right) \\ = \frac{f^{(n - 1)} (0)}{(n - 1) !}. \label{malakas}
  \end{multline}
\end{lemma}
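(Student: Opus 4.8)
The plan is to recognise the left-hand side of \eqref{malakas} as the $(n-1)$-st \emph{divided difference} of $f$ at the nodes $x_1,\dots,x_n$,
\[
f[x_1,\dots,x_n] \assign \sum_{i=1}^n \frac{f(x_i)}{\prod_{j\neq i}(x_i-x_j)},
\]
and to show that this symmetric expression extends to a function that is regular across the diagonals near the origin, with value $f^{(n-1)}(0)/(n-1)!$ on the full diagonal $x_1=\dots=x_n=0$. The conceptual point is that, although each individual summand is singular when two nodes collide, these singularities cancel in the sum, so the stated limit exists and is finite; the task is to make this cancellation quantitative and to identify the limiting constant.

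The main engine will be the Hermite--Genocchi integral representation
\[
f[x_1,\dots,x_n] = \int_{\Sigma} f^{(n-1)}\!\left(\sum_{i=1}^n s_i\, x_i\right) ds_1\cdots ds_{n-1},
\]
where $\Sigma=\{(s_1,\dots,s_n): s_i\ge 0,\ \sum_{i=1}^n s_i=1\}$ is the standard simplex, parametrised by $(s_1,\dots,s_{n-1})$ with $s_n=1-\sum_{i<n}s_i$. Granting this, the conclusion is immediate: as $x_1,\dots,x_n\to 0$ the argument $\sum_i s_i x_i$ tends to $0$ uniformly over $\Sigma$, so by continuity of $f^{(n-1)}$ the integrand converges uniformly to the constant $f^{(n-1)}(0)$, and since the Euclidean volume of $\Sigma$ equals $1/(n-1)!$ we obtain precisely the right-hand side of \eqref{malakas}. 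For real nodes one may instead invoke the mean value theorem for divided differences, giving $f[x_1,\dots,x_n]=f^{(n-1)}(\xi)/(n-1)!$ for some $\xi$ in the convex hull of the nodes, and then let $\xi\to 0$; the integral version has the advantage of covering complex nodes and of rendering the cancellation of singularities manifest.

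The main obstacle is establishing the integral representation (equivalently, the removability of the apparent diagonal singularities). I would prove it by induction on $n$ using the divided-difference recursion
\[
f[x_1,\dots,x_n] = \frac{f[x_1,\dots,x_{n-1}]-f[x_2,\dots,x_n]}{x_1-x_n},
\]
peeling off one simplex variable via the fundamental theorem of calculus, with base case $f[x_1,x_2]=\int_0^1 f'(sx_1+(1-s)x_2)\,ds$. In the setting of the paper $f$ is analytic in a complex neighbourhood of $0$, and then there is an even more elementary route: expand $f$ in its convergent Taylor series, use linearity of the divided difference together with the classical identity $(x^m)[x_1,\dots,x_n]=h_{m-n+1}(x_1,\dots,x_n)$ (with $h_d$ the complete homogeneous symmetric polynomial, $h_d=0$ for $d<0$ and $h_0=1$), and observe that the terms with $m\le n-2$ vanish identically, the term $m=n-1$ contributes exactly $f^{(n-1)}(0)/(n-1)!$, and every term with $m\ge n$ carries a factor $h_{m-n+1}$ of positive degree that tends to $0$ as the nodes approach the origin. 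Uniform convergence on a small polydisc justifies exchanging the limit with the summation, and \eqref{malakas} follows; the only genuine care needed in either route is this uniform control of the tail as all nodes collapse to the origin.
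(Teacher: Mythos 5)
Your proof is correct. Note that the paper itself gives no proof of this lemma: it is stated as a standard result with a pointer to \cite[Lemma 5.5]{BG}, so there is no in-paper argument to compare against. Your identification of the left-hand side as the divided difference $f[x_1,\dots,x_n]$ is exactly right, and both of your routes are sound: the Hermite--Genocchi representation immediately gives the limit as $f^{(n-1)}(0)$ times the volume $1/(n-1)!$ of the simplex (and the inductive derivation of that representation from the recursion $f[x_1,\dots,x_n]=(f[x_1,\dots,x_{n-1}]-f[x_2,\dots,x_n])/(x_1-x_n)$ with base case $f[x_1,x_2]=\int_0^1 f'(sx_1+(1-s)x_2)\,ds$ is standard and complete), while the term-by-term route via $(x^m)[x_1,\dots,x_n]=h_{m-n+1}(x_1,\dots,x_n)$ is the cleaner option in the paper's actual setting, where ``smooth in a complex neighborhood'' means analytic and the tail estimate $|h_d(x_1,\dots,x_n)|\le\binom{n+d-1}{d}\delta^d$ for $|x_i|\le\delta$ justifies the interchange of limit and sum. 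The only hypothesis-sensitivity worth flagging is the one you already flag yourself: the integral/mean-value route is what covers genuinely $C^{n-1}$ real data, whereas the power-series route needs analyticity; since the paper applies the lemma to functions analytic near $0$, either suffices.
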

The next lemma is \cite[Lemma 5.4]{BG}.

\begin{lemma}
  \label{SnikfeatFy}Let $n$ be a positive integer and $P \subseteq \{(a, b)
  \in \mathbb{N}^2 \of 1 \leq a < b \leq n\}$. Moreover, let $f (z_1, \ldots,
  z_n)$ be a function and consider its symmetrization with respect to $P$,
  \[ f_P (z_1, \ldots, z_n) \assign \frac{1}{n!} \sum_{\pi \in S (n)} \frac{f
     (z_{\pi (1)}, \ldots, z_{\pi (n)})}{\prod_{(a, b) \in P} (z_{\pi (a)} -
     z_{\pi (b)})} . \]
  Then, the following holds:
  \begin{enumerate}
    {\item If $f$ is an analytic function in a neighborhood of $0^n$,
    then $f_P$ is also an analytic function in a neighborhood of $0^n$.}{\item
    If $(f^{(m)} (z_1, \ldots, z_n))_{m \in \mathbb{N}}$ is a sequence of
    analytic functions converging to zero uniformly in a neighborhood of
    $0^n$, then so is the sequence $f_P^{(m)}$.}
  \end{enumerate}
\end{lemma}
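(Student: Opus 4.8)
The plan is to view $f_P$ as a function that is manifestly holomorphic off the diagonal hyperplanes $H_{i,j} := \{z_i = z_j\}$ and to show that its apparent poles along $\bigcup_{i<j} H_{i,j}$ are removable; uniform control of the sequence then follows from a maximum-modulus estimate on a torus that stays away from the diagonal. Write $D := \bigcup_{1 \le i < j \le n} H_{i,j}$, and let $U$ be a polydisc about $0^n$ on which $f$ is holomorphic. On $U \setminus D$ each summand of $n!\, f_P$ is a product of a holomorphic function with reciprocals of the nonvanishing factors $(z_{\pi(a)} - z_{\pi(b)})$, so $f_P$ is holomorphic there; the whole content of part (1) is that the poles cancel.

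First I would establish local boundedness of $f_P$ near a generic point $p$ of a single hyperplane $H_{i,j}$, i.e.\ a point where $z_i = z_j$ but all remaining coordinates are mutually distinct and distinct from $z_i$. At such $p$ the only vanishing difference is $z_i - z_j$, so a summand indexed by $\pi$ has a (simple) pole only when $\{\pi(a), \pi(b)\} = \{i,j\}$ for some $(a,b) \in P$; call such a $\pi$ singular. The key device is the fixed-point-free involution $\pi \mapsto (i\,j) \circ \pi$ on $S(n)$, which swaps the preimages of $i$ and $j$ and hence sends singular permutations to singular permutations. For a singular $\pi$ with, say, $\pi(a) = i,\ \pi(b) = j$, its partner carries the offending factor $(z_j - z_i)$, of opposite sign; moreover on $H_{i,j}$ the relation $z_{(i\,j)(m)} = z_m$ forces the numerators $f(z_{\pi(\cdot)})$ and all the remaining (nonvanishing) denominator factors of the two partners to agree in the limit. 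Thus the paired contribution equals $(z_i - z_j)^{-1}$ times a holomorphic function that vanishes on $H_{i,j}$, hence is holomorphic near $p$, while the nonsingular summands are individually holomorphic near $p$. This is the multivariate analogue of the cancellation $\frac{f(z_1,z_2)}{z_1 - z_2} + \frac{f(z_2,z_1)}{z_2 - z_1} = \frac{f(z_1,z_2) - f(z_2,z_1)}{z_1 - z_2}$, and it is the main obstacle of the proof.

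With boundedness near generic hyperplane points in hand, the Riemann removable-singularity theorem extends $f_P$ holomorphically across the generic part of every $H_{i,j}$; the locus that remains is contained in the pairwise intersections $H_{i,j} \cap H_{k,l}$, an analytic set of codimension $\ge 2$, across which Hartogs' extension theorem removes the singularity with no further estimate. This yields a holomorphic $f_P$ on $U$, proving (1); note the extension domain depends only on $U$ and not on the particular $f$. For part (2) I would fix distinct radii $r_1 < \cdots < r_n$ small enough that the closed polydisc $\bar\Delta = \prod_k \{|z_k| \le r_k\} \subset U$ lies in the common neighborhood on which $f^{(m)} \to 0$ uniformly, and evaluate on the distinguished boundary torus $T = \prod_k \{|z_k| = r_k\}$. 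There $|z_a - z_b| \ge |r_a - r_b| \ge \delta > 0$, so every denominator is bounded below by $\delta^{|P|}$ and $\sup_T |f_P^{(m)}| \le \delta^{-|P|}\, \sup_{\bar\Delta} |f^{(m)}|$. Since $f_P^{(m)}$ is holomorphic on $\bar\Delta$ by part (1), the maximum-modulus principle for polydiscs gives $\sup_{\bar\Delta}|f_P^{(m)}| = \sup_T |f_P^{(m)}| \to 0$, which is the claimed uniform convergence on a neighborhood of $0^n$.
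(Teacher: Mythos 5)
Your argument is correct, and it is necessarily your own route: the paper does not prove this lemma at all, it simply cites it as \cite[Lemma 5.4]{BG}, so there is no in-text proof to compare against. Your proof is a clean, self-contained version of the standard argument. The three ingredients all check out: (i) at a generic point of a single hyperplane $H_{i,j}$ each singular permutation $\pi$ has exactly one offending factor (namely the one indexed by $(a,b)=(\pi^{-1}(i),\pi^{-1}(j))$ up to order), so the pole is simple, and the fixed-point-free involution $\pi \mapsto (i\,j)\circ\pi$ preserves the witnessing pair while flipping the sign of that factor; since $z_{(i\,j)(m)}=z_m$ on $H_{i,j}$, the paired numerators agree there even though $f$ is not assumed symmetric, so the difference is divisible by $z_i-z_j$ in the local ring and the pair is holomorphic; (ii) the residual singular locus sits in the pairwise intersections $H_{i,j}\cap H_{k,l}$, an analytic set of codimension $2$, so the second Riemann extension theorem finishes part (1) with no further estimate; (iii) the distinct-radii torus bound plus the maximum principle on the distinguished boundary gives part (2).

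One small imprecision in part (2): for $z$ on the torus $T=\prod_k\{|z_k|=r_k\}$ the permuted point $(z_{\pi(1)},\ldots,z_{\pi(n)})$ generally does \emph{not} lie in $\bar\Delta=\prod_k\{|z_k|\le r_k\}$ (its $k$-th coordinate has modulus $r_{\pi(k)}$, which may exceed $r_k$), so the estimate should read $\sup_T|f_P^{(m)}|\le \delta^{-|P|}\sup_{\{|w_k|\le r_n\ \forall k\}}|f^{(m)}(w)|$, i.e.\ the supremum must be taken over the symmetric polydisc of radius $\max_k r_k$. This is harmless --- just choose all $r_k$ smaller than the radius of the common neighborhood of uniform convergence --- but as written the inequality is not literally true. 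With that repair the proof is complete.
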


Now we state the main theorem of this section. It is a degeneration of \cite[Theorem 5.1]{BG}.  

\begin{theorem}
  \label{SouvlakiA}Let $A=A(N)$ be a random Hermitian matrix of size $N$ and assume
  that for every finite $r$ one has
  \begin{equation}
    \lim_{N \rightarrow \infty} \frac{1}{N} \log \text{} \mathbb{E} [H \text{}
    C (x_1, \ldots, x_r, 0^{N - r} ; \lambda_1 (A), \ldots, \lambda_N (A))] =
    \sum_{i = 1}^r \Psi (x_i), \label{xereis}
  \end{equation}
  where $\Psi$ is a smooth function in a complex neighborhood of $0$ and the above
  convergence is uniform in a complex neighborhood of $0^r$. Then the random measure
  $N^{- 1} \sum_{i = 1}^N \delta \left( N^{- 1} \lambda_i (A) \right)$ converges, as $N
  \rightarrow \infty$, in probability, in the sense of moments to a
  deterministic measure $\tmmathbf{\mu}$ on $\mathbb{R}$ whose moments are
  given by
  \begin{equation}
    \int_{\mathbb{R}} t^k \tmmathbf{\mu} (d \text{} t) = \sum_{m = 0}^k
    \frac{k!}{m! (m + 1) ! (k - m) !}  \left. \frac{\mathd^m}{\mathd x^m}
    ((\Psi' (x))^{k - m}) \right|_{x = 0} . \label{xese}
  \end{equation}
\end{theorem}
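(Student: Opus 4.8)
The plan is to establish $L^2$-convergence of every moment, which then yields convergence in probability. For a fixed realization of the eigenvalues write $f = H\,C(x_1,\ldots,x_N;\lambda_1(A),\ldots,\lambda_N(A))$ for the integrand of Proposition \ref{prop:1}, and set $G_N(x_1,\ldots,x_N) := \mathbb{E}[f]$. Since $\mathcal{D}_k$ acts only on the $x$-variables it commutes with $\mathbb{E}$, so Proposition \ref{prop:1} together with $G_N|_{x=0}=1$ gives $\mathbb{E}\big[\sum_i \lambda_i^k(A)\big] = \mathcal{D}_k(G_N)|_{x=0}$. Moreover, because $\sum_i\lambda_i^k$ is a scalar with respect to $x$, applying Proposition \ref{prop:1} twice gives $\mathcal{D}_k^2(f) = (\sum_i\lambda_i^k)^2 f$, hence $\mathbb{E}\big[(\sum_i\lambda_i^k)^2\big] = \mathcal{D}_k^2(G_N)|_{x=0}$. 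The whole argument therefore reduces to the large-$N$ asymptotics of $\mathcal{D}_k(G_N)|_{x=0}$ (for the mean) and $\mathcal{D}_k^2(G_N)|_{x=0}$ (for the variance).

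First I would analyze $\mathcal{D}_k(G_N)|_{x=0}$ via Lemma \ref{lem:2}. Grouping the ordered tuples $(l_0,\ldots,l_m)$ in \eqref{alaniariko} by their underlying set $S$ of size $m+1$, the symmetry of $G_N$ collapses the inner sum into $m!\binom{N}{m+1}$ identical copies of a single divided-difference limit as $x_S\to 0$. Writing $G_N(x_S,0^{N-m-1}) = \exp(N\Phi_N(x_S))$, the hypothesis \eqref{xereis} with $r=m+1$ gives $\Phi_N \to \sum_i \Psi(x_i)$ uniformly in a complex neighborhood of $0^{m+1}$; since these are analytic functions, all derivatives converge uniformly as well, and $\Phi_N(0)=0$ (so $\Psi(0)=0$) because $G_N|_{x=0}=1$. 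A Fa\`a di Bruno / Bell-polynomial expansion yields $\partial_{l_0}^{k-m} G_N = \exp(N\Phi_N)\big(N^{k-m}(\partial_{l_0}\Phi_N)^{k-m} + O(N^{k-m-1})\big)$. Factoring out the symmetric prefactor $\exp(N\Phi_N)$ (which tends to $1$ as $x_S\to0$) and replacing $(\partial_{l_0}\Phi_N)^{k-m}$ by $q(x_{l_0}):=(\Psi'(x_{l_0}))^{k-m}$, the leading part becomes $N^{k-m}$ times the divided difference of the single-variable function $q$, whose $x_S\to0$ limit is $q^{(m)}(0)/m!$ by \eqref{malakas} (with $n-1=m$). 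The cancellation of the $m!$ from the tuple count against the $1/m!$ from the divided difference, combined with $\binom{N}{m+1}\sim N^{m+1}/(m+1)!$, produces the coefficient $\binom{k}{m}\frac{1}{(m+1)!}q^{(m)}(0) = \frac{k!}{m!(m+1)!(k-m)!}\frac{\mathd^m}{\mathd x^m}((\Psi'(x))^{k-m})|_{0}$ of $N^{k+1}$; summing over $m$ and dividing by $N^{k+1}$ reproduces exactly \eqref{xese}.

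The technical heart, and the step I expect to be the main obstacle, is showing that all error terms are negligible after the singular divided-difference limits. The $O(N^{k-m-1})$ Bell remainders contribute at most order $N^{m+1}\cdot N^{k-m-1} = N^k = o(N^{k+1})$, using uniform bounds on the $x$-derivatives of $\Phi_N$. The genuinely delicate point is that $\partial_{l_0}\Phi_N$ depends on \emph{all} variables of $S$, not on $x_{l_0}$ alone, so \eqref{malakas} does not apply to the difference $(\partial_{l_0}\Phi_N)^{k-m}-q(x_{l_0})$. Here Lemma \ref{SnikfeatFy} is decisive: this difference is analytic and tends to zero uniformly as $N\to\infty$, so by its second part the associated symmetrization (precisely the divided-difference limit) also tends to zero; since the common factor $N^{k-m}$ is divided out in the final normalization, no convergence rate beyond this is needed. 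Organizing the expansion so that every remainder appears as such a symmetrization lets one discard all of them and isolate the stated leading coefficient.

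Finally, for the variance I would run the same machinery on $\mathcal{D}_k^2(G_N)|_{x=0}$. The leading $N^{2k+2}$ contribution comes from terms in which the two applications of $\mathcal{D}_k$ differentiate disjoint blocks of variables; there the exponential prefactor and the two divided differences decouple, giving $(\mathcal{D}_k(G_N)|_0)^2(1+o(1))$. The overlapping terms, where the two index sets share a variable, carry at least one fewer factor of $N$ and are $O(N^{2k+1})$. Hence $\mathrm{Var}\big(\sum_i\lambda_i^k\big) = \mathcal{D}_k^2(G_N)|_0 - (\mathcal{D}_k(G_N)|_0)^2 = o(N^{2k+2})$, so $\mathrm{Var}\big(N^{-(k+1)}\sum_i\lambda_i^k\big)\to 0$. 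Together with the convergence of the mean this gives $L^2$-convergence, hence convergence in probability, of every moment to the value in \eqref{xese}, and these moments determine the deterministic limit $\tmmathbf{\mu}$. Throughout, the uniform analytic convergence of $\Phi_N$ and Lemma \ref{SnikfeatFy} are the tools that tame the singular limits, with the variance estimate demanding the most careful version of this bookkeeping.
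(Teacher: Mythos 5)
Your proposal is correct and follows essentially the same route as the paper: expanding $\mathcal{D}_k$ (and $\mathcal{D}_k^2$) of the expected Harish-Chandra transform via the exponential representation $f_N=\exp\bigl(N\cdot\tfrac1N\log f_N\bigr)$, collapsing the sums by symmetry into divided differences handled by \eqref{malakas} and Lemma \ref{SnikfeatFy}, extracting the leading power $N^{k+1}$ from the terms with $l_1=k-m$, and showing that the overlapping index configurations in the second-moment computation are of lower order so that the variance vanishes after normalization. The bookkeeping you identify as the technical heart (remainders appearing as symmetrizations that vanish by the second part of Lemma \ref{SnikfeatFy}) is exactly how the paper organizes the argument.
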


\begin{proof}
  For a notation simplicity we write $f_N (x_1, \ldots, x_N) \assign \mathbb{E}
  \left( H \text{} C (x_1, \ldots, x_N ; \lambda_1 (A), \ldots, \lambda_N (A))
  \right)$. First, we will show the convergence in expectation, in the sense that
  for every $k \in \mathbb{N}$
  \begin{equation}
    \lim_{N \rightarrow \infty} \frac{1}{N^{k + 1}} \mathbb{E} \left[ \sum_{i
    = 1}^N \lambda_i^k (A) \right] = \int_{\mathbb{R}} t^k \tmmathbf{\mu} (d
    \text{} t) \label{flwros} .
  \end{equation}
  Using Proposition \ref{prop:1}, we have
  \begin{equation}
    \mathbb{E} \left[ \left( \sum_{i = 1}^N \lambda_i^k (A) \right) H \text{}
    C (x_1, \ldots, x_N ; \lambda_1 (A), \ldots, \lambda_N (A)) \right]
    =\mathcal{D}_k f_N (x_1, \ldots, x_N). \label{pipi}
  \end{equation}
  It would be convenient to use the equality $f_N = \exp \left( N \cdot \frac{1}{N} \log \text{} f_N \right)$.
  For $m = 0, \ldots, k$,
  applying the chain rule for every $i = 1, \ldots, N$, one obtains
  \begin{equation}
    \partial_i^{k - m} f_N = \sum \frac{(k - m) !}{l_1 !1!^{l_1} l_2 !2!^{l_2}
    \ldots l_{k - m} ! (k - m) !^{l_{k - m}}} N^{l_1 + \cdots + l_{k - m}} f_N
    \prod_{j = 1}^{k - m} \left( \partial_i^j \left( \frac{1}{N} \log \text{}
    f_N \right) \right)^{l_j}, \label{kefi}
  \end{equation}
  where we consider the sum with respect to $l_1, \ldots, l_{k - m}$ such that
  $l_1 + 2 l_2 + \cdots + (k - m) l_{k - m} = k - m$. Due to Lemma \ref{SnikfeatFy},
  $\mathcal{D}_k f_N (x_1, \ldots, x_N)$ can be written as a linear
  combination of symmetric terms of the form
  \[ \frac{\left( \partial_{b_0} \left( \frac{1}{N} \log \text{} f_N \right)
     \right)^{l_1} \ldots \left( \partial_{b_0}^{k - m} \left( \frac{1}{N}
     \log \text{} f_N \right) \right)^{l_{k - m}}}{(x_{b_0} - x_{b_1}) \ldots
     (x_{b_0} - x_{b_m})} + \frac{\left( \partial_{b_1} \left( \frac{1}{N}
     \log \text{} f_N \right) \right)^{l_1} \ldots \left( \partial_{b_1}^{k -
     m} \left( \frac{1}{N} \log \text{} f_N \right) \right)^{l_{k -
     m}}}{(x_{b_1} - x_{b_0}) (x_{b_1} - x_{b_2}) \ldots (x_{b_1} - x_{b_m})}
     + \cdots \]
  \begin{equation}
    + \frac{\left( \partial_{b_m} \left( \frac{1}{N} \log \text{} f_N
    \right) \right)^{l_1} \ldots \left( \partial_{b_m}^{k - m} \left(
    \frac{1}{N} \log \text{} f_N \right) \right)^{l_{k - m}}}{(x_{b_m} -
    x_{b_0}) \ldots (x_{b_m} - x_{b_{m - 1}})}, \label{tralala}
  \end{equation}
  where $b_0, \ldots, b_m \in \{1, \ldots, N\}$. In (\ref{pipi}) one needs 
  to send $x_1, \ldots, x_N$ to zero in order to get the moments of the empirical distribution. 
  Let us determine the limiting behavior of terms (\ref{tralala}), as $x_1, \ldots, x_N
 \rightarrow 0$ and $N \rightarrow \infty$. For every $i =
  1, \ldots, N$, we consider the functions
  \[ g_i (x_1, \ldots, x_N) \assign \left[ \left( \partial_i \left(
     \frac{1}{N} \log \text{} f_N \right) \right)^{l_1} \left( \partial_i^2
     \left( \frac{1}{N} \log \text{} f_N \right) \right)^{l_2} \ldots \left(
     \partial_i^{k - m} \left( \frac{1}{N} \log \text{} f_N \right)
     \right)^{l_{k - m}} \right] (x_1, \ldots, x_N), \]
  and $F_i (\varepsilon) \assign g_i (\varepsilon, 2 \varepsilon, \ldots, N
  \varepsilon)$. In (\ref{pipi}) we set $x_i = i \varepsilon$ for every $i =
  1, \ldots, N$ and we will send $\varepsilon$ to $0$. For the functions $F_i$
  we consider the Taylor expansions
  \[ F_i (\varepsilon) = F_i (0) + F_i' (0) \varepsilon + \cdots +
     \frac{\varepsilon^m}{m!} F_i^{(m)} (0) + \varepsilon^m h_i (\varepsilon),
  \]
  where $\lim_{\varepsilon \rightarrow 0} h_i (\varepsilon) = 0$, and we want
  to understand how the summands
  \begin{equation}
    \frac{F_{b_0}^{(n)} (0) \varepsilon^{n - m}}{(b_0 - b_1) \ldots (b_0 -
    b_m)} + \frac{F_{b_1}^{(n)} (0) \varepsilon^{n - m}}{(b_1 - b_0) (b_1 -
    b_2) \ldots (b_1 - b_m)} + \cdots + \frac{F_{b_m}^{(n)} (0) \varepsilon^{n
    - m}}{(b_m - b_0) \ldots (b_m - b_{m - 1})} \label{w}
  \end{equation}
  contribute to (\ref{tralala}). For $n \in \{0, \ldots, m\}$ and $i \in \{1,
  \ldots, N\}$, we have
  \begin{equation}
    F^{(n)}_i (0) = \sum_{i_1, \ldots, i_n = 1}^N i_1 \ldots i_n
    \partial_{i_n} \ldots \partial_{i_1} g_i (0^N). \label{kamputsiki}
  \end{equation}
  Since $f_N$ is symmetric, for two $(n + 1)$-tuples $(\alpha_1, \ldots,
  \alpha_{n + 1})$ and $(\beta_1, \ldots, \beta_{n + 1})$ of elements of $\{1,
  \ldots, N\}$,
  \begin{equation}
    \partial_{\alpha_1} \ldots \partial_{\alpha_n} g_{\alpha_{n + 1}} (0^N) =
    \partial_{\beta_1} \ldots \partial_{\beta_n} g_{\beta_{n + 1}} (0^N),
    \label{seixeles}
  \end{equation}
  where $\alpha_i = \alpha_j \Leftrightarrow \beta_i = \beta_j$, for every $i,
  j$. But, by induction on $r$, for every $k_1, \ldots, k_r \in \mathbb{N}$, a
  sum of the form
  \[ \sum_{\underset{x_i \neq x_j \text{ for } i \neq j}{x_1, \ldots, x_r \in
     \{1, \ldots, N\} \backslash \{x\}}}  x_1^{k_1} \ldots x^{k_r}_r \]
  is a polynomial in $x$ of degree $k_1 + \cdots + k_r$, where its leading
  coefficient does not depend on $N$. Thus, in the sum (\ref{kamputsiki}),
  considering all the cases separately for some of the $i, i_1, \ldots, i_n$
  to be equal, we see that $F^{(n)}_i (0)$ is a polynomial in $i$ of degree
  $n$ and its leading coefficient is a linear combination of derivatives
  $\partial_{i_n} \ldots \partial_{i_1} g_i (0^N)$. Since
  \[ \frac{b_0^n}{(b_0 - b_1) \ldots (b_0 - b_m)} + \cdots + \frac{b_m^n}{(b_m
     - b_0) \ldots (b_m - b_{m - 1})} = \left\{\begin{array}{l}
       0 \text{, for } n = 0, \ldots, m - 1\\
       1 \text{, for } n = m,
     \end{array}\right. \]
  we deduce that only for $n = m$ the sum (\ref{w}) contributes to
  (\ref{tralala}). Thus, (\ref{tralala}) converges as $\varepsilon \rightarrow
  0$ and the limit does not depend on $b_0, \ldots, b_m$. We denote it by
  $c_N (m)$. The dependence of $c_N (m)$ on $N$ comes from the fact that it
  is a linear combination of derivatives
  \begin{equation}
    \partial_{i_m} \ldots \partial_{i_1} \left[ \left( \partial_i \left(
    \frac{1}{N} \log \text{} f_N \right) \right)^{l_1} \ldots \left(
    \partial_i \left( \frac{1}{N} \log \text{} f_N \right) \right)^{l_{k - m}}
    \right] (0^N), \label{antk}
  \end{equation}
  where $i, i_1, \ldots, i_m \in \{1, \ldots, m + 1\}$. Then, assumption
  (\ref{xereis}) implies that only the derivative with respect to one variable
  (which corresponds to the case $i = i_1 = \cdots = i_m$) contributes as $N
  \rightarrow \infty$, i.e.
  \[ \lim_{N \rightarrow \infty} c_N (m) = \left. \frac{\mathd^m}{\mathd x^m}
     ((\Psi' (x))^{l_1} (\Psi'' (x))^{l_2} \ldots (\Psi^{(k - m)}
     (x))^{l_{k - m}}) \right|_{x = 0} . \]
  In order to prove (\ref{flwros}) we also have to understand the dependence
  of $\lim_{\varepsilon \rightarrow 0} \mathcal{D}_k f_N \longdownminus_{x_i =
  i \varepsilon}$ from $N$. Since $c_N (m)$ does not depend on $b_0, \ldots,
  b_m$, we have
  \[ \left. \sum_{\underset{b_i \neq b_j \text{ for } i \neq j}{b_0, \ldots,
     b_m = 1}}^N \left( \frac{F_{b_0} (\varepsilon)}{(b_0 - b_1) \ldots (b_0 -
     b_m) \varepsilon^m} + \cdots + \frac{F_{b_m} (\varepsilon)}{(b_m - b_0)
     \ldots (b_m - b_{m - 1})} \right) \right|_{\varepsilon = 0} = m!
     \binom{N}{m + 1} c_N (m), \]
  and it is true that the dependence of $\lim_{\varepsilon \rightarrow 0}
  \mathcal{D}_k f_N \longdownminus_{x_i = i \varepsilon}$ on $N$ emerges
  from $(c_N (m))_{m = 0}^k$ and powers of $N$. The largest power of $N$ that
  appears is $N^{k + 1}$, which corresponds to the case $m \in \{0,
  \ldots, k\}$, $l_1 = k - m$ and $l_2 = \cdots = l_{k - m} = 0$. Thus, by the
  above we deduce that
  \[ \frac{1}{N^{k + 1}} \mathbb{E} \left[ \sum_{i = 1}^N \lambda_i^k (A)
     \right] = \sum_{m = 0}^k \frac{k!}{m! (m + 1) ! (k - m) !} \partial_1^m
     \left( \left( \partial_1 \left( \frac{1}{N} \log \text{} f_N \right)
     \right)^{k - m} \right) (0^N) + \omicron (1) . \]
  This proves (\ref{flwros}). Due to (\ref{flwros}), in order to show
  the convergence in probability it suffices to show that for every $k \in
  \mathbb{N}$ one has
  \begin{equation}
    \lim_{N \rightarrow \infty} \mathbb{E} \left[ \sum_{i = 1}^N
    \frac{\lambda_i^k (A)}{N^{k + 1}} \right]^2 = \left( \int_{\mathbb{R}} t^k
    \tmmathbf{\mu}(d \text{} t) \right)^2 . \label{comewithmenow}
  \end{equation}
  Since
  \[ \left. \mathcal{D}_k^2 \left( \mathbb{E} \left[ H \text{} C (x_1, \ldots,
     x_N ; \lambda_1 (A), \ldots, \lambda_N (A)) \right] \right) \right|_{x_1
     = \cdots = x_N = 0} =\mathbb{E} \left[ \sum_{i = 1}^N \lambda_i^k (A)
     \right]^2, \]
  we want to better understand how the operator $\mathcal{D}_k^2$ acts on
  smooth functions. For every smooth function $g$ and $k \in \mathbb{N}$ we
  have
  \[ \mathcal{D}_k^2 g =\mathcal{D}_{2 k} g + \left( \prod_{i < j} (x_i - x_j)
     \right)^{- 1} \sum_{\underset{m \neq n}{m, n = 1}}^N \partial_n^k
     \partial_m^k \left( \prod_{i < j} (x_i - x_j) g \right), \]
  and we have shown that
  \[ \lim_{N \rightarrow \infty} \frac{1}{N^{2 k + 2}} \mathcal{D}_{2 k} f_N
     \longdownminus_{x_1 = \cdots = x_N = 0} = 0. \]
  Thus, we have to control the above sum when $x_1, \ldots, x_N \rightarrow 0$
  and $N \rightarrow \infty$. By Leibniz rule
  \begin{equation}
    \sum_{\underset{n \neq m}{n, m = 1}}^N \partial_n^k \partial_m^k \left(
    \prod_{i < j} (x_i - x_j) f_N \right) = \sum_{\nu, \mu = 0}^k
    \sum_{\underset{n \neq m}{n, m = 1}}^N \binom{k}{\nu} \binom{k}{\mu}
    \partial_n^{k - \nu} \partial_m^{k - \mu} f_N \partial_n^{\nu}
    \partial_m^{\mu} \left( \prod_{i < j} (x_i - x_j) \right)
    \label{TisoukanwLilamou} .
  \end{equation}
  Dividing both sides by $\prod_{i < j} (x_i - x_j)$, we see that the
  summands in the right hand side of (\ref{TisoukanwLilamou}) are linear
  combinations of terms of the form
  \[ \frac{\partial_n^{k - \nu} \partial_m^{k - \mu} f_N}{(x_n - x_{a_1})
     \ldots (x_n - x_{a_{\nu}}) \ldots (x_m - x_{b_1}) \ldots (x_m -
     x_{b_{\mu}})}, \]
  where $a_i \neq n, b_i \neq m, a_i \neq a_j, b_i \neq b_j$ and $|\{m\} \cap
  \{a_1, \ldots, a_{\nu} \}| + |\{n\} \cap \{b_1, \ldots, b_{\mu} \}| \leq 1$.
  Taking into account condition (\ref{xereis}), we write $f_N = \exp \left( N
  \cdot \frac{1}{N} \log \text{} f_N \right)$ and we use the chain rule in
  order to write $\partial_n^{k - \nu} \partial_m^{k - \mu} f_N$ as a large sum. 
  For each summand the factors that will depend on $x_1, \ldots,
  x_N$ will be derivatives $\partial_n^{\rho} \partial_m^{\lambda} \left(
  \frac{1}{N} \log \text{} f_N \right)$ or $f_N$ and the factors that will not
  depend on $x_1, \ldots, x_N$ will be monomials in $N$. Moreover, the summand
  that corresponds to the monomial of the highest order will be
  \begin{equation}
    N^{2 k - \mu - \nu} f_N  \left( \partial_n \left( \frac{1}{N} \log \text{}
    f_N \right) \right)^{k - \nu} \left( \partial_m \left( \frac{1}{N} \log
    \text{} f_N \right) \right)^{k - \mu} \label{SEXYLADY} .
  \end{equation}
  For a notation homogeneity we write $n = a_0$ and $m = b_0$. Thus, since $f_N$
  is a symmetric function, in order to control the right hand side of
  (\ref{TisoukanwLilamou}) as $x_1, \ldots, x_N \rightarrow 0$, we can
  consider the sum of summands of the form
  \[ \frac{\prod_{i = 1}^t \partial_{a_0}^{\alpha_i} \partial_{b_0}^{\beta_i}
     \left( \frac{1}{N} \log \text{} f_N \right)}{(x_{a_0} - x_{a_1}) \ldots
     (x_{a_0} - x_{a_{\nu}}) (x_{b_0} - x_{b_1}) \ldots (x_{b_0} -
     x_{b_{\mu}})} \]
  in order to obtain a symmetrization of $\prod_{i = 1}^t \left[
  \partial_n^{\alpha_i} \partial_m^{\beta_i} \left( \frac{1}{N} \log \text{}
  f_N \right) \right] (x_1, \ldots, x_N)$ with respect to some subset of
  $\{(a, b) \in \mathbb{N}^2 \of 1 \leq a < b \leq N\}$, where $n, m \in \{1,
  2\}$. This subset will have the form 
  $$
  \{(1, l_1), \ldots, (1, l_{\rho}), (2, m_1), \ldots, (2, m_{\lambda})\}.
  $$ 
  If some of the $l_i$'s are equal to $2$,
  this corresponds to the case where some of the $a_1, \ldots, a_{\nu}$ are
  equal to $b_0$ or some of the $b_1, \ldots, b_{\mu}$ are equal to $a_0$. By
  Lemma \ref{SnikfeatFy} these symmetrizations will converge in the limit $x_1, \ldots,
  x_N \rightarrow 0$. Note that in order to compute the limit we can first
  send to zero the variables that will not appear in the denominator. Then the
  limit can be written as a sum of limits, where these summands are
  symmetrizations of the function $\prod_{i = 1}^t
  \left[ \partial_n^{\alpha_i} \partial_m^{\beta_i} \left( \frac{1}{N} \log
  \text{} f_N \right) \right] (x_1, \ldots, x_r, 0^{N - r})$ evaluated at $0^r$, where $r = |\{a_0, \ldots, a_{\nu}, b_0,
  \ldots, b_{\mu} \}|$. In order to show (\ref{comewithmenow}), we are
  interested in the limit $N \rightarrow \infty$. Due to the second part of Lemma
  \ref{SnikfeatFy} and assumption (\ref{xereis}), the symmetrization of the above
  function, with finite number of variables, evaluated at $0^r$, will converge to
  the corresponding symmetrization of
  \[ g_{\alpha, \beta, t} (x_1, \ldots, x_r) \assign \prod_{i = 1}^t
     \partial_n^{\alpha_i} \partial_m^{\beta_i} \left( \sum_{j = 1}^r \Psi
     (x_j) \right), \]
  evaluated at $0^r$. Assume that the indices $a_0, \ldots, a_{\nu}, b_0, \ldots,
  b_{\mu}$ are all distinct. Then we see that the number of different ways
  that we can choose $a_0, \ldots, a_{\nu}, b_0, \ldots, b_{\mu} \in \{1,
  \ldots, N\}$ at the sum in the right hand side of (\ref{TisoukanwLilamou}),
  divided by $\prod_{i < j} (x_i - x_j)$, in order to obtain the above
  symmetrization evaluated at zero, is equal to $O (N^{\nu + \mu + 2})$. On the
  other hand, if some of the $a_i$'s is equal to some of the $b_j$'s, then the
  corresponding symmetrization, evaluated at zero, will appear $\omicron (N^{\nu + \mu + 2})$ times in the right hand
  side of (\ref{TisoukanwLilamou}). Thus,
  since we have to divide $\mathcal{D}_k^2 f_N \longdownminus_{x_i = 0}$ by
  $N^{2 k + 2}$ before we consider the limit $N \rightarrow \infty$, we deduce that only
  the function (\ref{SEXYLADY}) will contribute to $\lim_{N \rightarrow
  \infty} \frac{1}{N^{2 k + 2}} \mathcal{D}_k^2 f_N \longdownminus_{x_i = 0}$.
  But the symmetrization of this function, evaluated at zero, contributes as $N
  \rightarrow \infty$ with terms
  \[ \left( \frac{(\Psi' (x_{b_0}))^{k - \mu}}{(x_{b_0} - x_{b_1}) \ldots
     (x_{b_0} - x_{b_{\mu}})} + \cdots + \frac{(\Psi' (x_{b_{\mu}}))^{k -
     \mu}}{(x_{b_{\mu}} - x_{b_0}) \ldots (x_{b_{\mu}} - x_{b_{\mu - 1}})}
     \right) \text{{\hspace{7em}}} \]
  \[ \text{{\hspace{9em}}} \times \left. \left( \frac{(\Psi' (x_{a_0}))^{k -
     \nu}}{(x_{a_0} - x_{a_1}) \ldots (x_{a_0} - x_{a_{\nu}})} + \cdots +
     \frac{(\Psi' (x_{a_{\nu}}))^{k - \nu}}{(x_{a_{\nu}} - x_{a_0}) \ldots
     (x_{a_{\nu}} - x_{a_{\nu - 1}})} \right) \right|_{x_i = 0} \]
  \begin{equation}
    = \left. \frac{\mathd^{\mu}}{\mathd x^{\mu}} \left( \frac{(\Psi' (x))^{k -
    \mu}}{\mu !} \right) \frac{\mathd^{\nu}}{\mathd x^{\nu}} \left(
    \frac{(\Psi' (x))^{k - \nu}}{\nu !} \right) \right|_{x = 0} \label{HH} .
  \end{equation}
  Taking into account the possible values that we can consider for $a_0,
  \ldots, a_{\nu}, b_0, \ldots, b_{\mu}$, we see that the terms (\ref{HH})
  will contribute $\mu ! \nu ! \binom{N}{\mu + 1} \binom{N - \mu - 1}{\nu +
  1}$ times to $\mathcal{D}_k^2 f_N \longdownminus_{x_i = 0}$. The factors
  $\mu !, \nu !$ exist due to the number of different ways that we can
  differentiate $x_{a_0} - x_{a_1}, \ldots, x_{a_0} - x_{a_{\nu}}$ and
  $x_{b_0} - x_{b_1}, \ldots, x_{b_0} - x_{b_{\mu}}$ respectively, in
  $\partial_{a_0}^{\nu} \partial_{b_0}^{\mu} \left( \prod_{i < j} (x_i - x_j)
  \right)$, in order to obtain the desirable denominator. Thus, we deduce that
  \[ \frac{1}{N^{2 k + 2}} \mathbb{E} \left[ \sum_{i = 1}^N \lambda_i^k (A)
     \right]^2 = \left( \sum_{m = 0}^{k - 1} \frac{k!}{m! (m + 1) ! (k - m) !}
     \left. \frac{\mathd^m}{\mathd x^m} ((\Psi' (x))^{k - m}) \right|_{x = 0}
     \right)^2 + \omicron (1), \]
  and the claim holds.
\end{proof}

In order to emphasize the connection with free probability, we recall the following technical lemma. 

\begin{lemma}
\label{lem:mom-cum}
  Let $\tmmathbf{\mu}$ be a probability measure on $\mathbb{R}$ such that it's
  moments $(\tmmathbf{\mu}_n)_{n \in \mathbb{N}}$ are given by
  \begin{equation}
    \tmmathbf{\mu}_n = \sum_{m = 0}^{n - 1} \frac{n!}{m! (m + 1) ! (n - m) !}
    \frac{\mathd^m}{\mathd x^m} ((\Psi' (x))^{n - m}) \longdownminus_{x =
    0}, \label{topg}
  \end{equation}
  where $\Psi$ is a smooth function in a neighborhood of $0$. Then the free
  cumulants $(\tmmathbf{\kappa}_n)_{n \in \mathbb{N}}$ of $\tmmathbf{\mu}$ are
  given by
  \[ \tmmathbf{\kappa}_n = \frac{\Psi^{(n)} (0)}{(n - 1) !}, \text{\qquad for
     all } \ n \in \mathbb{N}, \]
  i.e., $\Psi'$ is the $R$-transform of $\tmmathbf{\mu}$.
\end{lemma}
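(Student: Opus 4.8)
The claim that $\Psi'$ is the $R$-transform of $\tmmathbf{\mu}$ is, by definition of the $R$-transform as the generating series $R(z)=\sum_{n\ge 1}\tmmathbf{\kappa}_n z^{n-1}$, equivalent to the coefficient identity $\tmmathbf{\kappa}_n=\Psi^{(n)}(0)/(n-1)!$; indeed Taylor's formula gives $\Psi'(z)=\sum_{n\ge1}\frac{\Psi^{(n)}(0)}{(n-1)!}z^{n-1}$. So it suffices to establish the cumulant formula. The plan is to exploit that the free moment-cumulant relation $\tmmathbf{\mu}_n=\sum_{\pi\in\tmop{NC}(n)}\prod_{V\in\pi}\tmmathbf{\kappa}_{|V|}$ is a triangular, invertible system, so the moments determine the free cumulants uniquely. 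Hence it is enough to verify that feeding the candidate values $\tmmathbf{\kappa}_n:=\Psi^{(n)}(0)/(n-1)!$ into this relation reproduces exactly the right-hand side of \eqref{topg}.

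To do so I would organize both sides by block-size type. On the free-probability side, group $\pi\in\tmop{NC}(n)$ according to the numbers $b_j$ of blocks of size $j$ (so $\sum_j j b_j=n$) and invoke Kreweras' enumeration of non-crossing partitions of a prescribed type: the number of such $\pi$ equals $\frac{n!}{(n-b+1)!\,\prod_j b_j!}$, where $b=\sum_j b_j$ is the total number of blocks. This turns the moment-cumulant sum into $\tmmathbf{\mu}_n=\sum_{\sum_j j b_j=n}\frac{n!}{(n-b+1)!\prod_j b_j!}\prod_j\tmmathbf{\kappa}_j^{b_j}$. On the side of \eqref{topg}, expand $\Psi'(x)=\sum_{j\ge1}\tmmathbf{\kappa}_j x^{j-1}$, observe that $\frac{\mathd^m}{\mathd x^m}((\Psi'(x))^{n-m})\big|_{x=0}=m!\,[x^m](\Psi'(x))^{n-m}$, and read off this coefficient as a sum over the index choices $j_1,\dots,j_{n-m}\ge 1$ with $\sum_i(j_i-1)=m$, i.e. $\sum_i j_i=n$. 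Collecting equal indices, the coefficient equals $(n-m)!\sum_{\sum_j b_j=n-m,\ \sum_j jb_j=n}\frac{1}{\prod_j b_j!}\prod_j\tmmathbf{\kappa}_j^{b_j}$.

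Substituting this into \eqref{topg} and writing $b=n-m$ (so that $m=n-b$ and $m+1=n-b+1$), the prefactor collapses: $\frac{n!}{m!(m+1)!(n-m)!}\cdot m!\cdot(n-m)!=\frac{n!}{(n-b+1)!}$. Summing over $m$ from $0$ to $n-1$ is the same as summing over $b$ from $1$ to $n$, so \eqref{topg} becomes term-by-term identical to the Kreweras-grouped moment-cumulant expression above; this yields $\tmmathbf{\kappa}_n=\Psi^{(n)}(0)/(n-1)!$ and hence the claim. The one genuine input is Kreweras' count of non-crossing partitions by type; once it is in hand, the remaining work is the purely clerical verification that the combinatorial prefactor and the differentiation/multinomial factors agree, which is the step I expect to be the main (though routine) obstacle. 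An analytic alternative would run through the functional equation $K(w)=\tfrac1w+\Psi'(w)$, $G(K(w))=w$ for the Cauchy transform together with Lagrange inversion, but the combinatorial route keeps the factorial bookkeeping transparent and self-contained.
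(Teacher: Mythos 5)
Your proposal is correct and follows essentially the same route as the paper: both verify the candidate cumulants $\tmmathbf{\kappa}_n=\Psi^{(n)}(0)/(n-1)!$ against the moment--cumulant relation by expanding $\frac{\mathd^m}{\mathd x^m}((\Psi'(x))^{n-m})\big|_{x=0}$ multinomially, grouping terms by the multiset of block sizes, and matching against Kreweras' count $\frac{n!}{(n-b+1)!\prod_j b_j!}$ of non-crossing partitions of a prescribed type (the paper cites this from \cite{NS}). The only cosmetic difference is that you extract coefficients from the power series of $\Psi'$ while the paper applies the Leibniz rule directly; the factorial bookkeeping is identical.
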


\begin{proof}
  We will show that for the sequence $(c_n)_{n \in \mathbb{N}}$, where $c_n
  \assign \Psi^{(n)} (0) / (n - 1) !$, the moment-cumulant relations are
  satisfied, i.e.,
  \begin{equation}
    \tmmathbf{\mu}_n = \sum_{\pi \in \tmop{NC} (n)} \prod_{V \in \pi} c_{|V|},
    \text{\quad for all } \ n \in \mathbb{N}. \label{korifi}
  \end{equation}
  By Leibniz rule we have
  \begin{equation}
    \tmmathbf{\mu}_n = \sum_{m = 0}^{n - 1} \frac{n!}{(m + 1) ! (n - m) !}
    \sum_{l_1 + \cdots + l_{n - m} = m} \frac{1}{l_1 ! \ldots l_{n - m} !}
    \Psi^{(l_1 + 1)} (0) \ldots \Psi^{(l_{n - m} + 1)} (0). \label{dm}
  \end{equation}
  For $m = 0, \ldots, n - 1$ consider $\lambda_1, \ldots, \lambda_{n - m}$
  such that $\lambda_1 + \cdots + \lambda_{n - m} = m$, and assume that
  $\{\lambda_1, \ldots, \lambda_{n - m} \} = \{\nu_1, \ldots, \nu_a \}$, where
  $\nu_i \neq \nu_j$, for every $i \neq j$. We also assume that for every $i =
  1, \ldots, a$ the element $\nu_i$ appears $r_i$ times in the set
  $\{\lambda_1, \ldots, \lambda_{n - m} \}$, which implies that
  \[ r_1 \nu_1 + \cdots + r_a \nu_a = \lambda_1 + \cdots + \lambda_{n - m} = m
     \text{\quad and\quad} r_1 + \cdots + r_a = n - m. \]
  Therefore, the number of times that the summand $\frac{1}{\lambda_1 ! \ldots
  \lambda_{n - m} !} \Psi^{(\lambda_1 + 1)} (0) \ldots \Psi^{(\lambda_{n - m}
  + 1)} (0)$ will appear in the sum
  \[ \sum_{l_1 + \cdots + l_{n - m} = m} \frac{1}{l_1 ! \ldots l_{n - m} !}
     \Psi^{(l_1 + 1)} (0) \ldots \Psi^{(l_{n - m} + 1)} (0) \]
  is equal to the number of different ways that we can cover $n - m$ points on
  a line segment with $r_1$ elements $\nu_1$, {\textdots}, $r_a$
  elements $\nu_a$; this number equals
  \[ \binom{n - m}{r_1} \binom{n - m - r_1}{r_2} \ldots \binom{n - m - r_1 -
     \cdots - r_{a - 1}}{r_a} = \frac{(n - m) !}{r_1 ! \ldots r_a !} . \]
  In order to relate the right hand side of (\ref{korifi}) with the right hand
  side of (\ref{dm}), we will relate such a $(n - m)$-tuple $(\lambda_1,
  \ldots, \lambda_{n - m})$ with the partitions $\pi = \{V_1, \ldots, V_{n -
  m} \} \in \tmop{NC} (n)$ which have $r_1$ blocks with $\nu_1 + 1$
  elements,{\textdots}, $r_a$ blocks with $\nu_a + 1$ elements. The
  number of these non-crossing partition is equal to
  \[ \frac{n!}{(n + 1 - \sum_{i = 1}^a r_i) ! \prod_{i = 1}^a r_i !} =
     \frac{n!}{(m + 1) !r_1 ! \ldots r_a !} \]
  (see, e.g., \cite{NS}); thus, we have
  \[ \sum_{\underset{\text{with } r_i \text{ blocks with } \nu_i + 1 \text{
     elements}}{\pi \in \tmop{NC} (n)}} \prod_{V \in \pi} c_{|V|} =
     \frac{n!}{(m + 1) !r_1 ! \ldots r_a !}  \frac{1}{\lambda_1 ! \ldots
     \lambda_{n - m} !} \Psi^{(\lambda_1 + 1)} (0) \ldots \Psi^{(\lambda_{n -
     m} + 1)} (0) . \]
  This proves (\ref{korifi}).
\end{proof}

\section{A variety of scalings}
\label{sec:interm}

In this section we prove statements that are similar to \eqref{eq:Intro-free-conv} and \eqref{eq:Intro-erg-conv} for various regimes of growth of a Harish-Chandra transform.

\begin{theorem}
  \label{Propolemiko}
  Let $l > 1$ be a real number. Let $A$ be a random Hermitian matrix of size $N$ and
  assume that for every $r \in \mathbb{N}$ one has
  \begin{equation}
    \lim_{N \rightarrow \infty} \frac{1}{N^l} \log \mathbb{E} [H \text{} C
    (x_1, \ldots, x_r, 0^{N - r} ; \lambda_1 (A), \ldots, \lambda_N (A))] =
    \sum_{i = 1}^r \Psi (x_i), \label{tapaidiatouzevedeou}
  \end{equation}
  where $\Psi$ is a smooth function in a complex neighborhood of $0$ and the above
  convergence is uniform in a complex neighborhood of $0^r$. Then the random measure $N^{- 1} \sum_{i = 1}^N \delta \left( N^{- l}
  \lambda_i (A) \right)$ converges, as $N \rightarrow \infty$, in probability, in the
  sense of moments to the Dirac measure $\delta \left( \Psi' (0) \right)$. In more detail, this means that any moment of the former measure converges to the corresponding moment of the latter measure in probability.
\end{theorem}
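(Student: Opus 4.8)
The plan is to adapt the argument of Theorem \ref{SouvlakiA} to the faster-growth regime, tracking how the exponent $l>1$ alters the power counting and collapses the answer. Writing $f_N(x_1,\ldots,x_N):=\mathbb{E}[H\,C(x_1,\ldots,x_N;\lambda_1(A),\ldots,\lambda_N(A))]$, Proposition \ref{prop:1} gives $\mathcal{D}_k f_N|_{x=0}=\mathbb{E}[\sum_{i=1}^N\lambda_i^k(A)]$, so the $k$-th moment of the random measure $N^{-1}\sum_i\delta(N^{-l}\lambda_i(A))$ equals $\frac{1}{N^{lk+1}}\mathcal{D}_k f_N|_{x=0}$. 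I would first prove convergence in expectation, that is, $\lim_{N\to\infty}\frac{1}{N^{lk+1}}\mathbb{E}[\sum_i\lambda_i^k(A)]=(\Psi'(0))^k$, which is exactly the $k$-th moment of $\delta(\Psi'(0))$.

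To evaluate $\mathcal{D}_k f_N|_{x=0}$ I would use the factorization $f_N=\exp(N^l\cdot\frac{1}{N^l}\log f_N)$ --- the point being that $\frac{1}{N^l}\log f_N$ is precisely the quantity controlled by hypothesis \eqref{tapaidiatouzevedeou} --- and apply the chain rule as in \eqref{kefi}, with $N$ replaced by $N^l$. Combining this with the expansion of $\mathcal{D}_k$ from Lemma \ref{lem:2} and the evaluation-at-zero machinery of \eqref{malakas} and Lemma \ref{SnikfeatFy}, each contribution is indexed by an integer $m\in\{0,\ldots,k\}$ (the number of denominator factors produced by $\mathcal{D}_k$) together with a chain-rule multi-index $(l_1,\ldots,l_{k-m})$ satisfying $\sum_j j\,l_j=k-m$. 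Summing over the $O(N^{m+1})$ admissible $(m+1)$-tuples of distinct indices, and noting that the chain rule contributes a factor $(N^l)^{l_1+\cdots+l_{k-m}}$, such a term is of order $N^{m+1}\cdot(N^l)^{l_1+\cdots+l_{k-m}}$.

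The decisive step, and the one that separates this regime from Theorem \ref{SouvlakiA}, is the power counting. Since $l_1+\cdots+l_{k-m}\le\sum_j j\,l_j=k-m$, with equality exactly when $l_1=k-m$ and $l_2=\cdots=l_{k-m}=0$, the largest attainable exponent of $N$ for a fixed $m$ is $m+1+l(k-m)=lk+1-m(l-1)$. Because $l>1$ this is strictly decreasing in $m$, so the unique dominant contribution comes from $m=0$ together with $l_1=k$; every term with $m\ge1$, or with a non-leading multi-index, is $o(N^{lk+1})$. (For $l=1$ the exponent is independent of $m$, which is what produced the full sum \eqref{xese} in Theorem \ref{SouvlakiA}.) The surviving term is $\sum_{l_0=1}^N(N^l)^k\bigl(\partial_{l_0}(\frac{1}{N^l}\log f_N)\bigr)^k\big|_{x=0}$; since $f_N(0^N)=1$ and, by symmetry together with convergence of derivatives from \eqref{tapaidiatouzevedeou}, $\partial_{l_0}(\frac{1}{N^l}\log f_N)|_{x=0}\to\Psi'(0)$, this equals $N^{lk+1}(\Psi'(0))^k(1+o(1))$. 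Dividing by $N^{lk+1}$ yields $(\Psi'(0))^k$.

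Finally, to pass from convergence in expectation to convergence in probability I would show $\lim_{N\to\infty}\frac{1}{N^{2(lk+1)}}\mathbb{E}[(\sum_i\lambda_i^k(A))^2]=(\Psi'(0))^{2k}$, so that the variance of the $k$-th moment vanishes. As in Theorem \ref{SouvlakiA} one writes $\mathcal{D}_k^2 g=\mathcal{D}_{2k}g+(\prod_{i<j}(x_i-x_j))^{-1}\sum_{m\neq n}\partial_n^k\partial_m^k(\prod_{i<j}(x_i-x_j)g)$; here $\mathcal{D}_{2k}f_N|_0\sim N^{2lk+1}(\Psi'(0))^{2k}=o(N^{2lk+2})$ is negligible, and the same power-counting argument applied to the double-derivative sum singles out the term with $m\neq n$ and leading multi-indices, contributing $\binom{N}{1}\binom{N-1}{1}(N^l)^{2k}(\Psi'(0))^{2k}(1+o(1))\sim N^{2(lk+1)}(\Psi'(0))^{2k}$. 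I expect the main obstacle to be carrying out the power counting cleanly enough to confirm that, after dividing by $N^{lk+1}$ (respectively $N^{2(lk+1)}$), every term other than the $m=0$ leading-multi-index one is genuinely negligible; once this bookkeeping is in place the remainder of the proof copies the template of Theorem \ref{SouvlakiA} with only notational changes.
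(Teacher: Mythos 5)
Your proposal is correct and follows essentially the same route as the paper's proof: the factorization $f_N=\exp(N^l\cdot\frac{1}{N^l}\log f_N)$, the chain-rule expansion indexed by $(m;l_1,\ldots,l_{k-m})$, the power count $m+1+l(l_1+\cdots+l_{k-m})\le lk+1-m(l-1)$ isolating the $m=0$, $l_1=k$ term, and the second-moment/variance argument via $\mathcal{D}_k^2$ with the cross term $\sum_{n\neq m}\partial_n^k\partial_m^k f_N$ supplying the $N^{2lk+2}(\Psi'(0))^{2k}$ leading contribution. The only difference is presentational: you make the monotone decrease of the exponent in $m$ (and its degeneration at $l=1$) explicit, which the paper leaves implicit.
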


\begin{proof}
  Using the same idea as in Theorem \ref{SouvlakiA}, we will show that we can
  get an expansion $N^{l \text{} k + 1} M_{0, k, N} (\Psi) + N^{l \text{} k}
  M_{1, k, N} (\Psi) + \ldots$ for the $k$-th moment of $N^{- 1} \mathbb{E}
  \left[ \sum_{i = 1}^N \delta \left( N^{- l} \lambda_i (A) \right) \right]$, where the
  sequences $(M_{i, k, N} (\Psi))_{N \in \mathbb{N}}$ converge. This can be
  done by applying the differential operator $\mathcal{D}_k$ to 
  $$
  f_N (x_1,
  \ldots, x_N) \assign \mathbb{E} [H \text{} C (x_1, \ldots, x_N ; \lambda_1
  (A), \ldots, \lambda_N (A))],$$ 
  since $\mathbb{E} [\tmop{Tr} (A^k)] =
  \lim_{\varepsilon \rightarrow 0} \mathcal{D}_k f_N \longdownminus_{x_i = i
  \varepsilon}$. Taking into account condition (\ref{tapaidiatouzevedeou}), in
  order to obtain the desirable expansion and determine $M_{0, k, N} (\Psi)$,
  by the chain rule for $f_N = \exp \left( N^l \cdot \frac{1}{N^l} \log
  \text{} f_N \right)$, we can write the derivatives $\partial_i^{k - m} f_N$
  that are involved in $\mathcal{D}_k f_N$ in the following form:
  \begin{equation}
    \partial_i^{k - m} f_N = \sum \frac{(k - m) !}{l_1 !1!^{l_1} l_2 !2!^{l_2}
    \ldots l_{k - m} ! (k - m) !^{l_{k - m}}} N^{l (l_1 + \cdots + l_{k - m})}
    f_N \prod_{j = 1}^{k - m} \left( \partial_i^j \left( \frac{1}{N^l} \log
    \text{} f_N \right) \right)^{l_j}, \label{eskasetoportofoliaptapenintarika}
  \end{equation}
  where the sum is over non-negative integers $l_1, \ldots,
  l_{k - m}$ such that $l_1 + 2 l_2 + \cdots + (k - m) l_{k - m} = k - m$.
  Then, similarly with the proof of Theorem \ref{SouvlakiA}, considering the
  Taylor expansion for the functions
  \[ F_{i, l} (\varepsilon) \assign \left[ \left( \partial_i \left(
     \frac{1}{N^l} \log \text{} f_N \right) \right)^{l_1} \left( \partial_i^2
     \left( \frac{1}{N^l} \log \text{} f_N \right) \right)^{l_2} \ldots \left(
     \partial_i^{k - m} \left( \frac{1}{N^l} \log \text{} f_N \right)
     \right)^{l_{k - m}} \right] (\varepsilon, 2 \varepsilon, \ldots, N
     \varepsilon), \]
  we obtain that for $x_i = i \varepsilon$, for every $i = 1, \ldots, N$, the
  symmetrizations
  \[ \frac{\left( \partial_{b_0} \left( \frac{1}{N^l} \log \text{} f_N \right)
     \right)^{l_1} \ldots \left( \partial_{b_0}^{k - m} \left( \frac{1}{N^l}
     \log \text{} f_N \right) \right)^{l_{k - m}}}{(x_{b_0} - x_{b_1}) \ldots
     (x_{b_0} - x_{b_m})} + \frac{\left( \partial_{b_1} \left( \frac{1}{N^l}
     \log \text{} f_N \right) \right)^{l_1} \ldots \left( \partial_{b_1}^{k -
     m} \left( \frac{1}{N^l} \log \text{} f_N \right) \right)^{l_{k -
     m}}}{(x_{b_1} - x_{b_0}) (x_{b_1} - x_{b_2}) \ldots (x_{b_1} - x_{b_m})}
     + \]
  \begin{equation}
    \text{\qquad} \ldots + \frac{\left( \partial_{b_m} \left( \frac{1}{N^l}
    \log \text{} f_N \right) \right)^{l_1} \ldots \left( \partial_{b_m}^{k -
    m} \left( \frac{1}{N^l} \log \text{} f_N \right) \right)^{l_{k -
    m}}}{(x_{b_m} - x_{b_0}) \ldots (x_{b_m} - x_{b_{m - 1}})}
    \label{Mithridakis}
  \end{equation}
  converge as $\varepsilon \rightarrow 0$. The limit does not depend on $b_0,
  \ldots, b_m$ and it is a linear combination of derivatives
  \begin{equation}
    \partial_{i_m} \ldots \partial_{i_1} \left[ \left( \partial_i \left(
    \frac{1}{N^l} \log \text{} f_N \right) \right)^{l_1} \ldots \left(
    \partial_i^{k - m} \left( \frac{1}{N^l} \log \text{} f_N \right)
    \right)^{l_{k - m}} \right] (0^N), \label{potedeneimounakalosstosxoleio}
  \end{equation}
  where $i, i_1, \ldots, i_m \in \{1, \ldots, m + 1\}$ and the coefficients
  do not depend on $N$. As a corollary, $\lim_{\varepsilon \rightarrow 0}
  \mathcal{D}_k f_N \longdownminus_{x_i = i \varepsilon}$ is a sum of products, in which
  the factors in each summand are of the form
  (\ref{potedeneimounakalosstosxoleio}) or monomials in $N$. These monomials arise from the
  differentiation (\ref{eskasetoportofoliaptapenintarika}) and the fact that
  when $x_i = i \varepsilon$, for every $i = 1, \ldots, N$ and $\varepsilon
  \rightarrow 0$, (\ref{Mithridakis}) does not depend on $b_0, \ldots, b_m \in
  \{1, \ldots, N\}$. Thus, the summand that corresponds to the monomial of
  the highest degree is
  \[ N^{l \text{} k + 1} \left( \partial_1 \left( \frac{1}{N^l} \log \text{}
     f_N \right) (0^N) \right)^k, \]
  and it is obtained for $m = 0$ and $l_1 = k$. Therefore, we deduce that
  \[ \lim_{N \rightarrow \infty} \frac{1}{N^{l \text{} k + 1}} \mathbb{E}
     \left[ \sum_{i = 1}^N \lambda_i^k (A) \right] = (\Psi' (0))^k . \]
  Hence, by Chebyshev's inequality, in order to prove the claim it suffices to
  show that
  \[ \lim_{N \rightarrow \infty} \frac{1}{N^{2 l \text{} k + 2}} \mathbb{E}
     \left[ \sum_{i = 1}^N \lambda_i^k (A) \right]^2 = (\Psi' (0))^{2 k} . \]
  Applying the operator $\mathcal{D}_k^2$ to
  $f_N$ and considering $x_1, \ldots, x_N \rightarrow 0$, we will get an
  expansion $$N^{2 l \text{} k + 2} M'_{0, k, N} (\Psi) + N^{2 l \text{} k + 1}
  M'_{1, k, N} (\Psi) + \cdots$$ for $\mathbb{E} [\lambda_1^k (A) + \cdots +
  \lambda_N^k (A)]^2$, where the sequences $(M'_{i, k, N} (\Psi))_{N \in
  \mathbb{N}}$ converge. Since $\lim_{N \rightarrow \infty} \frac{1}{N^{2 l
  \text{} k + 2}} \mathcal{D}_k f_N \longdownminus_{x_i = 0} = 0$, by the
  above, the term $N^{2 l \text{} k + 2} M'_{0, k, N} (\Psi)$ will arise from
  \begin{equation}
    \left. \frac{1}{\prod_{i < j} (x_i - x_j)} \sum_{\underset{n \neq m}{n, m
    = 1}}^N \partial_n^k \partial_m^k \left( \prod_{i < j} (x_i - x_j) f_N
    \right) \right|_{x_i = 0} \label{Varoufakis} .
  \end{equation}
  Similarly with Theorem \ref{SouvlakiA}, (\ref{Varoufakis}) can be controlled
  by taking the sum of specific summands
  \begin{equation}
    \frac{\prod_{i = 1}^t \partial_{a_0}^{\alpha_i} \partial_{b_0}^{\beta_i}
    \left( \frac{1}{N^l} \log \text{} f_N \right)}{(x_{a_0} - x_{a_1}) \ldots
    (x_{a_0} - x_{a_{\nu}}) (x_{b_0} - x_{b_1}) \ldots (x_{b_{\mu}})}
    \label{VaroufakiS}
  \end{equation}
  in order to obtain symmetrizations of $\prod_{i = 1}^t \left[
  \partial_n^{\alpha_i} \partial_m^{\beta_i} \left( \frac{1}{N^l} \log \text{}
  f_N \right) \right] (x_1, \ldots, x_r, 0^{N - r})$ with respect to the sets
  described in Theorem \ref{SouvlakiA}, evaluated at zero, where $r = |\{a_0,
  \ldots, a_{\nu}, b_0, \ldots, b_{\mu} \}|$ and $n, m \in \{1, 2\}$. The
  terms of the form (\ref{VaroufakiS}) appear from (\ref{Varoufakis}) if we write $f_N = \exp
  (N^l \cdot \frac{1}{N^l} \log \text{} f_N)$ and use the chain rule in
  order to differentiate $f_N$. In such a way, using the same arguments as in
  Theorem \ref{SouvlakiA}, we can get the desirable expansion, where the term
  $N^{2 l \text{} k + 2} M'_{0, k, N} (\Psi)$ is equal to
  \[ N^{2 l \text{} k + 2} \left( \partial_1^k \left( \frac{1}{N^l} \log
     \text{} f_N \right) (0^N) \right)^2 \]
  and emerges from the summand
  \[ \sum_{\underset{n \neq m}{n, m = 1}}^N \partial_n^k \partial_m^k f_N \]
  of (\ref{Varoufakis}). This proves the claim.
\end{proof}

Now we present a different limit regime, which is related to ergodic unitarily invariant measures on infinite Hermitian matrices. We prove the implication \eqref{eq:Intro-erg-conv}, generalizing the result of \cite[Theorem 4.1]{OV}.

\begin{theorem}
  \label{BEBT!}Let $A$ be a random Hermitian matrix of size $N$ and assume
  that for every $r \in \mathbb{N}$ one has
  \begin{equation}
    \lim_{N \rightarrow \infty} \log \mathbb{E} [H \text{} C (x_1, \ldots,
    x_r, 0^{N - r} ; \lambda_1 (A), \ldots, \lambda_N (A))] = \sum_{i = 1}^r
    \Psi (x_i), \label{rightontime}
  \end{equation}
  where $\Psi$ is a smooth function in a neighborhood of $0$ and the above
  convergence is uniform in a neighborhood of $0^r$. Then for every $k \in
  \mathbb{N}$, the $k$-th moment of the random measure $\sum_{i = 1}^N
  \delta \left( N^{- 1} \lambda_i (A) \right)$ converges as $N \rightarrow \infty$ in
  probability to $\Psi^{(k)} (0) / (k - 1) !$.
\end{theorem}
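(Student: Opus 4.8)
The plan is to follow the strategy of the proof of Theorem \ref{SouvlakiA}, adapting the bookkeeping of powers of $N$ to the new scaling. Write $f_N(x_1,\dots,x_N):=\mathbb{E}[HC(x_1,\dots,x_N;\lambda_1(A),\dots,\lambda_N(A))]$; this is a symmetric function with $f_N(0^N)=1$, and by Proposition \ref{prop:1} the expected moments are extracted as $\mathbb{E}[\sum_i\lambda_i^k(A)]=\lim_{x\to 0}\mathcal{D}_k f_N$ and $\mathbb{E}[(\sum_i\lambda_i^k(A))^2]=\lim_{x\to 0}\mathcal{D}_k^2 f_N$. By Chebyshev's inequality it suffices to prove two statements: convergence in expectation, $\frac{1}{N^k}\mathbb{E}[\sum_i\lambda_i^k(A)]\to \Psi^{(k)}(0)/(k-1)!$, and the vanishing of the variance, $\frac{1}{N^{2k}}\operatorname{Var}\big(\sum_i\lambda_i^k(A)\big)\to 0$. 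The decisive difference from Theorem \ref{SouvlakiA} is that assumption \eqref{rightontime} controls $\log f_N$ itself rather than $\frac{1}{N}\log f_N$, so I write $f_N=\exp(\log f_N)$ and apply the chain rule, which now produces \emph{no} powers of $N$. Note that \eqref{rightontime} forces $\partial_i^j\log f_N|_{x=0}\to\Psi^{(j)}(0)$ and $\partial_i\partial_{i'}\log f_N|_{x=0}\to 0$ for $i\neq i'$.

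For the expectation I expand $\mathcal{D}_k f_N$ by Lemma \ref{lem:2} and the chain rule, so that $\partial_{l_0}^{k-m}f_N/f_N$ becomes a universal polynomial (the complete Bell polynomial $B_{k-m}$) in the derivatives $\partial_{l_0}^j\log f_N$, carrying no factor of $N$. Hence the only powers of $N$ come from the count of index tuples $(l_0,\dots,l_m)$, of order $N^{m+1}$. Symmetrizing and letting $x\to 0$ via Lemma \ref{SnikfeatFy} and the divided-difference identity \eqref{malakas}, the contribution of level $m$ is asymptotically $\binom{k}{m}\binom{N}{m+1}\frac{\mathd^m}{\mathd x^m}\big(B_{k-m}(\Psi'(x),\dots,\Psi^{(k-m)}(x))\big)\big|_{x=0}$. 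After division by $N^k$, every level $m<k-1$ is $o(1)$; the naively dominant level $m=k$ (of order $N^{k+1}$) vanishes \emph{identically}, because it is the divided difference of the constant $B_0\equiv 1$, which is zero; and the surviving level $m=k-1$, where $B_1(\Psi')=\Psi'$, yields the limit $\frac{1}{(k-1)!}\frac{\mathd^{k-1}}{\mathd x^{k-1}}\Psi'(x)\big|_{x=0}=\Psi^{(k)}(0)/(k-1)!$. This proves the expectation statement.

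For the variance I use the identity
\[ \mathcal{D}_k^2 f_N = \mathcal{D}_{2k} f_N + \Big(\prod_{i<j}(x_i-x_j)\Big)^{-1}\sum_{n\neq m}\partial_n^k\partial_m^k\Big(\prod_{i<j}(x_i-x_j)\,f_N\Big) \]
from the proof of Theorem \ref{SouvlakiA} and analyze both pieces at the scale $N^{2k}$. Applying the expectation analysis to the exponent $2k$ gives $\mathcal{D}_{2k}f_N|_{x=0}\sim \frac{\Psi^{(2k)}(0)}{(2k-1)!}N^{2k}$. In the cross term I organize the $k$ derivatives in each of two clusters centered at $n$ and $m$: the disjoint-cluster configurations with $k-1$ differentiated Vandermonde factors on each side contribute $\big(\Psi^{(k)}(0)/(k-1)!\big)^2 N^{2k}$. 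The new point is that the \emph{collision} configurations, in which a differentiated Vandermonde variable of one cluster coincides with the center of the other, are no longer negligible at this scale; they contribute exactly $-\frac{\Psi^{(2k)}(0)}{(2k-1)!}N^{2k}$, cancelling the $\mathcal{D}_{2k}$ term. Summing, $\mathcal{D}_k^2 f_N|_{x=0}\sim\big(\Psi^{(k)}(0)/(k-1)!\big)^2 N^{2k}$, which matches $\big(\mathbb{E}[\sum_i\lambda_i^k(A)]\big)^2$ to leading order, so the variance is $o(N^{2k})$ and the claim follows.

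The main obstacle is precisely this variance step. In contrast to the free-probability scaling of Theorem \ref{SouvlakiA} — where the diagonal operator $\mathcal{D}_{2k}$ and all collision configurations sit strictly below the leading order and may be discarded — here the target scale $N^{2k}$ is exactly the order of these terms. One must therefore retain them and verify the cancellation between the $\mathcal{D}_{2k}$ piece and the overlapping-cluster piece of the cross term; getting the combinatorial counts and signs of the collision configurations right, so that only the clean square $\big(\Psi^{(k)}(0)/(k-1)!\big)^2$ survives, is the delicate part of the argument. As a consistency check, for deterministic $A$ all identities reduce to $\mathcal{D}_k^2 f_N|_{0}=(\sum_i\lambda_i^k)^2$ and $\mathcal{D}_{2k}f_N|_0=\sum_i\lambda_i^{2k}$, which forces the collision total to equal $-\Psi^{(2k)}(0)/(2k-1)!$; the same operator combinatorics, depending only on the local data $\partial^j\log f_N|_0\to\Psi^{(j)}(0)$, apply verbatim in the random case.
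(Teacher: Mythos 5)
Your proposal is correct and follows essentially the same route as the paper's proof: the same expansion of $\mathcal{D}_k f_N$ with the key observation that writing $f_N=\exp(\log f_N)$ produces no powers of $N$ from the chain rule (so that the level $m=k$ vanishes identically and $m=k-1$ dominates, giving $\Psi^{(k)}(0)/(k-1)!$ for the expectation), and the same decomposition $\mathcal{D}_k^2=\mathcal{D}_{2k}+(\text{cross term})$ for the second moment. In particular you correctly identify the one genuinely new point relative to Theorem \ref{SouvlakiA} — that $\mathcal{D}_{2k}f_N$ now contributes $\Psi^{(2k)}(0)/(2k-1)!$ at scale $N^{2k}$ and is cancelled by the collision configurations (a differentiated Vandermonde index of one cluster hitting the center of the other) in the cross term — which is exactly how the paper's argument proceeds via \eqref{Yyy}, \eqref{tserlio} and \eqref{Romantic}.
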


\begin{proof}
  First we show that for every $k \in \mathbb{N}$
  \begin{equation}
    \lim_{N \rightarrow \infty} \mathbb{E} \left[ \sum_{i = 1}^N
    \frac{\lambda_i^k (A)}{N^k} \right] = \frac{\Psi^{(k)} (0)}{(k - 1) !}
    \label{PiPitopapi} .
  \end{equation}
  This can be done in a similar way as in Theorem \ref{SouvlakiA}, using that
  $\mathbb{E} [\tmop{Tr} (A^k)] = \lim_{\varepsilon \rightarrow 0}
  \mathcal{D}_k f_N \longdownminus_{x_i = i \varepsilon}$, where $f_N (x_1,
  \ldots, x_N) \assign \mathbb{E} [H \text{} C (x_1, \ldots, x_N ; \lambda_1
  (A), \ldots, \lambda_N (A))]$. Unlike condition (\ref{xereis}), assumption
  (\ref{rightontime}) leads to an expansion $N^k m_{0, k, N} (\Psi) + N^{k -
  1} m_{1, k, N} (\Psi) + \cdots$ for $\mathbb{E} [\tmop{Tr} (A^k)]$, where
  the sequences $(m_{i, k, N} (\Psi))_{N \in \mathbb{N}}$ converge. This holds
  because $\mathcal{D}_k f_N$ is a linear combination of terms
  \begin{equation}
    \frac{\left( \partial_{b_0} (\log \text{} f_N) \right)^{l_1} \ldots \left(
    \partial^{k - m}_{b_0} \left( \log \text{} f_N \right) \right)^{l_{k -
    m}}}{(x_{b_0} - x_{b_1}) \ldots (x_{b_0} - x_{b_m})} + \cdots +
    \frac{(\partial_{b_m} (\log \text{} f_N))^{l_1} \ldots \left(
    \partial_{b_m}^{k - m} (\log \text{} f_N) \right)^{l_{k - m}}}{(x_{b_m} -
    x_{b_0}) \ldots (x_{b_m} - x_{b_{m - 1}})}, \label{lil}
  \end{equation}
  and using the same arguments as before, we obtain that these terms do
  not depend on $b_0, \ldots, b_m \in \{1, \ldots, N\}$ and that they converge when
  $x_i = i \varepsilon$, $\varepsilon \rightarrow 0$ and $N \rightarrow
  \infty$. Thus, the limit $\lim_{\varepsilon \rightarrow 0} \mathcal{D}_k f_N
  \longdownminus_{x_i = i \varepsilon}$ is a sum, in which the summand with factor
  (\ref{lil}), for $x_i = i \varepsilon$ and $\varepsilon \rightarrow 0$, will
  also have as a factor a polynomial in $N$. Its degree is $m + 1$, i.e., the
  number of distinct variables in the denominator of (\ref{lil}). In
  comparison with assumption (\ref{xereis}), in the context of Theorem \ref{SouvlakiA} in
  order to get the similar expansion for $\mathbb{E} [\tmop{Tr} (A^k)]$ that
  leads to the limit result that we proved, we have to write $f_N = \exp
  \left( N \cdot \frac{1}{N} \log \text{} f_N \right)$ before we compute the
  derivatives $\partial_i^{k - m} f_N$ that involved in $\mathcal{D}_k f_N$
  according to (\ref{alaniariko}). Thus, in that case the corresponding
  polynomials in $N$ depended also on the differentiation of $f_N$. Concluding,
  we see that we can get the desirable expansion and the term $N^k m_{0, k, N}
  (\Psi)$ will arise from the summand
  \[ \frac{k}{\prod_{i < j} (x_i - x_j)}  \sum_{i = 1}^N \partial_i f_N
     \partial_i^{k - 1} \left( \prod_{i < j} (x_i - x_j) \right) \]
  of $\mathcal{D}_k f_N$. Using the same arguments as before, we obtain
  that the terms (\ref{lil}) for $b_0, \ldots, b_{m \in} \{1, \ldots, N\}$
  will contribute to $\mathbb{E} [\tmop{Tr} (A^k)]$ when $m = k - 1$, $x_i =
  i \varepsilon$ and $\varepsilon \rightarrow 0$ with
  \[ \binom{N}{k} \binom{k}{k - 1} \left( \partial_1^k (\log \text{} f_N)
     (0^N) + \omicron (1) \right), \]
  which implies that $m_{0, k, N} (\Psi) = (\partial_1^k (\log \text{} f_N)
  (0^N) + \omicron (1)) / (k - 1) !$ and (\ref{PiPitopapi}) holds. In order to
  prove the claim it suffices to show that for every $k \in \mathbb{N}$ one has
  \begin{equation}
    \lim_{N \rightarrow \infty} \mathbb{E} \left[ \sum_{i = 1}^N
    \frac{\lambda_i^k (A)}{N^k} \right]^2 = \left( \frac{\Psi^{(k)} (0)}{(k -
    1) !} \right)^2 \label{Rr} .
  \end{equation}
  
  We will show that $\frac{1}{N^{2 k}} \mathcal{D}_k^2 f_N
  \longdownminus_{x_i = 0}$ converges as $N \rightarrow \infty$ to the left
  hand side of (\ref{Rr}). The difference compared to Theorem \ref{SouvlakiA}
  is that now the term $\frac{1}{N^{2 k}} \mathcal{D}_{2 k} f_N
  \longdownminus_{x_i = 0}$ will contribute to the limit $\lim_{N \rightarrow
  \infty} \frac{1}{N^{2 k}} \mathcal{D}_k^2 f_N \longdownminus_{x_i = 0}$, due
  to relation (\ref{PiPitopapi}). Thus, we have to show that
  \begin{equation}
    \lim_{N \rightarrow \infty} \left( \frac{1}{N^{2 k} \prod_{i < j} (x_i -
    x_j)} \sum_{\underset{n \neq m}{n, m = 1}}^N \left. \partial_n^k
    \partial_m^k \left( \prod_{i < j} (x_i - x_j) f_N \right) \right|_{x_i =
    0} \right) = \left( \frac{\Psi^{(k)} (0)}{(k - 1) !} \right)^2 -
    \frac{\Psi^{(2 k)} (0)}{(2 k - 1) !} . \label{Yyy}
  \end{equation}
  For the same reasons as in Theorem \ref{SouvlakiA}, the term
  \begin{equation}
    \frac{1}{\prod_{i < j} (x_i - x_j)} \sum_{\nu, \mu = 0}^k
    \sum_{\underset{n \neq m}{n, m = 1}}^N \binom{k}{\nu} \binom{k}{\mu}
    \partial_n^{k - \nu} \partial_m^{k - \mu} f_N \partial_n^{\nu}
    \partial_m^{\mu} \left( \prod_{i < j} (x_i - x_j) \right) \label{Sarah}
  \end{equation}
  can be controlled when $x_1, \ldots, x_N \rightarrow 0$ if we consider the sum
  of specific summands of the form
  \begin{equation}
    \binom{k}{\nu} \binom{k}{\mu} \frac{\partial_{a_0}^{k - \nu}
    \partial_{b_0}^{k - \mu} f_N}{(x_{a_0} - x_{a_1}) \ldots (x_{a_0} -
    x_{a_{\nu}}) (x_{b_0} - x_{b_1}) \ldots (x_{b_0} - x_{b_{\mu}})}
    \label{numbern}
  \end{equation}
  in order to make symmetrizations. We recall that for the indices $a_0,
  \ldots, a_{\nu}, b_0, \ldots, b_{\mu}$ it is required that $a_0 \neq b_0$,
  $a_i \neq a_j$, $b_i \neq b_j$ and $|\{a_0 \} \cap \{b_1, \ldots, b_{\mu}
  \}| + |\{b_0 \} \cap \{a_1, \ldots, a_{\nu} \}| \leq 1$. Then, writing $f_N
  = \exp (\log \text{} f_N)$ and using the chain rule in order to express the
  derivatives $\partial_{a_0}^{k - \nu} \partial_{b_0}^{k - \mu} f_N$,
  assumption (\ref{rightontime}) implies that we can get an
  expansion $N^{2 k} M_{0, k, N} (\Psi) + N^{2 k - 1} M_{1, k, N} (\Psi) +
  \cdots$ for (\ref{Sarah}), where the sequences $(M_{i, k, N} (\Psi))_{N \in
  \mathbb{N}}$ converge. The coefficient of $N^i$ arises from terms
  (\ref{numbern}) where in the denominator there are at least $i$ variables.
  The reason that in our expansion there is no summand with factor $N^{2 k +
  2}$ is that for every $k \in \mathbb{N}$ one has
  \begin{equation}
    \sum_{\underset{n \neq m}{n, m = 1}}^N \partial_n^k \partial_m^k \left(
    \prod_{i < j} (x_i - x_j) \right) = - \sum_{n = 1}^N \partial_n^{2 k}
    \left( \prod_{i < j} (x_i - x_j) \right) = 0 \label{apopseimekodasou} .
  \end{equation}
  Thus, for $\nu = \mu = k$, the summands (\ref{numbern}) do not contribute to
  (\ref{Sarah}). Similarly, if $(\nu, \mu) = (k - 1, k)$ and all the indices
  $a_0, \ldots, a_{k - 1}, b_0, \ldots, b_k$ are distinct, then the
  corresponding terms (\ref{numbern}) cancel out, since
  \begin{equation}
    \frac{k \partial_{a_0} f_N}{(x_{a_0} - x_{a_1}) \ldots (x_{a_0} - x_{a_{k
    - 1}})} \left( \frac{1}{(x_{b_0} - x_{b_1}) \ldots (x_{b_0} - x_{b_k})} +
    \cdots + \frac{1}{(x_{b_k} - x_{b_0}) \ldots (x_{b_k} - x_{b_{k - 1}})}
    \right) = 0 \label{12345} .
  \end{equation}
  Clearly, the same holds if $(\nu, \mu) = (k, k - 1)$. That's why in our
  expansion there is no summand with factor $N^{2 k + 1}$. In order to
  determine $M_{0, k, N} (\Psi)$, we have to consider terms of the form (\ref{numbern}),
  where either $(\nu, \mu) \in \{(k - 1, k), (k, k - 1)\}$ and $|\{a_0,
  \ldots, a_{\nu}, b_0, \ldots, b_{\mu} \}| = 2 k$, or $(\nu, \mu) = (k - 1, k
  - 1)$ and all $a_0, \ldots, a_{k - 1}, b_0, \ldots, b_{k - 1}$ are
  distinct. We start with the first case, where $(\nu, \mu) = (k - 1, k)$. We
  have that one of the $a_0, \ldots, a_{k - 1}$ is equal to one of the $b_0,
  \ldots, b_k$. If $a_i = b_j$ for some $i \geq 1$ and $j \geq 0$, then the
  corresponding terms (\ref{numbern}) cancel out since the equality
  (\ref{12345}) will also hold in this case. It remains to examine the case
  where $b_i = a_0$, for some $i \geq 1$. Let $\alpha_0, \ldots, \alpha_{k -
  1}, \beta_0, \ldots, \beta_{k - 1} \in \{1, \ldots, N\}$ be fixed and pairwise distinct. Then, for $(\nu, \mu) \in \{(k, k - 1), (k - 1,
  k)\}$, we get a contribution
  \[ \frac{2 kk! (k - 1) ! \partial_{\alpha_0} f_N}{(x_{\alpha_0} -
     x_{\alpha_1}) \ldots (x_{\alpha_0} - x_{\alpha_{k - 1}})} \left(
     \frac{1}{(x_{\beta_0} - x_{\alpha_0}) (x_{\beta_0} - x_{\beta_1}) \ldots
     (x_{\beta_0} - x_{\beta_{k - 1}})} + \right. \]
  \[ \left. \frac{1}{(x_{\beta_1} - x_{\alpha_0}) (x_{\beta_1} - x_{\beta_0})
     \ldots (x_{\beta_1} - x_{\beta_{k - 1}})} + \cdots +
     \frac{1}{(x_{\beta_{k - 1}} - x_{\alpha_0}) (x_{\beta_{k - 1}} -
     x_{\beta_0}) \ldots (x_{\beta_{k - 1}} - x_{\beta_{k - 2}})} \right) \]
  \begin{equation}
    = \frac{- 2 kk! (k - 1) ! \partial_{\alpha_0} f_N}{(x_{\alpha_0} -
    x_{\alpha_1}) \ldots (x_{\alpha_0} - x_{\alpha_{k - 1}}) (x_{\alpha_0} -
    x_{\beta_0}) \ldots (x_{\alpha_0} - x_{\beta_{k - 1}})} \label{tserlio}
  \end{equation}
  to (\ref{Sarah}). The factor $k! (k - 1) !$ appears in the numerator because
  it is the number of different ways that we can differentiate $x_{\alpha_0} -
  x_{\alpha_1}, \ldots, x_{\alpha_0} - x_{\alpha_{k - 1}}, x_{\beta_0} -
  x_{\alpha_0}, x_{\beta_0} - x_{\beta_1}, \ldots, x_{\beta_0} - x_{\beta_{k -
  1}}$ in $\partial_{\alpha_0}^{k - 1} \partial_{\beta_0}^k \left( \prod_{i <
  j} (x_i - x_j) \right)$ in order to get the summand with denominator
  $(x_{\alpha_0} - x_{\alpha_1}) \ldots (x_{\alpha_0} - x_{\alpha_{k - 1}})
  (x_{\beta_0} - x_{\alpha_0}) (x_{\beta_0} - x_{\beta_1}) \ldots (x_{\beta_0}
  - x_{\beta_{k - 1}})$ when we divide with $\prod_{i < j} (x_i - x_j)$. The
  term (\ref{tserlio}) will contribute $\binom{2 k - 1}{k - 1}$ times to
  (\ref{Sarah}). Indeed, we have to choose $k - 1$ terms $x_{\alpha_0} - x_i$,
  where $i = \alpha_1, \ldots, \alpha_{k - 1}, \beta_0, \ldots, \beta_{k - 1}$
  to differentiate with respect to $x_{\alpha_0}$. Then the terms that we have
  to add in order to get (\ref{tserlio}) are fixed. Replacing the
  leading variable $x_{\alpha_0}$ with $x_{\alpha_1}, \ldots, x_{\alpha_{k -
  1}}, x_{\beta_0}, \ldots, x_{\beta_{k - 1}}$ and writing $x_{\alpha_{k + i}}
  \assign x_{\beta_i}$ for every $i = 0, \ldots, k - 1$, we have that the
  contribution of the terms (\ref{numbern}), stated above, for the
  indices $\alpha_0, \ldots, \alpha_{2 k - 1} \in \{1, \ldots, N\}$, is
  \begin{equation}
    \frac{- (2 k) ! \partial_{\alpha_0} f_N}{(x_{\alpha_0} - x_{\alpha_1})
    \ldots (x_{\alpha_0} - x_{\alpha_{2 k - 1}})} + \cdots + \frac{- (2 k) !
    \partial_{\alpha_{2 k - 1}} f_N}{(x_{\alpha_{2 k - 1}} - x_{\alpha_0})
    \ldots (x_{\alpha_{2 k - 1}} - x_{\alpha_{2 k - 2}})} . \label{Romantic}
  \end{equation}
  But when $x_1, \ldots, x_N \rightarrow 0$ and $N \rightarrow \infty$,
  (\ref{Romantic}) is equal to
  \[ - \frac{(2 k) ! \Psi^{(2 k)} (0)}{(2 k - 1) !} . \]
  Since we have $\binom{N}{2 k}$ options for choosing a $2 k$-tuple
  $(\alpha_0, \ldots, \alpha_{2 k - 1}) \in \{1, \ldots, N\}^{2 k}$, we deduce
  that the contribution of the terms (\ref{numbern}) to
  the left hand side of (\ref{Yyy}) is $- \Psi^{(2 k)} (0) / (2 k - 1) !$.
  For the case where $\nu = \mu = k - 1$ and $\alpha_0, \ldots, \alpha_{k -
  1}, \beta_0, \ldots, \beta_{k - 1} \in \{1, \ldots, N\}$ are all distinct,
  we obtain the terms
  \begin{equation}
    \frac{2 (k!)^2 \partial_{\alpha_0} \partial_{\beta_0} f_N}{(x_{\alpha_0} -
    x_{\alpha_1}) \ldots (x_{\alpha_0} - x_{\alpha_{k - 1}}) (x_{\beta_0} -
    x_{\beta_1}) \ldots (x_{\beta_0} - x_{\beta_{k - 1}})}. \label{thaxestw}
  \end{equation}
  The limit of these terms as $x_1, \ldots, x_N \rightarrow 0$ and $N
  \rightarrow \infty$ is
  \[ \frac{1}{2} \binom{2 k}{k} 2 (k!)^2 \lim_{x_1, \ldots, x_N \rightarrow 0}
     \left( \frac{\Psi' (x_{\alpha_0})}{(x_{\alpha_0} - x_{\alpha_1}) \ldots
     (x_{\alpha_0} - x_{\alpha_{k - 1}})} + \cdots + \frac{\Psi' (x_{\alpha_{k
     - 1}})}{(x_{\alpha_{k - 1}} - x_{\alpha_0}) \ldots (x_{\alpha_{k - 1}} -
     x_{\alpha_{k - 2}})} \right) \]
  \[ \times \left( \frac{\Psi' (x_{\beta_0})}{(x_{\beta_0} - x_{\beta_1})
     \ldots (x_{\beta_0} - x_{\beta_{k - 1}})} + \cdots + \frac{\Psi'
     (x_{\beta_{k - 1}})}{(x_{\beta_{k - 1}} - x_{\beta_{k - 2}})} \right) =
     (2 k) ! \left( \frac{\Psi^{(k)} (0)}{(k - 1) !} \right)^2 . \]
  The factor $2^{- 1} \binom{2 k}{k}$ exists in the above product because it
  is equal to the number of different ways that we can split $(\alpha_0,
  \ldots, \alpha_{k - 1}, \beta_0, \ldots, \beta_{k - 1})$ to two $k$-tuples
  such that the sum of terms corresponding to the case (\ref{thaxestw}) gives the above limit,
  as $x_1, \ldots, x_N \rightarrow 0$ and $N \rightarrow \infty$. Hence, the
  contribution of terms (\ref{thaxestw}) to the left hand side of (\ref{Yyy})
  is $(\Psi^{(k)} (0) / (k - 1) !)^2$. This implies that (\ref{Yyy}) holds and
  the claim has been proven.
\end{proof}

Note that the limit regimes of Theorem \ref{SouvlakiA} and Theorem \ref{BEBT!}
are quite similar. Thus, it is reasonable to ask for an intermediate limit
regime. It is addressed by the next theorem.

\begin{theorem}
  \label{HALFGODHALFSOUVLAKI}Let $A$ be a random Hermitian matrix of size $N$
  and $0 < \theta < 1$. Assume that for every finite $r$ one has
  \begin{equation}
    \lim_{N \rightarrow \infty} \frac{1}{N^{\theta}} \log \mathbb{E} [H
    \text{} C (x_1, \ldots, x_r, 0^{N - r} ; \lambda_1 (A), \ldots, \lambda_N
    (A))] = \sum_{i = 1}^r \Psi (x_i), \label{manytoomany}
  \end{equation}
  where $\Psi$ is a smooth function in a complex neighborhood of $0$ and the above
  convergence is uniform in a complex neighborhood of $0^r$. Then, for every $k \in
  \mathbb{N}$, the $k$-th moment of the random measure $N^{- \theta} \sum_{i =
  1}^N \delta \left( N^{- 1} \lambda_i (A) \right)$ converges, as $N \rightarrow \infty$, in
  probability to $\Psi^{(k)} (0) / (k - 1) !$.
\end{theorem}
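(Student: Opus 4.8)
The plan is to follow the template of Theorems \ref{SouvlakiA}, \ref{Propolemiko} and \ref{BEBT!}. Writing $f_N(x_1,\ldots,x_N):=\mathbb{E}[HC(x_1,\ldots,x_N;\lambda_1(A),\ldots,\lambda_N(A))]$, the $k$-th moment of the random measure $N^{-\theta}\sum_{i=1}^N\delta(N^{-1}\lambda_i(A))$ equals $\frac{1}{N^{k+\theta}}\tmop{Tr}(A^k)$, and by Proposition \ref{prop:1} its expectation is $\frac{1}{N^{k+\theta}}\lim_{\varepsilon\to 0}\mathcal{D}_k f_N|_{x_i=i\varepsilon}$. First I would establish convergence in expectation, and then control the variance to upgrade to convergence in probability by Chebyshev's inequality.

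For the expectation I would expand $\mathcal{D}_k f_N$ by Lemma \ref{lem:2}, write $f_N=\exp(N^{\theta}\cdot\frac{1}{N^{\theta}}\log f_N)$ and apply the chain rule exactly as in the proof of Theorem \ref{Propolemiko}, so that $f_N$ factors out and equals $1$ at the origin. A term indexed by $m$ in Lemma \ref{lem:2}, whose chain-rule partition contributes $S$ derivative factors, then carries a combinatorial weight of order $N^{m+1}$ (the number of tuples of distinct indices $l_0,\ldots,l_m$ after the symmetrization provided by Lemma \ref{SnikfeatFy}) times a factor $N^{\theta S}$ coming from differentiating $\exp(N^{\theta}(\cdots))$, with $1\le S\le k-m$ when $m<k$ and $S=0$ when $m=k$; thus its order is $N^{m+1+\theta S}$. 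The decisive point is that although the naive maximum $N^{k+1}$ is attained at $m=k$, that summand has an empty derivative product, so the required $k$-th symmetrized derivative is the $k$-th derivative of the constant $1$ and vanishes, exactly as the $m=k$ term of \eqref{xese} is zero. Hence the leading surviving contribution is $m=k-1$, $S=1$, of order $N^{k+\theta}$, while every term with $m\le k-2$ has order at most $N^{k-1+2\theta}$, which is strictly smaller because $\theta<1$. Evaluating this single surviving term and invoking assumption \eqref{manytoomany}, so that $N^{-\theta}\partial_1^k\log f_N(0^N)\to\Psi^{(k)}(0)$, yields $\frac{1}{N^{k+\theta}}\mathbb{E}[\tmop{Tr}(A^k)]\to\Psi^{(k)}(0)/(k-1)!$ just as in Theorem \ref{BEBT!}.

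For convergence in probability I would compute $\mathbb{E}[(\tmop{Tr}(A^k))^2]=\mathcal{D}_k^2 f_N|_{x=0}$ using $\mathcal{D}_k^2=\mathcal{D}_{2k}+\left(\prod_{i<j}(x_i-x_j)\right)^{-1}\sum_{n\ne m}\partial_n^k\partial_m^k\left(\prod_{i<j}(x_i-x_j)\,f_N\right)$. By the single-operator count above, $\mathcal{D}_{2k}f_N|_{x=0}$ has order $N^{2k+\theta}$, whereas the relevant normalization is $N^{2(k+\theta)}=N^{2k+2\theta}$; since $\theta>0$, the $\mathcal{D}_{2k}$ contribution is negligible after dividing. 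This is precisely where the intermediate regime behaves like Theorem \ref{SouvlakiA} rather than like Theorem \ref{BEBT!}, in which $\theta=0$ forces $\mathcal{D}_{2k}$ to contribute. The cross term I would treat exactly as in Theorem \ref{SouvlakiA}: its leading contribution factorizes over two disjoint index centers, each reproducing the $m=k-1$ structure of the first-moment computation, so that $\frac{1}{N^{2(k+\theta)}}\mathbb{E}[(\tmop{Tr}(A^k))^2]\to(\Psi^{(k)}(0)/(k-1)!)^2$. Together the two moment asymptotics show that the variance tends to $0$, and Chebyshev's inequality yields convergence in probability.

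The main obstacle I anticipate is the bookkeeping of the power count: verifying rigorously that the would-be dominant $m=k$ term vanishes, isolating $m=k-1$ as the unique leading contribution with the correct constant $1/(k-1)!$, and confirming that the $\mathcal{D}_{2k}$ part of the second moment is strictly subleading. All three hinge on the strict inequalities $0<\theta<1$, which separate this regime cleanly from the boundary cases $\theta\in\{0,1\}$ already handled.
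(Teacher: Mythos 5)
Your proposal is correct and follows essentially the same route as the paper's proof: the same operator expansion with $f_N=\exp\left(N^{\theta}\cdot\frac{1}{N^{\theta}}\log f_N\right)$, the same power count $N^{m+1+\theta S}$ isolating the $m=k-1$, $S=1$ term of order $N^{k+\theta}$ (with the $m=k$ term vanishing identically), and the same second-moment argument in which $\mathcal{D}_{2k}$ is negligible because $\theta>0$ while the cross term is dominated by the $(\nu,\mu)=(k-1,k-1)$ contribution. The only detail you gloss over is that the cross-term cases with $\nu=k$ or $\mu=k$ carry naively larger powers of $N$ and must be disposed of via the cancellation identities \eqref{apopseimekodasou} and \eqref{12345} from Theorem \ref{BEBT!} rather than by the factorization structure of Theorem \ref{SouvlakiA}, which is exactly how the paper handles them.
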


\begin{proof}
  First, in order to show that for every $k \in \mathbb{N}$
  \begin{equation}
    \lim_{N \rightarrow \infty} \frac{1}{N^{\theta + k}} \mathbb{E} \left[
    \sum_{i = 1}^N \lambda_i^k (A) \right] = \frac{\Psi^{(k)} (0)}{(k - 1) !},
    \label{salamalekum}
  \end{equation}
  we use the same technique as in Theorem \ref{SouvlakiA} and Theorem
  \ref{BEBT!} which gives us an expansion of $\lim_{\varepsilon
  \rightarrow 0} \mathcal{D}_k f_N \longdownminus_{x_i = i \varepsilon}$. In this expansion the leading summand will
  be of the form $N^{\theta + k} m_{0, k, N} (\Psi)$, where the sequence $(m_{0, k, N}
  (\Psi))_{N \in \mathbb{N}}$ converges and the remaining summands will be
  $\omicron (N^{\theta + k})$. This can be done if we use the formula
  (\ref{eskasetoportofoliaptapenintarika}) for the derivatives $\partial_i^{k
  - m} f_N$ of $f_N (x_1, \ldots, x_N) =\mathbb{E} [H \text{} C (x_1, \ldots,
  x_N ; \lambda_1 (A), \ldots, \lambda_N (A))]$, where now $l$ is replaced by
  $\theta$, since (\ref{manytoomany}) and Lemma \ref{SnikfeatFy} imply that
  the terms (\ref{Mithridakis}) converge as $x_1, \ldots, x_N \rightarrow 0$
  and $N \rightarrow \infty$. Thus, for every $m = 0, \ldots, k - 1$, we can
  write the summand
  \[ \frac{1}{\prod_{i < j} (x_i - x_j)} \sum_{i = 1}^N \binom{k}{m}
     \partial_i^{k - m} f_N  \left. \partial_i^m \left( \prod_{i < j} (x_i -
     x_j) \right) \right|_{x_i = 0} \]
  of $\mathbb{E} [\tmop{Tr} (A^k)]$ as a sum of terms of the form $N^{i_{m, k} \theta + j_{m,
  k}} h_{m, k, N} (\Psi)$, where $(h_{m, k, N} (\Psi))_{N \in \mathbb{N}}$
  converges. The summand that corresponds to the largest power of $N$ is
  \[ N^{(k - m) \theta + m + 1} \left( \frac{k!}{m! (m + 1) ! (k - m) !}
     \partial_1^m \left( \partial_1 \left( \frac{1}{N^{\theta}} \log \text{}
     f_N \right) \right)^{k - m} (0^N) + \omicron (1) \right) . \]
  Since $(k - m) \theta + m + 1 \leq \theta + k$, for every $m = 0, \ldots, k
  - 1$, we deduce that (\ref{salamalekum}) holds. On the other hand, in order
  to prove
  \begin{equation}
    \lim_{N \rightarrow \infty} \frac{1}{N^{2 \theta + 2 k}} \mathbb{E} \left[
    \sum_{i = 1}^N \lambda_i^k (A) \right]^2 = \left( \frac{\Psi^{(k)} (0)}{(k
    - 1) !} \right)^2, \label{metemfiliako}
  \end{equation}
  first note that $\lim_{N \rightarrow \infty} \frac{1}{N^{2 \theta + 2 k}}
  \mathcal{D}_{2 k} f_N \longdownminus_{x_i = 0}$ does not contribute to the
  above limit. Similarly, writing $$f_N = \exp \left( N^{\theta} \cdot
  \frac{1}{N^{\theta}} \log \text{} f_N \right)$$ and using Lemma
  \ref{SnikfeatFy} and (\ref{manytoomany}), the expression
  \begin{equation}
    \left. \frac{1}{\prod_{i < j} (x_i - x_j)} \sum_{\underset{n \neq m}{n, m
    = 1}}^N \binom{k}{\nu} \binom{k}{\mu} \partial_n^{k - \nu} \partial_m^{k -
    \mu} f_N \partial_n^{\nu} \partial_m^{\mu} \left( \prod_{i < j} (x_i -
    x_j) \right) \right|_{x_i = 0} \label{thelwex}
  \end{equation}
  can be written as a sum of terms of the form $N^{i_{\nu, \mu, k} \theta + j_{\nu, \mu, k}}
  h_{\nu, \mu, k, N} (\Psi)$, where $(h_{\nu, \mu, k, N} (\Psi))_{N \in
  \mathbb{N}}$ converges. The largest power of $N$ that appears as factor of a
  summand is $N^{(2 k - \nu - \mu) \theta + \nu + \mu + 2}$. In Theorem
  \ref{BEBT!} we have shown that for $(\nu, \mu) = (k - 1, k)$ the largest
  power of $N$ that appears as factor of a summand is $N^{\theta + 2 k}$.
  Moreover, we have shown that for $(\nu, \mu) = (k - 1, k - 1)$,
  (\ref{thelwex}) divided by $N^{2 \theta + 2 k}$ converges to $(\Psi^{(k)}
  (0) / (k - 1) !)^2$, as $N \rightarrow \infty$. Thus, due to
  (\ref{apopseimekodasou}) and the condition $(2 k - \mu - \nu) \theta + \nu + \mu + 2 \leq
  2 \theta + 2 k$, for every $0 \leq \nu, \mu \leq k - 1$, we deduce that
  (\ref{metemfiliako}) holds.
\end{proof}


\begin{remark}
Comparing Theorems \ref{SouvlakiA} and \ref{BEBT!}, we see that they produce Law of Large Numbers results for a matrix $A$ in two different regimes of growth. The first one is strongly related with free probability, and it is natural to ask whether the second one is related with it as well. The answer is positive. We show that the regime of growth from Theorem \ref{BEBT!} is related to \textit{infinitesimal freeness}. 

From the point of view of infinitesimal free probability, Theorem \ref{BEBT!} implies
that the empirical spectral distribution of $A / N$ converges to $\delta(0)$
and the $1 / N$ correction $\{m_k' \}$ is equal to
\[ m_k' = \frac{\Psi^{(k)} (0)}{(k - 1) !} = \sum_{\pi \in \tmop{NC} (k)}
   \sum_{V \in \pi} \frac{\Psi^{(|V|)} (0)}{(|V| - 1) !} \prod_{\underset{W
   \neq V}{W \in \pi}} \kappa_{|W|} (\delta(0)), \]
where $\{\kappa_n (\delta(0))\}$ are the free cumulants of $\delta(0)$. In other
words, $\Psi'$ is the infinitesimal $R$-transform. In Theorem \ref{Einaiiagapi} below, we combine the limit regimes of Theorems \ref{SouvlakiA} and \ref{BEBT!} (thus, generalizing both of them). 

\end{remark}

\section{Infinitesimal free probability}
\label{sec:inf-free}

The goal of this section is to study the first order correction to the Law of Large Numbers for the empirical measure of random matrices via a more detailed information about their Harish-Chandra transform.

\begin{theorem}
  \label{Einaiiagapi} Let $A$ be a random Hermitian matrix of size $N$. Assume that for every finite $r$ one has
  \begin{equation}
    \lim_{N \rightarrow \infty} N \left( \frac{1}{N} \log \mathbb{E}[H \text{}
    C (x_1, \ldots, x_r, 0^{N - r} ; \lambda_1 (A), \ldots, \lambda_N (A))] -
    \sum_{i = 1}^r \Psi (x_i) \right) = \sum_{i = 1}^r \Phi (x_i), \label{ola}
  \end{equation}
  where $\Psi, \Phi$ are smooth functions in a (complex) neighborhood of $0$ and the
  above convergence is uniform in a (complex) neighborhood of $0^r$. Then one has the following     limit:
  \begin{equation}
    \lim_{N \rightarrow \infty} N \left( \frac{1}{N^{k + 1}} \mathbb{E} \left[
    \sum_{i = 1}^N \lambda_i^k (A) \right] -\tmmathbf{\mu}_k \right)
    =\tmmathbf{\mu}'_k, \label{rin}
  \end{equation}
  where $(\tmmathbf{\mu}_k)_{k \in \mathbb{N}}$ are given by (\ref{topg}) and
  \begin{equation}
    \tmmathbf{\mu}'_k \assign \sum_{m = 0}^{k - 1} \frac{k!}{m! (m + 1) !
    (k - m - 1) !}  \left. \frac{\mathd^m}{\mathd x^m} ((\Psi' (x))^{k - m
    - 1} \Phi' (x)) \right|_{x = 0} . \label{G}
  \end{equation}
\end{theorem}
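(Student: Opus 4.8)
The plan is to follow the proof of Theorem \ref{SouvlakiA} essentially verbatim, but to retain one further order in $N$. Since the conclusion \eqref{rin} concerns only the expectation $\mathbb{E}[\sum_i\lambda_i^k(A)]$, no second–moment (variance) estimate is needed; the whole argument is a refinement of the ``convergence in expectation'' part of that proof. As before, I would start from Proposition \ref{prop:1}, which gives $\mathbb{E}[\sum_i\lambda_i^k(A)]=\lim_{\varepsilon\to 0}\mathcal{D}_k f_N\longdownminus_{x_i=i\varepsilon}$ for $f_N(x_1,\ldots,x_N):=\mathbb{E}[H\text{}C(x_1,\ldots,x_N;\lambda_1(A),\ldots,\lambda_N(A))]$, write $f_N=\exp(N G_N)$ with $G_N:=\tfrac1N\log f_N$, and apply the chain rule \eqref{kefi} together with Lemma \ref{lem:2}. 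The hypothesis \eqref{ola} now supplies the two–term expansion $G_N(x_1,\ldots,x_r,0^{N-r})=\sum_{i=1}^r\Psi(x_i)+\tfrac1N\sum_{i=1}^r\Phi(x_i)+o(1/N)$, uniformly in a complex neighbourhood of $0^r$; by analyticity (Cauchy estimates) and Lemma \ref{SnikfeatFy} the same expansion, with an $o(1/N)$ remainder, holds for all the relevant derivatives and their symmetrizations.

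Next I would isolate the coefficients of $N^{k+1}$ and of $N^{k}$ in $\lim_{\varepsilon\to 0}\mathcal{D}_k f_N\longdownminus_{x_i=i\varepsilon}$. The coefficient of $N^{k+1}$ reproduces $\tmmathbf{\mu}_k$ exactly as in Theorem \ref{SouvlakiA}. For the coefficient of $N^{k}$ there are three sources: (a) the explicit $\tfrac1N\Phi$ term in $G_N$, which replaces one factor $\Psi'$ by $\Phi'$ in all possible ways; (b) the subleading term of the index count $\binom{N}{m+1}$ attached to the $m$-th block of \eqref{alaniariko}; and (c) the subleading chain-rule patterns in \eqref{kefi}, namely those with $l_1+\cdots+l_{k-m}=k-m-1$, i.e. carrying one factor $\partial_i^2 G_N$. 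A preliminary observation prunes the list: all \emph{mixed}-variable derivatives of $G_N$ are $o(1/N)$, because $\sum_i\Psi(x_i)$ and $\sum_i\Phi(x_i)$ are sums of one–variable functions (so their mixed partials vanish) and the remainder is $o(1/N)$ together with its derivatives; hence the off-diagonal terms that were merely $O(1)$-negligible in Theorem \ref{SouvlakiA} are in fact $o(1/N)$-negligible here and never enter the $N^k$ coefficient. The finite-$N$ content of the symmetrization limit \eqref{malakas} is likewise entirely carried by $G_N$, so it contributes only through source (a).

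The crux --- and the step I expect to be the main obstacle --- is that the two ``intrinsic'' sources (b) and (c), which depend only on $\Psi$, cancel. Concretely, the subleading chain-rule contribution of block $m$ is annihilated by the subleading binomial contribution of block $m+1$. One already sees this for $k=2$, where the $+N^2\Psi''(0)$ produced by the $\partial_1^2 G_N$ term in the $m=0$ block is exactly cancelled by the $-N^2\Psi''(0)$ coming from $\binom{N}{2}=\tfrac{N^2-N}{2}$ in the $m=1$ block. After this cancellation only source (a) survives, and it assembles precisely into the directional derivative $\tfrac{\mathd}{\mathd t}\tmmathbf{\mu}_k[\Psi+t\Phi]\big|_{t=0}$, i.e. the effect of perturbing $\Psi\rightsquigarrow\Psi+\tfrac1N\Phi$ in formula \eqref{topg}. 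I would establish the cancellation in general by matching, block by block, the polynomial-in-$N$ coefficients, using the identity \eqref{malakas} and the expansion of $\binom{N}{m+1}$; this is the most bookkeeping-heavy part of the argument.

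Finally I would identify the surviving contribution with \eqref{G} by a direct computation. Differentiating \eqref{topg} in the direction $\Phi$ and using the Leibniz rule,
\[
\frac{\mathd}{\mathd t}\tmmathbf{\mu}_k[\Psi+t\Phi]\Big|_{t=0}
=\sum_{m=0}^{k}\frac{k!}{m!(m+1)!(k-m)!}\,\frac{\mathd^m}{\mathd x^m}\big((k-m)(\Psi'(x))^{k-m-1}\Phi'(x)\big)\Big|_{x=0},
\]
and absorbing the factor $(k-m)$ into the factorials yields exactly $\sum_{m=0}^{k-1}\frac{k!}{m!(m+1)!(k-m-1)!}\frac{\mathd^m}{\mathd x^m}((\Psi'(x))^{k-m-1}\Phi'(x))\big|_{x=0}=\tmmathbf{\mu}'_k$, the $m=k$ term dropping out. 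This also makes transparent the free-probability meaning anticipated in the remark following Theorem \ref{BEBT!}: setting $\kappa_n=\Psi^{(n)}(0)/(n-1)!$ and $\kappa_n'=\Phi^{(n)}(0)/(n-1)!$, the quantities $\tmmathbf{\mu}_k,\tmmathbf{\mu}_k'$ are the moment and the infinitesimal moment produced by \eqref{tavradiamouleipeisKoulii} and \eqref{thelongestway}, so that the $1/N$ correction is governed by the infinitesimal free cumulants $\kappa_n'$.
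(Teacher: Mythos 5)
Your proposal is correct and follows essentially the same route as the paper: the paper likewise reduces to the expansion $\frac{1}{N^{k+1}}\mathbb{E}[\sum_i\lambda_i^k(A)]=\sum_i N^{-i}M_{i,N}(\Psi)$ from Theorem \ref{SouvlakiA}, extracts the $1/N$ correction of the leading term $M_{0,N}$ as the directional perturbation of \eqref{topg} by $\Phi$ (your source (a)), and proves $\lim_{N\to\infty}M_{1,N}(\Psi)=0$ by exhibiting exactly the cancellation you describe between the one-$\partial^2$ chain-rule terms and the subleading part of $\binom{N}{m+1}$. Your block-$m$ versus block-$(m{+}1)$ matching is precisely the identity the paper verifies.
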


\begin{proof}
  Note that (\ref{ola}) implies that the relation (\ref{xereis}) is satisfied
  and the empirical distribution of $A / N$ converges in probability to a
  measure with moments $(\tmmathbf{\mu}_k)_{k \in \mathbb{N}}$. In the proof of Theorem \ref{SouvlakiA} we have
  shown the existence of an expansion
  \begin{equation}
    \frac{1}{N^{k + 1}} \mathbb{E} \left[ \sum_{i = 1}^N \lambda_i^k (A)
    \right] = \sum_{i = 0}^k \frac{1}{N^i} M_{i, N} (\Psi), \label{pits}
  \end{equation}
  where for every $i \in \{0, \ldots, k\}$ the sequence $(M_{i, N} (\Psi))_{N
  \in \mathbb{N}}$ converges and $M_{0, N} (\Psi)$ is a linear combination of
  derivatives
  \begin{equation}
    \partial_{i_m} \ldots \partial_{i_1} \left( \partial_{i_0} \left(
    \frac{1}{N} \log \text{} f \right) \right)^{k - m} (0^N), \label{viv}
  \end{equation}
  where $f (x_1, \ldots, x_N) =\mathbb{E} (H \text{} C (x_1, \ldots, x_N ;
  \lambda_1 (A), \ldots, \lambda_N (A)))$ and $m \in \{0, \ldots, k\}$. Due to
  (\ref{ola}), if $|\{i_0, \ldots, i_m \}|$>1, then the terms (\ref{viv})
  multiplied by $N$ will not contribute to the $1 / N$ correction as $N
  \rightarrow \infty$. Thus,
  \[ N (M_{0, N} (\Psi) -\tmmathbf{\mu}_k) = \]
  \[ = \sum_{m = 0}^{k - 1} \frac{k!}{m! (m + 1) ! (k - m) !} N \left(
     \partial_1^m \left( \partial_1 \left( \frac{1}{N} \log \text{} f \right)
     \right)^{k - m} (0^N) - \left. \frac{\mathd^m}{\mathd x^m} (\Psi'
     (x))^{k - m} \right|_{x = 0} \right) + \omicron (1), \]
  as $N \rightarrow \infty$. For $m \in \{0, \ldots, k - 1\}$, applying
  Leibniz rule, we have
  \[ N \left( \partial_1^m \left( \partial_1 \left( \frac{1}{N} \log \text{} f
     \right) \right)^{k - m} (0^N) - \left. \frac{\mathd^m}{\mathd x^m}
     (\Psi' (x))^{k - m} \right|_{x = 0} \right) = \]
  \begin{equation}
    = \sum_{l_1 + \cdots + l_{k - m} = m} \frac{m!}{l_1 ! \ldots l_{k - m} !}
    N \left( \prod_{i = 1}^{k - m} \partial_1^{l_i + 1} \left( \frac{1}{N}
    \log \text{} f \right) (0^N) - \prod_{i = 1}^{k - m} \Psi^{(l_i + 1)} (0)
    \right) \label{soli} .
  \end{equation}
  For any choice of $l_1, \ldots, l_{k - m}$, due to (\ref{ola}), we have
  \[ \lim_{N \rightarrow \infty} N \left( \partial_1^{l_i + 1} \left(
     \frac{1}{N} \log \text{} f \right) (0^N) - \Psi^{(l_i + 1)} (0) \right) =
     \Phi^{(l_i + 1)} (0), \text{\quad for every \ } i = 1, \ldots, k - m, \]
  and by induction on $k - m$
  \[ \lim_{N \rightarrow \infty} N \left( \prod_{i = 1}^{k - m}
     \partial_1^{l_i + 1} \left( \frac{1}{N} \log \text{} f \right) (0^N) -
     \prod_{i = 1}^{k - m} \Psi^{(l_i + 1)} (0) \right) = \]
  \[ = \Phi^{(l_1 + 1)} (0) \Psi^{(l_2 + 1)} (0) \ldots \Psi^{(l_{k - m} + 1)}
     (0) + \cdots + \Psi^{(l_1 + 1)} (0) \ldots \Psi^{(l_{k - m - 1} + 1)} (0)
     \Phi^{(l_{k - m} + 1)} (0) . \]
  Thus, as $N \rightarrow \infty$, the expression (\ref{soli}) is equal to
  \[ (k - m) \left. \frac{\mathd^m}{\mathd x^m} ((\Psi' (x))^{k - m - 1}
     \Phi' (x)) \right|_{x = 0}, \]
  which implies that $\lim_{N \rightarrow \infty} N (M_{0, N} (\Psi)
  -\tmmathbf{\mu}_k) =\tmmathbf{\mu}'_k$. As a corollary, the claim holds
  if $\lim_{N \rightarrow \infty} M_{1, N} (\Psi) = 0$. In order to determine
  $M_{1, N} (\Psi)$ we have to better understand the rule that gives expansion
  (\ref{pits}) or equivalently how the operator $\mathcal{D}_k$ acts on
  function $f$. Our approach from Theorem \ref{SouvlakiA} shows that the dependence of
  $\lim_{\varepsilon \rightarrow 0} \mathcal{D}_k f \longdownminus_{x_i = i
  \varepsilon}$ on $N$ emerges from powers of $N$ and the terms $(M_{i, N}
  (\Psi))_{i = 0}^k .$ The powers of $N$ appear from the differentiation of $f
  = \exp \left( N \cdot \frac{1}{N} \log \text{} f \right)$ (see formula
  (\ref{kefi})) and the fact that the symmetric terms (\ref{tralala}) do not
  depend on $b_0, \ldots, b_m \in \{1, \ldots, N\}$, where $x_i = i
  \varepsilon$, for $i = 1, \ldots, N$, and $\varepsilon \rightarrow 0$.
  Thus, $\lim_{\varepsilon \rightarrow 0} \mathcal{D}_k f \longdownminus_{x_{i
  = i \varepsilon}}$ can be written as a linear combination of terms
  \begin{equation}
    N^{l_1 + \cdots + l_{k - m}} \binom{N}{m + 1} c_N (m), \label{jij}
  \end{equation}
  where $m \in \{0, \ldots, k\}$, $l_1 + 2 l_2 + \cdots + (k - m) l_{k - m} =
  k - m$ and $c_N (m)$ are linear combinations of derivatives of the form
  (\ref{antk}). In order to determine $M_{1, N} (\Psi)$, we have to compute the
  summand with factor $N^k$. There are two cases that we have to consider. For
  $l_1 + \cdots + l_{k - m} = k - m - 1$, i.e. $l_1 = k - m - 2$, $l_2 = 1$
  and $l_3 = \cdots = l_{k - m} = 0$, the summands of the form (\ref{jij})
  with factor $N^k$ are
  \[ N^k \left( \frac{k!}{2 m! (m + 1) ! (k - m - 2) !} \partial_1^m \left(
     \left( \partial_1 \left( \frac{1}{N} \log \text{} f \right) \right)^{k -
     m - 2} \partial_1^2 \left( \frac{1}{N} \log \text{} f \right) \right)
     (0^N) + \omicron (1) \right), \]
  where $m \in \{0, \ldots, k - 2\}$. On the other hand, for $l_1 + \cdots +
  l_{k - m} = k - m$, i.e. $l_1 = k - m$ and $l_2 = \cdots = l_{k - m} = 0$,
  we also obtain appropriate summands of the form (\ref{jij}). These are
  \[ - N^k \left( \sum_{i = 1}^m i \right) \left( \frac{k!}{m! (m + 1) ! (k -
     m) !} \partial_1^m \left( \partial_1 \left( \frac{1}{N} \log \text{} f
     \right) \right)^{k - m} (0^N) + \omicron (1) \right) \]
  \[ \text{{\hspace{10em}}} = - N^k \left( \frac{k!}{2 (m - 1) !m! (k - m) !}
     \partial_1^m \left( \partial_1 \left( \frac{1}{N} \log \text{} f \right)
     \right)^{k - m} (0^N) + \omicron (1) \right), \]
  where $m \in \{0, \ldots, k - 1\}$. For $l_1 + \ldots l_{k - m} < k - m - 1$
  we obtain summands which have powers of $N$ of lower degree. By
  the above, we deduce that
  \[ \lim_{N \rightarrow \infty} M_{1, N} (\Psi) = \sum_{m = 0}^{k - 2}
     \frac{k!}{2 m! (m + 1) ! (k - m - 2) !}  \left.  \frac{\mathd^m}{\mathd
     x^m} ((\Psi' (x))^{k - m - 2} \Psi'' (x)) \right|_{x = 0} \]
  \[ \text{{\hspace{13em}}} - \sum_{m = 1}^{k - 1} \frac{k!}{2 (m - 1) !m! (k
     - m) !}  \left. \frac{\mathd^m}{\mathd x^m} ((\Psi' (x))^{k - m})
     \right|_{x = 0} = 0. \]
  This proves the claim.
\end{proof}

\begin{remark}
  Ergodic unitarily invariant matrices are included in the context of Theorem
  \ref{Einaiiagapi}. An example is a
  Hermitian random matrix $A$ of size $N$ that satisfies the
  relation
  \begin{equation}
    \mathbb{E} \left[ \exp (\tmop{Tr} (H \text{} A)) \right] = \prod_{b \in
    \tmop{Spec} (H)} \exp (N \Psi (b) + \Phi (b)), \text{\quad for every} \
    H \in H (N).
  \end{equation}
  In that case we have
  \[ \frac{1}{N}  \left( \log \mathbb{E} [H \text{} C (x_1, \ldots, x_N ; \lambda_1 (A),
     \ldots, \lambda_N (A))] - N \sum_{i = 1}^N \Psi (x_i) \right) = \sum_{i = 1}^N
     \Phi (x_i). \]
  In comparison with a more general setting of Theorem \ref{Einaiiagapi} , these matrices
  provide the simplest case since they give an expansion
  \[ \frac{1}{N^{k + 1}} \mathbb{E} \left[ \sum_{i = 1}^N \lambda_i^k (A)
     \right] = \sum_{i = 0}^k m_i \frac{1}{N^i}, \]
  where $m_0, \ldots, m_k$ do not depend from $N$. The numbers $m_0, m_1$ are
  given by (\ref{topg}) and (\ref{G}) respectively. Thus, for GUE and Wishart
  matrices the $1 / N$ correction will be equal to zero.
\end{remark}

In the context of Theorem \ref{Einaiiagapi}, we saw that function $\Psi'$ plays a crucial role
in the determination of the limit measure, since it gives us the
$R$-transform. Now we would like to understand the role of function $\Phi$
in the $1 / N$ correction. This role is quite similar from the
infinitesimal free probability side, in the following sense: For a random
matrix $A$ that satisfies the assumptions of Theorem \ref{Einaiiagapi}, we consider the
sequence of non-commutative probability spaces $(\mathbb{C} \langle \mathbf{x}
\rangle, \varphi_N)$, where
\begin{equation}
  \varphi_N (P) \assign \frac{1}{N} \mathbb{E} [\tmop{Tr} (P (N^{- 1}
  A))], \text{\quad for every} \ P \in \mathbb{C} \langle \mathbf{x} \rangle
  . \label{pornovioS}
\end{equation}
We also define a non-commutative probability space $(\mathbb{C} \langle
\mathbf{x} \rangle, \varphi)$ such that
\begin{equation}
  \varphi (P) = \int_{\mathbb{R}} P (t) \tmmathbf{\mu} (d \text{} t),
  \text{\quad for every} \ P \in \mathbb{C} \langle \mathbf{x} \rangle,
  \label{takitsan}
\end{equation}
where $\tmmathbf{\mu}$ is the probability measure with moments given by
(\ref{topg}). Theorem \ref{Einaiiagapi} implies the relation
\begin{equation}
  \varphi_N (P) = \varphi (P) + \frac{1}{N} \varphi' (P) + \omicron (N^{-
  1}), \label{geniamas}
\end{equation}
where $\varphi'$ is a linear functional which sends the identity of
$\mathbb{C} \langle \mathbf{x} \rangle$ to zero and $\varphi'
(\mathbf{x}^k)$ are given by (\ref{G}). The free cumulants
$(\tmmathbf{\kappa}_n (\mathbf{x}, \ldots, \mathbf{x}))_{n \in \mathbb{N}}$ of
$(\mathbb{C} \langle \mathbf{x} \rangle, \varphi)$ are given by
\begin{equation}
  \tmmathbf{\kappa}_n (\mathbf{x}, \ldots, \mathbf{x}) = \frac{\Psi^{(n)}
  (0)}{(n - 1) !}, \text{\quad for every} \ n \in \mathbb{N}. \label{chen}
\end{equation}
Similarly, function $\Phi$ allows us to compute explicitly the infinitesimal
free cumulants $(\tmmathbf{\kappa}'_n(\mathbf{x}, \ldots, \mathbf{x}))_{n
\in \mathbb{N}}$ of $(\mathbb{C} \langle \mathbf{x} \rangle, \varphi,
\varphi')$. In the next lemma we show that
\begin{equation}
  \tmmathbf{\kappa}'_n (\mathbf{x}, \ldots, \mathbf{x}) = \frac{\Phi^{(n)}
  (0)}{(n - 1) !}, \text{\quad for every} \ n \in \mathbb{N}. \label{alex}
\end{equation}

\begin{lemma}
  \label{24}Let $\Psi, \Phi$ be two infinitely differentiable at 0 functions
  and consider sequences $(c_n)_{n \in \mathbb{N}}, (c'_n)_{n \in
  \mathbb{N}}$, where
  \[ c_n \assign \frac{\Psi^{(n)} (0)}{(n - 1) !} \text{\quad and\quad}
     c'_n \assign \frac{\Phi^{(n)} (0)}{(n - 1) !}, \text{\quad for
     every \ } n \in \mathbb{N}. \]
  Then we have
  \begin{equation}
    \sum_{m = 0}^{k - 1} \frac{k!}{m! (m + 1) ! (k - m - 1) !}  \left.
    \frac{\mathd^m}{\mathd x^m} ((\Psi' (x))^{k - m - 1} \Phi' (x))
    \right|_{x = 0} = \sum_{\pi \in \tmop{NC} (k)} \sum_{V \in \pi}
    c'_{|V|} \prod_{W \in \pi; W \ne V} c_{|W|} . \label{v}
  \end{equation}
\end{lemma}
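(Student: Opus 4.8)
The plan is to obtain \eqref{v} by differentiating the ordinary moment--cumulant identity of Lemma \ref{lem:mom-cum} along the affine perturbation $\Psi \mapsto \Psi_t \assign \Psi + t \Phi$. The starting point is the observation that the proof of Lemma \ref{lem:mom-cum} really establishes the \emph{algebraic} identity \eqref{korifi},
\[ \sum_{m = 0}^{k - 1} \frac{k!}{m! (m + 1) ! (k - m) !} \left. \frac{\mathd^m}{\mathd x^m} ((G' (x))^{k - m}) \right|_{x = 0} = \sum_{\pi \in \tmop{NC} (k)} \prod_{V \in \pi} \frac{G^{(|V|)} (0)}{(|V| - 1) !}, \]
valid for \emph{any} function $G$ that is smooth at $0$, with no use of positivity or of $G$ coming from a genuine measure. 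I would apply it with $G = \Psi_t$ and set $c_n (t) \assign \Psi_t^{(n)} (0) / (n - 1) ! = c_n + t \text{} c'_n$; denoting by $\mu_k (t)$ the left-hand side above for $G = \Psi_t$, this gives $\mu_k (t) = \sum_{\pi \in \tmop{NC} (k)} \prod_{V \in \pi} c_{|V|} (t)$.

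Both sides are polynomials in $t$, so the next step is to compare the coefficient of $t^1$, i.e. to apply $\mathd / \mathd t$ at $t = 0$. On the cumulant side the Leibniz rule gives
\[ \left. \frac{\mathd}{\mathd t} \prod_{V \in \pi} c_{|V|} (t) \right|_{t = 0} = \sum_{V \in \pi} c'_{|V|} \prod_{\underset{W \neq V}{W \in \pi}} c_{|W|}, \]
so that the derivative of the right-hand side is exactly the right-hand side of \eqref{v}. On the operator side I would interchange $\mathd / \mathd t$ with $\frac{\mathd^m}{\mathd x^m}\big|_{x = 0}$ and use the chain rule, $\frac{\mathd}{\mathd t} (\Psi_t' (x))^{k - m} \big|_{t = 0} = (k - m) (\Psi' (x))^{k - m - 1} \Phi' (x)$; since $(k - m) / (k - m) ! = 1 / (k - m - 1) !$, the derivative of $\mu_k (t)$ at $0$ is precisely the left-hand side of \eqref{v}. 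Equating the two derivatives then yields the claim.

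The argument is analytically transparent, so the only point that genuinely needs to be spelled out is the one flagged above: Lemma \ref{lem:mom-cum} has to be invoked as a formal identity holding for every smooth $G$, not merely for those $\Psi$ that arise from an honest probability measure, because for generic $t$ the deformed data $\Psi_t$ need not correspond to any measure. Once this is granted the differentiation step is a one-line application of the product rule in the deformation parameter, and there is no further obstacle. Finally, combining this with Lemma \ref{lem:mom-cum} and with formula \eqref{G} for $\tmmathbf{\mu}_k' = \varphi' (\mathbf{x}^k)$ identifies the infinitesimal free cumulants of $(\mathbb{C} \langle \mathbf{x} \rangle, \varphi, \varphi')$ as $\tmmathbf{\kappa}_n' (\mathbf{x}, \ldots, \mathbf{x}) = \Phi^{(n)} (0) / (n - 1) !$, which is the asserted \eqref{alex}.
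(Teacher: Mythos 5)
Your proof is correct, but it takes a genuinely different route from the paper. The paper proves \eqref{v} by a direct combinatorial argument: it expands the left-hand side by the Leibniz rule into a sum over compositions $(l_1,\ldots,l_{k-m})$ of $m$, matches each composition with the non-crossing partitions of $\{1,\ldots,k\}$ having the corresponding block-size multiplicities, and carefully counts multiplicities on both sides (the count $\frac{(k-m-1)!}{r_1!\cdots (r_j-1)!\cdots r_a!}$ of orderings with one distinguished $\Phi$-factor against the count $\frac{k!}{(m+1)!\,r_1!\cdots r_a!}$ of such partitions), i.e.\ it redoes the proof of Lemma \ref{lem:mom-cum} with one factor replaced. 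You instead deform $\Psi\mapsto\Psi+t\Phi$, observe that both sides of the order-zero identity \eqref{korifi} are polynomials in $t$, and extract the coefficient of $t^1$; the factor $(k-m)$ from the chain rule converts $\frac{1}{(k-m)!}$ into $\frac{1}{(k-m-1)!}$ exactly as needed, and the product rule on the cumulant side produces the sum over distinguished blocks $V$. Your key caveat is the right one and is indeed satisfied: the proof of Lemma \ref{lem:mom-cum} in the paper is purely algebraic in the jet of $\Psi$ at $0$ and never uses that $\tmmathbf{\mu}$ is an actual probability measure, so it applies verbatim to $\Psi+t\Phi$ for every $t$. Your argument is shorter and makes rigorous the informal ``differentiation procedure'' that the paper describes around \eqref{thelongestway}; the paper's direct counting has the advantage that its template extends with little change to the genuinely new second-order identity \eqref{thatsmagic} in Lemma \ref{lem:mu-doublePrime} (which your method would recover by passing to the $t^2$ coefficient of a one- or two-parameter deformation). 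The concluding sentence of your write-up about identifying the infinitesimal cumulants via \eqref{G} and \eqref{alex} is consistent with how the paper uses the lemma.
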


\begin{proof}
  By Leibniz rule we have that the left hand side of (\ref{v}) is equal to
  \begin{equation}
    \sum_{m = 0}^{k - 1} \sum_{l_1 + \cdots + l_{k - m} = m} \frac{k!}{(m + 1)
    ! (k - m - 1) ! \prod_{i = 1}^{k - m} l_i !} \prod_{i = 1}^{k - m - 1}
    \Psi^{(l_i + 1)} (0) \Phi^{(l_{k - m} + 1)} (0) \label{zwzw} .
  \end{equation}
  We will show that the correspondence that we made in the proof of Lemma \ref{lem:mom-cum}
  between ($k - m$) -tuples $(l_1, \ldots, l_{k - m})$ and non-crossing
  partitions $\pi \in \tmop{NC} (k)$ allows us to prove the claim. For fixed $m \in
  \{0, \ldots, k - 1\}$, consider non-negative integers $\lambda_1,
  \ldots, \lambda_{k - m}$ such that $\lambda_1 + \cdots + \lambda_{k -
  m} = m$, and assume that $\{\lambda_1, \ldots, \lambda_{k - m} \} = \{\nu_1,
  \ldots, \nu_a \}$, where $\nu_i \neq \nu_j$, for every $i \neq j$. We also
  define for every $i \in \{1, \ldots, a\}$ the quantity
  \[ r_i \assign |\{j = 1, \ldots, k - m \of \lambda_j = \nu_i \}|, \]
  which implies that
  \[ r_1 \nu_1 + \cdots + r_a \nu_a = m \text{\quad and\quad} r_1 + \cdots +
     r_a = k - m. \]
  For fixed $j \in \{1, \ldots, a\}$, the number of times that the term
  \[ \frac{k!}{(m + 1) ! (k - m - 1) ! \prod_{i = 1}^{k - m} \lambda_i !}
     \Phi^{(\nu_j + 1)} (0) (\Psi^{(\nu_j + 1)} (0))^{r_j - 1}
     \prod_{\underset{i \neq j}{i = 1}}^a (\Psi^{(\nu_i + 1)} (0))^{r_i} \]
  will contribute to the latter sum of (\ref{zwzw}) is equal to the number of
  different ways that we can cover $k - m - 1$ points on a line segment with
  $r_1$ elements $\nu_1$, {\textdots}, $r_{j - 1}$
  elements $\nu_{j - 1}$, $r_j - 1$ elements $\nu_j$, $r_{j + 1}$
  elements $\nu_{j + 1}$, {\textdots}, $r_a$ elements
  $\nu_a$. This is equal to 
  \[ \frac{(k - m - 1) !}{r_1 ! \ldots r_{j - 1} ! (r_j - 1) !r_{j + 1} !
     \ldots r_a !} . \]
  Thus, we see that the $(k - m)$-tuple $(\lambda_1, \ldots, \lambda_{k - m})$
  will contribute to the latter sum of (\ref{zwzw}) a term
  \[ \frac{k!}{(m + 1) ! \prod_{j = 1}^a (\nu_j !)^{r_j}} \sum_{i = 1}^a
     \frac{\Phi^{(\nu_i + 1)} (0) (\Psi^{(\nu_i + 1)} (0))^{r_i - 1}}{r_1 !
     \ldots r_{i - 1} ! (r_i - 1) !r_{i + 1} ! \ldots r_a !}
     \prod_{\underset{j \neq i}{j = 1}}^a (\Psi^{(\nu_j + 1)} (0))^{r_j} \]
  \[ \text{} = \frac{k!}{(m + 1) ! \prod_{i = 1}^a r_i !} \sum_{i = 1}^a r_i
     c'_{\nu_i + 1} (c_{\nu_i + 1})^{r_i - 1} \prod_{\underset{j \neq i}{j
     = 1}}^a (c_{\nu_j + 1})^{r_j} . \]
  Let $\pi = \{V_1, \ldots, V_{k - m} \} \in \tmop{NC} (k)$ be a partition
  which have $r_1$ blocks with $\nu_1 + 1$ elements,{\textdots}, $r_a$ blocks
  with $\nu_a + 1$ elements. Such a partition gives a following contribution to the
  right hand side of (\ref{v})
  \[ \sum_{i = 1}^{k - m} c'_{|V_i |} \prod_{\underset{j \neq i}{j = 1}}^{k
     - m} c_{|V_j |} = \sum_{i = 1}^a r_i c'_{\nu_i + 1} (c_{\nu_i +
     1})^{r_i - 1} \prod_{\underset{j \neq i}{j = 1}}^a (c_{\nu_j + 1})^{r_j}
     . \]
  Since the number of such partitions is equal to $\left( (m + 1) ! \prod_{i =
  1}^a r_i ! \right)^{- 1} k!$, the claim has been proven.
\end{proof}

In the rest of this section we will focus on specific examples of random
matrix models, which will allow us to better understand the functional
$\varphi'$ defined in (\ref{geniamas}). Our goal is to compute the signed
measure on $\mathbb{R}$ that determines it in the same way that the
functional $\varphi$ is determined by relation (\ref{takitsan}) from the
probability measure $\tmmathbf{\mu}$.

\begin{example}
  \label{ZNisxoli}We consider the Hermitian random matrix of size $N$, $A = N
  \text{} \cdot A_1 + \sqrt{N} \cdot A_2$, where $A_1, A_2$ are GUE matrices. 
  We also assume that $A_1, A_2$ are independent. Then we have
  for every $H \in H (N)$,
  \[ \mathbb{E} \exp (\tmop{Tr} \left( H \text{} A \right))=
     \prod_{x \in \tmop{Spec} (H)} \exp \left( N \frac{x^2}{2} + \frac{x^2}{2}
     \right), \]
  which implies that the $1 / N$ correction of the matrix $A_1 +
  \frac{1}{\sqrt{N}} A_2$ is given by
  \[ \lim_{N \rightarrow \infty} N \left( \frac{1}{N^{k + 1}} \mathbb{E} \left( 
     \sum_{i = 1}^N \lambda_i^k (A) \right) - \sum_{m = 0}^{k - 1}
     \frac{k!}{m! (m + 1) ! (k - m) !}  \left. \frac{\mathd^m}{\mathd x^m}
     (x^{k - m}) \right|_{x = 0} \right) \]
  \[ = \sum_{m = 0}^{k - 1} \frac{k!}{m! (m + 1) ! (k - m - 1) !}  \left.
     \frac{\mathd^m}{\mathd x^m} (x^{k - m}) \right|_{x = 0} . \]
  Using Cauchy formula, we deduce that
  \begin{multline*}
    \sum_{m = 0}^{k - 1} \frac{k!}{m! (m + 1) ! (k - m - 1) !}  \left.
    \frac{\mathd^m}{\mathd x^m} (x^{k - m}) \right|_{x = 0} = \frac{1}{2
    \mathpi \mathi} \sum_{m = 0}^{k - 1} \binom{k}{m + 1} \oint_{|z| = 1}
    \frac{z^{k - m}}{z^{m + 1}} d \text{} z\\
    = \frac{1}{2 \mathpi \mathi} \oint_{|z| = 1} z^{k + 1} \sum_{m = 1}^k
    \binom{k}{m} \left( \frac{1}{z^2} \right)^m d \text{} z
    = \frac{1}{2 \mathpi \mathi} \oint_{|z| = 1} z^{k + 1} \left( \left(
    1 + \frac{1}{z^2} \right)^k - 1 \right) d \text{} z
    = \frac{1}{2 \mathpi \mathi} \oint_{|z| = 1} z \left( z + \frac{1}{z}
    \right)^k d \text{} z.
  \end{multline*}
  Using the change of variables $u = 2 \cos \text{} t$ in order to compute the
  last contour integral, we obtain the formula
  \[ \sum_{m = 0}^{k - 1} \frac{k!}{m! (m + 1) ! (k - m - 1) !}  \left.
     \frac{\mathd^m}{\mathd x^m} (x^{k - m}) \right|_{x = 0} = \frac{1}{2
     \mathpi} \int_{- 2}^2 u^k  \frac{u^2 - 2}{\sqrt{4 - u^2}} d \text{} u, \]
  where
  \[ \tmmathbf{\mu}' (d \text{} u) = \frac{1}{2 \mathpi} \tmmathbf{1}_{(- 2,
     2)} (u) \frac{u^2 - 2}{\sqrt{4 - u^2}} d \text{} u \]
  is a signed measure of total mass $0$. 
\end{example}

\begin{example}
\label{ex:Wishart-1}
  We consider the Hermitian random matrix of size $N$, $A = N \cdot A_1 +
  \sqrt{N} \cdot A_2$, where $A_1 = \frac{1}{N} X \text{} X^{\ast}$ is a
  Wishart matrix. More precisely, $X$ is a $N \times M$ matrix with
  independent and standard complex Gaussian entries. We also assume that $A_2$
  is a GUE matrix and $A_1, A_2$ are independent. Then, for every $H
  \in H (N)$, we have
  \[ \mathbb{E} [\exp (\tmop{Tr} (H \text{} A))] = \exp \left( M \sum_{x \in
     \tmop{Spec} (H)} \log (1 - x)^{- 1} + \sum_{x \in \tmop{Spec} (H)}
     \frac{x^2}{2} \right), \]
  which implies that the $1 / N$ correction of the matrix $A_1 +
  \frac{1}{\sqrt{N}} A_2$ is given by
  \[ \lim_{M, N \rightarrow \infty \text{, } M - \lambda N \to 0} N
     \left( \frac{1}{N^{k + 1}} \mathbb{E} \left[ \sum_{i = 1}^N \lambda_i^k
     (A) \right] - \sum_{m = 0}^{k - 1} \frac{k!}{m! (m + 1) ! (k - m) !} 
     \left. \frac{\mathd^m}{\mathd x^m} \left( \frac{\lambda}{1 - x}
     \right)^{k - m} \right|_{x = 0} \right) \]
  \begin{equation}
    = \sum_{m = 0}^{k - 1} \frac{k!}{m! (m + 1) ! (k - m - 1) !}  \left.
    \frac{\mathd^m}{\mathd x^m} \left( \left( \frac{\lambda}{1 - x} \right)^{k
    - m - 1} x \right) \right|_{x = 0} \label{snik} .
  \end{equation}
  Similarly with the previous example, we calculate the correction measure.
  Using that
  \[ \left. \frac{\mathd^m}{\mathd x^m} \left( \left( \frac{1}{1 - x}
     \right)^{k - m - 1} x \right) \right|_{x = 0} \]
  \[ = \sum_{n = 0}^m \binom{m}{n} \left. \frac{\mathd^n}{\mathd x^n} \left(
     \frac{1}{1 - x} \right)^{k - m - 1} \frac{\mathd^{m - n}}{\mathd x^{m -
     n}} (x) \right|_{x = 0} = \binom{m}{m - 1} \left. \frac{\mathd^{m -
     1}}{\mathd x^{m - 1}} (1 + x)^{k - 3} \right|_{x = 0}, \]
  for $m \geq 1$ and $k \geq 3$, we obtain that (\ref{snik}) is equal to
  \[ \frac{1}{2 \mathpi \mathi} \sum_{m = 1}^{k - 1} \lambda^{k - m - 1}
     \binom{k}{m + 1} \oint_{|z| = \frac{1}{\sqrt{\lambda}}} \frac{(1 + z)^{k
     - 3}}{z^m} d \text{} z = \frac{1}{2 \mathpi \mathi} \oint_{|z| =
     \frac{1}{\sqrt{\lambda}}} z (1 + z)^{k - 3} \lambda^k \sum_{m = 2}^k
     \binom{k}{m} \left( \frac{1}{\lambda z} \right)^m d \text{} z \]
  \[ = \frac{1}{2 \mathpi \mathi} \oint_{|z| = \frac{1}{\sqrt{\lambda}}}
     \frac{z}{(1 + z)^3} \left( \lambda + 1 + \lambda z + \frac{1}{z}
     \right)^k d \text{} z \]
  for every $k \geq 3$. This implies that the corresponding linear functional
  $\varphi_{\lambda}'$, defined in (\ref{geniamas}), can be described by the
  measure
  \begin{equation}
    \tmmathbf{\mu}'_{\lambda} (d \text{} u) = \frac{1}{2 \mathpi}
    \tmmathbf{1}_{(\lambda + 1 - 2 \sqrt{\lambda}, \lambda + 1 + 2
    \sqrt{\lambda})} (u) \frac{\lambda u^2 - 2 \lambda^2 u + (\lambda + 1) u +
    (\lambda - 1)^3}{u^3 \sqrt{(\lambda + 1 + 2 \sqrt{\lambda} - u) (u -
    \lambda - 1 + 2 \sqrt{\lambda})}} d \text{} u \label{einaipioomorfi} .
  \end{equation}
  More precisely, for $\lambda > 1$, we have for every $P \in \mathbb{C}
  \langle \mathbf{x} \rangle$
  \[ \varphi'_{\lambda} (P) = \int_{\mathbb{R}} P (t)
     \tmmathbf{\mu}'_{\lambda} (d \text{} t) . \]
  For $\lambda \leq 1$ comparing the $k$-th moment of
  $\tmmathbf{\mu}'_{\lambda}$ with (\ref{snik}), for $k = 0, 1, 2$, we see
  that we also have to add derivatives of $\delta \left( 0 \right)$, in order to obtain a
  formula for $\varphi'_{\lambda}$ that will also give $\varphi'_{\lambda}
  (\tmmathbf{1}), \varphi'_{\lambda} (\mathbf{x})$ and $\varphi'_{\lambda}
  (\mathbf{x}^2)$. Note that for $\lambda = 1$ the total mass of
  $\tmmathbf{\mu}'_{\lambda}$ is infinity.
\end{example}

Other examples of interest are finite-rank perturbations of certain random
matrices. It has been shown that these random matrix models fit in the
framework of infinitesimal free probability, see \cite{Sh}. In parallel with the
previous works in this direction, we study finite-rank pertubations of ergodic
unitarily invariant matrices, incorporating them into the context of Theorem
\ref{Einaiiagapi}. For this purpose, we need the following result about their Harish-Chandra integral, which is due to \cite{OV}.

\begin{lemma}
  Let $\theta_1, \ldots, \theta_l \in \mathbb{R}$, and $l$ is fixed. Then the Harish-Chandra integral satisfies, for every fixed $r \in \mathbb{N}$,
  \[ \lim_{N \rightarrow \infty} \log \text{} H \text{} C (x_1, \ldots, x_r,
     0^{N - r} ; N \theta_1, \ldots, N \theta_l, 0^{N - l}) = \sum_{i = 1}^r
     \sum_{j = 1}^l \log (1 - \theta_j x_i)^{- 1}, \]
  where the above convergence is uniform in a neighborhood of $0^r$.
\end{lemma}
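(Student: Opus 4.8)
The plan is to reduce the Harish-Chandra integral to an expectation over the first $l$ columns of a Haar-distributed unitary matrix and then to exploit the asymptotic Gaussianity of its entries. Writing $A=\tmop{diag}(x_1,\dots,x_r,0^{N-r})$ and $B=\tmop{diag}(N\theta_1,\dots,N\theta_l,0^{N-l})$, a direct expansion of $(UBU^{\ast})_{pp}$ gives $\tmop{Tr}(A\,U\,B\,U^{\ast})=\sum_{i=1}^{r}\sum_{j=1}^{l}\theta_j x_i\,\bigl(N|U_{ij}|^2\bigr)$, so that by \eqref{Haarmeasure}
\[ H\,C(x_1,\dots,x_r,0^{N-r};N\theta_1,\dots,N\theta_l,0^{N-l})=\mathbb{E}\Bigl[\exp\Bigl(\sum_{i=1}^{r}\sum_{j=1}^{l}\theta_j x_i\,N|U_{ij}|^2\Bigr)\Bigr], \]
where the expectation is over $U$ Haar on $U(N)$. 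Only the finitely many entries $U_{ij}$ with $i\le r$, $j\le l$ enter.

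Second, I would establish the joint convergence in distribution $\bigl(N|U_{ij}|^2\bigr)_{i\le r,\,j\le l}\to (E_{ij})_{i\le r,\,j\le l}$, where the $E_{ij}$ are independent $\mathrm{Exp}(1)$ random variables. This is the standard Gaussianization of Haar unitary entries: realizing the first $l$ columns of $U$ via the Gram--Schmidt (QR) decomposition of an $N\times l$ matrix $G$ of i.i.d.\ standard complex Gaussians, one has $R\approx\sqrt{N}\,\mathrm{Id}$ and hence $\sqrt{N}\,U_{ij}\to G_{ij}$ for each fixed pair $(i,j)$, so that $N|U_{ij}|^2\to|G_{ij}|^2\sim\mathrm{Exp}(1)$, jointly and independently across the finite index set. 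Granting this together with the interchange of limit and expectation, for real $x$ in a neighborhood of $0$ one obtains
\[ \lim_{N\to\infty}H\,C=\prod_{i=1}^{r}\prod_{j=1}^{l}\mathbb{E}\bigl[e^{\theta_j x_i E_{ij}}\bigr]=\prod_{i=1}^{r}\prod_{j=1}^{l}\frac{1}{1-\theta_j x_i}, \]
and taking logarithms yields $\sum_{i=1}^{r}\sum_{j=1}^{l}\log(1-\theta_j x_i)^{-1}$, as claimed.

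Third, to justify the interchange and to upgrade to a complex neighborhood with uniform convergence, I would argue as follows. Each $H\,C$ above is an entire function of $(x_1,\dots,x_r)$, being the integral over the compact group $U(N)$ of an integrand holomorphic in $x$ with bounds uniform on compacts; so by Vitali--Montel it suffices to prove locally uniform boundedness of the family together with pointwise convergence on a real neighborhood of $0^r$, and convergence of the holomorphic logarithms then follows since the limit is nonvanishing near $0$. Both the interchange of limit and expectation and the required uniform boundedness reduce to a single uniform-in-$N$ exponential moment estimate $\sup_N\mathbb{E}\bigl[\exp\bigl(c\sum_{i\le r,\,j\le l}N|U_{ij}|^2\bigr)\bigr]<\infty$ for $c$ below a threshold, obtainable either from the explicit Dirichlet/Jacobi joint law of the $|U_{ij}|^2$ or from the Gaussian representation with quantitative control of the normalization factor $R$.

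The main obstacle is precisely this uniform integrability: the sum $\sum_{i,j}N|U_{ij}|^2$ converges to a sum of independent exponentials whose exponential moments exist only up to the threshold $\theta_j x_i=1$, so the neighborhood of $0$ must be taken small enough that $\max_{i,j}\mathrm{Re}(\theta_j x_i)$ stays bounded away from $1$, and the $N$-dependent column constraints $\sum_{p=1}^N|U_{pj}|^2=1$ must be shown not to spoil the bound. A purely algebraic alternative avoids probability altogether: insert the determinantal formula \eqref{Jmn}, pass to the confluent limit as the coordinates $x_{r+1},\dots,x_N$ and $\lambda_{l+1},\dots,\lambda_N$ coalesce to $0$ (each coalescing column being replaced by a derivative, i.e.\ by a monomial), and evaluate the resulting structured $N\times N$ determinant against the $N^{l(N-l)}$ and $\prod_j\theta_j^{N-l}$ factors produced by the Vandermonde of $B$; there the difficulty shifts to the bookkeeping of the confluent determinant and the extraction of its $N\to\infty$ asymptotics.
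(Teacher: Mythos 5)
The paper does not prove this lemma at all --- it is quoted as a known result of Olshanski--Vershik \cite{OV} --- so there is no internal proof to compare against; your argument has to stand on its own, and it does. The reduction $\tmop{Tr}(A\,U\,B\,U^{\ast})=\sum_{i\le r}\sum_{j\le l}\theta_j x_i\,N|U_{ij}|^2$ is correct, the joint convergence of $(N|U_{ij}|^2)_{i\le r,\,j\le l}$ to independent $\mathrm{Exp}(1)$ variables is the standard Borel-type theorem for a fixed block of a Haar unitary (your Gram--Schmidt sketch is the usual proof), and the Vitali--Montel upgrade from pointwise real convergence plus local uniform boundedness to uniform convergence on a complex polydisc, followed by taking logarithms of a nonvanishing limit, is sound. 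You have also correctly isolated the only genuinely delicate point, the uniform-in-$N$ exponential moment bound; for completeness note that it is easily obtainable: for a single entry, $|U_{ij}|^2\sim\mathrm{Beta}(1,N-1)$, so $N|U_{ij}|^2$ has density $\frac{N-1}{N}(1-t/N)^{N-2}\le e^{-t(N-2)/N}$ on $[0,N]$, whence $\sup_N\mathbb{E}[e^{cN|U_{ij}|^2}]<\infty$ for any $c<1$, and the joint bound for the finite block follows from the generalized H\"older inequality at the cost of shrinking the neighborhood of $0^r$ (which the statement permits). This is in fact essentially the mechanism behind the proof in \cite{OV} (in the rank-one case the column of $U$ is uniform on the complex sphere and $(|U_{1j}|^2,\dots,|U_{Nj}|^2)$ is uniform on the simplex, which gives the explicit Dirichlet computation you mention), so your probabilistic route is the standard one; the determinantal alternative you outline at the end would also work but is considerably heavier.
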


The previous lemma allows to obtain explicit formulas for the $1 / N$
correction of certain random matrices $A$ that satisfy the relation
(\ref{xereis}). More precisely, considering $E_{i, j}$ to be the matrix with
unit in the $(i, j)$-th coordinate and all the other entries zero, Theorem \ref{Einaiiagapi} 
implies for the matrix $B = A + \sum_{i = 1}^l N \theta_i E_{i, i}$ one has
\[ \lim_{N \rightarrow \infty} N \left( \mathbb{E} \left[ \frac{1}{N^{k + 1}}
   \sum_{i = 1}^N \lambda_i^k (B) \right] - \sum_{m = 0}^{k - 1} \frac{k!}{m!
   (m + 1) ! (k - m) !}  \left. \frac{\mathd^m}{\mathd x^m} (\Psi' (x))^{k
   - m} \right|_{x = 0} \right) \]
\begin{equation}
  = \sum_{i = 1}^l \sum_{m = 0}^{k - 1} \frac{k!}{m! (m + 1) ! (k - m - 1) !} 
  \left. \frac{\mathd^m}{\mathd x^m} \left( (\Psi' (x))^{k - m - 1}
  \frac{\theta_i}{1 - \theta_i x} \right) \right|_{x = 0}, \label{afrikane}
\end{equation}
where $\theta_1, \ldots, \theta_l \in \mathbb{R}$. We examine below specific
cases for the random matrix $A$ that allow to relate (\ref{afrikane}) with moments of a
signed measure. As we see in the examples below, the terms
(\ref{afrikane}) are explicit enough in order to determine the corresponding
signed measure for finite rank perturbations of ergodic unitarily invariant
matrices.

\begin{example}
  \label{Emeispamekorifi}Consider the Hermitian random matrix of size $N$, $B
  = N \cdot A + N \theta  E_{1, 1}$ where $A$ is a GUE matrix. Relation
  (\ref{afrikane}) implies that the $1 / N$ correction of $A + \theta E_{1,
  1}$ is given by
  \begin{equation}
    \sum_{m = 0}^{k - 1} \frac{k!}{m! (m + 1) ! (k - m - 1) !}  \left.
    \frac{\mathd^m}{\mathd x^m} \left( x^{k - m - 1} \frac{\theta}{1 - \theta
    x} \right) \right|_{x = 0} \label{gringo}
  \end{equation}
  \[ = \frac{1}{2 \mathpi \mathi} \sum_{m = 0}^{k - 1} \binom{k}{m + 1}
     \oint_{|z| = 1} \frac{\theta z^k}{1 - \theta z} \left( \frac{1}{z^2}
     \right)^{m + 1} d \text{} z = \frac{1}{2 \mathpi \mathi} \oint_{|z| = 1}
     \frac{\theta z^k}{1 - \theta z} \sum_{m = 1}^k \binom{k}{m} \left(
     \frac{1}{z^2} \right)^m d \text{}  z \]
  \[ = \frac{1}{2 \mathpi \mathi} \oint_{|z| = 1} \frac{\theta z^k}{1 - \theta
     z} \left( \left( 1 + \frac{1}{z^2} \right)^k - 1 \right) d \text{} z =
     \frac{1}{2 \mathpi \mathi} \oint_{|z| = 1} \frac{\theta}{1 - \theta z}
     \left( z + \frac{1}{z} \right)^k d \text{} z, \]
  for $| \theta | < 1$. Making the change of variables $u = 2 \cos \text{} t$
  on the last contour integral, we deduce that
  \[ \sum_{m = 0}^{k - 1} \frac{k!}{m! (m + 1) ! (k - m - 1) !}  \left.
     \frac{\mathd^m}{\mathd x^m} \left( x^{k - m - 1} \frac{\theta}{1 - \theta
     x} \right) \right|_{x = 0} = - \frac{1}{2 \pi} \int_{- 2}^2 \frac{\theta
     (u - 2 \theta)}{(\theta (u - \theta) - 1) \sqrt{4 - u^2}} u^k d \text{} u,
  \]
  for every $k \geq 1$. The above equality gives a characterization of the $1
  / N$ correction of $A + \theta E_{1, 1}$ via a signed measure for the case
  $| \theta | < 1$, in the sense that (\ref{gringo}) is given by the $k$-th
  moment of
  \[ \tmmathbf{\mu}'_{\theta} (d \text{} u) = - \frac{1}{2 \mathpi}
     \tmmathbf{1}_{(- 2, 2)} (u) \frac{\theta (u - 2 \theta)}{(\theta (u -
     \theta) - 1) \sqrt{4 - u^2}} d \text{} u, \]
  which has total mass zero. In order to get an integral representation that
  will lead to the signed measure, we write (\ref{gringo}) in the form
  \[ \sum_{m = 0}^{k - 1} \frac{1}{m!} \binom{k}{m + 1} \left.
     \frac{\mathd^m}{\mathd x^m} \left( \frac{\theta}{1 - \theta x} \left(
     x^{k - m - 1} - \left( \frac{1}{\theta} \right)^{k - m - 1} \right)
     \right) \right|_{x = 0} + \left( \theta + \frac{1}{\theta} \right)^k -
     \frac{1}{\theta^k}, \]
  where by Cauchy formula the sum is equal to
  \[ \frac{1}{2 \mathpi \mathi} \sum_{m = 0}^{k - 1} \binom{k}{m + 1}
     \oint_{|z| = 1} \frac{\theta}{z^{m + 1} (1 - \theta z)} \left( z^{k - m -
     1} - \frac{1}{\theta^{k - m - 1}} \right) d \text{} z \]
  \[ = \frac{1}{2 \mathpi \mathi} \oint_{|z| = 1} \left( \frac{\theta z^k}{1 -
     \theta z} \sum_{m = 1}^k \binom{k}{m} \frac{1}{z^{2 m}} -
     \frac{\theta}{\theta^k (1 - \theta z)} \sum_{m = 1}^k \binom{k}{m}
     \frac{\theta^{m + 1}}{z^{m + 1}} \right) d \text{} z \]
  \[ = \frac{1}{2 \mathpi \mathi} \oint_{|z| = 1} \left( \frac{\theta}{1 -
     \theta z} \left( z + \frac{1}{z} \right)^k - \frac{\theta z^k}{1 - \theta
     z} - \frac{\theta}{1 - \theta z} \left( \frac{1}{\theta} + \frac{1}{z}
     \right)^k + \frac{\theta}{\theta^k (1 - \theta z)} \right) d \text{} z \]
  \[ = \frac{1}{2 \mathpi \mathi} \oint_{|z| = 1} \frac{\theta}{1 - \theta z}
     \left( \left( z + \frac{1}{z} \right)^k - \left( \frac{1}{\theta} +
     \frac{1}{z} \right)^k \right) d \text{} z. \]
  In the last contour integral it is visible that the $k$-th moment of the
  signed measure will emerge from the function $\frac{\theta}{1 - \theta z}
  \left( z + \frac{1}{z} \right)^k$. Thus, using that
  \[ \oint_{|z| = 1} \frac{\theta}{1 - \theta z} \left( \left( \frac{1}{z} +
     \frac{1}{\theta} \right)^k - \left( \theta + \frac{1}{\theta} \right)^k
     \right) d \text{} z = \sum_{n = 0}^{k - 1} \frac{1}{2 \mathpi \mathi}
     \oint_{|z| = 1} \frac{\theta}{z} \left( \frac{1}{z} + \frac{1}{\theta}
     \right)^n \left( \theta + \frac{1}{\theta} \right)^{k - 1 - n} d \text{}
     z \]
  \[ = \sum_{n = 0}^{k - 1} \frac{1}{\theta^{n - 1}} \left( \theta +
     \frac{1}{\theta} \right)^{k - 1 - n} = \left( \theta + \frac{1}{\theta}
     \right)^k - \frac{1}{\theta^k}, \]
  we obtain, for $\theta \in \mathbb{R}$, that (\ref{gringo}) is equal to
  \[ \frac{1}{2 \mathpi \mathi} \oint_{|z| = 1} \frac{\theta}{1 - \theta z}
     \left( \left( z + \frac{1}{z} \right)^k - \left( \theta +
     \frac{1}{\theta} \right)^k \right) d \text{} z =\tmmathbf{1}_{| \theta |
     \geq 1} \left( \theta + \frac{1}{\theta} \right)^k - \frac{1}{2 \mathpi}
     \int_{- 2}^2 \frac{\theta (u - 2 \theta) u^k}{(\theta (u - \theta) - 1)
     \sqrt{4 - u^2}} d \text{} u. \]
  By the above, we deduce that the $1 / N$ correction of the matrix $A +
  \sum_{i = 1}^l \theta_i E_{i, i}$, in the sense of (\ref{geniamas}), is given
  by a linear functional $\varphi' : \mathbb{C} \langle \mathbf{x} \rangle
  \rightarrow \mathbb{C}$, where for every polynomial $P$,
  \[ \varphi' (P) = \sum_{i = 1}^l \int_{\mathbb{R}} P (t)
     \tmmathbf{\mu}'_{\theta_i} (d \text{} t), \text{} \]
  and for every $i = 1, \ldots, l$, $\tmmathbf{\mu}'_{\theta_i}$ are
  signed measures on $\mathbb{R}$ of total mass zero given by
  \[ \tmmathbf{\mu}'_{\theta_i} (d \text{} t) =\tmmathbf{1}_{| \theta_i |
     \geq 1} \delta \left( \theta_i + \theta_i^{- 1} \right) (d \text{} t) - \frac{1}{2
     \pi} \tmmathbf{1}_{(- 2, 2)} (t) \frac{\theta_i (t - 2
     \theta_i)}{(\theta_i (t - \theta_i) - 1) \sqrt{4 - t^2}} d \text{} t. \]
  This repeats a result of \cite{Sh}. The delta measures involved in the correction measure illustrate the Baik-Ben Arous-Peche phase transition, first studied in \cite{BBP}. 
\end{example}

\begin{example}
\label{ex:one-pertub}
  Consider the Hermitian random matrix of size $N$ defined by $A = N \cdot A_1 + N
  \theta E_{1, 1}$, where $A_1$ is a Wishart matrix as in Example \ref{ex:Wishart-1}. Relation
  (\ref{afrikane}) implies that the $k$-th moment of the $1 / N$ correction measure of $A_1 +     \theta E_{1,1}$ is given by
  \begin{equation}
    \sum_{m = 0}^{k - 1} \frac{k!}{m! (m + 1) ! (k - m - 1) !}  \left.
    \frac{\mathd^m}{\mathd x^m} \left( \left( \frac{\lambda}{1 - x} \right)^{k
    - m - 1} \frac{\theta}{1 - \theta x} \right) \right|_{x = 0} .
    \label{bonasera}
  \end{equation}
  In order to use an integral representation for the derivative that will
  simplify the computation of the signed measure, we write
  \[ \left. \frac{\mathd^m}{\mathd x^m} \left( \left( \frac{1}{1 - x}
     \right)^{k - m - 1} \frac{\theta}{1 - \theta x} \right) \right|_{x = 0} =
     \sum_{n = 0}^m \frac{m!}{n!} \theta^{m - n + 1} \left.
     \frac{\mathd^n}{\mathd x^n} (1 + x)^{k - m + n - 2} \right|_{x = 0} \]
  \[ = \sum_{n = 0}^m \frac{m! \theta^{m - n + 1}}{2 \mathpi \mathi}
     \oint_{|z| = \frac{1}{\sqrt{\lambda}}} \frac{(1 + z)^{k - m + n -
     2}}{z^{n + 1}} d \text{} z = \frac{m! \theta}{2 \mathpi \mathi}
     \oint_{|z| = \frac{1}{\sqrt{\lambda}}} \frac{(1 + z)^{k - m - 2}}{z}
     \sum_{n = 0}^m \left( 1 + \frac{1}{z} \right)^n \theta^{m - n} d \text{}
     z \]
  \[ = \frac{m! \theta}{2 \mathpi \mathi} \oint_{|z| =
     \frac{1}{\sqrt{\lambda}}} \frac{(1 + z)^{k - m - 2}}{(1 - \theta) z + 1}
     \left( \left( 1 + \frac{1}{z} \right)^{m + 1} - \theta^{m + 1} \right) d
     \text{} z \]
  \[ = \frac{m! \theta}{2 \mathpi \mathi} \oint_{|z| =
     \frac{1}{\sqrt{\lambda}}} \frac{(1 + z)^{k - 1}}{(1 - \theta) z + 1}
     \left( \frac{1}{z^{m + 1}} - \left( \frac{\theta}{1 + z} \right)^{m + 1}
     \right) d \text{} z, \]
  for $k \geq 2$. Thus, (\ref{bonasera}) is equal to
  \[ \frac{\theta \lambda}{2 \mathpi \mathi} \oint_{|z| =
     \frac{1}{\sqrt{\lambda}}} \frac{(\lambda + \lambda z)^{k - 1}}{(1 -
     \theta) z + 1} \sum_{m = 0}^{k - 2} \binom{k}{m + 1} \left( \left(
     \frac{1}{\lambda z} \right)^{m + 1} - \left( \frac{\theta}{\lambda (1 +
     z)} \right)^{m + 1} \right) d \text{} z + \theta^k \]
  \[ = \frac{\theta}{2 \mathpi \mathi} \oint_{|z| = \frac{1}{\sqrt{\lambda}}}
     \frac{(1 + z)^{- 1}}{(1 - \theta) z + 1} \left( \left( \lambda + 1 +
     \lambda z + \frac{1}{z} \right)^k - \left( 1 + \frac{1}{z} \right)^k -
     (\lambda + \theta + \lambda z)^k + \theta^k \right) d \text{} z +
     \theta^k . \]
  Similarly with Example \ref{ex:Wishart-1}, in the above contour integration the function
  \[ \frac{(1 + z)^{- 1}}{(1 - \theta) z + 1} \left( \lambda + 1 + \lambda z +
     \frac{1}{z} \right)^k \]
  will give the density of a signed measure while the remaining terms will
  contribute to point masses. We will only treat the case $\lambda = 1$
  because it illustrates the procedure sufficiently. By Cauchy formula, a
  straightforward computation shows that
  \[ \frac{\theta}{2 \mathpi \mathi} \oint_{|z| = 1} \frac{(1 + z)^{- 1}}{(1 -
     \theta) z + 1} \left( 1 + \frac{1}{z} \right)^k d \text{} z =
     \left\{\begin{array}{l}
       \theta^k \text{, for } | \theta - 1| < 1 \text{}\\
       0 \text{, for } | \theta - 1| > 1
     \end{array}\right. \]
  and
  \[ \frac{\theta}{2 \mathpi \mathi} \oint_{|z| = 1} \frac{(1 + z)^{- 1}}{(1 -
     \theta) z + 1} ((\theta + 1 + z)^k - \theta^k) d \text{} z =
     \left\{\begin{array}{l}
       0 \text{, for } | \theta - 1| < 1\\
       \left( \theta + 1 + \frac{1}{\theta - 1} \right)^k - \theta^k \text{,
       for } | \theta - 1| > 1.
     \end{array}\right. \]
  As a corollary, the corresponding linear functional $\varphi'_{\theta}$
  that gives the $1 / N$ correction satisfies the relation
  \[ \varphi'_{\theta} (\mathbf{x}^k) = \frac{1}{2 \mathpi} \int_0^4
     \frac{\theta (2 - \theta)}{(1 - \theta) t + \theta^2} 
     \frac{t^k}{\sqrt{(4 - t) t}} d \text{} t +\tmmathbf{1}_{| \theta - 1| >
     1} \left( \theta + 1 + \frac{1}{\theta - 1} \right)^k, \]
  for every $k \geq 1$ and $| \theta - 1| \neq 1$.
\end{example}

\section{Infinitesimal quantized freeness}
\label{sec:Schur-free}

In this section we deal with a different setup compared to the rest of the paper. 
However, this setup is a closely related one and leads to new classes of applications. 
Instead of the study of the Harish-Chandra transform, we will study characters of the
irreducible representations of the unitary group $U (N)$, as $N \rightarrow
\infty$.

We start by recalling relevant definitions and the main result of \cite{BG}. It is well-known that all irreducible representations of $U(N)$ are parametrized by signatures(=highest weights), that is, $N$-tuples of integers
$\mathlambda_1 \geq \mathlambda_2 \geq \cdots \geq \mathlambda_N$. We denote
by $\hat{U} (N)$ the set of all signatures and by $\pi^{\lambda}$ the
irreducible representation of $U(N)$ that corresponds to $\mathlambda \in \hat{U} (N)$.
Information about a signature $\lambda$ can be encoded by a discrete
probability measure on $\mathbb{R}$:
\begin{equation}
  \tmmathbf{m}_N [\mathlambda] \assign \frac{1}{N} \sum_{i = 1}^N
  \delta \left( \frac{\mathlambda_i + N - i}{N} \right) . \label{nono}
\end{equation}
For $\mathlambda \in \hat{U} (N)$ chosen at random with respect to some
probability measure $\varrho$ on $\hat{U} (N)$, we are interested in the
asymptotic behaviour of the random measure (\ref{nono}).

We recall that for a finite-dimensional representation $\pi$ of $U (N)$, the
character of $\pi$ is the function given by $\tmmathbf{\chi}^{\pi} (U) \assign
\tmop{Tr} (\pi (U))$, for every $U \in U (N)$. Note that for a matrix $U \in U (N)$
with eigenvalues $u_1, \ldots, u_N$, one has $\tmmathbf{\chi}^{\pi} (U)
=\tmmathbf{\chi}^{\pi} (\tmop{diag} (u_1, \ldots, u_N))$, and the character will be  denoted
by $\tmmathbf{\chi}^{\pi} (u_1, \ldots, u_N)$. Moreover, for $\mathlambda =
(\mathlambda_1 \geq \cdots \geq \mathlambda_N) \in \hat{U} (N)$ we will denote
the character of $\pi^{\mathlambda}$ by $\tmmathbf{\chi}^{\mathlambda}$. Due
to Weyl (see, e.g., \cite{W}), for $U \in U (N)$ with eigenvalues $u_1, \ldots, u_N
\in \mathbb{R}$, the value $\tmmathbf{\chi}^{\mathlambda} (u_1, \ldots, u_N)$
is given by the rational Schur function, i.e., we have
\begin{equation}
  \tmmathbf{\chi}^{\mathlambda} (u_1, \ldots, u_N) = \frac{\det
  (u_i^{\mathlambda_j + N - j})}{\det (u_i^{N - j})} = \frac{\det
  (u_i^{\mathlambda_j + N - j}) }{\prod_{i < j} (u_i - u_j)} . \label{Maroko} 
\end{equation}
Note that $\tmmathbf{\chi}^{\mathlambda} (1^N)$ is the dimension of
$\pi^{\mathlambda}$.

Let $\varrho$ be a probability measure on $\hat{U} (N)$. For a fixed $\mathlambda \in
\hat{U} (N)$, the symmetric polynomial 
$$
\tmmathbf{\chi}^{\mathlambda}
(u_1, \ldots, u_N) /\tmmathbf{\chi}^{\mathlambda} (1^N)
$$
should be thought of as
the analogue of the Harish-Chandra integral $H \text{} C (u_1, \ldots, u_N ;
\lambda_1 (A), \ldots, \lambda_N (A))$ of a Hermitian random matrix of size
$N$ with fixed spectrum $\{\lambda_i (A)\}_{i = 1}^N$. An analog of the Harish-Chandra transform is the following notion. 

\begin{definition}
  For a probability measure $\varrho$ on $\hat{U} (N)$, a \textbf{Schur generating function} $S_{\varrho}^{U (N)}$ is a symmetric Laurent
  power series in $(u_1, \ldots, u_N)$ given by
  \[ S_{\varrho}^{U (N)} (u_1, \ldots, u_N) := \sum_{\mathlambda \in \hat{U}
     (N)} \varrho (\mathlambda) \frac{\tmmathbf{\chi}^{\mathlambda} (u_1,
     \ldots, u_N)}{\tmmathbf{\chi}^{\mathlambda} (1^N)} . \]
\end{definition}
For every $k \in \mathbb{N}$ we consider the
following differential operator acting on smooth functions $f$ of $N$
variables:
\[ \mathcal{D}_k^{U (N)} (f) \assign \left( \prod_{i < j} (u_i - u_j)
   \right)^{- 1} \sum_{i = 1}^N (u_i \partial_i)^k \left( \prod_{i < j} (u_i -
   u_j) f \right) . \]

\begin{proposition}
  \label{Gkofretakia}For every $\mathlambda \in \hat{U} (N)$, the character
  $\tmmathbf{\chi}^{\mathlambda}$ satisfies the relation
  \[ \mathcal{D}_k^{U (N)} (\tmmathbf{\chi}^{\mathlambda}) (u_1, \ldots, u_N)
     = \sum_{i = 1}^N (\mathlambda_i + N - i)^k \tmmathbf{\chi}^{\mathlambda}
     (u_1, \ldots, u_N) . \]
\end{proposition}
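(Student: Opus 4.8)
The plan is to mirror the computation in Proposition \ref{prop:1}, replacing the ordinary derivative $\partial_i$ by the Euler operator $u_i \partial_i$ and the matrix exponential by the Schur determinant. The key structural fact is that $u \partial_u$ acts diagonally on monomials, namely $(u \partial_u)^k u^m = m^k u^m$, so that a logarithmic derivative reads off the exponent as its eigenvalue.

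First I would rewrite the operator acting on $\tmmathbf{\chi}^{\mathlambda}$ using formula (\ref{Maroko}). Multiplying the character by the Vandermonde $\prod_{i < j} (u_i - u_j)$ cancels the denominator, so that
\[ \prod_{i < j} (u_i - u_j) \cdot \tmmathbf{\chi}^{\mathlambda} (u_1, \ldots, u_N) = \det (u_i^{\mathlambda_j + N - j})_{i, j = 1}^N = \sum_{\sigma \in S_N} \tmop{sgn} (\sigma) \prod_{i = 1}^N u_i^{\mathlambda_{\sigma (i)} + N - \sigma (i)}. \]
Thus $\mathcal{D}_k^{U (N)} (\tmmathbf{\chi}^{\mathlambda})$ equals $\left( \prod_{i < j} (u_i - u_j) \right)^{- 1}$ times the result of applying $\sum_{i = 1}^N (u_i \partial_i)^k$ to this signed sum of monomials, which reduces the problem to understanding the operator on a single monomial.

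Next I would apply the operator termwise. Since $(u_i \partial_i)^k$ acts only on the variable $u_i$ and returns the exponent $\mathlambda_{\sigma (i)} + N - \sigma (i)$ of $u_i$ as its eigenvalue, each monomial in the permutation expansion is an eigenfunction:
\[ \sum_{i = 1}^N (u_i \partial_i)^k \prod_{l = 1}^N u_l^{\mathlambda_{\sigma (l)} + N - \sigma (l)} = \left( \sum_{i = 1}^N (\mathlambda_{\sigma (i)} + N - \sigma (i))^k \right) \prod_{l = 1}^N u_l^{\mathlambda_{\sigma (l)} + N - \sigma (l)}. \]
The crucial observation is that the scalar $\sum_{i = 1}^N (\mathlambda_{\sigma (i)} + N - \sigma (i))^k$ does not depend on $\sigma$: as $i$ runs over $\{1, \ldots, N\}$, the index $\sigma (i)$ runs over the same set, so reindexing by $j = \sigma (i)$ gives $\sum_{j = 1}^N (\mathlambda_j + N - j)^k$.

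Finally, factoring this common scalar out of the permutation sum restores $\det (u_i^{\mathlambda_j + N - j})$, and dividing back by $\prod_{i < j} (u_i - u_j)$ recovers $\tmmathbf{\chi}^{\mathlambda}$ multiplied by the prefactor $\sum_{j = 1}^N (\mathlambda_j + N - j)^k$, which is exactly the claimed identity. I do not anticipate a genuine obstacle: the only points requiring care are the eigenvalue property of $(u_i \partial_i)^k$ on monomials and the permutation-invariance of the exponent sum, both elementary. Conceptually this is the multiplicative/Schur analogue of Proposition \ref{prop:1}, where the additive operator $\partial_i$ produced the eigenvalues $\mathlambda_i$, while here the logarithmic derivative $u_i \partial_i$ produces the shifted content $\mathlambda_i + N - i$.
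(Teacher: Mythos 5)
Your proof is correct. The paper itself offers no argument for Proposition \ref{Gkofretakia}, deferring to \cite{BG}, but your computation is the standard one and is precisely the multiplicative analogue of the paper's own proof of Proposition \ref{prop:1}: clearing the Vandermonde, expanding the determinant over permutations, using that each monomial is an eigenfunction of $\sum_i (u_i\partial_i)^k$, and noting that the eigenvalue $\sum_i(\mathlambda_{\sigma(i)}+N-\sigma(i))^k$ is independent of $\sigma$. All steps check out.
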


\begin{proof}
  See, e.g., \cite{BG}.
\end{proof}
The operator $\mathcal{D}_k^{U (N)}$ is quite similar to the operator
$\mathcal{D}_k$ used in previous sections. 


\begin{theorem}[Bufetov-Gorin, Theorem 5.1]
  \label{KALOKAIRI}Let $\varrho (N)$ be a sequence of probability measures on
  $\hat{U} (N)$ such that for every $r \in \mathbb{N}$ one has
  \begin{equation}
    \lim_{N \rightarrow \infty} \frac{1}{N} \log \text{} S_{\varrho (N)}^{U
    (N)} (u_1, \ldots, u_r, 1^{N - r}) = \sum_{i = 1}^r \Psi (u_i),
    \label{Kailunsegamw}
  \end{equation}
  where $\Psi$ is an analytic function in a complex neighborhood of $1$ and the above
  convergence is uniform in a complex neighborhood of $1^r$. Then the random measure
  $N^{- 1} \sum_{i = 1}^N \delta \left( \frac{\mathlambda_i + N - i}{N} \right)$, converges,
  as $N \rightarrow \infty$, in probability, in the sense of moments to a
  deterministic measure $\mathbf{m}$ on $\mathbb{R}$, whose moments are given
  by
  \begin{equation}
    \int_{\mathbb{R}} t^k \mathbf{m} (d \text{} t) = \sum_{m = 0}^k
    \frac{k!}{m! (m + 1) ! (k - m) !}  \left. \frac{\mathd^m}{\mathd x^m} (x^k
    (\Psi' (x))^{k - m}) \right|_{x = 1} . \label{mporw?}
  \end{equation}
\end{theorem}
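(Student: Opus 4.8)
The plan is to run the argument of Theorem~\ref{SouvlakiA} verbatim, with the operator $\mathcal{D}_k^{U(N)}$ in place of $\mathcal{D}_k$ and the Schur generating function $f_N := S_{\varrho(N)}^{U(N)}$ in place of the averaged Harish-Chandra transform. Since $\mathcal{D}_k^{U(N)}$ is linear in its argument, Proposition~\ref{Gkofretakia} gives
\[
  \mathcal{D}_k^{U(N)}(f_N)(u_1,\ldots,u_N) = \sum_{\mathlambda \in \hat U(N)} \varrho(N)[\mathlambda] \left( \sum_{i=1}^N (\mathlambda_i + N - i)^k \right) \frac{\tmmathbf{\chi}^{\mathlambda}(u_1,\ldots,u_N)}{\tmmathbf{\chi}^{\mathlambda}(1^N)},
\]
so that sending $u_1,\ldots,u_N \to 1$ (which, exactly as before, must be done along a curve such as $u_i = e^{i\varepsilon}$ with index $i=1,\ldots,N$ and $\varepsilon\to 0$, to avoid the vanishing Vandermonde) evaluates the left-hand side to $\sum_{\mathlambda}\varrho(N)[\mathlambda]\sum_i(\mathlambda_i+N-i)^k$, i.e. to $N^{k+1}$ times the expected $k$-th moment of the measure~\eqref{nono}. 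The whole problem therefore reduces to extracting the large-$N$ asymptotics of $\lim_{\varepsilon\to 0}\mathcal{D}_k^{U(N)}f_N\big|_{u_i = e^{i\varepsilon}}$.

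The first step is the analog of Lemma~\ref{lem:2}. Because $u_l\partial_{u_l}$ is a derivation, the iterated Leibniz rule yields
\[
  \mathcal{D}_k^{U(N)}(f) = \sum_{l=1}^N \sum_{p=0}^k \binom{k}{p} \frac{(u_l\partial_{u_l})^p \left( \prod_{i<j}(u_i-u_j) \right)}{\prod_{i<j}(u_i-u_j)} \, (u_l\partial_{u_l})^{k-p} f,
\]
and expanding the logarithmic derivatives $\big(\prod_{i<j}(u_i-u_j)\big)^{-1}(u_l\partial_{u_l})^p\big(\prod_{i<j}(u_i-u_j)\big)$ produces, for each block of size $m+1$, denominators $(u_l-u_{l_1})\cdots(u_l-u_{l_m})$ accompanied by a power $u_l^{m}$ coming from the $m$ Euler operators. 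I would then write $f_N = \exp\big(N\cdot\tfrac1N\log f_N\big)$ and apply the chain rule as in~\eqref{kefi}, now with $(u_i\partial_{u_i})^j(\tfrac1N\log f_N)$ replacing $\partial_i^j(\tfrac1N\log f_N)$; the leading power of $N$ is again $N^{k+1}$ and is governed by $(u_i\partial_{u_i})(\tfrac1N\log f_N)\to u\,\Psi'(u)$ from assumption~\eqref{Kailunsegamw}. The symmetrized blocks carrying denominators converge as $\varepsilon\to 0$ by the neighborhood-of-$1$ version of Lemma~\ref{SnikfeatFy} and the identity~\eqref{malakas} transported to $1$, and are independent of the index block, so assumption~\eqref{Kailunsegamw} leaves only the fully diagonal derivatives surviving as $N\to\infty$.

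The delicate point, and where I expect the bulk of the work, is the bookkeeping of the powers of $u$ produced by the Euler operator $u\partial_u$; this is precisely what converts the weight $(\Psi'(x))^{k-m}$ of~\eqref{xese} into the weight $x^k(\Psi'(x))^{k-m}$ evaluated at $x=1$ in~\eqref{mporw?}. Concretely, the diagonal contribution of a block of size $m+1$ combines the factor $u^{k-m}$ coming from $\big(u\,\Psi'(u)\big)^{k-m}$ with the factor $u^{m}$ coming from the $m$ Euler operators acting on the Vandermonde, producing the overall $u^k=x^k$; the remaining denominators then yield $\tfrac1{m!}\tfrac{d^m}{dx^m}$ via~\eqref{malakas}, and the combinatorial prefactors organize into $\tfrac{k!}{m!(m+1)!(k-m)!}$. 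The main care is to check that these $u$-powers distribute so that the differentiation $\tfrac{d^m}{dx^m}$ acts on the \emph{entire} product $x^k(\Psi'(x))^{k-m}$ before evaluation at $x=1$; everything else is a transcription of the corresponding estimates in Theorem~\ref{SouvlakiA}. This establishes convergence of $\tfrac1{N^{k+1}}\mathbb{E}[\sum_i(\mathlambda_i+N-i)^k]$ to the right-hand side of~\eqref{mporw?}.

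To upgrade convergence in expectation to convergence in probability, I would repeat the second half of the proof of Theorem~\ref{SouvlakiA} with $\big(\mathcal{D}_k^{U(N)}\big)^2$. Writing $\big(\mathcal{D}_k^{U(N)}\big)^2 f_N = \mathcal{D}_{2k}^{U(N)} f_N + \big(\prod_{i<j}(u_i-u_j)\big)^{-1}\sum_{n\neq m}(u_n\partial_{u_n})^k(u_m\partial_{u_m})^k\big(\prod_{i<j}(u_i-u_j)f_N\big)$ and evaluating at $u=1$ gives $\mathbb{E}\big[(\sum_i(\mathlambda_i+N-i)^k)^2\big]$. After dividing by $N^{2k+2}$ the diagonal term $\mathcal{D}_{2k}^{U(N)}f_N$ is of lower order (it is a single $2k$-th moment carrying one extra power of $N$), while the two-index terms reproduce the square of the first-moment limit, so the variance of the $k$-th moment of~\eqref{nono} tends to $0$. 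Assumption~\eqref{Kailunsegamw} is used once more to discard every contribution in which the two groups of derivatives act on disjoint sets of variables. This gives the claimed convergence in probability in the sense of moments.
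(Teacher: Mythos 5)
Your proposal is correct and follows essentially the same route as the paper: Theorem~\ref{KALOKAIRI} is quoted from [BG] without proof here, but the method you describe is exactly the one the paper itself runs in Theorem~\ref{SouvlakiA} and again (with the $1/N$ refinement) in Theorem~\ref{Maria!!} --- apply $\mathcal{D}_k^{U(N)}$, regularize at $1^N$, extract the leading $N^{k+1}$ term via the chain rule and the symmetrization identity~\eqref{malakas}, then kill the variance with $\big(\mathcal{D}_k^{U(N)}\big)^2$. The one mechanical point where you deviate is the Leibniz step: the paper first expands $(u\partial_u)^k=\sum_n c_{k,n}\,u^n\partial_u^n$ and only ever applies plain $\partial_u^n$ to the Vandermonde (so Lemma~\ref{lem:2} applies verbatim, each factor absorbing at most one derivative), whereas your iterated-derivation expansion lets $(u\partial_u)$ hit the same factor $(u_l-u_{l_j})$ repeatedly without annihilating it (since $(u\partial_u)(u_l-u_{l_j})=u_l$ and $(u\partial_u)u_l=u_l$); those repeated-hit terms have fewer distinct indices in the denominator and are therefore $O(N^{j+1})$ with $j<p$, hence subleading, but this needs to be said explicitly --- once it is, your $u^{k-m}\cdot u^m=u^k$ bookkeeping correctly produces the weight $x^k(\Psi'(x))^{k-m}$ of~\eqref{mporw?}, including the nonvanishing $m=k$ term absent from~\eqref{xese}.
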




In the setting of
irreducible representations of $U (N)$ and Theorem \ref{KALOKAIRI}, the tensor product of representations can be viewed as a discrete generalization of a summation of (random) matrices. Let $\pi_1, \pi_2$ be two finite-dimensional representations of $U (N)$, their tensor product $(\pi_1 \otimes \pi_2) (U)$,
for $U \in U (N)$, is the Kronecker product of the matrices $\pi_1 (U), \pi_2
(U)$. This implies that $\tmmathbf{\chi}^{\pi_1 \otimes \pi_2} (U)
=\tmmathbf{\chi}^{\pi_1} (U) \tmmathbf{\chi}^{\pi_2} (U)$. Assume that $\pi_1,
\pi_2$ are irreducible representations of $U (N)$ that correspond to
signatures $\mathlambda_1 (N), \mathlambda_2 (N)$, respectively. Moreover,
assuming that for every fixed $r$ and $i = 1, 2$, one has
\begin{equation}
  \lim_{N \rightarrow \infty} \frac{1}{N} \log \left(
  \frac{\tmmathbf{\chi}^{\mathlambda_i (N)} (u_1, \ldots, u_r, 1^{N -
  r})}{\tmmathbf{\chi}^{\mathlambda_i (N)} (1^N)} \right) = \Psi_i (u_1) +
  \cdots + \Psi_i (u_r), \label{SpOtIfY}
\end{equation}
uniformly in a neighborhood of $1^r$, we can include the asymptotics of
$\tmmathbf{\chi}^{\pi_1 \otimes \pi_2}$ to the context of Theorem
\ref{KALOKAIRI}. Assumption (\ref{SpOtIfY}) should be thought of as the analogue
of (\ref{xereis}) for the matrices $A_1, A_2$.



The representation $\pi_1 \otimes \pi_2$ can be decomposed into irreducibles
\[ \pi_1 \otimes \pi_2 = \bigoplus_{\mathlambda \in \hat{U} (N)}
   c_{\mathlambda} \pi^{\mathlambda}, \]
where the non-negative integers $c_{\mathlambda}$ are multiplicities. This
implies that the character of $\pi = \pi_1 \otimes \pi_2$ is equal to
$S_{\varrho^{\pi}}^{U (N)}$, where the probability measure $\varrho^{\pi}$ on
$\hat{U} (N)$ is given by
\[ \varrho^{\pi} (\mathlambda) = \frac{c_{\mathlambda} \dim
   (\pi^{\mathlambda})}{\dim (\pi_1 \otimes \pi_2)} \text{, \quad for every }
   \mathlambda \in \hat{U} (N) . \]
Hence, we have the following result.

\begin{corollary}[Bufetov-Gorin]
  \label{Erntogan}Let $\mathlambda_1 (N), \mathlambda_2 (N) \in \hat{U} (N)$
  be two sequences of signatures such that for $i = 1, 2$,
  \[ \lim_{N \rightarrow \infty} \tmmathbf{m}_N [\mathlambda_i (N)]
     =\mathbf{m}_i, \]
  where $\mathbf{m}_1, \mathbf{m}_2$ are two probability measures and the
  above convergence is weak. Moreover, let $\pi (N) = \pi^{\mathlambda_1 (N)}
  \otimes \pi^{\mathlambda_2 (N)}$. Then, for $\mathlambda (N) \in \hat{U}
  (N)$ chosen at random with respect to $\varrho^{\pi (N)}$, the random
  measures $\tmmathbf{m}_N [\mathlambda (N)]$ converge, as $N \rightarrow
  \infty$, in the sense of moments, in probability to a deterministic
  probability measure which one denotes by $\mathbf{m}_1 \otimes \mathbf{m}_2$.
\end{corollary}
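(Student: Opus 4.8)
The plan is to reduce the statement to Theorem \ref{KALOKAIRI} by exploiting the multiplicativity of characters under tensor products. Since $\tmmathbf{\chi}^{\pi_1 \otimes \pi_2} (U) = \tmmathbf{\chi}^{\pi_1} (U) \tmmathbf{\chi}^{\pi_2} (U)$ and, as computed above, the Schur generating function of $\varrho^{\pi}$ equals the normalized character $\tmmathbf{\chi}^{\pi} / \tmmathbf{\chi}^{\pi} (1^N)$, the Schur generating function of $\varrho^{\pi (N)}$ factorizes as
$$ S_{\varrho^{\pi (N)}}^{U (N)} (u_1, \ldots, u_N) = \frac{\tmmathbf{\chi}^{\mathlambda_1 (N)} (u_1, \ldots, u_N)}{\tmmathbf{\chi}^{\mathlambda_1 (N)} (1^N)} \cdot \frac{\tmmathbf{\chi}^{\mathlambda_2 (N)} (u_1, \ldots, u_N)}{\tmmathbf{\chi}^{\mathlambda_2 (N)} (1^N)} . $$
Evaluating at $(u_1, \ldots, u_r, 1^{N - r})$ and taking logarithms, the logarithm of $S_{\varrho^{\pi (N)}}^{U (N)}$ becomes the sum of the logarithms of the two normalized characters. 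Thus it suffices to obtain the single-particle asymptotics \eqref{Kailunsegamw} for each factor separately and then add them.

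The first step is to bridge the hypothesis (weak convergence of $\tmmathbf{m}_N [\mathlambda_i (N)]$) to the functional convergence required by Theorem \ref{KALOKAIRI}. Concretely, I claim that for $i = 1, 2$ the weak limit $\mathbf{m}_i$ determines an analytic function $\Psi_i$ in a complex neighborhood of $1$ such that
$$ \lim_{N \rightarrow \infty} \frac{1}{N} \log \frac{\tmmathbf{\chi}^{\mathlambda_i (N)} (u_1, \ldots, u_r, 1^{N - r})}{\tmmathbf{\chi}^{\mathlambda_i (N)} (1^N)} = \sum_{j = 1}^r \Psi_i (u_j), $$
uniformly in a complex neighborhood of $1^r$, which is exactly assumption \eqref{SpOtIfY}. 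This is the content of the correspondence between the empirical measures $\tmmathbf{m}_N [\mathlambda]$ and the logarithmic asymptotics of the rational Schur functions \eqref{Maroko} developed in \cite{BG}: the function $\Psi_i$ is the quantized analog of the $R$-transform of $\mathbf{m}_i$, and $\mathbf{m}_i$ is recovered from $\Psi_i$ through formula \eqref{mporw?}. Since this map between probability measures and such functions is a bijection onto the appropriate class, weak convergence $\tmmathbf{m}_N [\mathlambda_i (N)] \rightarrow \mathbf{m}_i$ transfers to the convergence of the log-characters displayed above.

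With $\Psi_1, \Psi_2$ in hand, I combine the two factors. By the factorization,
$$ \frac{1}{N} \log S_{\varrho^{\pi (N)}}^{U (N)} (u_1, \ldots, u_r, 1^{N - r}) \longrightarrow \sum_{j = 1}^r \left( \Psi_1 (u_j) + \Psi_2 (u_j) \right), $$
uniformly in a complex neighborhood of $1^r$. Hence the sequence $\varrho^{\pi (N)}$ satisfies the hypothesis \eqref{Kailunsegamw} of Theorem \ref{KALOKAIRI} with $\Psi \assign \Psi_1 + \Psi_2$. Applying that theorem yields the convergence of $\tmmathbf{m}_N [\mathlambda (N)]$, in probability and in the sense of moments, to a deterministic probability measure whose moments are given by \eqref{mporw?} evaluated at $\Psi_1' + \Psi_2'$. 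As this limit depends only on $\Psi_1, \Psi_2$, and therefore only on $\mathbf{m}_1, \mathbf{m}_2$, it is legitimate to denote it by $\mathbf{m}_1 \otimes \mathbf{m}_2$, completing the proof. The main obstacle is the bridge lemma of the second paragraph: upgrading weak convergence of the discretized measures to uniform convergence of the normalized characters in a full complex neighborhood of $1^r$, together with analyticity of the limit. Weak convergence alone does not control the Schur functions off the real axis; one needs a priori bounds (for instance uniformly bounded supports, keeping the atoms $(\mathlambda_i + N - i)/N$ in a fixed compact set) and a steepest-descent analysis of \eqref{Maroko} to produce the analytic limit and its uniformity. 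This is precisely the technical core carried out in \cite{BG}, which I would invoke rather than reprove.
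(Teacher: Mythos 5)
Your proposal follows exactly the route the paper intends: the corollary is stated as a consequence of the preceding discussion (multiplicativity of characters under tensor products, the identification of $S_{\varrho^{\pi}}^{U(N)}$ with the normalized character, the additivity of the log-asymptotics \eqref{SpOtIfY}, and an application of Theorem \ref{KALOKAIRI}), and your factorization argument is precisely that chain. You also correctly isolate the only genuinely nontrivial ingredient --- upgrading weak convergence of $\tmmathbf{m}_N[\mathlambda_i(N)]$ to the uniform complex-analytic convergence \eqref{SpOtIfY} of the normalized Schur functions, which is the Schur-function asymptotics established in \cite{BG} and is appropriately cited rather than reproved.
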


This corollary is an analogue of Voiculescu's theorem for random
matrices, see Theorem \ref{Voiculescu} above. The free
convolution $\mathbf{m}_1 \boxplus \mathbf{m}_2$ from Theorem \ref{Voiculescu} is replaced by $\mathbf{m}_1 \otimes \mathbf{m}_2$ in Corollary \ref{Erntogan}. In contrast with the random matrix framework, it was
noticed in \cite{BG} that the operation $(\mathbf{m}_1, \mathbf{m}_2) \mapsto
\mathbf{m}_1 \otimes \mathbf{m}_2$ is not linearized by the $R$-transform.
Instead, it is linearized by its deformation
\[ R_{\mathbf{m}}^{q \text{} u \text{} a \text{} n \text{} t} (z) \assign
   R_{\mathbf{m}} (z) - R_{u [0, 1]} (z), \]
where $u [0, 1]$ stands for the uniform measure on $[0, 1]$. This means that
$R^{q \text{} u \text{} a \text{} n \text{} t}_{\mathbf{m}_1 \otimes
\mathbf{m}_2} (z) = R^{q \text{} u \text{} a \text{} n \text{}
t}_{\mathbf{m}_1} (z) + R_{\mathbf{m}_2}^{q \text{} u \text{} a \text{} n
\text{} t} (z)$. 
The map $R_{\mathbf{m}}^{q \text{} u \text{} a \text{} n \text{} t}$ was
introduced in \cite{BG} and it is called the quantized $R$-transform, while the operation $\mathbf{m}_1 \otimes \mathbf{m}_2$ is called the quantized free convolution
of $\mathbf{m}_1$ and $\mathbf{m}_2$.

In the following we concentrate on the $1 / N$ correction of the random
measure (\ref{nono}), as $N \rightarrow \infty$. First we focus on its
description via an explicit formula (as it was done in Theorem
\ref{Einaiiagapi} for the empirical distribution of random matrices).

\begin{theorem}
  \label{Maria!!}Let $\varrho (N)$ be a sequence of probability measures on
  $\hat{U} (N)$ such that for every finite $r$ one has
  \begin{equation}
    \lim_{N \rightarrow \infty} N \left( \frac{1}{N} \log S_{\varrho (N)}^{U
    (N)} (u_1, \ldots, u_r, 1^{N - r}) - \sum_{i = 1}^r \Psi (u_i) \right) =
    \sum_{i = 1}^r \Phi (u_i), \label{thatonegamiswtonKinezo}
  \end{equation}
  where $\Psi, \Phi$ are analytic functions in a neighborhood of $1$ and the
  above convergence is uniform in a neighborhood of $1^r$. Then, for every $k
  \in \mathbb{N}$, we have
  \begin{equation}
    \lim_{N \rightarrow \infty} N \left( \frac{1}{N} \sum_{\mathlambda \in
    \hat{U} (N)} \varrho (N) [\mathlambda] \sum_{i = 1}^N \left(
    \frac{\mathlambda_i + N - i}{N} \right)^k -\mathbf{m}_k \right)
    =\mathbf{m}'_k, \label{magapaskiegwxezw}
  \end{equation}
  where $(\mathbf{m}_k)_{k \in \mathbb{N}}$ are given by (\ref{mporw?}) and
  \begin{equation}
    \mathbf{m}'_k = \sum_{m = 0}^{k - 1} \frac{k!}{m! (m + 1) ! (k - m - 1) !}
    \left. \frac{\mathd^m}{\mathd x^m} \left( x^k \left( \Phi' (x) -
    \frac{1}{2 x} \right) (\Psi' (x))^{k - m - 1} \right) \right|_{x = 1} .
    \label{poueinai}
  \end{equation}
\end{theorem}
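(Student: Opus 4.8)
The plan is to mirror the argument that produced the expectation limit in Theorem~\ref{Einaiiagapi}, replacing the operator $\mathcal{D}_k$ by $\mathcal{D}_k^{U(N)}$, the Harish-Chandra transform by the Schur generating function, and the base point $0$ by $1$. First I would observe that, exactly as in Theorem~\ref{Einaiiagapi}, the quantity in \eqref{magapaskiegwxezw} is already an average (the sum over $\mathlambda$ carries the expectation), so no second-moment or concentration estimate is needed; it suffices to produce a two-term asymptotic expansion of a single deterministic quantity. By Proposition~\ref{Gkofretakia} and linearity of $\mathcal{D}_k^{U(N)}$,
\[ \frac{1}{N}\sum_{\mathlambda\in\hat U(N)}\varrho(N)[\mathlambda]\sum_{i=1}^N\left(\frac{\mathlambda_i+N-i}{N}\right)^k = \frac{1}{N^{k+1}}\,\mathcal{D}_k^{U(N)}\big(S_{\varrho(N)}^{U(N)}\big)\Big|_{u_1=\cdots=u_N=1}, \]
so the whole problem reduces to expanding the right-hand side to order $1/N$. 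Writing $S=\exp(N\cdot\tfrac1N\log S)$ and using \eqref{thatonegamiswtonKinezo} in the form $\tfrac1N\log S(u_1,\dots,u_r,1^{N-r})=\sum_i\Psi(u_i)+\tfrac1N\sum_i\Phi(u_i)+o(1/N)$ near $1^r$ puts us in position to run the chain-rule and symmetrization machinery.

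Next I would decompose the operator. Since $(u\partial)^k=\sum_{p=1}^k S(k,p)\,u^p\partial^p$ with $S(k,p)$ the Stirling numbers of the second kind, each piece $\big(\prod_{i<j}(u_i-u_j)\big)^{-1}\sum_i u_i^p\partial_i^p\big(\prod_{i<j}(u_i-u_j)f\big)$ obeys a verbatim analog of Lemma~\ref{lem:2} (the only change is the harmless prefactor $u_{l_0}^p$). Applying the chain rule as in \eqref{kefi}, then Lemma~\ref{SnikfeatFy} and the symmetrization limit \eqref{malakas} \emph{evaluated at $1$ instead of $0$}, I would obtain an expansion $\tfrac{1}{N^{k+1}}\mathcal{D}_k^{U(N)}(S)\big|_{u=1}=\mathbf{m}_k+\tfrac1N\mathbf{m}'_k+o(1/N)$ in which, just as in Theorems~\ref{SouvlakiA} and \ref{Einaiiagapi}, the power of $N$ attached to each symmetrized term is controlled by the number of derivatives falling on $\log S$ plus the number of distinct indices in the denominator.

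I would then read off the two coefficients. The leading $N^{k+1}$ term comes from $p=k$ together with the top part $u_{l_0}^k(\partial\,\tfrac1N\log S)^{k-m}\to x^k(\Psi'(x))^{k-m}$ of the chain rule; collecting the combinatorial factors reproduces exactly \eqref{mporw?}, i.e.\ $\mathbf{m}_k$, which is the consistency check against Theorem~\ref{KALOKAIRI} (whose hypothesis \eqref{Kailunsegamw} follows from \eqref{thatonegamiswtonKinezo}). The $1/N$ coefficient $\mathbf{m}'_k$ then assembles from three kinds of $N^k$-contributions: (i) the $\Phi$-part of $\tfrac1N\log S$ inside the $p=k$ block, which by the same Leibniz/induction computation as in Theorem~\ref{Einaiiagapi} produces the $x^k(\Psi'(x))^{k-m-1}\Phi'(x)$ term; (ii) the next symmetrization corrections at the base point $1$; and (iii) the subleading Stirling pieces $p<k$. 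The goal is to show that (ii) and (iii) combine into precisely the extra factor $-\tfrac{1}{2x}$ in \eqref{poueinai}.

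The main obstacle is this last bookkeeping, i.e.\ the emergence of $-\tfrac{1}{2x}$, which has no counterpart in the Harish-Chandra setting and is exactly what distinguishes the \emph{quantized} $R$-transform from the ordinary one. The cleanest way I see to locate it is the substitution $u_i=e^{v_i}$, under which $u_i\partial_{u_i}=\partial_{v_i}$ and the Weyl denominator factors as $\prod_{i<j}(u_i-u_j)=\exp\!\big(\tfrac{N-1}{2}\sum_\ell v_\ell\big)\prod_{i<j}2\sinh\frac{v_i-v_j}{2}$; conjugating $\sum_i\partial_{v_i}^k$ by the symmetric factor turns it into $\sum_i\big(\partial_{v_i}+\tfrac{N-1}{2}\big)^k$ acting against the $\sinh$-Vandermonde. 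Here the shift $\tfrac{N-1}{2}=\tfrac N2-\tfrac12$ injects the constant $-\tfrac12$ one order below the leading $\tfrac N2$, while the accompanying drop of one power of $u=e^v$ supplies the factor $x^{-1}$; matching this defect, together with the $\sinh$-corrections to the plain Vandermonde, against the $p<k$ Stirling terms is what yields $-\tfrac{1}{2x}$. In practice I would keep the $x^k$ manifest through the $u$-variable Stirling expansion and use the $v$-variable shift only to identify and verify the $-\tfrac12$ defect, since carrying both features simultaneously is the delicate part of the computation.
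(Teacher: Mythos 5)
Your proposal is correct and, for the main skeleton, coincides with the paper's proof of Theorem \ref{Maria!!}: reduction via Proposition \ref{Gkofretakia} to expanding $\mathcal{D}_k^{U(N)} S_{\varrho(N)}^{U(N)}$ at $1^N$, the observation that only the expectation needs to be expanded (no concentration step), the decomposition $(u\partial)^k=\sum_p S(k,p)u^p\partial^p$ with the power count showing that only $p=k$ and $p=k-1$ survive, and the chain-rule/symmetrization machinery of Theorem \ref{SouvlakiA} transplanted to the base point $1$; your item (i) is exactly the paper's \eqref{iposxeseis}. Where you genuinely diverge is the last step. The paper extracts the $-\tfrac{1}{2x}$ by brute-force bookkeeping: it computes three explicit sums --- the $p=k-1$ Stirling contribution \eqref{Iagapisouvganeiapomesamousandiaria.} with coefficient $S(k,k-1)=\tfrac{k(k-1)}{2}$, and the two subleading-in-$N$ pieces \eqref{WNIX} and \eqref{Kifmarokino} of the $p=k$ block coming from $l_1+\cdots+l_{k-m}\in\{k-m,\,k-m-1\}$ --- and verifies that they telescope to $-\tfrac12\sum_m\frac{k!}{m!(m+1)!(k-m-1)!}\frac{\mathrm{d}^m}{\mathrm{d}x^m}(x^{k-1}(\Psi'(x))^{k-m-1})$. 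Your alternative, the substitution $u_i=e^{v_i}$ with the Weyl-denominator factorization and the $\tfrac{N-1}{2}=\tfrac N2-\tfrac12$ shift, is a legitimate and arguably more conceptual route to the same constant: it is, in essence, the deformed Schur generating function $T_{\varrho}^{U(N)}$ that the paper introduces only afterwards, in Theorem \ref{th:cumulants}, precisely because (as the authors note) the direct route does not readily expose the cumulant structure; there the $-\tfrac12$ appears as $B'(0)=\tfrac12$ for $B(u)=\log\frac{e^u-1}{u}$, which is exactly your $\tfrac{N-1}{2}$-shift per Vandermonde factor. So each approach buys something: the paper's direct computation stays entirely within the $u$-variables and requires only the three-sum identity, while your exponential conjugation trades that identity for the two-variable cross-term analysis of $\log T_{\varrho}^{U(N)}$ (the $\sinh$-corrections you mention), which the paper carries out in the proof of Theorem \ref{th:cumulants} --- so the delicate step you flag is known to close.
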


\begin{proof}
  The proof strategy is similar to the proof of Theorem \ref{Einaiiagapi}. Assumption
  (\ref{thatonegamiswtonKinezo}) implies that (\ref{Kailunsegamw}) holds. This
  allows us to write
  \begin{equation}
    \sum_{\mathlambda \in \hat{U} (N)} \varrho (N) [\mathlambda] \sum_{i =
    1}^N (\mathlambda_i + N - i)^k \label{pussy!}
  \end{equation}
  in the form $N^{k + 1} m_{k, N} (\Psi) + N^k m_{k, N} (\Psi, \Phi) +
  \omicron (N^k)$, where $(m_{k, N} (\Psi))_{N \in \mathbb{N}}$ converges to \eqref{mporw?}
   and we need to prove that $(m_{k, N}(\Psi, \Phi))_{n \in \mathbb{N}}$ converges to
  (\ref{poueinai}). By Proposition \ref{Gkofretakia},
  (\ref{pussy!}) is equal to $\mathcal{D}_k^{U (N)} S_{\varrho (N)}^{U (N)}
  \longdownminus_{u_i = 1}$. Due to the definition of the differential
  operator, $\mathcal{D}_k^{U (N)} S_{\varrho (N)}^{U (N)}$ can be written as
  a linear combination of terms
  \begin{equation}
    \left( \prod_{i < j} (u_i - u_j) \right)^{- 1} \sum_{i = 1}^N u_i^n
    \partial_i^n \left( \prod_{i < j} (u_i - u_j) S_{\varrho (N)}^{U (N)}
    \right), \label{Seskeftomaiotanxezw!}
  \end{equation}
  for $n = 1, \ldots, k$. However, only for $n = k - 1$ and $n=k$ the terms
  (\ref{Seskeftomaiotanxezw!}) contribute to the left-hand side of
  (\ref{magapaskiegwxezw}) and their coefficients are $\frac{k (k - 1)}{2}$
  and $1$, respectively. For $n = k$, by writing $S_{\varrho (N)}^{U (N)} = \exp
  \left( N \cdot \frac{1}{N} \log \text{} S_{\varrho (N)}^{U (N)} \right)$ and
  using the chain rule in order to compute the derivatives of $S_{\varrho
  (N)}^{U (N)}$, we obtain the expansion of (\ref{Seskeftomaiotanxezw!}) as a
  linear combination of terms of the form
  \[ \frac{u_{b_0}^k \left( \partial_{b_0} \left( \frac{1}{N} \log \text{}
     S_{\varrho (N)}^{U (N)} \right) \right)^{l_1} \ldots \left(
     \partial_{b_0}^{k - m} \left( \frac{1}{N} \log \text{} S_{\varrho (N)}^{U
     (N)} \right) \right)^{l_{k - m}}}{(u_{b_0} - u_{b_1}) \ldots (u_{b_0} -
     u_{b_m})} + \text{{\hspace{14em}}} \]
  \begin{equation}
    \text{{\hspace{10em}}} \ldots + \frac{u_{b_m}^k \left( \partial_{b_m}
    \left( \frac{1}{N} \log \text{} S_{\varrho (N)}^{U (N)} \right)
    \right)^{l_1} \ldots \left( \partial_{b_m}^{k - m} \left( \frac{1}{N} \log
    \text{} S_{\varrho (N)}^{U (N)} \right) \right)^{l_{k - m}}}{(u_{b_m} -
    u_{b_0}) \ldots (u_{b_m} - u_{b_{m - 1}})}, \label{minmemalwneis}
  \end{equation}
  where $m \in \{0, \ldots, k\}$, $b_0, \ldots, b_m \in \{1, \ldots, N\}$ and
  $l_1, \ldots, l_{k-m}$ are non-negative integers such that $l_1 + 2 l_2 + \cdots
  + (k - m) l_{k-m} = k - m$. Similarly with the proof of Theorem
  \ref{SouvlakiA}, setting $u_i = 1 + i \varepsilon$ for every $i = 1, \ldots,
  N$, and considering the $m$-th order Taylor polynomial of
  \[ F_i (\varepsilon) = \left. u_i^k \left( \partial_i \left( \frac{1}{N}
     \log \text{} S_{\varrho (N)}^{U (N)} \right) \right)^{l_1} \ldots \left(
     \partial_i^{k - m} \left( \frac{1}{N} \log \text{} S_{\varrho (N)}^{U
     (N)} \right) \right)^{l_{k - m}} \right|_{u_j = 1 + j \varepsilon}, \]
  for $i = b_0, \ldots, b_m$, we obtain that, as $\varepsilon \rightarrow 0$,
  (\ref{minmemalwneis}) is a linear combination of derivatives
  \[ \partial_{i_m} \ldots \partial_{i_1} \left[ u_i^k \left( \partial_i
     \left( \frac{1}{N} \log \text{} S_{\varrho (N)}^{U (N)} \right)
     \right)^{l_1} \ldots \left( \partial_i^{k - m} \left( \frac{1}{N} \log
     \text{} S_{\varrho (N)}^{U (N)} \right) \right)^{l_{k - m}} \right] (1^N)
     . \]
  The coefficients of these derivatives do not depend on $N$ because
  (\ref{minmemalwneis}) emerges from differentiating $k - m$ times the function
  $S_{\varrho (N)}^{U (N)}$ in (\ref{Seskeftomaiotanxezw!}), for $n = k$.
  Since (\ref{minmemalwneis}) does not depend on $b_0, \ldots, b_m$ when $u_i
  = 1 + i \varepsilon$ and $\varepsilon \rightarrow 0$, we deduce that
  \[ m_{k, N} (\Psi) = \sum_{m = 0}^k \frac{k!}{m! (m + 1) ! (k - m) !}
     \partial_1^m \left( u_1^k \left( \partial_1 \left( \frac{1}{N} \log
     \text{} S_{\varrho (N)}^{U (N)} \right) \right)^{k - m} \right) (1^N) +
     \omicron (N^{- 1}) . \]
  Assumption (\ref{thatonegamiswtonKinezo}) allows us to control $\lim_{N
  \rightarrow \infty} N (m_{k, N} (\Psi) -\mathbf{m}_k)$. Namely, we start with the equality
  \[ N \left( \partial_1^m \left( u_1^k \left( \partial_1 \left( \frac{1}{N}
     \log \text{} S_{\varrho (N)}^{U (N)} \right) \right)^{k - m} \right)
     (1^N) - \left. \frac{\mathd^m}{\mathd u^m} (u^k (\Psi' (u))^{k - m})
     \right|_{u = 1} \right) \]
  \[ = \sum_{l_0 + \cdots + l_{k - m} = m} \frac{m!}{l_0 ! \ldots l_{k - m} !}
     \frac{k!}{(k - l_0) !} N \left( \prod_{i = 1}^{k - m} \partial_1^{l_i +
     1} \left( \frac{1}{N} \log \text{} S_{\varrho (N)}^{U (N)} \right) (1^N)
     - \prod_{i = 1}^{k - m} \Psi^{(l_i + 1)} (1) \right), \]
  and due to (\ref{thatonegamiswtonKinezo}) we have
  \[ \lim_{N \rightarrow \infty} N \left( \prod_{i = 1}^{k - m}
     \partial_1^{l_i + 1} \left( \frac{1}{N} \log \text{} S_{\varrho (N)}^{U
     (N)} \right) (1^N) - \prod_{i = 1}^{k - m} \Psi^{(l_i + 1)} (1) \right) =
     \sum_{i = 1}^{k - m} \Phi^{(l_i + 1)} (1) \prod_{\underset{j \neq i}{j =
     1}}^{k - m} \Psi^{(l_j + 1)} (1) . \]
  Therefore,
  \begin{equation}
    \lim_{N \rightarrow \infty} N (m_{k, N} (\Psi) -\mathbf{m}_k) = \sum_{m =
    0}^{k - 1} \frac{k!}{m! (m + 1) ! (k - m - 1) !}  \left.
    \frac{\mathd^m}{\mathd x^m} (x^k \Phi' (x) (\Psi' (x))^{k - m - 1})
    \right|_{x = 1} \label{iposxeseis} .
  \end{equation}
  It remains to show that
  \begin{equation}
    \lim_{N \rightarrow \infty} m_{k, N} (\Psi, \Phi) = - \frac{1}{2} \sum_{m
    = 0}^{k - 1} \frac{k!}{m! (m + 1) ! (k - m - 1) !}  \left.
    \frac{\mathd^m}{\mathd x^m} (x^{k - 1} (\Psi' (x))^{k - m - 1}) \right|_{x
    = 1} . \label{boadcast}
  \end{equation}
  Both for $n = k - 1$ and $n = k$, the terms (\ref{Seskeftomaiotanxezw!})
  contribute to $\lim_{N \rightarrow \infty} m_{k, N} (\Psi, \Phi)$. As we
  showed above, for $n = k - 1$ the contribution is
  \begin{equation}
    \frac{k (k - 1)}{2} \sum_{m = 0}^{k - 1} \frac{(k - 1) !}{m! (m + 1) ! (k
    - m - 1) !}  \left. \frac{\mathd^m}{\mathd x^m} (x^{k - 1} (\Psi' (x))^{k
    - m - 1}) \right|_{x = 1} . \label{Iagapisouvganeiapomesamousandiaria.}
  \end{equation}
  The case $n = k$ is quite similar to what we did in Theorem
  \ref{Einaiiagapi}. When $u_1 = \cdots = u_N = 1$, the sum with respect to
  $b_0, \ldots, b_m \in \{1, \ldots, N\}$ of (\ref{minmemalwneis})
  (multiplied by its coefficient, which is $O (N^{l_1 + \cdots + l_{k - m}})$)
  gives a term that contributes to $\lim_{N \rightarrow \infty} m_{k, N}
  (\Psi, \Phi)$ when $l_1 + \cdots + l_{k - m} \in \{k - m - 1, k - m\}$. For
  $l_1 + \cdots + l_{k - m} = k - m$, the contribution is
  \begin{equation}
    - \frac{1}{2} \sum_{m = 0}^{k - 1} \frac{k!}{m! (m + 1) ! (k - m - 1) !} 
    \left. \frac{\mathd^{m + 1}}{\mathd x^{m + 1}} (x^k (\Psi' (x))^{k - m -
    1}) \right|_{x = 1} . \label{WNIX}
  \end{equation}
  On the other hand, for $l_1 + \cdots + l_{k - m} = k - m - 1$, the
  contribution is
  \begin{equation}
    \frac{1}{2} \sum_{m = 0}^{k - 2} \frac{k!}{m! (m + 1) ! (k - m - 2) !} 
    \left. \frac{\mathd^m}{\mathd x^m} (x^k \Psi'' (x) (\Psi' (x))^{k - m -
    2}) \right|_{x = 1} . \label{Kifmarokino}
  \end{equation}
  Summing the expressions from (\ref{Iagapisouvganeiapomesamousandiaria.}),
  (\ref{WNIX}) and (\ref{Kifmarokino}), we see that (\ref{boadcast}) holds.
  This proves the claim.
\end{proof}

Our next goal is to give a description of the above $1 / N$ correction in terms of the
infinitesimal quantized free probability. Namely, we would like to
understand how to express moments (\ref{mporw?}) via the moment-cumulant formula \eqref{thelongestway}. This would allow to introduce the infinitesimal quantized free cumulants, which should be a non-trivial deformation of infinitesimal free cumulants.

Interestingly enough, we were unable to prove an analog of Lemma \ref{24} in the discrete setup directly. Instead, below we are essentially proving Theorem \ref{Maria!!} again in a slightly different way, which will produce a required formula. 
The key point is to deform the Schur generating
function.

\begin{definition}
  A deformed Schur generating function is a symmetric function
  $T_{\varrho}^{U (N)}$ given by
  \[ T_{\varrho}^{U (N)} (u_1, \ldots, u_N) = \frac{\prod_{i < j} (e^{u_i} -
     e^{u_j})}{\prod_{i < j} (u_i - u_j)} S_{\varrho}^{U (N)} (e^{u_1},
     \ldots, e^{u_N}) . \]
\end{definition}
For our purpose, the usefulness of $T_{\varrho}^{U (N)}$ comes from the fact
that the function
\[ (u_1, \ldots, u_N) \longmapsto \frac{\det (e^{u_i (\mathlambda_j + N -
   j)})_{1 \leq i, j \leq N}}{\prod_{i < j} (u_i - u_j)
   \tmmathbf{\chi}^{\mathlambda} (1^N)} \]
is an eigenfunction of $\mathcal{D}_k$ with corresponding eigenvalue $\sum_{i =
1}^N (\mathlambda_i + N - i)^k$. Moreover, an additive asymptotic behaviour for
the logarithm of $S_{\varrho (N)}^{U (N)}$ implies the same phenomenon for the
logarithm of $T_{\varrho (N)}^{U (N)}$.

\begin{theorem}
\label{th:cumulants}
  Let $\varrho (N)$ be a sequence of probability measures on $\hat{U} (N)$
  such that for every finite $r$ the condition (\ref{thatonegamiswtonKinezo})
  is satisfied. Then, for every $k \in \mathbb{N}$, we have
  \begin{equation}
    \lim_{N \rightarrow \infty} N \left( \frac{1}{N} \sum_{\mathlambda \in
    \hat{U} (N)} \varrho (N) [\mathlambda] \sum_{i = 1}^N \left(
    \frac{\mathlambda_i + N - i}{N} \right)^k -\mathbf{m}_k \right) =
    \sum_{\pi \in \tmop{NC} (k)} \sum_{V \in \pi}
    \tmmathbf{\kappa}_{|V|}^{(1)} \prod_{\underset{W \neq V}{W \in \pi}}
    \tmmathbf{\kappa}_{|W|}^{(0)}, \label{mpanio}
  \end{equation}
  where $(\mathbf{m}_k)_{k \in \mathbb{N}}$ are given by (\ref{mporw?}) and
  for every $n \in \mathbb{N}$,
  \[ \tmmathbf{\kappa}_n^{(0)} \assign \frac{1}{(n - 1) !}  \left.
     \frac{\mathd^n}{\mathd u^n} \left( \Psi (e^u) + \log \left( \frac{e^u -
     1}{u} \right) \right) \right|_{u = 0} \text{\quad and\quad}
     \tmmathbf{\kappa}_n^{(1)} \assign \frac{1}{(n - 1) !}  \left.
     \frac{\mathd^n}{\mathd u^n} \left( \Phi (e^u) - \frac{1}{2} u \right)
     \right|_{u = 0} . \]
\end{theorem}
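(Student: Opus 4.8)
The plan is to re-derive the moment asymptotics of Theorem \ref{Maria!!} through the deformed Schur generating function $T_{\varrho (N)}^{U (N)}$, which transports the problem into the ``flat'' Harish-Chandra coordinates, where the moment-cumulant bookkeeping of Theorem \ref{Einaiiagapi} and Lemma \ref{24} applies. First I would record that, writing $\mu_j \assign \mathlambda_j + N - j$, each summand
\[ E_{\mathlambda} (u_1, \ldots, u_N) \assign \frac{\det (e^{u_i (\mathlambda_j + N - j)})}{\prod_{i < j} (u_i - u_j) \, \tmmathbf{\chi}^{\mathlambda} (1^N)} \]
is an eigenfunction of the operator $\mathcal{D}_k$ from (\ref{gamisi}) with eigenvalue $\sum_{i = 1}^N (\mathlambda_i + N - i)^k$: this is Proposition \ref{prop:1} applied to the spectrum $\{\mu_j\}$. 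Moreover $E_{\mathlambda} (0^N) = 1$ by the Weyl dimension formula (equivalently, the $x \to 0$ limit of the Harish-Chandra integral (\ref{Jmn}) equals $1$), so $T_{\varrho (N)}^{U (N)} = \sum_{\mathlambda} \varrho (N) [\mathlambda] E_{\mathlambda}$ satisfies $\mathcal{D}_k T_{\varrho (N)}^{U (N)} |_{u = 0} = \sum_{\mathlambda} \varrho (N) [\mathlambda] \sum_{i = 1}^N (\mathlambda_i + N - i)^k$. Thus the left-hand side of (\ref{mpanio}), up to the factor $N^{- (k + 1)}$, equals $\mathcal{D}_k T_{\varrho (N)}^{U (N)} |_{u = 0}$, and I have reduced the statement to applying the flat machinery to $f = T_{\varrho (N)}^{U (N)}$.

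Next I would compute the asymptotic expansion of $\frac{1}{N} \log T_{\varrho (N)}^{U (N)} (u_1, \ldots, u_r, 0^{N - r})$. Splitting $\log T = \log S_{\varrho (N)}^{U (N)} (e^{u_\bullet}) + \log V$ with $V \assign \prod_{i < j} \frac{e^{u_i} - e^{u_j}}{u_i - u_j}$, the first term is governed by hypothesis (\ref{thatonegamiswtonKinezo}), while restricting $V$ to $(u_1, \ldots, u_r, 0^{N - r})$ factorizes as $\big( \prod_{i \le r} (\frac{e^{u_i} - 1}{u_i})^{N - r} \big) \prod_{i < j \le r} \frac{e^{u_i} - e^{u_j}}{u_i - u_j}$. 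The leading order is then $\sum_{i = 1}^r \tilde{\Psi} (u_i)$ with $\tilde{\Psi} (u) \assign \Psi (e^u) + \log \frac{e^u - 1}{u}$, which already identifies $\tmmathbf{\kappa}_n^{(0)} = \tilde{\Psi}^{(n)} (0) / (n - 1)!$ as the free cumulants of the limit measure $\mathbf{m}$ via Lemma \ref{lem:mom-cum} (consistent with the leading order of Theorem \ref{Maria!!}). The decisive feature is the $1 / N$ correction: besides the single-variable piece $\sum_i \big( \Phi (e^{u_i}) - \log \frac{e^{u_i} - 1}{u_i} \big)$ it contains the genuinely two-variable term $\frac{1}{N} \sum_{i < j \le r} \log \frac{e^{u_i} - e^{u_j}}{u_i - u_j}$ and the $r$-dependent term $- \frac{r}{N} \sum_i \log \frac{e^{u_i} - 1}{u_i}$.

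With this expansion in hand I would run the argument of Theorems \ref{SouvlakiA} and \ref{Einaiiagapi} on $f = T_{\varrho (N)}^{U (N)}$. The leading-order analysis is unchanged and reproduces $\mathbf{m}_k$. For the $1 / N$ correction the essential difference from Theorem \ref{Einaiiagapi} is that the mixed-variable derivatives of $\frac{1}{N} \log f$ at $0^N$ no longer vanish at order $1 / N$ — the two-variable Vandermonde term contributes $\partial_{u_i} \partial_{u_j} \log \frac{e^{u_i} - e^{u_j}}{u_i - u_j} |_0 = - \frac{1}{12} + \cdots \neq 0$ — so the terms of type (\ref{viv}) with more than one distinct index must now be retained. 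Collecting the single-variable contributions together with these surviving cross contributions, I expect the total to organize, exactly as in Lemma \ref{24}, into the moment-cumulant sum $\sum_{\pi \in \tmop{NC} (k)} \sum_{V \in \pi} \tmmathbf{\kappa}_{|V|}^{(1)} \prod_{W \neq V} \tmmathbf{\kappa}_{|W|}^{(0)}$, with the infinitesimal cumulant corrected to $\tmmathbf{\kappa}_n^{(1)} = \tilde{\Phi}^{(n)} (0) / (n - 1)!$ for $\tilde{\Phi} (u) = \Phi (e^u) - \frac{1}{2} u$.

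The main obstacle is precisely this last step: showing that the single-variable correction $- \log \frac{e^u - 1}{u}$ together with the two-variable and $r$-dependent Vandermonde contributions conspire to leave only $- \frac{1}{2} u$, i.e. that all higher Taylor corrections of $\log \frac{e^u - 1}{u} = \frac{u}{2} + \frac{u^2}{24} + \cdots$ cancel against the off-diagonal Vandermonde terms after symmetrization, while the answer still assembles into the non-crossing-partition form. This is why the standard proof of Theorem \ref{Einaiiagapi} must be genuinely reopened rather than quoted, and it is the reason a direct ``analog of Lemma \ref{24}'' in the Schur variables is bypassed: the deformation $T_{\varrho}^{U (N)}$ builds the $\tmop{NC} (k)$ structure in automatically. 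Consistency of the resulting formula with Theorem \ref{Maria!!} can be verified by comparing (\ref{mpanio}) with (\ref{poueinai}) for small $k$, where both reduce to $\tmmathbf{\kappa}_1^{(1)} = \Phi' (1) - \frac{1}{2}$ for $k = 1$ and to $\tmmathbf{\kappa}_2^{(1)} + 2 \tmmathbf{\kappa}_1^{(1)} \tmmathbf{\kappa}_1^{(0)}$ for $k = 2$.
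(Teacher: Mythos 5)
Your strategy is the paper's own: pass to the deformed Schur generating function $T_{\varrho(N)}^{U(N)}$, use that it is a $\mathcal{D}_k$-eigenfunction construction with eigenvalue $\sum_i (\lambda_i+N-i)^k$, expand $\tfrac1N\log T$ via \eqref{thatonegamiswtonKinezo} plus the Vandermonde ratio, and rerun the machinery of Theorems \ref{SouvlakiA} and \ref{Einaiiagapi} keeping the mixed derivatives that no longer vanish at order $1/N$. Up to and including the identification of $\tmmathbf{\kappa}_n^{(0)}$ and of the three pieces of the $1/N$ correction, the setup is correct (modulo a bookkeeping slip: you list $-\log\tfrac{e^{u_i}-1}{u_i}$ both inside the ``single-variable piece'' and again inside the ``$r$-dependent term'' $-\tfrac rN\sum_i\log\tfrac{e^{u_i}-1}{u_i}$; the correct correction, after multiplying by $N$, is $\sum_i\Phi(e^{u_i}) - r\sum_i\log\tfrac{e^{u_i}-1}{u_i} + \sum_{i<j}\log\tfrac{e^{u_i}-e^{u_j}}{u_i-u_j}$, with the $-B$ term appearing exactly $r$ times in total).

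However, there is a genuine gap exactly where you flag ``the main obstacle'': you assert, but do not prove, that the surviving two-index contributions conspire with the single-index ones to replace $\Gamma'(u)-B'(u)$ by $\Gamma'(u)-\tfrac12$, where $B(u)=\log\tfrac{e^u-1}{u}$ and $\Gamma(u)=\Phi(e^u)$. This is the decisive half of the proof and it does not follow from ``cancellation of higher Taylor coefficients after symmetrization''; the actual mechanism is different. One must (a) isolate, among the derivatives \eqref{involves}, precisely those with exactly two distinct indices (all others are $o(1/N)$ after the relevant normalization), (b) evaluate them using $\lim_{N}\partial_2^n\partial_1^l\bigl(\log T_{\varrho(N)}^{U(N)}\bigr)(0^N)=(-1)^n B^{(l+n)}(0)$, which comes from the two-variable Vandermonde term alone, and (c) perform the binomial resummation
\begin{equation*}
\sum_{n=1}^{m}(-1)^{n+1}\binom{m}{n}\left.\frac{\mathd^{m-n}}{\mathd u^{m-n}}\Bigl(B^{(n+1)}(u)\,(A'(u)+B'(u))^{k-m-1}\Bigr)\right|_{u=0}
=\left.\frac{\mathd^{m}}{\mathd u^{m}}\Bigl(\bigl(B'(u)-B'(0)\bigr)(A'(u)+B'(u))^{k-m-1}\Bigr)\right|_{u=0},
\end{equation*}
which, combined with $B'(0)=\tfrac12$ and the combinatorial weight with which each such term occurs in $\mathcal{D}_k T\longdownminus_{u_i=0}$, produces the summand $\bigl(B'(u)-\tfrac12\bigr)(A'+B')^{k-m-1}$. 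Adding this to the single-index contribution $\bigl(\Gamma'(u)-B'(u)\bigr)(A'+B')^{k-m-1}$ yields $\bigl(\Gamma'(u)-\tfrac12\bigr)(A'+B')^{k-m-1}$, after which Lemma \ref{24} (applied verbatim in the variable $u$, which you may quote rather than reprove) gives the non-crossing-partition form with $\tmmathbf{\kappa}_n^{(1)}=\tilde\Phi^{(n)}(0)/(n-1)!$. Without steps (a)--(c) the proof is not complete; your $k=1,2$ consistency checks against \eqref{poueinai} corroborate the answer but do not substitute for them.
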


\begin{proof}
  Note that condition (\ref{thatonegamiswtonKinezo}) determines the
  asymptotics of the logarithm of $T_{\varrho (N)}^{U (N)}$, since for every
  finite $r$ we have
  \begin{equation}
    \lim_{N \rightarrow \infty} \frac{1}{N} \log \text{} T_{\varrho (N)}^{U
    (N)} (u_1, \ldots, u_r, 0^{N - r}) = \sum_{i = 1}^r \Psi (e^{u_i}) + \log
    \left( \frac{e^{u_i} - 1}{u_i} \right), \label{tsalapetions}
  \end{equation}
  where the above convergence is uniform in a complex neighborhood of $0^r$. Thus, we
  have
  \begin{equation}
    \mathbf{m}_k = \sum_{m = 0}^k \frac{k!}{m! (m + 1) ! (k - m) !}  \left.
    \frac{\mathd^m}{\mathd u^m} ((A' (u) + B' (u))^{k - m}) \right|_{u = 0},
    \label{Italides}
  \end{equation}
  where $A (u) \assign \Psi (e^u)$ and $B (u) \assign \log (e^u - 1) - \log
  \text{} (u)$. This holds because both the left and the right hand side of
  (\ref{Italides}) are equal to $\lim_{N \rightarrow \infty} N^{- k - 1}
  \mathcal{D}_k T_{\varrho (N)}^{U (N)} \longdownminus_{u_i = 0}$. Moreover, as
  we showed, (\ref{tsalapetions}) implies that for $N^{- k - 1} \mathcal{D}_k
  T_{\varrho (N)}^{U (N)} \longdownminus_{u_i = 0}$ we can get an expansion of
  the form $M_{0, k, N} + N^{- 1} M_{1, k, N} + \ldots$, where the sequences
  $(M_{i, k, N})_{n \in \mathbb{N}}$ converge. In the proof of Theorem
  \ref{Propolemiko} we justified why only the sequence $(M_{0, k, N})_{N \in
  \mathbb{N}}$ will contribute to the limit (\ref{mpanio}). We recall that
  $M_{0, k, N}$ is a linear combination of derivatives
  \begin{equation}
    \partial_{i_m} \ldots \partial_{i_1} \left( \partial_{i_0} \left(
    \frac{1}{N} \log \text{} T_{\varrho (N)}^{U (N)} \right) \right)^{k - m}
    (0^N), \label{involves}
  \end{equation}
  for $m = 0, \ldots, k - 1$, where the coefficients do not depend on $N$. The
  summand $M_{0, k, N}^{(1)}$ of $M_{0, k, N}$, that involves derivatives
  (\ref{involves}) where $i_0 = \cdots = i_m$, is
  \[ M_{0, k, N}^{(1)} = \sum_{m = 0}^k \frac{k!}{m! (m + 1) ! (k - m!)}
     \partial_1^m \left( \left( \partial_1 \left( \frac{1}{N} \log \text{}
     {T^{U (N)}_{\varrho (N)}}  \right) \right)^{k - m} \right) (0^N) . \]
  Due to (\ref{thatonegamiswtonKinezo}), for every non-negative integer
  $l$, one has
  \[ \lim_{N \rightarrow \infty} N \left( \partial_1^l \left( \frac{1}{N} \log
     \text{} T_{\varrho (N)}^{U (N)} \right) (0^N) - A^{(l)} (0) - B^{(l)} (0)
     \right) = \Gamma^{(l)} (0) - B^{(l)} (0), \]
  where $\Gamma (u) \assign \Phi (e^u)$. We deduce that
  \[ \lim_{N \rightarrow \infty} N (M_{0, k, N}^{(1)} -\mathbf{m}_k) =
      \sum_{m = 0}^{k - 1} \frac{k!}{m! (m + 1) ! (k - m
     - 1) !}  \left. \frac{\mathd^m}{\mathd u^m} ((\Gamma' (u) - B' (u)) (A'
     (u) + B' (u))^{k - m - 1}) \right|_{u = 0} . \]
  Moreover, since
  \[ N \left( \frac{1}{N} \log \text{} T_{\varrho (N)}^{U (N)} (u_1, \ldots,
     u_r, 0^{N - r}) - \sum_{i = 1}^r \Psi (e^{u_i}) + \log \left(
     \frac{e^{u_i} - 1}{u_i} \right) \right) = \log \left( \prod_{1 \leq i < j
     \leq r} \frac{e^{u_i} - e^{u_j}}{u_i - u_j} \right) \]
  \[ - r \text{} \log \left( \prod_{i = 1}^r \frac{e^{u_i} - 1}{u_i} \right) +
     N \left( \frac{1}{N} \log \text{} S_{\varrho (N)}^{U (N)} (e^{u_1},
     \ldots, e^{u_r}, 1^{N - r}) - \sum_{i = 1}^r \Psi (e^{u_i}) \right), \]
  condition (\ref{thatonegamiswtonKinezo}) implies that only the summands
  $M_{0, k, N}^{(1)}, M_{0, k, N}^{(2)}$ of $M_{0, k, N}$ will contribute to
  the limit (\ref{mpanio}), where $M_{0, k, N}^{(2)}$ is the summand that
  involves all the derivatives (\ref{involves}), where $|\{i_0, \ldots, i_m
  \}| = 2$. From the procedure that we described in the proof of Theorem
  \ref{SouvlakiA} that provides the expansion for $N^{- k - 1} \mathcal{D}_k
  T_{\varrho (N)}^{U (N)} \longdownminus_{u_i = 0}$, we obtain that
  \[ N \cdot \text{} M_{0, k, N}^{(2)} = - \sum_{m = 1}^{k - 1} \frac{k!N}{m!
     (m + 1) ! (k - m) !} \sum_{n = 1}^m \binom{m}{n} \partial_2^n
     \partial_1^{m - n} \left( \partial_1 \left( \frac{1}{N} \log \text{}
     T_{\varrho (N)}^{U (N)} \right) \right)^{k - m} (0^N) . \]
  In order to compute its limit as $N \rightarrow \infty$, we write
  \[ \lim_{N \rightarrow \infty} N \partial_2^n \partial_1^{m - n} \left(
     \partial_1 \left( \frac{1}{N} \log \text{} T_{\varrho (N)}^{U (N)}
     \right) \right)^{k - m} (0^N) = \]
  \[ \lim_{N \rightarrow \infty} \sum_{l_1 + \cdots + l_{k - m} = m - n}
     \frac{(m - n) !N}{l_1 ! \ldots l_{k - m} !}  \sum_{i = 1}^{k - m}
     \partial_2^n \partial_1^{l_i + 1} \left( \frac{1}{N} \log \text{}
     T_{\varrho (N)}^{U (N)} \right) (0^N) \prod_{\underset{j \neq i}{j =
     1}}^{k - m} \partial_1^{l_j + 1} \left( \frac{1}{N} \log \text{}
     T_{\varrho (N)}^{U (N)} \right) (0^N) \]
  \[ = (- 1)^n (k - m) \left. \frac{\mathd^{m - n}}{\mathd u^{m - n}} (B^{(n +
     1)} (u) (A' (u) + B' (u))^{k - m - 1}) \right|_{u = 0}, \]
  where we used that $\lim_{N \rightarrow \infty} \partial_2^n \partial_1^l
  \left( \log \text{} T_{\varrho (N)}^{U (N)} \right) (0^N) = (- 1)^n B^{(l +
  n)} (0)$. As a corollary, $(N \cdot M_{0, k, N}^{(2)})_{N \in \mathbb{N}}$
  will converge to
  \[ \sum_{m = 1}^{k - 1} \frac{k!}{m! (m + 1) ! (k - m - 1) !} \sum_{n = 1}^m
     (- 1)^{n + 1} \binom{m}{n} \left. \frac{\mathd^{m - n}}{\mathd u^{m - n}}
     (B^{(n + 1)} (u) (A' (u) + B' (u))^{k - m - 1}) \right|_{u = 0} \]
  \[ = \sum_{m = 0}^{k - 1} \frac{k!}{m! (m + 1) ! (k - m - 1) !}  \left.
     \frac{\mathd^m}{\mathd u^m} \left( \left( B' (u) - \frac{1}{2} \right)
     (A' (u) + B' (u))^{k - m - 1} \right) \right|_{u = 0}, \]
  because $B' (0) = 1 / 2$. Thus, due to Lemma \ref{24}, $\lim_{N \rightarrow
  \infty} N (M_{0, k, N} -\mathbf{m}_k)$ is equal to the right hand side of
  (\ref{mpanio}), which proves the claim.
\end{proof}

\begin{remark}
\label{rem:quant-class-free}
  Theorem \ref{th:cumulants} describes the ''differentiation procedure'' of
  the moment-cumulant relations of the limit measure from Theorem
  \ref{KALOKAIRI} that gives the $1 / N$ correction. In particular, for the
  limit regime (\ref{thatonegamiswtonKinezo}) we showed that the ''derivative'' of
  $u [0, 1]$ is $\delta \left( - 1 / 2 \right)$.
\end{remark}

In the remainder of the section, we provide some examples. Our first
example comes from \textit{extreme characters} of $U (\infty)$. Their
multiplicativity incorporates them into the framework of Theorem \ref{KALOKAIRI}
and Theorem \ref{Maria!!}. In that sense, they are analogous to ergodic
unitarily invariant matrices on $H (\infty)$.

We recall that a character $\tmmathbf{\chi}: U (\infty) \rightarrow
\mathbb{C}$ has a form
\[ \tmmathbf{\chi} (\tmop{diag} (u_1, \ldots, u_N, 1, 1, \ldots)) =
   \sum_{\mathlambda \in \hat{U} (N)} \varrho^{\tmmathbf{\chi}} (N)
   [\mathlambda] \frac{s_{\mathlambda} (u_1, \ldots, u_N)}{s_{\mathlambda}
   (1^N)}, \]
where $s_{\mathlambda}$ is the rational Schur function, given by
(\ref{Maroko}), and $\varrho^{\tmmathbf{\chi}} (N)$ is a probability measure on
$\hat{U} (N)$. By extreme characters we refer to the extreme points of the
convex set of characters of $U (\infty)$. The classification of the extreme
characters is known as the Edrei-Voiculescu theorem. 
They are multiplicative functions, meaning that
\[ \tmmathbf{\chi} (U) = \prod_{u \in \tmop{Spec} (U)} F (u), \]
where $F$ is a function of one variable, parametrized by a set of non-negative
parameters. It is known as Voiculescu function and its general form can be
found in \cite{E}, \cite{V3}, or, e.g., \cite{BBO}. For our particular example, we consider
\begin{equation}
  F (u) = \exp (\delta (u - 1)) \frac{1}{1 - \alpha (u - 1)},
  \label{KifMaRoKiNo}
\end{equation}
where $\delta, \alpha \geq 0$.

\begin{example}
\label{ex:disc-BBP}
  Let $\chi^{\pi (N)}$ be the character of $U (N)$ that is the restriction on $U
  (N)$ of the character
  $\tmmathbf{\chi}^{\pi}$ given by the function (\ref{KifMaRoKiNo}), where
  $\delta = N \gamma$ and $\alpha, \gamma \geq 0$ do not depend on $N$. Its decomposition into irreducible characters define a probability measure on $\hat U(N)$.
  Its Schur generating function $\tmmathbf{\chi}^{\pi (N)}
  (\tmop{diag} (u_1, \ldots, u_N))$ will satisfy
  (\ref{thatonegamiswtonKinezo}) for $\Psi (u) = \gamma (u - 1)$ and $\Phi (u)
  = - \log (1 - \alpha (u - 1))$. Therefore, the limiting probability measure that
  emerges form Theorem \ref{KALOKAIRI} has $k$-th moment
  \[ \sum_{m = 0}^k \frac{k! \gamma^{k - m}}{m! (m + 1) ! (k - m) !}  \left.
     \frac{\mathd^m}{\mathd u^m} (u + 1)^k \right|_{u = 0} . \]
  By Cauchy formula, this expression equals
  \[ \frac{1}{2 \mathpi \mathi (k + 1)} \oint_{|z| = \frac{1}{\sqrt{\gamma}}}
     \frac{1}{z + 1} \left( \gamma z + \frac{1}{z} + \gamma + 1 \right)^{k +
     1} d \text{} z \]
  \[ = \frac{1}{2 \mathpi (k + 1)} \int_{\gamma + 1 - 2 \sqrt{\gamma}}^{\gamma
     + 1 + 2 \sqrt{\gamma}} \frac{t + 1 - \gamma}{t}  \frac{t^{k +
     1}}{\sqrt{(\gamma + 1 + 2 \sqrt{\gamma} - t) (t - \gamma - 1 + 2
     \sqrt{\gamma})}} d \text{} t. \]
  Integrating by parts, we obtain that the density function of
  the limit probability measure is
  \[ \frac{1}{\mathpi} \tmmathbf{1}_{[\gamma + 1 - 2 \sqrt{\gamma}, \gamma + 1
     + 2 \sqrt{\gamma}]} (x) \arccos \left( \frac{x - 1 + \gamma}{2
     \sqrt{\gamma x}} \right), \]
  for $\gamma \ge 1$ and
  \[ \frac{1}{\mathpi} \tmmathbf{1}_{[\gamma + 1 - 2 \sqrt{\gamma}, \gamma + 1
     + 2 \sqrt{\gamma}]} (x) \arccos \left( \frac{x - 1 + \gamma}{2
     \sqrt{\gamma x}} \right) +\tmmathbf{1}_{[0, \gamma + 1 - 2
     \sqrt{\gamma}]} (x), \]
     for $0 <\gamma <1$. 
  To simplify the formulas, let us assume that $\gamma>1$.     
  For the correction measure, by Theorem \ref{Maria!!} its $k$-th moment is given by
  \begin{equation}
    \sum_{m = 0}^{k - 1} \frac{k!}{m! (m + 1) ! (k - m - 1) !}  \left. \left(
    \frac{\mathd^m}{\mathd u^m} \left( u^k \frac{\alpha \gamma^{k - m - 1}}{1
    - \alpha (u - 1)} \right) - \frac{1}{2} \frac{\mathd^m}{\mathd u^m} (u^{k
    - 1} \gamma^{k - m - 1}) \right) \right|_{u = 1} . \label{ND}
  \end{equation}
  After opening the parenthesis, the first summand can be written in the form
  \[ \sum_{m = 0}^{k - 1} \frac{k!}{m! (m + 1) ! (k - m - 1) !}  \left.
     \frac{\mathd^m}{\mathd u^m} \left( (u + 1)^k  \frac{\alpha \gamma^{k - m
     - 1}}{1 - \alpha u} \right) \right|_{u = 0} = \left( \alpha + \gamma + 1
     + \frac{\gamma}{\alpha} \right)^k - \left( \gamma + \frac{\gamma}{\alpha}
     \right)^k + \]
  \[ \text{{\hspace{8em}}} + \sum_{m = 0}^{k - 1} \frac{k!}{m! (m + 1) ! (k -
     m - 1) !}  \left. \frac{\mathd^m}{\mathd u^m} \left( \left( (u + 1)^k -
     \left( 1 + \frac{1}{\alpha} \right)^k \right) \frac{\alpha \gamma^{k - m
     - 1}}{1 - \alpha u} \right) \right|_{u = 0} . \]
  Using the Cauchy formula and doing similar computations as in Example
  \ref{Emeispamekorifi}, we obtain that the above sum is equal to
  \[ \frac{\alpha}{2 \mathpi \mathi} \oint_{|z| = \frac{1}{\sqrt{\gamma}}}
     \frac{1}{1 - \alpha z} \left( \left( \gamma z + \gamma + 1 + \frac{1}{z}
     \right)^k - (\gamma z + \gamma)^k - \left( \frac{1}{z} + \frac{1}{\alpha
     z} + \gamma + \frac{\gamma}{\alpha} \right)^k + \left( \gamma +
     \frac{\gamma}{\alpha} \right)^k \right) d \text{} z \text{} \]
  \[ = \frac{\alpha}{2 \mathpi \mathi} \oint_{|z| = \frac{1}{\sqrt{\gamma}}}
     \frac{1}{1 - \alpha z} \left( \left( \gamma z + \gamma + 1 + \frac{1}{z}
     \right)^k - \left( \alpha + 1 + \gamma + \frac{\gamma}{\alpha} \right)^k
     \right) d \text{} z \text{} - \left( \alpha + 1 + \gamma +
     \frac{\gamma}{\alpha} \right)^k + \left( \gamma + \frac{\gamma}{\alpha}
     \right)^k . \]
  For $\sqrt{\gamma} < \alpha$, the above contour integral is equal to
  \begin{equation}
  \label{eq:ex-BPP}
  \frac{\alpha}{2 \mathpi} \int_{\gamma + 1 - 2 \sqrt{\gamma}}^{\gamma + 1
     + 2 \sqrt{\gamma}} \frac{t - 2 \alpha - \gamma - 1}{\gamma + \alpha^2 +
     \alpha \gamma + \alpha - \alpha t}  \frac{t^k}{\sqrt{(\gamma + 1 + 2
     \sqrt{\gamma} - t) (t - \gamma - 1 + 2 \sqrt{\gamma})}} d \text{} t +
     \left( \alpha + 1 + \gamma + \frac{\gamma}{\alpha} \right)^k.
     \end{equation}
  In the opposite case $\alpha < \sqrt{\gamma}$, equation \eqref{eq:ex-BPP} holds without the second term in its right-hand side, while in the case $\alpha = \sqrt{\gamma}$, we have a half of the second term.
  
  The remaining summand of (\ref{ND}) can be written as
  \[ \frac{1}{2} \sum_{m = 0}^{k - 1} \frac{k! \gamma^{k - m - 1}}{m! (m + 1)
     ! (k - m - 1) !}  \left. \frac{\mathd^m}{\mathd u^m} (u^{k - 1})
     \right|_{u = 1} = \frac{1}{4 \mathpi \mathi} \oint_{|z| =
     \frac{1}{\sqrt{\gamma}}} \frac{1}{1 + z} \left( \gamma z + \frac{1}{z} +
     \gamma + 1 \right)^k d \text{} z \]
  \[ = \frac{1}{4 \mathpi} \int_{\gamma + 1 - 2 \sqrt{\gamma}}^{\gamma + 1 + 2
     \sqrt{\gamma}} \frac{t + 1 - \gamma}{t}  \frac{t^k}{\sqrt{(\gamma + 1 + 2
     \sqrt{\gamma} - t) (t - \gamma - 1 + 2 \sqrt{\gamma})}} d \text{} t. \]

Combining the formulas, we obtain that the correction measure is equal to 
\begin{multline}
\label{eq:BBP-answer}
\left( \frac{ \alpha ( t - 2 \alpha - \gamma - 1)}{\gamma + \alpha^2 +
     \alpha \gamma + \alpha - \alpha t} - \frac{t + 1 - \gamma}{2t} \right) \frac{1}{2 \pi \sqrt{(\gamma + 1 + 2
     \sqrt{\gamma} - t) (t - \gamma - 1 + 2 \sqrt{\gamma})}} \mathbf{1}_{ t \in [(\sqrt{\gamma}-1)^2;(\sqrt{\gamma}+1)^2]} \\ + \delta \left( \alpha + 1 + \gamma + \frac{\gamma}{\alpha} \right) \mathbf{1}_{\alpha > \sqrt{\gamma}} + \frac12 \delta \left( \alpha + 1 + \gamma + \frac{\gamma}{\alpha} \right) \mathbf{1}_{\alpha = \sqrt{\gamma}}. 
\end{multline}
  
This answer (see also Figure \ref{fig:dBBP}) illustrates a discrete analog of the BBP phase transition for random matrices. We see that if one perturbs the large one-sided Plancherel representation (discrete analog of GUE) by multiplying its character to a character of one-dimensional representation, the resulting limiting measure has an outlier in the case $\alpha > \sqrt{\gamma}$, and does not have it otherwise. This is analogous to a random matrix phenomenon. 

\end{example}

\begin{figure}
\center
\includegraphics[height=5cm]{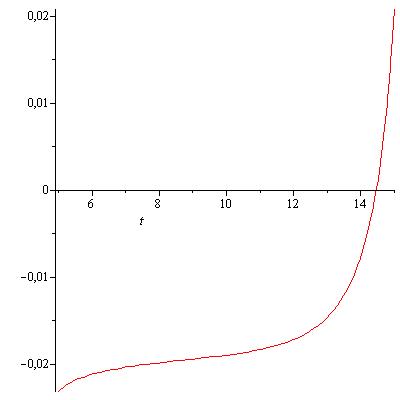}
\includegraphics[height=5cm]{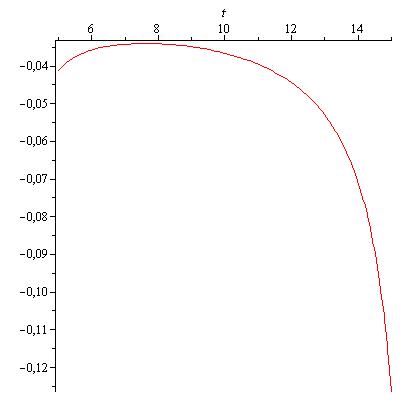}
\caption{The density from \eqref{eq:BBP-answer} for ($\gamma=9$,$\alpha=2$) and ($\gamma=9$,$\alpha=4$), respectively. It is visible that for $\alpha=2$ the correction measure created by a perturbation is positive at the right of its support and negative at the left. For $\alpha=4$, the outlier is created, thus the correction measure is negative inside the support of its continuous part. }
\label{fig:dBBP}

\end{figure}

\begin{example}
\label{ex:Aztec}

In this example we will address the perturbation of the model of uniformly random domino tilings of the Aztec diamond. Let us introduce very briefly the required notions. We refer to \cite{BK} for a detailed description of the construction below. 

A signature $\lambda$ with non-negative entries can be also thought of as a Young diagram and vice versa. Two Young diagrams $\lambda, \mu$ interlace (notation $\mu \prec \lambda$) if $\lambda_1 \ge \mu_1 \ge \lambda_2 \ge \mu_2 \ge \lambda_3 \ge \dots$ holds. Two Young diagrams interlace vertically
(notation $\mu \prec_v \lambda$) if the conjugations of $\lambda$ and $\mu$ interlace in the sense defined above. 
By $|\lambda|$ one denotes the sum $\lambda_1 + \lambda_2 + \dots$, which is the total number of boxes in the Young diagram $\lambda$.

It is known that domino tilings of the Aztec diamond of size $M$ are in a natural bijection with the collection of Young diagrams $(\lambda^1, \mu^1, \dots, \lambda^M)$ satisfying
$$
\varnothing \prec \lambda^1 \succ_v \mu^1 \prec \lambda^2 \succ_v \mu^2 \prec \dots \prec \lambda^M \succ_v \varnothing.
$$
Each signature $\lambda^r$ describes the positions of domino tilings along a one-dimensional slice of the Aztec diamond. 

One can introduce a probability measure on domino tilings of the Aztec diamond in various ways. The first one is to consider the uniform measure on all tilings. Uniformly random domino tilings were extensively studied in the last thirty years, in particular, in pioneering works \cite{JPS}, \cite{J}.

We will consider a perturbation of the uniform measure. Let us view the Aztec diamond as a square, and attach to horizontal dominoes along the first (top) up-right row (if the Aztec diamond is depicted as in Figure \ref{fig:til}) the weight $A>1$, while all the other dominoes have weight 1. Now, consider a measure on the space of domino tilings with the probability of a tiling being proportional to 
$$A^{\text{number of horizontal dominoes in the first up-right row}}.$$ 
In terms of interlacing Young diagrams above, the probability $Z A^{|\lambda^M|}$ is attached to a sequence $(\lambda^1, \mu^1, \dots, \lambda^M)$, where $Z$ is a normaizing constant. 

For $1 \le N \le M$, it follows from \cite[Sections 2 and 8]{BK} that the Schur generating function of the (marginal) distribution of the signature $\lambda^{N}$ under such a measure is given by
$$
\prod_{i=1}^N \left( \frac{1+x_i}{2} \right)^{M-N-1} \frac{1+A x_i}{1+A}.
$$
To make formulas below more readable, we assume that $N= \alpha M$ exactly (without rounding up or down), where $0<\alpha<1$ is fixed. Then, as $N \to \infty$, we can apply Theorem \ref{Maria!!} in order to study the asymptotic behavior of the signature $\lambda^{N}$: we see that the assumptions of the theorem hold with
$$
\Psi (x) = \left( \frac{1}{\alpha} - 1 \right) \log \left( \frac{1+x}{2} \right), \qquad 
\Phi (x) = \log \left( \frac{1+Ax}{1+x} \right).
$$
Let us assume additionally that $\alpha>\frac12$. Performing very similar to the previous example and somewhat lengthy computations (we omit them for brevity), one arrives to the following formula for the correction measure: 
\begin{multline}
\label{eq:Aztec-correction}
- \mathbf{1}_{\alpha > \frac{(A+1)^2}{2(A^2+1)}} \delta \left( \frac{-1 + 2\alpha A -A}{\alpha ( A^2 -1 )} \right) - \frac12 \mathbf{1}_{\alpha = \frac{(A+1)^2}{2(A^2+1)}} \delta \left( \frac{-1 + 2\alpha A -A}{\alpha ( A^2 -1 )} \right) \\ + \left( \alpha+\frac{-2 \alpha+1}{4y}-\frac{\alpha (2 \alpha-1)}{4( \alpha y-1)}+\frac{\alpha (2 A^2 \alpha-A^2-2A+2 \alpha-1)}{2(-\alpha y+A^2 \alpha y-2 \alpha A+1+A)} \right) \frac{\mathbf{1}_{1-(2 \alpha-1)^2-(2 \alpha y-1)^2 >0}}{\pi \sqrt{1-(2 \alpha-1)^2-(2 \alpha y-1)^2}} dy.
\end{multline}

As discussed in Section \ref{sec:intro}, the result explains geometric properties of a random tiling visible on Figure \ref{fig:til}, and also it is related to another setup known as the Tangent Method for models of statistical mechanics. Formula \eqref{intro-eq:Aztec-LLN} can be obtained from the formula for moments in a standard way. Note that in our assumptions (in particular, $\alpha >1/2$) outside of the arctic curve we have (the maximal possible) density 1 of particles encoded by a signature; therefore, the outlier is parameterized by a delta measure with a negative sign. 



\end{example}

\begin{remark}
All general theorems in the current paper provide formulas for moments of limiting (correction) measures. In order to find their density, in all examples we perform direct calculations for finding the integral representation for moments. In this remark we outline an another approach -- via Stieltjes transforms -- which is comparable in complexity and can be more convenient in some other setups. We will focus on the statement of Theorem \ref{Maria!!}, a similar computation can be performed for other theorems from this paper as well. 

In notations of Theorem \ref{Maria!!}, using the binomial theorem, one gets 
$$
\mathbf{m}'_k = \frac{1}{2 \pi \mathbf{i}} \oint \left( \Phi'(x) - \frac{1}{2x} \right) \mathcal{F} (x)^k dx,
$$
where $\mathcal{F} (x) := x \Psi'(x) + \frac{x}{x-1}$, and the contour of integration encircles 
$x=1$ and no other poles of the integrand. Therefore, we have
\begin{equation}
\label{eq:Stielt}
- \sum_{i=0}^{\infty} \mathbf{m}'_k y^{k+1} = - \frac{1}{2 \pi \mathbf{i}} \oint \frac{ \Phi'(x) - \frac{1}{2x} }{y} \frac{1}{1-\frac{\mathcal{F}(x)}{y}} dx.
\end{equation}
The equation $\mathcal{F}(x) = y$ has a unique solution $x_+ (y)$ for large $|y|$ (this follows from the formal power series technique). The solution can be analytically continued to the upper half-plane. In examples, this continuation often can be given explicitly, and it is also continuous on a real line (though we do not know how to justify these steps in full generality). In such a case, the contour integral in \eqref{eq:Stielt} can be calculated via computing the residue at the single pole inside the contour of integration for large $|y|$, and then analytically continued.
One obtains the following formula for the Stieltjes transform of the correction measure:
$$
St_{\mu} (y) = \frac{\Phi'(x_+ (y)) - \frac{1}{2 x_+ (y)}}{ \mathcal{F}' (x_+(y))}, \qquad \Im (y)>0.
$$
Thus, the formula for the density of the correction measure is given by
$$
d(y) = \frac{1}{\pi} \Im \left( \frac{\Phi'(x_+ (y)) - \frac{1}{2 x_+ (y)}}{ \mathcal{F}' (x_+(y))} \right), \qquad y \in \mathbb{R}, 
$$
while outliers can be found from the condition $\Phi'(x_+ (y)) = \infty$.

\end{remark}

\section{Second order infinitesimal free probability}
\label{sec:second-order}

In this section we analyze the third leading term in the expectation of the empirical measure of random matrices via its Harish-Chandra transform. We also connect it with notions coming from second order infinitesimal free probability. 

The main result of the section is the following theorem. 

\begin{theorem}
\label{th:sec-order-main}
  Let $A$ be a random Hermitian matrix of size $N$ and assume that for every
  finite $r$ one has
  \begin{equation} 
  \lim_{N \rightarrow \infty} N^2 \left( \frac{1}{N} \log \mathbb{E}[H
     \text{} C (x_1, \ldots, x_r, 0^{N - r} ; \lambda_1 (A), \ldots, \lambda_N
     (A))] - \sum_{i = 1}^r \Psi (x_i) - \frac{1}{N} \sum_{i = 1}^r \Phi (x_i)
     \right) = \sum_{i = 1}^r \Tau (x_i), \label{audikos}
     \end{equation} 
  where $\Psi, \Phi, \Tau$ are smooth functions in a complex neighborhood of $0$ and
  the above convergence is uniform in a complex neighborhood of $0^r$. Then the $1 /
  N^2$ correction of the average empirical distribution of $N^{- 1} \cdot A$
  is given by
  \[ \lim_{N \rightarrow \infty} N^2 \left( \frac{1}{N^{k + 1}}
     \mathbb{E}[\tmop{Tr} (A^k)] -\tmmathbf{\mu}_k - \frac{1}{N}
     \tmmathbf{\mu}'_k \right) =\tmmathbf{\mu}''_k
     +\tmmathbf{\nu}''_k, \]
  where $(\tmmathbf{\mu}_k)_{k \in \mathbb{N}}, (\tmmathbf{\mu}'_k)_{k \in
  \mathbb{N}}$ are given by (\ref{topg}),(\ref{G}) respectively. The sequences
  $(\tmmathbf{\mu}''_k)_{k \in \mathbb{N}}, (\tmmathbf{\nu}''_k)_{k \in
  \mathbb{N}}$ are given by
  \[ \tmmathbf{\mu}''_k = \sum_{m = 0}^{k - 1} \frac{k!}{m! (m + 1) ! (k -
     m - 1) !}  \left. \frac{\mathd^m}{\mathd x^m} ((\Psi' (x))^{k - m - 1}
     \Tau' (x)) \right|_{x = 0} \]
  \begin{equation}
    \text{{\hspace{10em}}} + \frac{1}{2} \sum_{m = 0}^{k - 2} \frac{k!}{m! (m
    + 1) ! (k - m - 2) !}  \left. \frac{\mathd^m}{\mathd x^m} ((\Psi'
    (x))^{k - m - 2} (\Phi' (x))^2) \right|_{x = 0} \label{Ss}
  \end{equation}
  and
  \[ \tmmathbf{\nu}''_k = \frac{1}{24} \sum_{m = 0}^{k - 3} \frac{k!}{m! (m
     + 1) ! (k - m - 3) !}  \left. \frac{\mathd^m}{\mathd x^m} ((\Psi'
     (x))^{k - m - 3} \Psi''' (x)) \right|_{x = 0} \]
  \[ \text{{\hspace{4em}}} + \frac{1}{12} \sum_{m = 0}^{k - 3} \frac{k!}{m! (m
     + 2) ! (k - m - 3) !}  \left. \frac{\mathd^m}{\mathd x^m} ((\Psi'
     (x))^{k - m - 3} \Psi''' (x)) \right|_{x = 0} \]
  \begin{equation}
    \text{{\hspace{10em}}} + \frac{1}{12} \sum_{m = 0}^{k - 4} \frac{k!}{m! (m
    + 2) ! (k - m - 4) !}  \left. \frac{\mathd^m}{\mathd x^m} ((\Psi'
    (x))^{k - m - 4} (\Psi'' (x))^2) \right|_{x = 0} . \label{sagan}
  \end{equation}
\end{theorem}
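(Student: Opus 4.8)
The plan is to follow the template of the proof of Theorem \ref{Einaiiagapi}, pushing the asymptotic analysis of the operator $\mathcal{D}_k$ one order of $N$ further. With $f_N$ the averaged Harish-Chandra transform, Proposition \ref{prop:1} gives $\mathbb{E}[\tmop{Tr}(A^k)] = \lim_{\varepsilon\to0}\mathcal{D}_k f_N|_{x_i=i\varepsilon}$, and from Theorem \ref{SouvlakiA} we already have the finite expansion $\tfrac{1}{N^{k+1}}\mathbb{E}[\tmop{Tr}(A^k)] = \sum_{i=0}^k \tfrac{1}{N^i}M_{i,N}$ with each $(M_{i,N})_N$ convergent and $M_{0,N}$ the collapsed diagonal sum $\sum_{m=0}^k\frac{k!}{m!(m+1)!(k-m)!}\partial_1^m((\partial_1(\frac1N\log f_N))^{k-m})(0^N)$. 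First I would regroup
\[ N^2\Big(\tfrac{1}{N^{k+1}}\mathbb{E}[\tmop{Tr}(A^k)] - \tmmathbf{\mu}_k - \tfrac1N\tmmathbf{\mu}'_k\Big) = N^2\Big(M_{0,N} - \tmmathbf{\mu}_k - \tfrac1N\tmmathbf{\mu}'_k\Big) + N\,M_{1,N} + M_{2,N} + o(1), \]
so that the statement reduces to three limits: the $1/N^2$-coefficient of $M_{0,N}$, the quantity $\lim_N N M_{1,N}$ (recall $\lim_N M_{1,N}=0$ was already shown in Theorem \ref{Einaiiagapi}), and $\lim_N M_{2,N}$.

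For the first limit I would use that assumption \eqref{audikos} together with uniform convergence on a complex polydisc yields, via Cauchy estimates, the pointwise expansion $\partial_1^{l+1}(\tfrac1N\log f_N)(0^N) = \Psi^{(l+1)}(0) + \tfrac1N\Phi^{(l+1)}(0) + \tfrac1{N^2}\Tau^{(l+1)}(0) + o(N^{-2})$, and moreover that every mixed partial $\partial_i\partial_j(\tfrac1N\log f_N)(0^N)$ with $i\neq j$ is $o(N^{-2})$, since all three limiting functions are separable. Thus only the collapsed diagonal derivatives survive in $M_{0,N}$ up to order $N^{-2}$, and a Leibniz (multinomial) expansion of $\partial_1^m((\partial_1(\tfrac1N\log f_N))^{k-m})$ splits the $1/N^2$ term into a part linear in $\Tau$ and a part quadratic in $\Phi$. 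By the same combinatorial bookkeeping as in Lemma \ref{lem:mom-cum} and Lemma \ref{24} (substituting one factor $\Psi'$ by $\Tau'$ in the first case, and choosing the two positions carrying $\Phi'$ in the second), these assemble exactly into the two lines of $\tmmathbf{\mu}''_k$ in \eqref{Ss}.

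The purely $\Psi$-dependent part $\tmmathbf{\nu}''_k$ must then come entirely from $\lim_N N M_{1,N}$ and $\lim_N M_{2,N}$. Here I would track the two mechanisms producing powers of $N$ below the top one: the chain-rule multinomials in \eqref{kefi}, which feed in the higher derivatives $\partial_1^2,\partial_1^3$ of $\tfrac1N\log f_N$ (the families $l_2=1$, $l_3=1$, $l_2=2$, giving $\Psi''$, $\Psi'''$, $(\Psi'')^2$), and the symmetrization factor $m!\binom{N}{m+1}=\tfrac{N^{m+1}}{m+1}\big(1-\tfrac1N\sum_{i=1}^m i+\tfrac1{N^2}(\cdots)-\cdots\big)$, whose subleading orders contribute the Faulhaber sums $\sum_{i=1}^m i$ and the elementary symmetric combinations of $\{0,\dots,m\}$. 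The crucial point is that the contributions to $\lim_N N M_{1,N}$ which are linear in $\Phi$ are precisely the linearization, in the direction $\Phi$, of the functional $\Psi\mapsto\lim_N M_{1,N}(\Psi)$, which vanishes identically by the identity established in Theorem \ref{Einaiiagapi}; hence no $\Phi$ survives, and only $\Psi$-terms remain. Collecting the $\Psi'''$ contributions (from the $l_3=1$ family and from the $\sum_i i$ correction of the $l_2=1$ family) and the $(\Psi'')^2$ contributions (from $l_2=2$ and from the sub-subleading correction of the $l_1=k-m$ family) should then yield the coefficients $\tfrac1{24}$, $\tfrac1{12}$, $\tfrac1{12}$ of \eqref{sagan}.

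The main obstacle is exactly this last bookkeeping. Unlike the first-order correction, at order $1/N^2$ several independent combinatorial sources --- the degree-$(m+1)$ polynomial $\binom{N}{m+1}$ expanded to two subleading orders, the chain-rule multinomials carrying $\Psi''$ and $\Psi'''$, and the reindexing of the symmetrization over $b_0,\dots,b_m$ --- all feed the same monomials $(\Psi')^{k-m-3}\Psi'''$ and $(\Psi')^{k-m-4}(\Psi'')^2$, and I expect the delicate step to be verifying that their rational coefficients combine into the precise values of \eqref{sagan}, together with confirming that the analogues of the order-$N^{k}$ and order-$N^{k+1}$ pure-$\Psi$ remainders vanish so that $N M_{1,N}$ indeed converges. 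The identification of the $(\Phi')^2$ line of \eqref{Ss} with second order infinitesimal freeness would then be recorded separately, as indicated before the statement.
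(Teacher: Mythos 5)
Your plan coincides with the paper's own proof: the same decomposition of the left-hand side into $N^2\bigl(M_{0,N}-\tmmathbf{\mu}_k-\tfrac1N\tmmathbf{\mu}'_k\bigr)+N\,M_{1,N}+M_{2,N}$, the same order-$N^{-2}$ Taylor/Leibniz computation of the collapsed diagonal term yielding the $\Tau'$-linear and $(\Phi')^2$-quadratic pieces of $\tmmathbf{\mu}''_k$, and the same identification of $\tmmathbf{\nu}''_k$ with the coefficient of $N^{k-1}$ arising from the subleading terms of $\binom{N}{m+1}$ (the sums $\sum_i i$ and $\tfrac12\sum_{i\neq j}ij$) combined with the chain-rule families $l_2=1$, $l_2=2$, $l_3=1$. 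The one point to sharpen is that $N\,M_{1,N}\to 0$ outright: the two leading finite-$N$ sums constituting $M_{1,N}$ cancel by an exact algebraic identity valid for any smooth function (not merely in the limit), so neither a $\Phi$-linear nor a pure-$\Psi$ contribution survives from this term, and all of $\tmmathbf{\nu}''_k$ is supplied by $\lim_N M_{2,N}$.
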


\begin{proof}
  For notation simplicity we write $f (x_1, \ldots, x_N) =\mathbb{E} [H
  \text{} C (x_1, \ldots, x_N ; \lambda_1 (A), \ldots, \lambda_N (A))]$. Note
  that \eqref{audikos} implies that the relations (\ref{xereis}),(\ref{ola})
  are satisfied. Thus, the empirical distribution of $N^{- 1} \cdot A$
  converges to a probability measure with moments
  $(\tmmathbf{\mu}_k)_{k \in \mathbb{N}}$ and its $1 / N$ correction is given
  by $(\tmmathbf{\mu}'_k)_{k \in \mathbb{N}}$. We have also shown an
  expansion
  \begin{equation}
    \frac{1}{N^{k + 1}} \mathbb{E} [\tmop{Tr} (A^k)] = \sum_{i = 0}^k
    \frac{1}{N^i} M_{i, N} (\Psi), \label{Drouvis}
  \end{equation}
  where $N^2 M_{0, N} (\Psi)$ is a linear combination of terms of the form
  \begin{equation}
    N^2 \partial_{i_m} \ldots \partial_{i_1} \left( \partial_{i_0} \left(
    \frac{1}{N} \log \text{} f \right) \right)^{k - m} (0^N) \label{gousaki} .
  \end{equation}
  Due to (\ref{audikos}), for $|\{i_0, \ldots, i_m \}| > 1$, (\ref{gousaki})
  converges to $0$ as $N \rightarrow \infty$, which implies that
  \[ N^2 \left( M_{0, N} (\Psi) -\tmmathbf{\mu}_k - \frac{1}{N}
     \tmmathbf{\mu}'_k \right) = \omicron (1) + N^2 \sum_{m = 0}^{k - 1}
     \frac{k!}{m! (m + 1) ! (k - m) !} \times \]
  \[ \left( \left. \partial_1^m \left( \partial_1 \left( \frac{1}{N} \log
     \text{} f \right) \right)^{k - m} (0^N) - \frac{\mathd^m}{\mathd x^m}
     ((\Psi' (x))^{k - m}) \right|_{x = 0} - \frac{k - m}{N}  \left. 
     \frac{\mathd^m}{\mathd x^m} ((\Psi' (x))^{k - m - 1} \Phi' (x))
     \right|_{x = 0} \right) . \]
  For fixed $m \in \{0, \ldots, k - 1\}$, by Leibniz rule, we get
  \[ N^2 \left( \partial_1^m \left( \partial_1 \left( \frac{1}{N} \log \text{}
     f \right) \right)^{k - m} (0^N) - \left. \frac{\mathd^m}{\mathd x^m}
     ((\Psi' (x))^{k - m}) \right|_{x = 0} - \frac{k - m}{N}  \left.
     \frac{\mathd^m}{\mathd x^m} ((\Psi' (x))^{k - m - 1} \Phi' (x))
     \right|_{x = 0} \right) \]
   \[
   = \sum_{l_1 + \cdots + l_{k - m} = m} \frac{m!}{l_1 ! \ldots l_{k - m} !}
     \] 
     \begin{equation}
    \times \left( N^2 \prod_{i = 1}^{k - m} \partial_1^{l_i + 1} \left( \frac{1}{N}
    \log \text{} f \right) (0^N) - N^2 \prod_{i = 1}^{k - m} \Psi^{(l_i + 1)}
    (0) - N \sum_{i = 1}^{k - m} \Phi^{(l_i + 1)} (0) \prod_{\underset{j \neq
    i}{j = 1}}^{k - m} \Psi^{(l_j + 1)} (0) \right) \label{9}
  \end{equation}
  For fixed indices $l_1, \ldots, l_{k - m}$, due to (\ref{audikos}), for
  every $i \in \{1, \ldots, k - m\}$, we have 
  \[ \lim_{N \rightarrow \infty} N^2 \left. \partial_1^{l_i + 1} \left(
     \frac{1}{N} \log \text{} f \right) (0^N) - N^2 \Psi^{(l_i + 1)} (0) - N
     \Phi^{(l_i + 1)} (0) \right) = \Tau^{(l_i + 1)} (0), \]
  and by induction on $k - m$ we obtain
  \[ \lim_{N \rightarrow \infty} \left( N^2 \prod_{i = 1}^{k - m}
     \partial_1^{l_i + 1} \left( \frac{1}{N} \log \text{} f \right) (0^N) -
     N^2 \prod_{i = 1}^{k - m} \Psi^{(l_i + 1)} (0) - N \sum_{i = 1}^{k - m}
     \Phi^{(l_i + 1)} (0) \prod_{\underset{j \neq i}{j = 1}}^{k - m}
     \Psi^{(l_j + 1)} (0) \right) \]
  \[ = \sum_{i = 1}^{k - m} \Tau^{(l_i + 1)} (0) \prod_{\underset{j \neq i}{j
     = 1}}^{k - m} \Psi^{(l_j + 1)} (0) + \frac{1}{2} \sum_{\underset{i \neq
     j}{i, j = 1}}^{k - m} \Phi^{(l_i + 1)} (0) \Phi^{(l_j + 1)} (0)
     \prod_{\underset{r \neq i, j}{r = 1}}^{k - m} \Psi^{(l_r + 1)} (0) . \]
  Therefore, as $N \rightarrow \infty$, (\ref{9}) converges to
  \[ (k - m) \left. \frac{\mathd^m}{\mathd x^m} ((\Psi' (x))^{k - m - 1}
     \Tau' (x)) \right|_{x = 0} + \frac{(k - m) \text{} (k - m - 1)}{2} 
     \left. \frac{\mathd^m}{\mathd x^m} ((\Psi' (x))^{k - m - 2} (\Phi'
     (x))^2) \right|_{x = 0}, \]
  which implies that
  \[ \lim_{N \rightarrow \infty} N^2 \left( M_{0, N} (\Psi) -\tmmathbf{\mu}_k
     - \frac{1}{N} \tmmathbf{\mu}'_k \right) =\tmmathbf{\mu}''_k . \]
  Thus, the claim holds if $\lim_{N \rightarrow \infty} (N \text{}
  \cdot M_{1, N} (\Psi) + M_{2, N} (\Psi)) =\tmmathbf{\nu}''_k$.
  The computation of $M_{1, N} (\Psi)$ was presented in the proof of Theorem \ref{Einaiiagapi}. 
  Taking also into account the assumption (\ref{audikos}), we have
  \[ N \cdot M_{1, N} (\Psi) = \omicron (1) + \frac{N}{2} \sum_{m = 0}^{k - 2}
     \frac{k!}{m! (m + 1) ! (k - m - 2) !} \partial_1^m \left( \left(
     \partial_1 \left( \frac{1}{N} \log \text{} f \right) \right)^{k - m - 2}
     \partial_1^2 \left( \frac{1}{N} \log \text{} f \right) \right) (0^N) \]
  \[ - \frac{N}{2} \sum_{m = 1}^{k - 1} \frac{k!}{(m - 1) !m! (k - m) !}
     \partial_1^m \left( \partial_1 \left( \frac{1}{N} \log \text{} f \right)
     \right)^{k - m} (0^N) = \omicron (1). \]
  Therefore, $\lim_{N \rightarrow \infty} N \cdot M_{1, N} (\Psi) = 0$. In order to
  determine $M_{2, N} (\Psi)$, note that relation
  (\ref{xereis}) implies that for every $k \in \mathbb{N}$, $\mathbb{E}
  [\tmop{Tr} (A^k)]$ can be written as a linear combination of terms
  \begin{equation}
    N^{l_1 + \cdots + l_{k - m}} \binom{N}{m + 1} c_N (m), \label{MAL}
  \end{equation}
  where $m \in \{0, \ldots, k - 1\}$, $l_1 + 2 l_2 + \cdots + (k - m) l_{k -
  m} = k - m$, and $c_N (m)$'s are linear combinations of derivatives of the form
  (\ref{antk}). Then, $M_{2, N} (\Psi)$ is given by the sum of the summands of
  terms (\ref{MAL}) with factor $N^{k - 1}$. There are three cases that we have to
  consider regarding $l_1, \ldots, l_{k-m}$ in order to compute these summands, since a term
  \[ N^{l_1 + \cdots + l_{k - m}} \binom{N}{m + 1} \]
  should be a polynomial in $N$ of degree at least $k - 1$. Thus, we must have $l_1
  + \cdots + l_{k - m} \in \{k - m, k - m - 1, k - m - 2\}$, since a
  binomial coefficient is a polynomial in $N$ of degree $m + 1$. For each
  case, we are interested in the coefficient of $N^{k - 1}$. For $l_1 + \cdots
  + l_{k - m} = k - m - 1$, i.e. $l_1 = k - m - 2$, $l_2 = 1$ and $l_3 =
  \cdots = l_{k - m} = 0$ the summands of the terms (\ref{MAL}) with factor
  $N^{k - 1}$ are
  \[ - N^{k - 1} \left( \sum_{i = 1}^m i \right) \left( \frac{k!}{2 m! (m + 1)
     ! (k - m - 2) !} \partial_1^m \left( \left( \partial_1 \left( \frac{1}{N}
     \log \text{} f \right) \right)^{k - m - 2} \partial_1^2 \left(
     \frac{1}{N} \log \text{} f \right) \right) (0^N) + \omicron (1) \right)
  \]
  \[ = - N^{k - 1} \left( \frac{k!}{4 (m - 1) !m! (k - m - 2) !} \partial_1^m
     \left( \left( \partial_1 \left( \frac{1}{N} \log \text{} f \right)
     \right)^{k - m - 2} \partial_1^2 \left( \frac{1}{N} \log \text{} f
     \right) \right) (0^N) + \omicron (1) \right), \]
  where $m \in \{1, \ldots, k - 2\}$. Thus, the contribution of these summands
  to $\lim_{N \rightarrow \infty} M_{2, N} (\Psi)$ will be
  \begin{equation}
    - \sum_{m = 0}^{k - 3} \frac{k!}{4 m! (m + 1) ! (k - m - 3) !}  \left.
    \frac{\mathd^{m + 1}}{\mathd x^{m + 1}} ((\Psi' (x))^{k - m - 3}
    \Psi'' (x)) \right|_{x = 0} \label{2222} .
  \end{equation}
  For the case $l_1 + \cdots + l_{k - m} = k - m - 2$ we have that $l_1 = k -
  m - 4$, $l_2 = 2$, $l_3 = \cdots = l_{k - m} = 0$ or $l_1 = k - m - 3$, $l_3
  = 1$ and $l_2 = l_4 = \cdots = l_{k - m} = 0$. For $l_1 = k - m - 4$, $l_2 =
  2$ and $l_3 = \cdots = l_{k - m} = 0$, the summands of the terms (\ref{MAL})
  with factor $N^{k - 1}$ are
  \[ N^{k - 1} \left( \frac{k!}{8 m! (m + 1) ! (k - m - 4) !} \partial_1^m
     \left( \left( \partial_1 \left( \frac{1}{N} \log \text{} f \right)
     \right)^{k - m - 4} \left( \partial_1^2 \left( \frac{1}{N} \log \text{} f
     \right) \right)^2 \right) (0^N) + \omicron (1) \right), \]
  where $m \in \{0, \ldots, k - 4\}$. On the other hand, for $l_1 = k - m -
  3$, $l_3 = 1$ and $l_2 = l_4 = \cdots = l_{k - m} = 0$, the summands of the
  terms (\ref{MAL}) with factor $N^{k - 1}$ are
  \[ N^{k - 1} \left( \frac{k!}{6 m! (m + 1) ! (k - m - 3) !} \partial_1^m
     \left( \left( \partial_1 \left( \frac{1}{N} \log \text{} f \right)
     \right)^{k - m - 3} \partial_1^3 \left( \frac{1}{N} \log \text{} f
     \right) \right) (0^N) + \omicron (1) \right), \]
  where $m \in \{0, \ldots, k - 3\}$. Hence, the case $l_1 + \cdots + l_{k -
  m} = k - m - 2$ contributes to $\lim_{N \rightarrow \infty} M_{2, N} (\Psi)$
  the expression
  \[ \sum_{m = 0}^{k - 4} \frac{k!}{8 m! (m + 1) ! (k - m - 4) !}  \left.
     \frac{\mathd^m}{\mathd x^m} ((\Psi' (x))^{k - m - 4} (\Psi''
     (x))^2) \right|_{x = 0} \]
  \begin{equation}
    \text{{\hspace{10em}}} + \sum_{m = 0}^{k - 3} \frac{k!}{6 m! (m + 1) ! (k
    - m - 3) !}  \left. \frac{\mathd^m}{\mathd x^m} ((\Psi' (x))^{k - m -
    3} \Psi''' (x)) \right|_{x = 0} \label{333} .
  \end{equation}
  For the last case $l_1 + \cdots +
  l_{k - m} = k - m$, we have $l_1 = k - m$ and $l_2 = \cdots = l_{k - m} =
  0$. Then, the desired summands will be
  \[ N^{k - 1} \left( \frac{1}{2} \sum_{\underset{i \neq j}{i, j = 0}}^m i
     \text{} j \right) \left( \frac{k!}{m! (m + 1) ! (k - m) !} \partial_1^m
     \left( \partial_1 \left( \frac{1}{N} \log \text{} f \right) \right)^{k -
     m} (0^N) + \omicron (1) \right) \]
  \[ \text{{\hspace{4em}}} = N^{k - 1} \frac{k!}{8 (m - 2) ! (m - 1) ! (k - m)
     !} \partial_1^m \left( \partial_1 \left( \frac{1}{N} \log \text{} f
     \right) \right)^{k - m} (0^N) \]
  \[ \text{{\hspace{8em}}} + N^{k - 1} \frac{k!}{12 (m - 2) !m! (k - m) !}
     \partial_1^m \left( \partial_1 \left( \frac{1}{N} \log \text{} f \right)
     \right)^{k - m} (0^N) + N^{k - 1} \omicron (1), \]
  where $m \in \{2, \ldots, k - 1\}$. Thus, the contribution of these summands
  to $\lim_{N \rightarrow \infty} M_{2, N} (\Psi)$ will be equal to
  \begin{equation}
    \sum_{m = 0}^{k - 3} \left( \frac{k!}{8 m! (m + 1) ! (k - m - 2) !} +
    \frac{k!}{12 m! (m + 2) ! (k - m - 2) !} \right)  \left. \frac{\mathd^{m +
    2}}{\mathd x^{m + 2}} ((\Psi' (x))^{k - m - 2}) \right|_{x = 0}. \label{l}
  \end{equation}
  Since $\lim_{N \rightarrow \infty} M_{2, N} (\Psi)$ is equal to the sum of
  (\ref{2222}),(\ref{333}) and (\ref{l}), a straightforward computation shows
  that the limit is equal to $\tmmathbf{\nu}''_k$. This proves the claim.
\end{proof}

We would like to better understand the terms
$\tmmathbf{\mu}''_k, \tmmathbf{\nu}''_k$ from  Theorem \ref{th:sec-order-main}. 
In the proof of
Theorem \ref{th:sec-order-main} we showed that $\tmmathbf{\nu}''_k = \lim_{N \rightarrow \infty} M_{2, N} (\Psi)$, and that $\tmmathbf{\mu}''_k$ contributes
to the $1 / N^2$ correction of $A / N$ due to the relation
\[ \tmmathbf{\mu}''_k = \lim_{N \rightarrow \infty} N^2 \left( M_{0, N} (\Psi)
   -\tmmathbf{\mu}_k - \frac{1}{N} \tmmathbf{\mu}'_k \right) . \]
However, we would like to provide also a different explanation of the term $\tmmathbf{\mu}''_k$,
which makes the connection with second order infinitesimal free probability.
This connection is a generalization of the fact that the formula (\ref{G}) for the
$1 / N$ correction of $A / N$ gives an explicit description for the
infinitesimal free cumulants of the infinitesimal non-commutative probability
space $(\mathbb{C} \langle \mathbf{x} \rangle, \varphi, \varphi')$, where
for every $P \in \mathbb{C} \langle \mathbf{x} \rangle$
\[ \varphi (P) = \lim_{N \rightarrow \infty} \frac{1}{N} \mathbb{E} [\tmop{Tr}
   \text{} P (N^{- 1} \cdot A)] \text{\quad and\quad} \varphi' (P) = \lim_{N
   \rightarrow \infty} N \left( \frac{1}{N} \mathbb{E}[\tmop{Tr} \text{} P
   (N^{- 1} \cdot A)] - \varphi (P) \right) . \]
By the above equalities, $\mu_k = \varphi (\mathbf{x}^k)$ and
$\tmmathbf{\mu}'_k = \varphi' (\mathbf{x}^k)$, for every $k \in \mathbb{N}$.
The role of functions $\Psi, \Phi$ in the computation of infinitesimal free
cumulants can be seen from (\ref{chen}) and (\ref{alex}). Thus, thinking of
$\tmmathbf{\mu}_k, \tmmathbf{\mu}'_k$ as moments, we have that the
relations (\ref{topg}),(\ref{G}) describe moment-cumulant relations for an infinitesimal non-commutative probability space of order 1. The fact that $\tmmathbf{\mu}''_k$ is determined by (\ref{Ss}) allows us to go a step further and see the relations
(\ref{topg}),(\ref{G}),(\ref{Ss}) as moment-cumulant relations, in the sense
of Definition \ref{def:inf-free-mom-cum}, for an infinitesimal non-commutative probability space of order
2. This is shown by the next lemma.

\begin{lemma}
\label{lem:mu-doublePrime}
  Let $\Psi, \Phi, \Tau$ be infinitely differentiable functions at $0$ and
  consider the sequences $(c_n)_{n \in \mathbb{N}}, (c'_n)_{n \in \mathbb{N}},
  (c''_n)_{n \in \mathbb{N}}$, where for every $n \in \mathbb{N}$
  \[ c_n \assign \frac{\Psi^{(n)} (0)}{(n - 1) !}, \text{\quad} c'_n \assign
     \frac{\Phi^{(n)} (0)}{(n - 1) !} \text{\quad and\quad} c''_n \assign 2
     \frac{\Tau^{(n)} (0)}{(n - 1) !} . \]
  Then for the sequence $(\tmmathbf{\mu}''_k)_{k \in \mathbb{N}}$ defined by
  (\ref{Ss}), for every $k \in \mathbb{N}$,  we have
  \[ \tmmathbf{\mu}''_k = \frac{1}{2} \left( \sum_{\pi \in \tmop{NC} (k)}
     \sum_{V \in \pi} c''_{|V|} \prod_{\underset{W \neq V}{W \in \pi}} c_{|W|}
     + \sum_{\pi \in \tmop{NC} (k)} \sum_{\underset{W \neq V}{V, W \in \pi}}
     c'_{|V|} c'_{|W|} \prod_{\underset{U \neq V, W}{U \in \pi}} c_{|U|}
     \right) . \]
\end{lemma}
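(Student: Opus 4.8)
The plan is to avoid a direct combinatorial enumeration and instead obtain both pieces of $\tmmathbf{\mu}''_k$ by a \emph{polarization} of the moment--cumulant identity already available at leading order. Recall that the proof of Lemma~\ref{lem:mom-cum} establishes, for \emph{any} function smooth at $0$, the purely algebraic identity \eqref{korifi}: if $\tmmathbf{\mu}_n$ is defined by \eqref{topg} and $c_n := \Psi^{(n)}(0)/(n-1)!$, then $\tmmathbf{\mu}_n = \sum_{\pi \in \tmop{NC}(n)} \prod_{V \in \pi} c_{|V|}$. The idea is to feed the one-parameter family $\Psi_s := \Psi + s\,\Phi$ into this identity and differentiate in $s$ at $s=0$, and to treat the $\Tau'$ summand of \eqref{Ss} by a direct appeal to Lemma~\ref{24}.

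First I would treat the $(\Phi')^2$ summand of \eqref{Ss}. Let $\tmmathbf{\mu}_k(s)$ denote the quantity \eqref{topg} with $\Psi$ replaced by $\Psi_s$, so that $c_n(s) := \Psi_s^{(n)}(0)/(n-1)! = c_n + s\,c'_n$ is \emph{linear} in $s$. On the analytic side, since $\Psi_s' = \Psi' + s\,\Phi'$ we have $\frac{\mathd^2}{\mathd s^2}(\Psi_s'(x))^{k-m}\big|_{s=0} = (k-m)(k-m-1)(\Psi'(x))^{k-m-2}(\Phi'(x))^2$, and using $(k-m)(k-m-1)/(k-m)! = 1/(k-m-2)!$ (the factor $m=k-1$ drops out) one gets
\[
\tfrac12\,\frac{\mathd^2}{\mathd s^2}\tmmathbf{\mu}_k(s)\Big|_{s=0}
= \frac{1}{2}\sum_{m=0}^{k-2}\frac{k!}{m!\,(m+1)!\,(k-m-2)!}\left.\frac{\mathd^m}{\mathd x^m}\big((\Psi'(x))^{k-m-2}(\Phi'(x))^2\big)\right|_{x=0},
\]
which is exactly the second summand of \eqref{Ss}. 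On the combinatorial side, by \eqref{korifi} applied to $\Psi_s$ we have $\tmmathbf{\mu}_k(s) = \sum_{\pi \in \tmop{NC}(k)} \prod_{V\in\pi} c_{|V|}(s)$, and because each factor $c_{|V|}(s)$ is linear in $s$ (so its second derivative vanishes), the product rule gives
\[
\tfrac12\,\frac{\mathd^2}{\mathd s^2}\prod_{V\in\pi} c_{|V|}(s)\Big|_{s=0}
= \tfrac12\sum_{\underset{W\neq V}{V,W\in\pi}} c'_{|V|}\,c'_{|W|}\prod_{\underset{U\neq V,W}{U\in\pi}} c_{|U|},
\]
the sum being over ordered pairs of distinct blocks. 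Summing over $\pi$ identifies the second summand of \eqref{Ss} with the second term of the claim.

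It remains to handle the $\Tau'$ summand of \eqref{Ss}. This summand is literally the left-hand side of \eqref{v} in Lemma~\ref{24} with the function $\Phi$ there replaced by $\Tau$; applying that lemma with the sequence $\Tau^{(n)}(0)/(n-1)! = c''_n/2$ playing the role of $(c'_n)$ yields $\sum_{\pi}\sum_{V\in\pi}\tfrac{c''_{|V|}}{2}\prod_{W\neq V} c_{|W|} = \tfrac12\sum_{\pi}\sum_V c''_{|V|}\prod_{W\neq V} c_{|W|}$, which is the first term of the claim. Adding the two contributions gives the asserted formula for $\tmmathbf{\mu}''_k$.

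The only genuinely delicate bookkeeping is the coefficient matching in the polarization step: one must check that the combinatorial factor $(k-m)(k-m-1)$ produced by differentiating $(\Psi_s')^{k-m}$ twice cancels exactly against the denominator $(k-m)!$ of \eqref{topg} to reproduce the coefficients of \eqref{Ss}, and that the product-rule expansion over \emph{ordered} pairs of blocks is consistent with the ordered reading of the inner sum $\sum_{W\neq V}$ in the statement. Both points are routine once the linearity of $s\mapsto c_n(s)$ is exploited, so I expect no essential obstacle; the main conceptual content is simply the observation that the second-order correction is the second $s$-derivative of the leading-order moment--cumulant identity.
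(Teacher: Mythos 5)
Your proposal is correct, and for the harder half of the lemma it takes a genuinely different route from the paper. Both arguments dispose of the $\Tau'$ summand of \eqref{Ss} in the same way, by invoking Lemma \ref{24} with $\Tau$ in place of $\Phi$ and absorbing the resulting factor $\Tau^{(|V|)}(0)/(|V|-1)! = c''_{|V|}/2$. For the $(\Phi')^2$ summand, however, the paper proves the identity \eqref{thatsmagic} by a direct combinatorial enumeration in the spirit of Lemma \ref{24}: it expands the $x$-derivative by the Leibniz rule, matches $(k-m)$-tuples $(\lambda_1,\dots,\lambda_{k-m})$ with non-crossing partitions having prescribed block-size multiplicities, and carefully separates the cases where the two $\Phi$-factors sit on blocks of equal or unequal size (producing the $r_i(r_i-1)$ and $r_i r_j$ counts). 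You instead obtain \eqref{thatsmagic} by polarization: feed $\Psi_s = \Psi + s\Phi$ into the algebraic identity \eqref{korifi} — whose proof in Lemma \ref{lem:mom-cum} indeed never uses the probability-measure hypothesis, as you correctly observe, and both sides are polynomials in $s$ — and take $\tfrac12 \frac{\mathd^2}{\mathd s^2}\big|_{s=0}$. Your coefficient check $(k-m)(k-m-1)/(k-m)! = 1/(k-m-2)!$ and the vanishing of the $m=k-1$ term are right, and the product rule applied to the linear factors $c_{|V|}(s)$ produces exactly the ordered sum over pairs of distinct blocks that appears in the statement (the paper's enumeration confirms the ordered reading). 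Your route is shorter and makes structurally transparent why a sum over pairs of blocks appears — it is literally the second derivative of the product over blocks — whereas the paper's enumeration is self-contained and is the template that gets reused verbatim for the general higher-order statement in Theorem \ref{th:mom-cum-high}, where the functions $\Psi_0,\dots,\Psi_n$ are distinct and a single one-parameter deformation no longer suffices (though a multi-parameter polarization would).
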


\begin{proof}
  In Lemma \ref{24} we proved, for every $k \in \mathbb{N}$, that
  \[ \sum_{m = 0}^{k - 1} \frac{k!}{m! (m + 1) ! (k - m - 1) !}  \left.
     \frac{\mathd^m}{\mathd x^m} ((\Psi' (x))^{k - m - 1} \Tau' (x))
     \right|_{x = 0} = \frac{1}{2} \sum_{\pi \in \tmop{NC} (k)} \sum_{V \in
     \pi} c''_{|V|} \prod_{\underset{W \neq V}{W \in \pi}} c_{|W|} . \]
  Thus, it suffices to show that
   \begin{equation}   
   \sum_{m = 0}^{k - 2} \frac{k!}{m! (m + 1) ! (k - m - 2) !}  \left.
     \frac{\mathd^m}{\mathd x^m} ((\Psi' (x))^{k - m - 2} (\Phi' (x))^2)
     \right|_{x = 0} = \sum_{\pi \in \tmop{NC} (k)} \sum_{\underset{W
    \neq V}{V, W \in \pi}} c'_{|V|} c'_{|W|} \prod_{\underset{U \neq V, W}{U
    \in \pi}} c_{|U|}. \label{thatsmagic} 
    \end{equation}
  By Leibniz rule, the left hand side of (\ref{thatsmagic}) is equal to
  \begin{equation} 
  \sum_{m = 0}^{k - 2} \sum_{l_{1 + \cdots + l_{k - m} = m}} \frac{k!}{(m +
     1) ! (k - m - 2) ! \prod_{i = 1}^{k - m} l_i !} \Phi^{(l_{k - m} + 1)} (0) \Phi^{(l_{k - m-1} + 1)} (0) \prod_{i = 1}^{k - m - 2} \Psi^{(l_i + 1)} (0).
    \label{jewdogs} 
    \end{equation}
  The way that we related $(k - m)$-tuples $(l_1, \ldots, l_{k - m})$ such
  that $l_1 + \cdots + l_{k - m} = m$ with non-crossing partitions $\pi =
  \{V_1, \ldots, V_{k - m} \} \in \tmop{NC} (k)$ in Lemma \ref{24}
  will lead to the equality (\ref{thatsmagic}). In more detail, for $m \in
  \{0, \ldots, k - 2\}$, consider non-negative integers $\lambda_1,
  \ldots, \lambda_{k - m}$ such that $\lambda_1 + \cdots + \lambda_{k -
  m} = m$. We also assume that $\{\lambda_1, \ldots, \lambda_{k - m} \} =
  \{\nu_1, \ldots, \nu_a \}$, where $\nu_i \neq \nu_j$, for every $i \neq j$, and
  we define for every $i \in \{1, \ldots, a\}$
  \[ r_i \assign |\{j = 1, \ldots, k - m \of \lambda_j = \nu_i \}| . \]
  We would like to calculate the contribution of $(\lambda_1, \ldots,
  \lambda_{k - m})$ to the latter sum of (\ref{jewdogs}). In order to do so, we
  consider two cases. If we assume $l_{k - m} \neq l_{k - m - 1}$, the $(k - m)$-tuple $(\lambda_1, \ldots, \lambda_{k - m})$
  contributes terms of the form
  \[ \frac{k!}{(m + 1) ! (k - m - 2) ! \prod_{t = 1}^{k - m} \lambda_t !}
     \Phi^{(\nu_i + 1)} (0) \Phi^{(\nu_j + 1)} (0) (\Psi^{(\nu_i + 1)} (0))^{r_i - 1}
     (\Psi^{(\nu_j + 1)} (0))^{r_j - 1} \prod_{\underset{t \neq i, j}{t =
     1}}^a (\Psi^{(\nu_t + 1)} (0))^{r_t}, \]
  where $i, j \in \{1, \ldots, a\}$, with $i < j$. The number of times that
  such a term will appear in the latter sum of (\ref{jewdogs}) is equal to the
  number of different ways that we can cover $k - m - 2$ points on a line
  segment with $r_t$ number of elements $\nu_t$ ($t \in \{1, \ldots, a\}
  \backslash \{i, j\}$), $r_i - 1$ number of elements $\nu_i$ and $r_j - 1$
  number of elements $\nu_j$. This is equal to
  \[ C_{i, j} \assign \frac{(k - m - 2) !}{r_1 ! \ldots r_{i - 1} ! (r_i - 1)
     !r_{i + 1} ! \ldots r_{j - 1} ! (r_j - 1) !r_{j + 1} ! \ldots r_a !} . \]
  Thus, we see that the $(k - m)$-tuple $(\lambda_1, \ldots, \lambda_{k - m})$
  contributes to the latter sum of (\ref{jewdogs}) the expression
  \[ 2 \frac{k!}{(m + 1) ! (k - m - 2) ! \prod_{t = 1}^a (\nu_t !)^{r_t}}
     \sum_{\underset{i < j}{i, j = 1}}^a C_{i, j} \Phi^{(\nu_i + 1)} (0)
     \Phi^{(\nu_j + 1)} (0)
   (\Psi^{(\nu_i + 1)} (0))^{r_i - 1}
     (\Psi^{(\nu_j + 1)} (0))^{r_j - 1} \prod_{\underset{t \neq i, j}{t =
     1}}^a (\Psi^{(\nu_t + 1)} (0))^{r_t} . \]
  By the definition of $(c_n)_{n \in \mathbb{N}}, (c'_n)_{n \in \mathbb{N}}$
  this is equal to
  \begin{equation}
    \frac{k!}{(m + 1) ! \prod_{i = 1}^a r_i !} \sum_{\underset{i \neq j}{i, j
    = 1}}^a r_i r_j c'_{\nu_i + 1} c'_{\nu_j + 1} (c_{\nu_i + 1})^{r_i - 1}
    (c_{\nu_j + 1})^{r_j - 1} \prod_{\underset{t \neq i, j}{t = 1}}^a
    (c_{\nu_t + 1})^{r_t} \label{monkeys} .
  \end{equation}
  In the second case, for $l_{k - m} = l_{k - m - 1}$, the contributions of
  $(\lambda_1, \ldots, \lambda_{k - m})$ to the latter sum of (\ref{jewdogs})
  will be of the form
  \[ \frac{k!}{(m + 1) ! (k - m - 2) ! \prod_{i = 1}^{k - m} \lambda_i !}
     (\Phi^{(\nu_j + 1)} (0))^2 (\Psi^{(\nu_j + 1)} (0))^{r_j - 2}
     \prod_{\underset{i \neq j}{i = 1}}^a (\Psi^{(\nu_i + 1)} (0))^{r_i}, \]
  for some $j \in \{1, \ldots, a\}$. The number of times that this term will
  appear in the latter sum of (\ref{jewdogs}) is equal to the number of
  different ways that we can cover $k - m - 2$ points on a line segment with
  $r_i$ number of elements $\nu_i$ ($i \in \{1, \ldots, a\} \backslash \{j\}$)
  and $r_j - 2$ number of elements $\nu_j$, i.e.
  \[ \frac{(k - m - 2) !}{r_1 ! \ldots r_{j - 1} ! (r_j - 2) !r_{j + 1} !
     \ldots r_a !} . \]
  Thus, $(\lambda_1, \ldots, \lambda_{k - m})$ will further contribute to the
  latter sum of (\ref{jewdogs}) the expression
  \[ \frac{k!}{(m + 1) ! \prod_{i = 1}^a (\nu_i !)^{r_i}}  \sum_{i = 1}^a
     \frac{(\Phi^{(\nu_i + 1)} (0))^2 (\Psi^{(\nu_i + 1)} (0))^{r_i - 2}}{r_1
     ! \ldots r_{i - 1} ! (r_i - 2) !r_{i + 1} ! \ldots r_a !}
     \prod_{\underset{j \neq i}{j = 1}}^a (\Psi^{(\nu_j + 1)} (0))^{r_j} \]
  \begin{equation}
    = \frac{k!}{(m + 1) ! \prod_{i = 1}^a r_i !} \sum_{i = 1}^a r_i (r_i - 1)
    (c'_{\nu_i + 1})^2 (c_{\nu_i + 1})^{r_i - 2} \prod_{\underset{j \neq i}{j
    = 1}}^a (c_{\nu_j + 1})^{r_j} \label{Ziz} .
  \end{equation}
  The total contribution of $(\lambda_1, \ldots, \lambda_{k - m})$ is given by
  the sum of (\ref{monkeys}) and (\ref{Ziz}). 
  
  Regarding the right hand
  side of (\ref{thatsmagic}), a partition $\pi = \{V_1, \ldots, V_{k - m} \}
  \in \tmop{NC} (k)$ which has $r_1$ blocks with $\nu_1 + 1$
  elements,{\textdots}, $r_a$ blocks with $\nu_a + 1$ elements, contributes
  \[ \sum_{\underset{i \neq j}{i, j = 1}}^{k - m} c'_{|V_i |} c'_{|V_j |}
     \prod_{\underset{t \neq i, j}{t = 1}}^{k - m} c_{|V_t |} =
     \sum_{\underset{i \neq j \text{ and } |V_i | = |V_j |}{i, j = 1}}^{k - m}
     c'_{|V_i |} c'_{|V_j |} \prod_{\underset{t \neq i, j}{t = 1}}^{k - m}
     c_{|V_t |} + \sum_{\underset{|V_i | \neq |V_j |}{i, j = 1}}^{k - m}
     c'_{|V_i |} c'_{|V_j |} \prod_{\underset{t \neq i, j}{t = 1}}^{k - m}
     c_{|V_t |} \]
  \[ = \sum_{i = 1}^a r_i (r_i - 1) (c'_{\nu_i + 1})^2 (c_{\nu_i + 1})^{r_i -
     2} \prod_{\underset{j \neq i}{j = 1}}^a (c_{\nu_j + 1})^{r_j}
   + \sum_{\underset{i \neq j}{i, j = 1}}^a r_i r_j
     c'_{\nu_i + 1} c'_{\nu_j + 1} (c_{\nu_i + 1})^{r_i - 1} (c_{\nu_j +
     1})^{r_j - 1} \prod_{\underset{t \neq i, j}{t = 1}}^a (c_{\nu_t +
     1})^{r_t} . \]
  Since the number of such partitions is equal to $\left( (m + 1) ! \prod_{i =
  1}^a r_i ! \right)^{- 1} k!$, this proves (\ref{thatsmagic}) and the claim
  holds.
\end{proof}

In the remainder of this section we make some explicit computations, which
illustrate how second order fluctuations of random matrix models are described by certain signed measures on $\mathbb{R}$. Our models consist of additive
perturbations of ergodic unitarily invariant matrices.

\begin{example}
  \label{olesoigeitonies}We consider the Hermitian random matrix of size $N$,
  $A = N \cdot A_1 + \sqrt{N} A_2 + A_3$, where $A_1, A_2, A_3$ are
  independent GUE matrices. As a consequence, for every $H \in H (N)$, one has
  \[ \mathbb{E} [\exp (\tmop{Tr} (H \text{} A))] = \prod_{x \in \tmop{Spec}
     (H)} \exp \left( N \frac{x^2}{2} + \frac{x^2}{2} + \frac{1}{N} 
     \frac{x^2}{2} \right) . \]
  The $1 / N^2$ correction of the matrix $A_1$ is given by
  \[ \frac{1}{12} \sum_{m = 0}^{k - 4} \frac{k!}{m! (m + 2) ! (k - m - 4) !} 
     \left. \frac{\mathd^m}{\mathd x^m} (x^{k - m - 4}) \right|_{x = 0} =
     \frac{k (k - 1)}{24 \mathpi \mathi} \sum_{m = 0}^{k - 4} \binom{k - 2}{m
     + 2} \oint_{|z| = 1} \frac{z^{k - m - 4}}{z^{m + 1}} d \text{} z \]
  \[ = \frac{k (k - 1)}{24 \mathpi \mathi} \oint_{|z| = 1} z^{k - 1} \sum_{m =
     2}^{k - 2} \binom{k - 2}{m} \left( \frac{1}{z^2} \right)^m d \text{} z =
     \frac{k (k - 1)}{24 \mathpi \mathi} \oint_{|z| = 1} z \left( z +
     \frac{1}{z} \right)^{k - 2} d \text{} z \]
  for every $k \geq 4$. Using the change of variables $u = 2 \cos \text{} t$
  in order to compute the last contour integral, we see that the second order
  correction functional of $A_1$
  \[ \phi'' (P) \assign \lim_{N \rightarrow \infty} N^2 \left( \frac{1}{N}
     \mathbb{E}[\tmop{Tr} \text{} P (A_1)] - \frac{1}{2 \mathpi} \int_{- 2}^2
     P (t) \sqrt{4 - t^2} d \text{} t \right) \text{, \quad} P \in \mathbb{C}
     \langle \mathbf{x} \rangle, \]
  is equal to
  \[ \phi'' (P) = \frac{1}{24 \mathpi} \int_{- 2}^2 P'' (t) \frac{t^2 -
     2}{\sqrt{4 - t^2}} d \text{} t, \text{\quad for every } P \in
     \mathbb{C} \langle \mathbf{x} \rangle . \]
  A similar formula for the second order correction functional of $A_1 +
  \frac{1}{\sqrt{N}} A_2 + \frac{1}{N} A_3$ can be derived if we consider an
  integral representation for the terms that correspond to the
  infinitesimal non-commutative probability space of order $2$. One of this
  terms is
  \[ \frac{1}{2} \sum_{m = 0}^{k - 2} \frac{k!}{m! (m + 1) ! (k - m - 2) !} 
     \left. \frac{\mathd^m}{\mathd x^m} (x^{k - m}) \right|_{x = 0} =
     \frac{k}{4 \mathpi \mathi} \sum_{m = 0}^{k - 2} \binom{k - 1}{m + 1}
     \oint_{|z| = 1} \frac{z^{k - m}}{z^{m + 1}} d \text{} z \]
  \[ = \frac{k}{4 \mathpi \mathi} \oint_{|z| = 1} z^{k + 1} \sum_{m = 1}^{k -
     1} \binom{k - 1}{m} \left( \frac{1}{z^2} \right)^m d \text{} z =
     \frac{k}{4 \mathpi \mathi} \oint_{|z| = 1} z^2 \left( z + \frac{1}{z}
     \right)^{k - 1} d \text{} z, \]
  for every $k \geq 2$. The remaining term is the first order correction of
  $A_1 + \frac{1}{\sqrt{N}} A_2$ and it was studied in Example \ref{ZNisxoli}. Concluding,
  the second order correction functional $\varphi''$ of $A_1 +
  \frac{1}{\sqrt{N}} A_2 + \frac{1}{N} A_3$ is given by
  \[ \varphi'' (P) = \frac{1}{2 \mathpi} \int_{- 2}^2 \left( P'' (t) \frac{t^2
     - 2}{12} + P' (t) \frac{t^3 - 3 t}{2} + P (t) (t^2 - 2) \right)
     \frac{1}{\sqrt{4 - t^2}} d \text{} t, \text{\quad for every } P \in
     \mathbb{C} \langle \mathbf{x} \rangle . \]
\end{example}

\begin{example}
  We consider the random matrix
  \begin{equation}
    A = N \cdot G + \theta_1 X \text{} X^{\ast} + \frac{\theta_2}{N} Y \text{}
    Y^{\ast}, \label{1111}
  \end{equation}
  where the matrices $G, X, Y$ are independent, $G$ is a GUE of size $N$ and
  $X, Y$ are independent $N \times 1$ vectors with standard complex Gaussian i.i.d. entries. Then, in notation of Theorem \ref{th:sec-order-main},
  we have that
  \[ \Psi (x) = \frac{x^2}{2}, \text{{\hspace{3em}}} \Phi (x) = \log (1 -
     \theta_1 x)^{- 1}, \text{{\hspace{3em}}} \Tau (x) = \theta_2 x. \]
     Applying the theorem and performing very similar to previous examples calculations, one arrives to the following answer for the corrections. 
  Consider the infinitesimal non-commutative probability space of order 2,
  $(\mathbb{C} \langle X \rangle, \varphi^{(0)}, \varphi^{(1)},
  \varphi^{(2)})$, where $\varphi^{(0)}$ is the limit, $\varphi^{(1)}$ is the
  first order correction and $\varphi^{(2)}$ is the second order correction.
  Then we have that
  \[ \varphi^{(1)} (P) =\tmmathbf{1}_{| \theta_1 | \geq 1} \cdot P (\theta_1 +
     \theta_1^{- 1}) - \frac{1}{2 \pi} \int_{- 2}^2 \frac{t - 2 \theta_1}{(t -
     \theta_1 - \theta_1^{- 1}) \sqrt{4 - t^2}} P (t) d \text{} t, \]
  and
  \[ \varphi^{(2)} (P) = \frac{\theta_2}{2 \mathpi} \int_{- 2}^2 \sqrt{4 -
     t^2} P' (t) d \text{} t + \frac{1}{2 \mathpi} \int_{- 2}^2
     \frac{\theta_1^2}{\sqrt{4 - t^2}}  \frac{P' (t) - P' (\theta_1 +
     \theta^{- 1}_1)}{t - \theta_1 - \theta_1^{- 1}} d \text{} t \]
  \[ - \frac{1}{4 \mathpi} \int_{- 2}^2 \frac{(\theta_1^2 - 1) (t - 2
     \theta_1)}{(t - \theta_1 - \theta_1^{- 1}) \sqrt{4 - t^2}} \left(
     \frac{P' (t) - P' (\theta_1 + \theta_1^{- 1})}{t - \theta_1 - \theta_1^{-
     1}} - P'' (\theta_1 + \theta_1^{- 1}) \right) d \text{} t + \frac{1}{24
     \mathpi} \int_{- 2}^2 P'' (t) \frac{t^2 - 2}{\sqrt{4 - t^2}} d \text{} t.
  \]
  
  Instead of $\frac{\theta_2}{N} Y \text{} Y^{\ast}$ in (\ref{1111}), one can
  consider the deterministic Hermitian matrix with one non-zero eigenvalue
  $\theta_2$. This would lead to the same result for corrections.
\end{example}

\section{Higher order infinitesimal free probability}
\label{sec:higher-order}

In this section we present our most general result, focusing on corrections of arbitrary order
of certain random matrices. Our goal is to provide explicit
computations that allow us to make the connection with non-crossing cumulant
functionals of $k$-th order.
As before, we are interested in an asymptotic expansion
\begin{equation}
  \frac{1}{N^{k + 1}} \mathbb{E} [\tmop{Tr} (A^k)] = \sum_{i = 0}^k
  \frac{1}{N^i} M_{i, N} (\Psi), \label{GOVA}
\end{equation}
where $(M_{i, N} (\Psi))_{N \in \mathbb{N}}$ converges for every $i$. We would like to understand under which further conditions on $A$ one can control the terms
$M_{i, N} (\Psi)$, for $i \in \{0, \ldots, k\}$, as $N \rightarrow
\infty$. 

\begin{theorem}
  \label{fredi}
  Let $n$ be a fixed integer. Let $A$ be a random Hermitian matrix of size $N$ and assume
  that for every finite $r$ one has
  \begin{equation}
    \lim_{N \rightarrow \infty} N^{n \varepsilon} \left( \frac{1}{N} \log
    \mathbb{E}[H \text{} C (x_1, \ldots, x_r, 0^{N - r} ; \lambda_1 (A),
    \ldots, \lambda_n (A))] - \sum_{j = 0}^{n - 1} \sum_{i = 0}^r
    \frac{1}{N^{j \varepsilon}} \Psi_j (x_i) \right) = \sum_{i = 1}^r \Psi_n
    (x_i), \label{100}
  \end{equation}
  where $\Psi_0, \ldots, \Psi_n$ are smooth functions in a complex neighborhood of
  $0$, the above convergence is uniform in a complex neighborhood of $0^r$ and $0 <
  \varepsilon < n^{- 1}$. Then the $1 / N^{n \varepsilon}$ correction of the
  average empirical distribution of $N^{- 1} A$ is given by
  \begin{equation}
    \lim_{N \rightarrow \infty} N^{n \varepsilon} \left( \frac{1}{N^{k + 1}}
    \mathbb{E}[\tmop{Tr} (A^k)] - \sum_{j = 0}^{n - 1} \frac{1}{N^{j
    \varepsilon}} \tmmathbf{\mu}^{(j)}_k \right) =\tmmathbf{\mu}_k^{(n)},
    \label{PORDOKOFTIS}
  \end{equation}
  where for every $j \in \{0, \ldots, n\}$
  \[ \tmmathbf{\mu}^{(j)}_k \assign \sum_{b_1 + 2 b_2 + \cdots + j \text{} b_j
     = j} \sum_{m = 0}^{k - b_1 - \cdots - b_j} \frac{k!}{m! (m + 1) ! (k - m
     - b_1 - \cdots - b_j) !b_1 ! \ldots b_j !} \text{{\hspace{8em}}} \]
  \begin{equation}
    \text{{\hspace{14em}}} \times \left. \frac{\mathd^m}{\mathd x^m} ((\Psi'_0
    (x))^{k - m - b_1 - \cdots - b_j} (\Psi'_1 (x))^{b_1} \ldots (\Psi'_j
    (x))^{b_j}) \right|_{x = 0} . \label{tavradiamouleipis}
  \end{equation}
\end{theorem}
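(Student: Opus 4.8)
The plan is to reuse the machinery developed in the proof of Theorem \ref{SouvlakiA}, exploiting the fact that the exponent is small enough ($n\varepsilon<1$) so that, at the scale $N^{-n\varepsilon}$, only the ``leading, single-variable'' part of the expansion survives. First I would set $f=f_N(x_1,\ldots,x_N):=\mathbb{E}[H\text{}C(x_1,\ldots,x_N;\lambda_1(A),\ldots,\lambda_N(A))]$ and $g_N:=\frac{1}{N}\log f_N$, and recall from Proposition \ref{prop:1} that $\mathbb{E}[\tmop{Tr}(A^k)]=\lim_{\epsilon\to 0}\mathcal{D}_k f_N\longdownminus_{x_i=i\epsilon}$. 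Since assumption \eqref{100} with its $j=0$ term reduces to \eqref{xereis}, the entire analysis of $\mathcal{D}_k f_N$ from Theorem \ref{SouvlakiA} applies verbatim and yields the expansion in integer powers of $1/N$, namely $\frac{1}{N^{k+1}}\mathbb{E}[\tmop{Tr}(A^k)]=\sum_{i=0}^{k} N^{-i}M_{i,N}$, where each $M_{i,N}$ is a finite linear combination, with $N$-independent coefficients, of derivatives of $g_N$ at $0^N$ of the form \eqref{viv} and \eqref{antk}. Because \eqref{100} makes every single-variable derivative $\partial_1^{l}g_N(0^N)$ convergent and every genuinely mixed derivative tend to zero, each sequence $(M_{i,N})_N$ converges and is in particular bounded.

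The heart of the argument is a twofold reduction, both parts of which use $n\varepsilon<1$. On the one hand, since $M_{i,N}$ is bounded and $i\ge 1$, we have $N^{-i}M_{i,N}=O(N^{-1})=\omicron(N^{-n\varepsilon})$, so these terms do not contribute to the $N^{-n\varepsilon}$ correction. On the other hand, within $M_{0,N}$ I would split off the single-variable part $M^{\mathrm{s}}_{0,N}:=\sum_{m=0}^{k-1}\frac{k!}{m!(m+1)!(k-m)!}\,\partial_1^m\bigl((\partial_1 g_N)^{k-m}\bigr)(0^N)$, exactly as in the proof of Theorem \ref{Einaiiagapi}. Every summand of the complementary part $M_{0,N}-M^{\mathrm{s}}_{0,N}$ contains at least one genuinely mixed derivative $\partial_{i}\partial_{i'}g_N(0^N)$ with $i\neq i'$, all remaining factors being $O(1)$. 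The key observation is that \eqref{100} writes $g_N$, up to a remainder $\rho_N=\omicron(N^{-n\varepsilon})$ uniform in a complex neighborhood of $0^r$, as $\sum_{j=0}^{n}N^{-j\varepsilon}\sum_{i}\Psi_j(x_i)$, a sum of functions each depending on a single variable; a mixed partial annihilates every term of this main sum, so by the Cauchy-estimate argument of Lemma \ref{SnikfeatFy} all such mixed derivatives are $\omicron(N^{-n\varepsilon})$. Hence $N^{n\varepsilon}\bigl(M_{0,N}-M^{\mathrm{s}}_{0,N}\bigr)\to 0$, and it remains only to expand $M^{\mathrm{s}}_{0,N}$.

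Finally I would use that the single-variable derivatives satisfy $\partial_1^{l+1}g_N(0^N)=\sum_{j=0}^{n}N^{-j\varepsilon}\Psi_j^{(l+1)}(0)+\omicron(N^{-n\varepsilon})$, so that inside $M^{\mathrm{s}}_{0,N}$ each factor $\partial_1 g_N$ may be replaced (as a formal one-variable function via its Taylor coefficients at $0$) by $\sum_{j=0}^{n}N^{-j\varepsilon}\Psi_j'$. Applying the multinomial theorem to $\bigl(\sum_{j=0}^{n}N^{-j\varepsilon}\Psi_j'\bigr)^{k-m}$, the coefficient of $N^{-J\varepsilon}$ is the sum over $(a_0,\ldots,a_n)$ with $a_0+\cdots+a_n=k-m$ and $a_1+2a_2+\cdots+na_n=J$ of $\frac{(k-m)!}{a_0!\,a_1!\cdots a_n!}(\Psi_0')^{a_0}\cdots(\Psi_n')^{a_n}$. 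Writing $b_i:=a_i$ for $i\ge 1$ and cancelling the factor $(k-m)!$ against the multinomial coefficient reproduces exactly the formula \eqref{tavradiamouleipis} for $\tmmathbf{\mu}^{(J)}_k$; the constraint $a_0=k-m-b_1-\cdots-b_J\ge 0$ forces $m\le k-b_1-\cdots-b_J$, matching the upper limit of the $m$-sum there (and for $J=0$ the spurious $m=k$ term vanishes). This gives $M^{\mathrm{s}}_{0,N}=\sum_{J=0}^{n}N^{-J\varepsilon}\tmmathbf{\mu}^{(J)}_k+\omicron(N^{-n\varepsilon})$, which combined with the two reductions yields \eqref{PORDOKOFTIS}.

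I expect the main obstacle to be the \emph{simultaneous} control of the mixed-derivative contributions to $M_{0,N}$ and of the integer-power corrections $N^{-i}M_{i,N}$, showing that all of them are $\omicron(N^{-n\varepsilon})$; both facts hinge precisely on the inequality $n\varepsilon<1$ together with the additivity (absence of cross-terms between distinct variables) of every level $\Psi_j$ in \eqref{100}. By contrast, the closing combinatorial identity is a routine multinomial computation that matches the definition of $\tmmathbf{\mu}^{(J)}_k$ termwise. Note also that, unlike Theorems \ref{SouvlakiA} and \ref{HALFGODHALFSOUVLAKI}, the statement \eqref{PORDOKOFTIS} concerns only the expectation $\mathbb{E}[\tmop{Tr}(A^k)]$, so no second-moment (variance) estimate is required.
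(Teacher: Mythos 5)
Your proposal is correct, and all the ingredients you identify (the integer-power expansion, the two reductions hinging on $n\varepsilon<1$, the multinomial bookkeeping) do assemble into a complete argument; but the route differs from the paper's in its organization. The paper first treats an ``exact'' model case in which $f_N$ factors as a product $f_0f_1\cdots f_n$ of explicit single-variable exponentials $f_j=\exp(N^{1-j\varepsilon}\sum_i\Psi_j(x_i))$, applies $\mathcal{D}_k$ and the Leibniz rule \emph{across these $n+1$ factors} (equation \eqref{anemoi}), and reads off \eqref{tavradiamouleipis} directly from which distribution of derivatives among the factors carries the top power of $N$; it then handles the general case by writing $f_N=F_N\cdot G_N$ with $F_N=\exp(N\widetilde{F_N})$ a multiplicative remainder and showing that every Leibniz term in which a derivative lands on $F_N$ is negligible at scale $N^{-n\varepsilon}$. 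You instead keep a single logarithm $g_N=\frac1N\log f_N$, reuse the expansion $\sum_i N^{-i}M_{i,N}$ verbatim from Theorems \ref{SouvlakiA} and \ref{Einaiiagapi}, discard $i\ge 1$ and the mixed-derivative part of $M_{0,N}$, and only at the very end substitute $\partial_1^{l+1}g_N(0^N)=\sum_j N^{-j\varepsilon}\Psi_j^{(l+1)}(0)+\omicron(N^{-n\varepsilon})$ and expand multinomially. Your version is more uniform with the proofs of Theorems \ref{Einaiiagapi} and \ref{th:sec-order-main} and avoids the two-stage (model case / general case) structure; the paper's version makes the combinatorial origin of \eqref{tavradiamouleipis} --- how many derivatives each $\Psi_j$ absorbs --- more transparent, since it is read off from an explicit product before any limit is taken. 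Two points worth making explicit if you write this up: (i) the claim that every summand of $M_{0,N}-M^{\mathrm{s}}_{0,N}$ contains a genuinely mixed factor requires the observation that in $\partial_{i_m}\cdots\partial_{i_1}\bigl((\partial_i g_N)^{k-m}\bigr)$ any index $i_j\neq i$ necessarily lands, after Leibniz, on a factor already carrying $\partial_i$, producing a cross-derivative $\partial_{i_j}\cdots\partial_i g_N(0^N)=\omicron(N^{-n\varepsilon})$; and (ii) the terms of the multinomial expansion with total weight $J>n$ must be discarded as $O(N^{-(n+1)\varepsilon})=\omicron(N^{-n\varepsilon})$, which you use implicitly.
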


\begin{proof}
  First, we consider an easier case, in which we additionally assume that 
  \begin{equation}
    N^{n \varepsilon} \left( \frac{1}{N} \log \mathbb{E}[H \text{} C (x_1,
    \ldots, x_N ; \lambda_1 (A), \ldots, \lambda_N (\Alpha))] - \sum_{j =
    0}^{n - 1} \sum_{i = 0}^N \frac{1}{N^{j \varepsilon}} \Psi_j (x_i) \right)
    = \sum_{i = 1}^N \Psi_n (x_i), \label{rithmo}
  \end{equation}
  for every $N \in \mathbb{N}$ and $x_1, \ldots, x_N \in \mathbb{R}$. Thus,
  denoting the average Harish-Chandra integral of $A$ by $f_N$, we have that
  $f_N = f_0 f_1 \ldots f_n$, where
  \[ f_j (x_1, \ldots, x_N) = \exp \left( N^{1 - j \varepsilon} \sum_{i = 1}^N
     \Psi_j (x_i) \right), \]
  for $j = 0, \ldots, n$. Similarly to previous theorems, we will pass from the
  Harish-Chandra integral of $A$ to the $k$-th moment of the average empirical
  distribution of $A / N$ using the differential operator $\mathcal{D}_k$.
  We have that $\mathbb{E} (\tmop{Tr}
  (A^k)) =\mathcal{D}_k f_N |_{x_i = 0}$ and
  \begin{equation}
    \mathcal{D}_k f_N = \sum_{m = 0}^{k - 1} \sum_{\underset{l_i \neq l_j
    \text{ for } i \neq j}{l_0, \ldots, l_m = 1}}^N \sum_{b_0 + \cdots + b_n =
    k - m} \binom{k}{m} \frac{(k - m) !}{b_0 ! \ldots b_n !} 
    \frac{\partial_{l_0}^{b_0} f_0 \partial_{l_0}^{b_1} f_1 \ldots
    \partial^{b_n}_{l_0} f_n}{(x_{l_0} - x_{l_1}) \ldots (x_{l_0} - x_{l_m})}
    . \label{anemoi}
  \end{equation}
  As a consequence, $\mathcal{D}_k f_N$ is a linear combination of terms
  \begin{equation} 
  \text{{\hspace{3em}}} f_N \frac{\prod_{i = 0}^n \prod_{j = 1}^{b_i}
     (\Psi_i^{(j)} (x_{l_0}))^{\lambda_{i, j}}}{(x_{l_0} - x_{l_1}) \ldots
     (x_{l_0} - x_{l_m})} + f_N \frac{\prod_{i = 0}^n \prod_{j = 1}^{b_i}
     (\Psi_i^{(j)} (x_{l_1}))^{\lambda_{i, j}}}{(x_{l_1} - x_{l_0}) (x_{l_1} -
     x_{l_2}) \ldots (x_{l_1} - x_{l_m})} + \ldots + f_N \frac{\prod_{i = 0}^n \prod_{j =
    1}^{b_i} (\Psi_i^{(j)} (x_{l_m}))^{\lambda_{i, j}}}{(x_{l_m} - x_{l_0})
    \ldots (x_{l_m} - x_{l_{m - 1}})}, \label{polakis}
     \end{equation}
  where these terms have a limit for $x_1 = \cdots = x_N = 0$, due to Lemma \ref{SnikfeatFy}.
  Note that all the coefficients in $\mathcal{D}_k f_N |_{x_i = 0}$ will
  be of order $O (N^{k + 1})$. In order to prove the claim, for $x_1 = \cdots = x_N =
  0$ we have to compute the terms (\ref{polakis}), where their coefficients are
  not $\omicron (N^{k + 1 - n \varepsilon})$. Since $0 <
  \varepsilon < n^{- 1}$ and $b_0 + \cdots + b_n = k - m$, note that
  differentiating the functions $f_0, \ldots, f_n$ in the expression
  \[ \binom{k}{m} \frac{(k - m) !}{b_0 ! \ldots b_n !} 
     \frac{\partial_{l_0}^{b_0} f_0 \partial_{l_0}^{b_1} f_1 \ldots
     \partial_{l_0}^{b_n} f_n}{(x_{l_0} - x_{l_1}) \ldots (x_{l_0} - x_{l_m})}
  \]
  can be written as a sum, where only the summand
  \[ \frac{k!}{m!b_0 ! \ldots b_n !} N^{b_0 + b_1 (1 - \varepsilon) + \cdots +
     b_n (1 - n \varepsilon)} f_N \frac{(\Psi'_0 (x_{l_0}))^{b_0} \ldots
     (\Psi'_n (x_{l_0}))^{b_n}}{(x_{l_0} - x_{l_1}) \ldots (x_{l_0} -
     x_{l_m})} \]
  may contribute to a term (\ref{polakis}) with a coefficient that is not
  $\omicron (N^{k + 1 - n \varepsilon})$. Indeed, it follows from the equality
  \[ \lim_{x_1, \ldots, x_N \rightarrow 0} \left( \frac{(\Psi'_0
     (x_{l_0}))^{b_0} \ldots (\Psi'_n (x_{l_0}))^{b_n}}{(x_{l_0} - x_{l_1})
     \ldots (x_{l_0} - x_{l_m})} + \frac{(\Psi'_0 (x_{l_1}))^{b_0} \ldots
     (\Psi'_n (x_{l_1}))^{b_n}}{(x_{l_1} - x_{l_0}) (x_{l_1} - x_{l_2}) \ldots
     (x_{l_1} - x_{l_m})} + \right. \text{{\hspace{8em}}} \]
  \[ \text{{\hspace{8em}}} \left. \ldots + \frac{(\Psi'_0 (x_{l_m}))^{b_0}
     \ldots (\Psi'_n (x_{l_m}))^{b_n}}{(x_{l_m} - x_{l_0}) \ldots (x_{l_m} -
     x_{l_{m - 1}})} \right) = \frac{1}{m!}  \left. \frac{\mathd^m}{\mathd
     x^m} ((\Psi'_0 (x))^{b_0} \ldots (\Psi'_n (x))^{b_n}) \right|_{x = 0}, \]
  in which the right hand side does not depend on $l_0, \ldots, l_m \in \{1, \ldots, N\}$. Thus, the above limit appears in the right hand side of (\ref{anemoi}) exactly $m! \binom{N}{m
  + 1}$ times. Therefore, the summands (\ref{polakis}) of
  $\mathcal{D}_k f_N |_{x_i = 0}$ with a coefficient that it is not $\omicron (N^{k +
  1 - n \varepsilon})$ have the form
  \begin{equation}
    \frac{k!}{m! (m + 1) !b_0 ! \ldots b_n !} N^{k + 1 - b_1 \varepsilon -
    \cdots - n \text{} b_n \varepsilon} \left. \frac{\mathd^m}{\mathd x^m}
    ((\Psi'_0 (x))^{b_0} \ldots (\Psi'_n (x))^{b_n}) \right|_{x = 0},
    \label{VERAMAN}
  \end{equation}
  where $b_0 = k - m - b_1 - \cdots - b_n$ and $b_1 + 2 b_2 + \cdots + n
  \text{} b_n \leq n$. Thus, the summands that we are interested in have a factor $N^{k
  + 1 - j \varepsilon}$, for $j = 0, \ldots, n$. Taking the sum of those of the form
  (\ref{VERAMAN}) with the same factor $N^{k + 1 - j \varepsilon}$ (i.e., for
  $b_1 + 2 b_2 + \cdots + jb_j = j$, $b_{j + 1} = \cdots = b_n = 0$, and $m =
  0, \ldots, k - b_1 - \cdots - b_j$), we get that
  \begin{equation}
    \frac{1}{N^{k + 1}} \mathbb{E} [\tmop{Tr} (A^k)] = \frac{1}{N^{k + 1}}
    \mathcal{D}_k (f_0 \ldots f_n) |_{x_i = 0} =\tmmathbf{\mu}_k^{(0)} +
    \frac{1}{N^{\varepsilon}} \tmmathbf{\mu}_k^{(1)} + \cdots + \frac{1}{N^{n
    \varepsilon}} \tmmathbf{\mu}_k^{(n)} + o (N^{- n \varepsilon}), \label{DUB}
  \end{equation}
  and the claim holds.
  
  Now we deal with a more general case, in which only relation (\ref{100}) is
  known for $A$. Again, we will use the differential operator
  $\mathcal{D}_k$ in order to pass from the Harish-Chandra integral of $A$ to
  the $k$-th moment of the average empirical distribution of $A / N$. Taking
  into account (\ref{100}), it will be useful for our analysis to write the
  average Harish-Chandra integral as $f_N = F_N \cdot G_N$,
  where $F_N \assign \exp (N \cdot \widetilde{F_N})$, where
  \[ \widetilde{F_N} (x_1, \ldots, x_N) \assign \frac{1}{N} \log \text{} f_N
     (x_1, \ldots, x_N) - \sum_{j = 0}^n \sum_{i = 1}^N \frac{1}{N^{j
     \varepsilon}} \Psi_j (x_i) \]
  and
  \[ G_N (x_1, \ldots, x_N) \assign \prod_{j = 0}^n \exp \left( N^{1 - j
     \varepsilon} \sum_{i = 1}^N \Psi_j (x_i) \right) . \]
  We write $G_N = f_0 f_1 \ldots f_n$, where $f_j \assign \exp (N^{1 - j
  \varepsilon} g_j)$ and $g_j (x_1, \ldots, x_N) \assign \Psi_j (x_1) + \cdots
  + \Psi_j (x_N)$, for every $j = 0, \ldots, n$. Then, applying the operator
  $\mathcal{D}_k$ to $f_N$, by Proposition \ref{prop:1} and Lemma \ref{lem:2} we have
  \[ \mathbb{E} \left[ \left( \sum_{i = 1}^N \lambda_i^k (A) \right) H \text{}
     C (x_1, \ldots, x_N ; \lambda_1 (A), \ldots, \lambda_N (A)) \right] \]
  \begin{equation}
    = \left. \sum_{m = 0}^{k - 1} \sum_{\underset{l_i \neq l_j \text{ for } i \neq
    j}{l_0, \ldots, l_m = 1}}^N \sum_{\nu = 0}^{k - m} \sum_{b_0 + \cdots +
    b_n = k - m - \nu} \binom{k}{m} \binom{k - m}{\nu} \frac{(k - m - \nu)
    !}{b_0 ! \ldots b_n !} \frac{\partial_{l_0}^{\nu} F_N \partial_{l_0}^{b_0}
    f_0 \ldots \partial_{l_0}^{b_n} f_n}{(x_{l_0} - x_{l_1}) \ldots (x_{l_0} -
    x_{l_m})} \right|_{x_1,x_2, \dots, x_N=0} \label{AKRIVA}.
  \end{equation}
  Using the chain rule, we obtain
  that $\partial_{l_0}^{\nu} F_N, \partial_{l_0}^{b_0} f_0, \ldots,
  \partial_{l_0}^{b_n} f_n$ can be written as a linear combination of products
  of factors $F_N, f_0, \ldots, f_n$ and derivatives of $\widetilde{F_N},
  g_0, \ldots, g_n$ with respect to $x_{l_0}$, where the
  coefficients depend on $N$. Writing in such a way $\partial_{l_0}^{\nu} F_N
  \partial_{l_0}^{b_0} f_0 \ldots \partial_{l_0}^{b_n} f_n$ as a sum, we will
  show that the contribution of each summand to (\ref{AKRIVA}) is equal to
  zero, for $\nu > 0$, in the limit $x_1, \ldots, x_N \rightarrow 0$ and $N
  \rightarrow \infty$. We consider the summands 
  $$
  N^{\nu} F_N (\partial _{l_0}
  \widetilde{F_N})^{\nu}, N^{b_0} f_0  (\partial_{l_0} g_0)^{b_0}, N^{b_1 (1 -
  \varepsilon)} f_1  (\partial_{l_0} g_1)^{b_1}, \ldots, N^{b_n (1 - n
  \varepsilon)} f_n  (\partial_{l_0} g_n)^{b_n}
  $$ of 
  $\partial_{l_0}^{\nu} F_N,
  \partial_{l_0}^{b_0} f_0, \ldots, \partial_{l_0}^{b_n} f_n.
  $
  We will show that that the terms
  \[ f_N \frac{(\partial_{l_0} \widetilde{F_N})^{\nu} (\partial_{l_0}
     g_0)^{b_0} \ldots (\partial_{l_0} g_n)^{b_n}}{(x_{l_0} - x_{l_1}) \ldots
     (x_{l_0} - x_{l_m})} + f_N \frac{(\partial_{l_1} \widetilde{F_N})^{\nu}
     (\partial_{l_1} g_0)^{b_0} \ldots (\partial_{l_1} g_n)^{b_n}}{(x_{l_1} -
     x_{l_0}) (x_{l_1} - x_{l_2}) \ldots (x_{l_1} - x_{l_m})} + \]
  \begin{equation}
    \text{\qquad} \ldots + f_N \frac{(\partial_{l_m} \widetilde{F_N})^{\nu}
    (\partial_{l_m} g_0)^{b_0} \ldots (\partial_{l_m} g_n)^{b_n}}{(x_{l_m} -
    x_{l_0}) \ldots (x_{l_m} - x_{l_{m - 1}})} \label{iperboleS}
  \end{equation}
  that contribute to (\ref{AKRIVA}) can be controlled when $x_1, \ldots, x_N
  \rightarrow 0$. In order to do so, we can use a similar
  to the proof of Theorem \ref{SouvlakiA} argument. The first step is to consider $x_i = i \varepsilon$, for every $i = 1, \ldots, N$. For every $i = 1, \ldots, N$, we consider the
  functions
  \[ h_i (x_1, \ldots, x_N) \assign [(\partial_i \widetilde{F_N})^{\nu}
     (\partial_i g_0)^{b_0} \ldots (\partial_i g_n)^{b_n}] (x_1, \ldots, x_N)
  \]
  and $H_i (\varepsilon) \assign h_i (\varepsilon, 2 \varepsilon, \ldots, N
  \varepsilon)$. Then, if we consider for every $i = 1, \ldots, N$ the Taylor
  expansion
  \[ H_i (\varepsilon) = H_i (0) + H'_i (0) \varepsilon + \cdots +
     \frac{\varepsilon^m}{m!} H_i^{(m)} (0) + \varepsilon^m \widetilde{H_i}
     (0), \]
  where $\lim_{\varepsilon \rightarrow 0} \widetilde{H_i} (\varepsilon) = 0$,
  we can control (\ref{iperboleS}) as $\varepsilon \rightarrow 0$, because
  \[ \frac{H_{l_0}^{(l)} (0)}{(l_0 - l_1) \ldots (l_0 - l_m)} +
     \frac{H_{l_1}^{(l)} (0)}{(l_1 - l_0) (l_1 - l_2) \ldots (l_1 - l_m)} +
     \cdots + \frac{H_{l_m}^{(l)} (0)}{(l_m - l_0) \ldots (l_m - l_{m - 1})} =
     0, \]
  for $l < m$. Indeed, one has
  \[ H_i^{(l)} (0) = \sum_{i_1, \ldots, i_l = 1}^N i_1 \ldots i_l
     \partial_{i_l} \ldots \partial_{i_1} h_i (0^N), \]
  and since the functions $\widetilde{F_N}, g_0, \ldots, g_n$ are symmetric
  for two $(l + 1)$-tuples $(\alpha_1, \ldots, \alpha_{l + 1})$ and $(\beta_1,
  \ldots, \beta_{l + 1})$ of elements of $\{1, \ldots, N\}$, we have
  \[ \partial_{\alpha_1} \ldots \partial_{\alpha_l} h_{\alpha_{l + 1}} (0^N) =
     \partial_{\beta_1} \ldots \partial_{\beta_l} h_{\beta_{l + 1}} (0^N), \]
  where $\alpha_i = \alpha_j \Leftrightarrow \beta_i = \beta_j$, for every $i,
  j$. Thus, $H_i^{(l)} (0)$ is a polynomial in $i$ of degree $l$ and in the limit
  $\varepsilon \rightarrow 0$ the expression (\ref{iperboleS}) is equal to a linear
  combination of derivatives
  \[ \partial_{i_m} \ldots \partial_{i_1} [(\partial_i \widetilde{F_N})^{\nu}
     (\partial_i g_0)^{b_0} \ldots (\partial_i g_n)^{b_n}] (0^N), \]
  where $i, i_1, \ldots, i_m \in \{1, \ldots, m + 1\}$, and the coefficients
  do not depend on $N$. This shows that as $\varepsilon \rightarrow 0$,
  the expression (\ref{iperboleS}) does not depend on $l_0, \ldots, l_m \in \{1, \ldots,
  N\}$. Therefore, the above linear combinations of derivatives will
  appear $m! \binom{N}{m + 1}$ times in (\ref{AKRIVA}), as $\varepsilon
  \rightarrow 0$. Thus, the summands
  \begin{equation}
    N^{\nu + b_0 + b_1 (1 - \varepsilon) + \cdots + b_n (1 - n \varepsilon)}
    f_N  (\partial_{l_0} \widetilde{F_N})^{\nu}  (\partial_{l_0} g_0)^{b_0}
    \ldots (\partial_{l_0} g_n)^{b_n} \label{ILEO}
  \end{equation}
  of $\partial_{l_0}^{\nu} F_N \partial_{l_0}^{b_0} f_0 \ldots
  \partial_{l_0}^{b_n} f_n$ will contribute to (\ref{AKRIVA}) by a factor
  $N^{n \varepsilon - (k + 1)}$, as $\varepsilon \rightarrow 0$, with a linear
  combination of terms
  \[ \left( \frac{N^{n \varepsilon}}{N^{\varepsilon b_1 + 2 \varepsilon b_2 +
     \cdots + n \varepsilon b_n}} + \omicron (1) \right) \partial_{i_m} \ldots
     \partial_{i_1} [(\partial_i \widetilde{F_N})^{\nu} (\partial_i g_0)^{b_0}
     \ldots (\partial_i g_n)^{b_n}] (0^N), \]
  for $b_0 + \cdots + b_n = k - m - \nu$ and $0 < \varepsilon < n^{- 1}$.
  For $\nu > 0$, due to (\ref{100}), we have
  \[ \lim_{N \rightarrow \infty} N^{n \varepsilon} \partial_{i_l} \ldots
     \partial_{i_1} \widetilde{F_N} (0^N) = 0, \]
  for all $l$, and we obtain that these linear combinations will not
  contribute to $N^{n \varepsilon - (k + 1)} \mathbb{E} [\tmop{Tr} (A^k)]$, as
  $N \rightarrow \infty$. Let $\psi_{l_0} (x_1, \ldots, x_N)$ be one of the
  remaining summands of $\partial_{l_0}^{\nu} F_N \partial_{l_0}^{b_0} f_0
  \ldots \partial_{l_0}^{b_n} f_n$, where they are products and their factors
  that depend from $x_1, \ldots, x_N$ are $f_N$ and derivarives of
  $\widetilde{F_N}, g_0, \ldots, g_n$ with respect to $x_{l_0}$. Then, we can
  use the same argument to control the terms
  \begin{equation}
    \frac{\psi_{l_0} (x_1, \ldots, x_N)}{(x_{l_0} - x_{l_1}) \ldots (x_{l_0} -
    x_{l_m})} + \cdots + \frac{\psi_{l_m} (x_1, \ldots, x_N)}{(x_{l_m} -
    x_{l_0}) \ldots (x_{l_m} - x_{l_{m - 1}})} \label{TRAFICANTE}
  \end{equation}
  that contribute to (\ref{AKRIVA}), when $x_i = i \varepsilon$ and
  $\varepsilon \rightarrow 0$. Similarly to the case that we analyzed above,
  where $\psi_{l_0} (x_1, \ldots, x_N)$ has a form (\ref{ILEO}), for the same reasons
  (\ref{TRAFICANTE}) will converge to a linear combination of products of
  derivatives of $\widetilde{F_N}, g_0, \ldots, g_n$ at $0^N$. For $\nu > 0$,
  we have that each of its summands will have as factor a derivative of
  $\widetilde{F_N}$ at $0^N$. Taking into account that as $\varepsilon
  \rightarrow 0$, (\ref{TRAFICANTE}) does not depend on $l_0, \ldots, l_m \in
  \{1, \ldots, N\}$ and that the factor of $\psi_{l_0} (x_1, \ldots, x_N)$
  that does not depend on $x_1, \ldots, x_N$ is $O (N^{k - m - 1})$, we obtain
  that this factor multiplied by $N^{n \varepsilon - (k + 1)} \binom{N}{m +
  1}$ will converge to zero as $N \rightarrow \infty$. Moreover, since all
  the derivatives of $\widetilde{F_N}$ at $0^N$ converge (to zero), due to
  (\ref{100}), the expression (\ref{TRAFICANTE}) will not contribute to $N^{n \varepsilon -
  (k + 1)} \mathbb{E} [\tmop{Tr} (A^k)]$, as $N \rightarrow \infty$. Thus,
  considering the case $\nu = 0$ in (\ref{AKRIVA}), as $N \rightarrow \infty$, we obtain
  that
  \[ \frac{N^{n \varepsilon}}{N^{k + 1}} \mathbb{E} [\tmop{Tr} (A^k)] =
     \frac{N^{n \varepsilon}}{N^{k + 1}} \mathcal{D}_k G_N |_{x_i = 0} +
     \omicron (1) . \]
  But since in (\ref{DUB}) we proved that
  \[ \frac{1}{N^{k + 1}} \mathcal{D}_k G_N |_{x_i = 0} =\tmmathbf{\mu}_k^{(0)}
     + \frac{1}{N^{\varepsilon}} \tmmathbf{\mu}_k^{(1)} + \cdots +
     \frac{1}{N^{n \varepsilon}} \tmmathbf{\mu}_k^{(n)} + \omicron (N^{- n
     \varepsilon}), \]
  we arrive at the claim of the theorem.
\end{proof}

\begin{remark}
  In the context of Theorem \ref{fredi}, note that for $\Psi \assign \Psi_0, \Phi
  \assign \Psi_1$ and $\Tau \assign \Psi_2$, we have that
  $\tmmathbf{\mu}_k^{(0)} =\tmmathbf{\mu}_k, \tmmathbf{\mu}_k^{(1)}
  =\tmmathbf{\mu}'_k$ and $\tmmathbf{\mu}^{(2)}_k \assign \tmmathbf{\mu}''_k$,
  for every $k \in \mathbb{N}$, where $\tmmathbf{\mu}_k, \tmmathbf{\mu}'_k,
  \tmmathbf{\mu}''_k$ were defined in (\ref{topg}), (\ref{G}), (\ref{Ss}),
  repsectively. Thus, for the average empirical distribution functional of $A
  / N$ we have, for every $P \in \mathbb{C} \langle \mathbf{x} \rangle$, that
  \[ \varphi_N (P) = \varphi (P) + \frac{1}{N^{\varepsilon}} \varphi' (P) +
     \frac{1}{N^{2 \varepsilon}}  \frac{\varphi'' (P)}{2} + \omicron (N^{- 2
     \varepsilon}),  \]
  where $(\mathbb{C} \langle \mathbf{x} \rangle, \varphi, \varphi',
  \varphi'')$ is the infinitesimal non-commutative probability space of order
  $2$. Hence we see that compared to Theorem \ref{th:sec-order-main}, the
  limit regime (\ref{100}) for the logarithm of the Harish-Chandra integral
  of $A$ gives an expansion for $\varphi_N$, where the rate of convergence
  that gives the correction to the non-commutative distribution of $A$ is
  $N^{\varepsilon}$ instead of $N$. As a corollary, the functional
  $\varphi''$ determines completely the second order correction since the term
  (\ref{sagan}) that contributes to the limit regime (\ref{audikos}) will not
  affect the $1 / N^{2 \varepsilon}$ correction. This holds because assumption
  (\ref{100}) implies the expansion (\ref{GOVA}) and the $1 / N^{n
  \varepsilon}$ correction (\ref{PORDOKOFTIS}) of $A / N$ will be given by
  \begin{equation}
    \lim_{N \rightarrow \infty} N^{n \varepsilon} \left( M_{0, N} (\Psi)
    -\tmmathbf{\mu}_k^{(0)} - \frac{1}{N^{\varepsilon}} \tmmathbf{\mu}_k^{(1)}
    - \cdots - \frac{1}{N^{(n - 1) \varepsilon}} \tmmathbf{\mu}_k^{(n - 1)}
    \right), \label{PORNIDIO!}
  \end{equation}
  since $0 < \varepsilon < n^{- 1}$. On the other hand, assuming the limit
  regime (\ref{100}) for $A$ with $\varepsilon = 1$, in order to compute the
  $1 / N^n$ correction, besides of (\ref{PORNIDIO!})
  there are extra limits that we have to consider. For example, for $n = 3$,
  the limits
  \[ \lim_{N \rightarrow \infty} N (M_{2, N} (\Psi) -\tmmathbf{\nu}''_k),
     \text{\quad} \lim_{N \rightarrow \infty} M_{3, N} (\Psi) . \]
  will also contribute non-trivially to the $1 / N^3$ correction
  \[ \lim_{N \rightarrow \infty} N^3 \left( \frac{1}{N^{k + 1}}
     \mathbb{E}[\tmop{Tr} (A^k)] -\tmmathbf{\mu}_k - \frac{1}{N}
     \tmmathbf{\mu}'_k - \frac{1}{N^2} (\tmmathbf{\mu}''_k
     +\tmmathbf{\nu}''_k) \right). \]
  Note also that for $n = 1$ the
  first order fluctuations that we get from the limit regime of Theorem \ref{Einaiiagapi}
  coincide with those of the $\varepsilon < 1$ regime because $\lim_{N
  \rightarrow \infty} M_{1, N} (\Psi) = 0$.
\end{remark}

\begin{remark}

Theorem \ref{fredi} (for $n = 1$) and Theorem \ref{Einaiiagapi} give the
same formula for the $1 / N^{\varepsilon}$ and the $1 / N$ correction,
respectively. However, this is not true for the second order correction, if we
compare Theorem \ref{fredi} (for $n = 2$) and Theorem \ref{th:sec-order-main}. 
A (technical) reason why this holds for
the first order corrections is that in the limit regime
\begin{equation}
  \frac{1}{N} \log \mathbb{E} [H \text{} C (x_1, \ldots, x_r, 0^{N - r} ;
  \mathlambda_1 (A), \ldots, \mathlambda_N (A))] \longrightarrow \Psi (x_1) +
  \cdots + \Psi (x_r), \label{1a1}
\end{equation}
applying the differential operator to the Harish-Chandra transform, one obtains (see the proofs of the theorems) an
asymptotic expansion
\begin{equation}
  \frac{1}{N^{k + 1}} \mathbb{E} [\tmop{Tr} (A^k)] = \sum_{i = 0}^k M_{i, k,
  N} (\Psi) \frac{1}{N^i}, \label{2a1}
\end{equation}
where $M_{1, k, N} (\Psi) = 0$. Therefore $M_{1, k, N} (\Psi)$ does not
contribute to the $1 / N$ correction of Theorem \ref{Einaiiagapi}. On the other hand the extra
term $\nu''$ of Theorem \ref{th:sec-order-main}, is the $N \rightarrow \infty$ limit of \{$M_{2, k,N} (\Psi)$\}.

Using similar arguments, one can check that $M_{3, k, N} (\Psi)$=0. More
generally, it is believed by the authors that in the asymptotic expansion
(\ref{2a1}) all the odd terms vanish. Therefore, (\ref{1a1}) leads to an asymptotic
expansion of a similar form as in, e.g., \cite{Par}.

\end{remark}

\begin{remark}
  One source of examples of random matrix models that satisfy the limit regime (\ref{100})
  are sums of ergodic unitarily invariant matrices. In more detail, for
  functions $\Psi_0, \ldots, \Psi_n$ of the form \eqref{eq:OV-classification} and $0<\varepsilon < n^{-1}$, we consider $n$ independent Hermitian random matrices $A_0, \ldots, A_n$ of size $N$, such that
  \[ \mathbb{E} [\tmop{Tr} (H \text{} A_j)] = \prod_{b \in \tmop{Spec} (H)}
     \exp (N^{1 - j \varepsilon} \Psi_j (b)), \text{\quad for every } H
     \in H (N), \]
  and $j = 0, \ldots, n$. Then $A \assign A_0 + \cdots + A_n$ satisfy the
  relation (\ref{rithmo}). Note that the formula (\ref{tavradiamouleipis}) for
  the $1 / N^{j \varepsilon}$ correction of $A / N$, for $j = 0, \ldots, n$,
  implies that the matrices $A_{j + 1}, \ldots, A_n$ do not affect the $1 /
  N^{i \varepsilon}$ correction of $A / N$, for $i = 0, \ldots, j$.
\end{remark}

Our next result provides an interpretation of the moment formulas \eqref{tavradiamouleipis} via infinitesimal free probability of order $n$.

\begin{theorem}
\label{th:mom-cum-high}
  Let $\Psi_0, \ldots, \Psi_n$ be infinitely differentiable functions at $0$
  and consider the sequences $(c_m^{(0)})_{m \in \mathbb{N}}$, $\ldots$,
  $(c^{(n)}_m)_{m \in \mathbb{N}}$, where for every $i = 0, \ldots, n$,
  \[ c_m^{(i)} \assign \frac{i! \Psi_i^{(m)} (0)}{(m - 1) !}, \text{\quad for
     every } m \in \mathbb{N}. \]
  Then, for the sequence $(\tmmathbf{\mu}_k^{(n)})_{k \in \mathbb{N}}$ defined
  in (\ref{tavradiamouleipis}) we have, for every $k \in \mathbb{N}$, the equality
  \begin{equation}
    \tmmathbf{\mu}_k^{(n)} = \sum_{\underset{\pi = \{V_1, \ldots, V_l \}}{\pi
    \in \tmop{NC} (k)}} \sum_{\lambda_1 + \cdots + \lambda_l = n}
    \frac{1}{\lambda_1 ! \ldots \lambda_l !} c_{|V_1 |}^{(\lambda_1)} \ldots
    c_{|V_l |}^{(\lambda_l)}. \label{Zouganeli}
  \end{equation}
\end{theorem}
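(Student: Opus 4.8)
The plan is to recognize the right-hand side of \eqref{Zouganeli} as exactly the higher-order moment--cumulant relation \eqref{Goldtouch} of Definition \ref{def:inf-free-mom-cum}, specialized to a single variable, and to verify the identity by expanding both sides and matching the coefficient of each monomial in the derivatives $\{\Psi_i^{(s)}(0)\}$. Since a block $V$ of color $\lambda$ contributes $\frac{1}{\lambda!}c_{|V|}^{(\lambda)}=\frac{\Psi_\lambda^{(|V|)}(0)}{(|V|-1)!}$, the right-hand side equals $\sum_{\pi\in\tmop{NC}(k)}\sum_{\lambda_1+\cdots+\lambda_l=n}\prod_{j=1}^{l}\frac{\Psi_{\lambda_j}^{(|V_j|)}(0)}{(|V_j|-1)!}$, where $\pi=\{V_1,\dots,V_l\}$. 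This approach directly generalizes Lemma \ref{lem:mom-cum} (the case $n=0$), Lemma \ref{24} (the case $n=1$) and Lemma \ref{lem:mu-doublePrime} (the case $n=2$), and I would follow the same template of combinatorial matching used there.

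First I would expand the left-hand side \eqref{tavradiamouleipis} by the Leibniz rule. For fixed $(b_1,\dots,b_n)$ with $\sum_j jb_j=n$ and fixed $m$, the product $(\Psi_0')^{k-m-|b|}(\Psi_1')^{b_1}\cdots(\Psi_n')^{b_n}$ has $k-m$ factors, of which $b_0:=k-m-|b|$ equal $\Psi_0'$; distributing $m$ derivatives among these factors, a factor of type $i$ receiving $l$ derivatives produces $\Psi_i^{(l+1)}(0)$. The key observation is the correspondence ``(a factor of type $i$ carrying $l$ derivatives) $\longleftrightarrow$ (a block of color $i$ and size $l+1$)''. Under it the constraint $\sum_t l_t=m$ becomes $p=k-m$, where $p$ is the number of blocks, while $\sum_j jb_j=n$ becomes $\lambda_1+\cdots+\lambda_l=n$, so that both sides are indexed by the same monomials $\prod_j \Psi_{\lambda_j}^{(|V_j|)}(0)$.

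It then remains to match coefficients. I would group a monomial by the numbers $r_{s,c}$ of blocks of size $s$ and color $c$; writing $R_s=\sum_c r_{s,c}$ and $p=\sum_{s,c}r_{s,c}$, one has $m+1=k+1-p$. On the left, the prefactor $\frac{k!}{m!(m+1)!\,b_0!\cdots b_n!}$ is multiplied by the Leibniz weight $\frac{m!}{\prod_{s,c}((s-1)!)^{r_{s,c}}}$ and by the number $\prod_c \frac{b_c!}{\prod_s r_{s,c}!}$ of ways to assign derivative counts to the slots of each type; after cancelling $m!$ and $\prod_c b_c!$ this gives $\frac{k!}{(k+1-p)!\prod_{s,c}r_{s,c}!}\prod_s((s-1)!)^{-R_s}$. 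On the right, the number of colored non-crossing partitions with the prescribed data is $\frac{k!}{(k+1-p)!\prod_s R_s!}\cdot\prod_s\frac{R_s!}{\prod_c r_{s,c}!}=\frac{k!}{(k+1-p)!\prod_{s,c}r_{s,c}!}$, using the count $\frac{k!}{(k+1-p)!\prod_s R_s!}$ of non-crossing partitions with $R_s$ blocks of size $s$ (established in the proof of Lemma \ref{lem:mom-cum}) together with the $\prod_s\binom{R_s}{(r_{s,c})_c}$ ways to color them; multiplying by $\prod_s((s-1)!)^{-R_s}$ reproduces exactly the left-hand coefficient.

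The routine but delicate part, and the main obstacle, is precisely this bookkeeping of multiplicities: one must track carefully that factors of equal type but different derivative order are distinguishable in the Leibniz expansion while identical blocks of equal size and color are interchangeable, so that the multinomial factors $\frac{b_c!}{\prod_s r_{s,c}!}$ on the left correspond term by term to the coloring factors $\binom{R_s}{(r_{s,c})_c}$ on the right. Once these counts are aligned, the coefficients agree monomial by monomial and the identity \eqref{Zouganeli} follows.
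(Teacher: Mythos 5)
Your proposal is correct and follows essentially the same route as the paper's proof: expand \eqref{tavradiamouleipis} by the Leibniz rule, identify a factor of type $i$ carrying $l$ derivatives with a block of color $i$ and size $l+1$, and match multiplicities using the count $\frac{k!}{(k+1-p)!\prod_s R_s!}$ of non-crossing partitions with prescribed block sizes from the proof of Lemma \ref{lem:mom-cum} together with the multinomial coloring factors. Your bookkeeping via the data $(r_{s,c})$ is just a reorganization of the paper's indices $r_{i,j}$, and the coefficient computation on both sides agrees with the paper's.
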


\begin{proof}
  In order to prove the claim, we describe how the indices $b_1, \ldots, b_n,
  m$ from the sum
  \[ \sum_{b_1 + 2 b_2 + \cdots + n \text{} b_n = n} \sum_{m = 0}^{k - b_1 -
     \cdots - b_n} \frac{k!}{m! (m + 1) ! (k - m - b_1 - \cdots - b_n) !b_1 !
     \ldots b_n !} \text{{\hspace{8em}}} \]
  \begin{equation}
    \text{{\hspace{11em}}} \times \left. \frac{\mathd^m}{\mathd x^m} ((\Psi'_0
    (x))^{k - m - b_1 - \cdots - b_n} (\Psi'_1 (x))^{b_1} \ldots (\Psi'_n
    (x))^{b_n}) \right|_{x = 0} \label{GIANAMPEISmestoclub}
  \end{equation}
  can be matched with those from the right hand side of
  (\ref{Zouganeli}). Note that for
  $\pi = \{V_1, \ldots, V_{k - m} \} \in \tmop{NC} (k)$, with $m = 0,
  \ldots, k - 1$, the sum
  \begin{equation}
    \sum_{\lambda_1 + \cdots + \lambda_{k - m} = n} \frac{n!}{\lambda_1 !
    \ldots \lambda_{k - m} !} c_{|V_1 |}^{(\lambda_1)} \ldots c_{|V_{k - m}
    |}^{(\lambda_{k - m})} \label{ipovrixieskatastrofes}
  \end{equation}
  looks like a formal $n$-th derivative of the product $c^{(0)}_{|V_1 |}
  \ldots c^{(0)}_{|V_{k - m} |}$, if we view $c_l^{(i)}$ as a formal $i$-th
  derivative of $c_l^{(0)}$. For fixed $b_1, \ldots, b_n$ with $b_1 + 2 b_2 +
  \cdots + n \text{} b_n = n$ and $m \in \{0, \ldots, k - b_1 - \cdots - b_n
  \}$, let $b_0 \assign k - m - b_1 - \cdots - b_n$. Then, by Leibniz
  rule,
  \[ \left. \frac{\mathd^m}{\mathd x^m} ((\Psi'_0 (x))^{b_0} \ldots (\Psi'_n
     (x))^{b_n}) \right|_{x = 0} = \sum \frac{m!}{\prod_{i = 1}^{b_0} l_{i, 0}
     ! \ldots \prod_{i = 1}^{b_n} l_{i, n} !}  \prod_{i = 1}^{b_0}
     \Psi_0^{(l_{i, 0} + 1)} (0) \ldots \prod_{i = 1}^{b_n} \Psi_n^{(l_{i, n}
     + 1)} (0) \]
  \begin{equation}
    \text{{\hspace{10em}}} = \sum m! \left( \prod_{i = 0}^n (i!)^{b_i}
    \right)^{- 1} \prod_{i = 1}^{b_0} c^{(0)}_{l_{i, 0} + 1} \ldots \prod_{i =
    1}^{b_n} c^{(n)}_{l_{i, n} + 1}, \label{koritsakimoupanourgoo}
  \end{equation}
  where the sum is over all tuples of non-negative integers $((l_{i, 0})_{i =
  1}^{b_0}, \ldots, (l_{i, n})_{i = 1}^{b_n})$ satisfying the constraint
  \[ \sum_{i = 1}^{b_0} l_{i, 0} + \cdots + \sum_{i = 1}^{b_n} l_{i, n} = m.
  \]
  Thus, in order to pass from (\ref{GIANAMPEISmestoclub}) to (\ref{Zouganeli}),
  we will provide a correspondence between summands of
  (\ref{koritsakimoupanourgoo}) and summands of (\ref{ipovrixieskatastrofes}),
  where we differentiate one time $b_1$ factors, two times $b_2$
  factors,{\textdots}, $n$ times $b_n$ factors of $c^{(0)}_{|V_1 |} \ldots
  c^{(0)}_{|V_{k - m} |}$, i.e.,
  \[ |\{i = 1, \ldots, k - m \of \lambda_i = j\}| = b_j, \]
  for every $j = 1, \ldots, n$. For this reason we need to have an equality $m \leq k - b_1 -
  \cdots - b_n$, since such a partition must have at least $b_1 + \cdots +
  b_n$ blocks. On the other hand, the integers $l_{i, j} + 1$ will express the
  cardinalities of the blocks of $\pi$. We consider a tuple $((\lambda_{i, 0})_{i =
  1}^{b_0}, \ldots, (\lambda_{i, n})_{i = 1}^{b_n})$ such that their sum
  is equal to $m$. Previously, in the cases $n = 0, 1, 2$, we proved the claim by showing
  that the contribution of all $\sigma = \{W_1, \ldots, W_{k - m} \} \in
  \tmop{NC} (k)$ that have the same number of blocks with $i$ elements, for
  every $i = 1, \ldots, k$, to the right hand side of (\ref{Zouganeli}), is
  equal to the contribution of $(|W_1 | - 1, \ldots, |W_{k - m} | - 1)$ to
  $\tmmathbf{\mu}_k^{(n)}$. For arbitrary $n \in \mathbb{N}$, we will show that the contribution of
  \begin{equation}
    \prod_{i = 1}^{b_0} c_{\lambda_{i, 0} + 1}^{(0)} \ldots \prod_{i =
    1}^{b_n} c_{\lambda_{i, n} + 1}^{(n)} \label{ginaikespolles}
  \end{equation}
  to $\tmmathbf{\mu}_k^{(n)}$ and to the right hand side of (\ref{Zouganeli})
  will be the same. This suffices to prove the claim. For this purpose, for $j
  = 0, \ldots, n$, we assume that $\{\lambda_{1, j}, \ldots, \lambda_{b_j, j}
  \} = \{\mu_{1, j}, \ldots, \mu_{a_j, j} \}$, where $\mu_{p, j} \neq \mu_{q,
  j}$ for every $p \neq q$. We also define
  \[ r_{i, j} \assign |\{p = 1, \ldots, b_j \of \lambda_{p, j} = \mu_{i, j}
     \}|, \]
  for every $i = 1, \ldots, a_j$ and $j = 0, \ldots, n$. Then, one has
  \[ \sum_{i = 1}^{a_j} r_{i, j} = b_j \text{\quad and\quad} \sum_{i =
     1}^{a_j} r_{i, j} \mu_{i, j} = \sum_{i = 1}^{b_j} \lambda_{i, j} . \]
  Therefore, the number of different ways for
  \[ \prod_{i = 1}^{b_j} c_{\lambda_{i, j} + 1}^{(j)} = \prod_{i = 1}^{a_j}
     (c_{\mu_{i, j} + 1}^{(j)})^{r_{i, j}} \]
  to become a factor of a summand of (\ref{koritsakimoupanourgoo}) is equal
  to the number of different ways that we can cover $b_j$ points on a line
  segment with $r_{1, j}$ number of elements $\mu_{1, j}$, {\textdots},
  $r_{a_j, j}$ number of elements $\mu_{a_j, j}$. This is equal to $(\prod_{i
  = 1}^{a_j} r_{i, j} !)^{- 1} b_j !$, and it implies that
  (\ref{ginaikespolles}) will contribute
  \[ \left( (m + 1) ! \cdot \prod_{j = 0}^n \prod_{i = 1}^{a_j} r_{i, j} !
     \cdot \prod_{i = 0}^n (i!)^{b_i} \right)^{- 1} k! \]
  times to $\tmmathbf{\mu}_k^{(n)}$. 
  
  Now let us analyze the right hand side of
  (\ref{Zouganeli}). Its summands with factor (\ref{ginaikespolles})
  correspond to non-crossing partitions $\{\{V_{i, 0} \}_{i = 1}^{b_0},
  \ldots, \{V_{i, n} \}_{i = 1}^{b_n} \} \in \tmop{NC} (k)$ such that $|V_{i,
  j} | = \lambda_{i, j} + 1$, for every $i = 1, \ldots, b_j$ and $j = 0,
  \ldots, n$. For such partitions the corresponding products
  \begin{equation}
    \prod_{i = 1}^{b_0} c_{|V_{i, 0} |}^{(0)} \ldots \prod_{i = 1}^{b_n}
    c_{|V_{i, n} |}^{(0)} \label{paragwgos}
  \end{equation}
  have the same formal $n$-th derivative. We assume that such a partition has
  $r_1$ blocks with $\mu_1 + 1$ elements,{\textdots}, $r_a$ blocks with $\mu_a
  + 1$ elements. Then
  \[ r_1 + \cdots + r_a = k - m \text{\quad and\quad} r_1 (\mu_1 + 1) + \cdots
     + r_a (\mu_a + 1) = k. \]
  These $r_i$'s depend on whether some of the $\mu_{i, j}$'s are equal to each other. By
  definition, for every $p = 1, \ldots, a$, $r_p$ is a finite sum of $r_{i,
  j}$'s and every $r_{i, j}$ will be a summand of a unique $r_p$. The $r_{i,
  j}$ will be a summand of $r_p$ if and only if $\mu_{i, j} = \mu_p$. Thus,
  for every $p = 1, \ldots, a$, one has
  \[ r_p = \sum_{j = 0}^n \sum_{i = 1}^{a_j} r_{i, j} \tmmathbf{1}_{\{\mu_{i,
     j} = \mu_p \}} . \]
  There are $((m + 1) ! \prod_{i = 1}^a r_i !)^{- 1} k!$ such non-crossing
  partitions. Moreover, for such partitions we are interested in summands inside
  \begin{equation}
    \sum \frac{1}{\prod_{i = 1}^{r_1} l_{i, 1} ! \ldots \prod_{i = 1}^{r_a}
    l_{i, a} !} \prod_{i = 1}^{r_1} c_{\mu_1 + 1}^{(l_{i, 1})} \ldots \prod_{i
    = 1}^{r_a} c_{\mu_a + 1}^{(l_{i, a})} \label{Whatelse}
  \end{equation}
  which have factor (\ref{ginaikespolles}). The above sum is over all tuples
  of nonnegative integers $((l_{i, 1})_{i = 1}^{r_1}, \ldots, (l_{i, a})_{i =
  1}^{r_a})$, such that their sum is equal to $n$. These terms appear when the $l_{i,
  j}$'s have been chosen in order to get a product where we differentiate one
  time $b_1$ terms $c_{\mu_i + 1}^{(0)}$, {\textdots}, $n$ times $b_n$ terms
  $c_{\mu_i + 1}^{(0)}$. Thus, for these summands, the product of the
  corresponding $l_{i, j} !$ is equal to $\prod_{i = 0}^n (i!)^{b_i}$.
  We have to find the number of ways we have to choose the $l_{i, j}$'s in
  order to get such a product with factor (\ref{ginaikespolles}). For this
  purpose, for some $p \in \{1, \ldots, a\}$, we assume that $r_p = r_{i_1,
  j_1} + \cdots + r_{i_l, j_l}$, where $0 \leq j_1 < \cdots < j_l \leq n$.
  This implies that $\mu_{i_1, j_1} = \cdots = \mu_{i_l, j_l} = \mu_p$. Therefore, 
  the number of different ways for
  \[ \left( c_{\mu_{i_1, j_1} + 1}^{(j_1)} \right)^{r_{i_1, j_1}} \ldots
     \left( c_{\mu_{i_l, j_l} + 1}^{(j_l)} \right)^{r_{i_l, j_l}} \]
  to become a factor of a summand of (\ref{Whatelse}) is equal to the number
  of different ways that we can cover $r_p$ points on a line segment with
  $r_{i_1, j_1}$ number of elements $j_1$, {\textdots}, $r_{i_l, j_l}$ number
  of elements $j_l$. This is equal to $(r_{i_1, j_1} ! \ldots r_{i_l, j_l}
  !)^{- 1} r_p !$. Therefore, we deduce that the term (\ref{ginaikespolles})
  contributes
  \[ \frac{k!}{(m + 1) ! \prod_{i = 1}^a r_i !}  \frac{\prod_{i = 1}^a r_i
     !}{\prod_{j = 0}^n \prod_{i = 1}^{a_j} r_{i, j} ! \cdot \prod_{i = 0}^n
     (i!)^{b_i}} \]
  times to the right hand side of (\ref{Zouganeli}). This proves the claim.
\end{proof}

\begin{remark}
  Theorem \ref{fredi} and Theorem \ref{th:mom-cum-high} show that the limit regime
  (\ref{100}) for a random Hermitian matrix $A$ provides us a version of Taylor
  expansion for the average empirical distribution functional of $A / N$. An
  infinitesimal non-commutative probability space of higher order plays the
  role of the derivatives. More precisely, for the functional
  $\varphi_N : \mathbb{C} \langle \mathbf{x} \rangle \rightarrow \mathbb{C}$,
  defined in (\ref{pornovioS}), we have the expansion
  \begin{equation}
    \varphi_N (P) = \varphi^{(0)} (P) + \frac{1}{N^{\varepsilon}}
    \varphi^{(1)} (P) + \cdots + \frac{1}{N^{n \varepsilon}} 
    \frac{\varphi^{(n)} (P)}{n!} + \omicron (N^{- n \varepsilon}), \text{\quad
    for every } P \in \mathbb{C} \langle \mathbf{x} \rangle, \label{FANIS}
  \end{equation}
  where $(\mathbb{C} \langle \mathbf{x} \rangle, \varphi^{(0)}, \ldots,
  \varphi^{(n)})$ is an infinitesimal non-commutative probability space of
  order $n$ with infinitesimal non-crossing cumulant functionals given by
  $(\tmmathbf{\kappa}^{(0)}_m)_{m \in \mathbb{N}}, \ldots,
  (\tmmathbf{\kappa}_m^{(n)})_{m \in \mathbb{N}}$, where
  \[ \tmmathbf{\kappa}_m^{(i)} (\mathbf{x}, \ldots, \mathbf{x}) = \frac{i!
     \Psi_i^{(m)} (0)}{(m - 1) !}, \]
  for every $i = 0, \ldots, n$ and $m \in \mathbb{N}$. 
\end{remark}

\begin{example}
  We compute the corresponding infinitesimal non-commutative probability space
  $$(\mathbb{C} \langle \mathbf{x} \rangle, \varphi^{(0)}, \varphi^{(1)},
  \varphi^{(2)}, \varphi^{(3)}),$$ described in (\ref{FANIS}), for the random
  matrix $A = N \cdot A_1 + N^{1 - \varepsilon / 2} A_2 + N^{1 - \varepsilon}
  A_3 + N^{1 - 3 \varepsilon / 2} A_4$, where $A_1, A_2, A_3, A_4$ are
  independent GUE matrices. Then $\varphi_0$ is given by the semicircle distribution
  and the linear functionals $\varphi_1, \varphi_2$ have been computed in
  Example \ref{ZNisxoli} and Example \ref{olesoigeitonies}, respectively.
  Theorem \ref{fredi} implies that the values $\varphi_3 (\mathbf{x}^k)$ are
  equal to
  \[ \sum_{m = 0}^{k - 3} \frac{k!}{m! (m + 1) ! (k - m - 3) !}  \left.
     \frac{\mathd^m}{\mathd x^m} (x^{k - m}) \right|_{x = 0} + 6 \sum_{m =
     0}^{k - 2} \frac{k!}{m! (m + 1) ! (k - m - 2) !}  \left.
     \frac{\mathd^m}{\mathd x^m} (x^{k - m}) \right|_{x = 0} \]
  \[ + 6 \sum_{m = 0}^{k - 1} \frac{k!}{m! (m + 1) ! (k - m - 1) !}  \left.
     \frac{\mathd^m}{\mathd x^m} (x^{k - m}) \right|_{x = 0} . \]
  Using the Cauchy formula, we see that this is equal to
  \[ \frac{k (k - 1)}{2 \mathpi \mathi} \oint_{|z| = 1} z^3 \left( z +
     \frac{1}{z} \right)^{k - 2} d \text{} z + \frac{3 k}{\mathpi \mathi}
     \oint_{|z| = 1} z^2 \left( z + \frac{1}{z} \right)^{k - 1} d \text{} z +
     \frac{3}{\mathpi \mathi} \oint_{|z| = 1} z \left( z + \frac{1}{z}
     \right)^k d \text{} z. \]
  This implies that the linear functional $\varphi^{(3)}$ is given by
  \[ \varphi^{(3)} (P) = \frac{1}{2 \mathpi} \int_{- 2}^2 \left( \frac{t^4 - 4
     t^2 + 2}{\sqrt{4 - t^2}} P'' (t) + \frac{6 t^3 - 18 t}{\sqrt{4 - t^2}} P'
     (t) + \frac{6 t^2 - 12}{\sqrt{4 - t^2}} P (t) \right) d \text{} t, \]
  for every $P \in \mathbb{C} \langle \mathbf{x} \rangle$.
\end{example}

\begin{example}
\label{ex:BBP-highOrder}
  We consider the random matrix
  \[ A = N \cdot G + \theta_1 \sum_{i = 1}^{\lfloor N^{1 - \varepsilon}
     \rfloor} X_i X_i^{\ast} + \theta_2 \sum_{i = 1}^{\lfloor N^{1 - 2
     \varepsilon} \rfloor} Y_i Y_i^{\ast}, \]
  where the matrices $G, \{X_i \}, \{Y_i \}$ are independent, $G$ is a GUE of
  size $N$ and $X_i, Y_i$ are independent standard complex Gaussian $N \times
  1$ vectors. 
  
  Then, in notation of Theorem \ref{fredi}, we have that
  \[ \Psi_0 (x) = \frac{x^2}{2} \text{, \qquad} \Psi_1 (x) = \log (1 -
     \theta_1 x)^{- 1} \text{, \qquad} \Psi_2 (x) = \log (1 - \theta_2 x)^{-
     1} . \]
  
  Applying the theorem and performing similar to previous examples computations, 
  we arrive to the following answer.   
  Consider the infinitesimal non-commutative probability space of
  order 2, $(\mathbb{C} \langle X \rangle, \varphi^{(0)}, \varphi^{(1)},
  \varphi^{(2)})$, where $\varphi^{(0)}$ is the limit, $\varphi^{(1)}$ is the
  first order correction and $\varphi^{(2)}$ is the second order correction.     
     
  This implies that
  \[ \varphi^{(1)} (P) =\tmmathbf{1}_{| \theta_1 | \geq 1} \cdot P (\theta_1 +
     \theta_1^{- 1}) - \frac{1}{2 \pi} \int_{- 2}^2 \frac{t - 2 \theta_1}{(t -
     \theta_1 - \theta_1^{- 1}) \sqrt{4 - t^2}} P (t) d \text{} t, \]
  and
  \[ \varphi^{(2)} (P) =\tmmathbf{1}_{| \theta_2 | \geq 1} \cdot P (\theta_2
     + \theta_2^{- 1}) - \frac{1}{2 \pi} \int_{- 2}^2 \frac{t - 2 \theta_2}{(t
     - \theta_2 - \theta_2^{- 1}) \sqrt{4 - t^2}} P (t) d \text{} t \text{
     {\hspace{12em}}} \]
  \[ \text{{\hspace{6em}}} + \frac{1}{2 \mathpi} \int_{- 2}^2
     \frac{\theta_1^2}{\sqrt{4 - t^2}}  \frac{P' (t) - P' (\theta_1 +
     \theta^{- 1}_1)}{t - \theta_1 - \theta_1^{- 1}} d \text{} t \hspace{16em}
  \]
  \[ \text{{\hspace{7em}}} \quad - \frac{1}{4 \mathpi} \int_{- 2}^2
     \frac{(\theta_1^2 - 1) (t - 2 \theta_1)}{(t - \theta_1 - \theta_1^{- 1})
     \sqrt{4 - t^2}} \left( \frac{P' (t) - P' (\theta_1 + \theta_1^{- 1})}{t -
     \theta_1 - \theta_1^{- 1}} - P'' (\theta_1 + \theta_1^{- 1}) \right) d
     \text{} t. \]

Note that the second correction lives on the scale $N^{1-2 \varepsilon}$, for $\varepsilon <\frac12$. Therefore, the presence of the delta measure in $\varphi^{(2)} (P)$ means the existence of $N^{1-2 \varepsilon}$ outliers at $\theta_2+\theta_2^{- 1}$ in case $| \theta_2 | \geq 1$. This can be viewed as a \textit{higher order} BBP phase transition --- it is not visible from the first order infinitesimal freeness (which governs $N^{1 - \varepsilon}$ scale), so in order to see these outliers, one needs to consider higher orders of infinitesimal freeness. 
     
\end{example}

\end{document}